\theoremstyle{definition}
\newtheorem{definition}{Definition}
\newtheorem{theorem}{Theorem}
\newtheorem{lemma}{Lemma}
\newtheorem{assumption}{Assumption}
\newtheorem{corollary}{Corollary} 
\newtheorem{remark}{Remark}
\newtheorem{proposition}{Proposition}
\newtheorem{fact}{Fact}
\newcommand{\h}{\pmb{h}}
\newcommand{\x}{\pmb{x}}
\newcommand{\y}{\pmb{y}}
\newcommand{\z}{\pmb{z}}
\newcommand{\M}{\pmb{M}}
\newcommand{\X}{\pmb{X}}
\newcommand{\Z}{\pmb{Z}}
\newcommand{\xu}{\pmb{u}}
\newcommand{\pp}{\pmb{\varphi}}
\newcommand{\ppp}{\pmb{\Phi}}
\newcommand{\E}{\mathbb{E}}
\newcommand{\R}{\mathbb{R}}
\newcommand{\C}{\mathbb{C}}
\newcommand{\mD}{\mathcal{D}}
\newcommand{\mS}{\mathcal{S}}
\newcommand{\mA}{\mathcal{A}}
\newcommand{\mT}{\mathcal{T}}
\newcommand{\mO}{\mathcal{O}}
\newcommand{\mM}{\mathcal{M}}
\newcommand{\mN}{\mathcal{N}}
\newcommand{\mF}{\mathcal{F}}
\def\lV{\left\lVert}
\def\rV{\right\lVert}
\def\lv{\left\lvert}
\def\rv{\right\lvert}
\def\lk{\left(}
\def\rk{\right)}
\def\lg{\langle}
\def\rg{\rangle}
\def\lz{\left[}
\def\rz{\right]}
\begin{document}
\title{Stable Phase Retrieval: Optimal Rates in Poisson and Heavy-tailed Models}
\author{Gao Huang\footnote{ School of Mathematical Sciences, Zhejiang University, Hangzhou 310027, P. R. China. Email: hgmath@zju.edu.cn}}	
\author{Song Li\footnote{ School of Mathematical Sciences, Zhejiang University, Hangzhou 310027, P. R. China. Corresponding author. Email: songli@zju.edu.cn}}	
\author{Deanna Needell\footnote{Department of Mathematics, University of California,
Los Angeles, CA 90095, USA. Email: deanna@math.ucla.edu}}

\affil{}
\renewcommand*{\Affilfont}{\small\it}
	
	\maketitle
	\begin{abstract}	
We investigate stable recovery guarantees for phase retrieval under two realistic and challenging noise models: the Poisson model and the heavy-tailed model. 
Our analysis covers both nonconvex least squares (NCVX-LS) and convex least squares (CVX-LS) estimators.
For the Poisson model, we demonstrate that in the high-energy regime where the true signal $\x$ exceeds a certain energy threshold,  both estimators achieve a signal-independent, minimax optimal error rate $\mO\lk\sqrt{\frac{n}{m}}\rk$, with $n$ denoting the signal dimension and $m$ the number of sampling vectors.
To the best of our knowledge, these are the first minimax optimal recovery guarantees established for the Poisson model.
In contrast, in the low-energy regime, the NCVX-LS estimator attains an error rate of {\footnotesize$\mO\lk\lV\x\rV^{1/4}_2\cdot\lk\frac{n}{m}\rk^{1/4}\rk$}, which decreases as the energy of signal $\x$ diminishes 
and remains nearly optimal with respect to the oversampling ratio.
This demonstrates a signal-energy-adaptive behavior in the Poisson setting. 
For the heavy-tailed model with noise having a finite $q$-th moment ($q>2$), both estimators attain the minimax optimal error rate {\footnotesize$\mO\lk \frac{\lV \xi \rV_{L_q}}{\lV \x \rV_2} \cdot \sqrt{\frac{n}{m}} \rk$} in the high-energy regime, while the NCVX-LS estimator further achieves the minimax optimal rate {\footnotesize$\mO\lk \sqrt{\lV \xi \rV_{L_q}}\cdot \lk\frac{n}{m}\rk^{1/4} \rk$} in the low-energy regime.

Our analysis builds on two key ideas: the use of multiplier inequalities to handle noise that may exhibit dependence on the sampling vectors, and a novel interpretation of Poisson noise as sub-exponential in the high-energy regime yet heavy-tailed in the low-energy regime.
These insights form the foundation of a unified analytical framework, which we further apply to a range of related problems, including sparse phase retrieval, low-rank positive semidefinite matrix recovery, and random blind deconvolution, 
demonstrating the versatility and broad applicability of our approach.	 
      	\end{abstract}	
        
\noindent\textbf{Keywords} Phase Retrieval $\cdot$ Poisson Model $\cdot$ Heavy-tailed Model $\cdot$ Minimax Rate $\cdot$ Multiplier Inequality\\
        
\noindent\textbf{Mathematics Subject Classification} 94A12 $\cdot$ 62H12 $\cdot$ 90C26 $\cdot$ 60F10
		
\section{Introduction}	

Consider a set of $ m$ quadratic equations taking the form
\begin{equation}\label{phaseless}
		y_{k}=\lv\lg\pp_{k},\x \rg\rv^{2},\quad k=1,\cdots,m,
		\end{equation}
where the observations $\left\{y_{k}\right\}_{k=1}^m$ and the design vectors $\{ \pp_{k} \}_{k=1}^{m}$ in $V=\C^{n}$ are known and the goal is to reconstruct the unknown vector $\x\in\C^{n}$.
This problem, known as phase retrieval \cite{fienup1982phase}, arises in a broad range of applications, including X-ray crystallography, diffraction imaging, microscopy, astronomy, optics, and quantum mechanics; see, e.g., \cite{candes2015phase}.

From an application standpoint, the stability of the reconstruction performance is arguably the most critical consideration. 
That is, we focus on scenarios where the observed data may be corrupted by noise, which means that we only have access to noisy measurement of $\lv\lg\pp_{k},\x \rg\rv^{2}$.
There are various sources of noise contamination, including thermal noise, background noise, and instrument noise, among others; see, e.g., \cite{chang2019advanced}.
A common type of noise arises from the operating mode of the detector \cite{chen2017solving,fannjiang2020numerics,diederichs2024wirtinger}, particularly in imaging applications such as CCD cameras, fluorescence microscopy and optical coherence tomography (OCT),  where variations in the number of photons are detected. 
As a result, the measurement process can be modeled as a counting process, which is mathematically represented by Poisson observation model,
\begin{equation}\label{poisson}
		y_{k}\overset{\text{ind.}}{\sim}\text{Poisson}\lk\lv\lg\pp_{k},\x \rg\rv^{2}\rk,\quad k=1,\cdots,m.
		\end{equation}
This means that the observation data $y_k$ at each pixel position (or measurement point $k$) follows the Poisson distribution with parameter $\lv\lg\pp_{k},\x \rg\rv^{2}$.
Poisson noise is an adversarial type of noise that depends not only on the design vectors but also on the true signal, with its intensity diminishing as the signal energy decreases, thereby complicating the analysis; see, e.g., \cite{diederichs2024wirtinger,dirksen2025spectral,allain2025phasebook}.
Another common source of noise is the nonideality of optical and imaging systems, as well as the generation of super-Poisson noise by certain sensors; see, e.g., \cite{yeh2015experimental}. 
This type of noise typically exhibits a heavy-tailed distribution, meaning that the probability density is higher in regions with larger values (far from the mean). 
We model the observations $\left\{y_k\right\}_{k=1}^{m}$ using a heavy-tailed observation model, 
\begin{equation}\label{heavy}
		y_{k}=\lv\lg\pp_{k},\x \rg\rv^{2}+\xi_k,\quad k=1,\cdots,m,
		\end{equation}
where $\left\{\xi_k\right\}_{k=1}^{m}$ represent heavy-tailed noise that satisfies certain statistical properties.
Heavy-tailed noise contains more outliers, which contradicts the sub-Gaussian or sub-exponential noise assumptions commonly used in the theoretical analysis of standard statistical procedures \cite{ibragimov2015heavy}.
Therefore, addressing heavy-tailed model and characterizing its stable performance in phase retrieval remains a challenge; see, e.g., \cite{chen2022error,buna2024robust}.

Now, a natural and important question arises:
\begin{itemize}
\item[] \textbf{\textit{Where does the phase retrieval problem stand in terms of minimax optimal statistical performance when the observations follow Poisson distributions \eqref{poisson} or are contaminated by heavy-tailed noise \eqref{heavy}?}}

\end{itemize}

\noindent Unfortunately, to our best knowledge, the existing theoretical understanding for phase retrieval under Poisson model \eqref{poisson} and heavy-tailed model \eqref{heavy} remains far from satisfactory, as we shall discuss momentarily.

\subsection{Prior Art and Bottlenecks}\label{prior}

\subsubsection{Poisson Model}

We begin by reviewing results from the literature on the Poisson model \eqref{poisson}; a summary is provided in Table~\ref{tab:1}.
In a breakthrough work~\cite{candes2013phaselift}, Cand{\'e}s, Strohmer, and Voroninski established theoretical guarantees for phase retrieval using the PhaseLift approach and demonstrated its stability in the presence of bounded noise. 
Moreover, their experiments showed that the PhaseLift approach performs robustly under Poisson noise, with stability comparable to the case of Gaussian noise. 
However, they did not provide a theoretical justification for this observation. 
Furthermore, in the discussion section of~\cite{candes2013phaselift}, they suggested that assuming random noise, such as Poisson noise, could lead to sharper error bounds compared to the case of bounded noise.
	
To handle the Poisson model~\eqref{poisson}, Chen and Cand{\'e}s in~\cite{chen2017solving} proposed a Poisson log-likelihood estimator and introduced a novel approach called truncated Wirtinger flow to solve it, which improves upon the original Wirtinger flow method introduced in~\cite{candes2015phase4}.
Under the assumption of Gaussian sampling and in the real case, they proved the algorithm's convergence at the optimal sampling order $m=\mathcal{O}\lk n\rk$ and established its robustness against bounded noise. 
Furthermore, leveraging the error bound derived for bounded noise, they obtained an $\mathcal{O}\lk1\rk$ error bound under Poisson noise, provided that the true signal lies in the high-energy regime, i.e., $\lV\x\rV^2_2\ge\log^{3} m$.
Moreover, under a fixed oversampling ratio, they presented a minimax lower bound for the Poisson setting, demonstrating that if also the signal energy exceeds $\log^{3} m$, 
then no estimator can achieve a mean estimation error better than $\Omega\lk\sqrt{\frac{n}{m}}\rk$; see Theorem 1.6 in~\cite{chen2017solving}.
Since the Poisson model~\eqref{poisson} characterizes the numbers of photons diffracted by the specimen (input $\x$) and detected by the optical sensor (output $\y$), reliable detection requires that the specimen be sufficiently illuminated. Motivated by this physical constraint, Chen and Candès~\cite{chen2017solving} concentrated on the high-energy regime, where photon counts are large enough to yield stable estimation under Poisson noise.
Nevertheless, despite assuming that the signal lies in the high-energy regime, their analysis still leaves a gap between the derived upper bound $\mathcal{O}\lk1\rk$ and the minimax lower bound $\Omega\lk\sqrt{\frac{n}{m}}\rk$.
 	
In a very recent work~\cite{dirksen2025spectral}, Dirksen et al. proposed a constrained optimization problem based on the spectral method to assess the stable performance of phase retrieval under Poisson noise.
In their estimator, the optimization is constrained to maintain the same energy level as the true signal $\x$,
thereby requiring prior knowledge of $\x$.
Still under the assumption of Gaussian sampling, in the real case and at the sampling order $m=\mathcal{O}\lk n\log n\rk$, they provided an error bound  
	\begin{equation}\label{prior of low-dose}
	    \textbf{dist}\lk\z_{\star},\x\rk\lesssim \lk1+\lV\x\rV_2\rk\cdot\lk\log m\rk^{1/2}\lk\log n\rk^{1/4}\lk\frac{n}{m}\rk^{1/4}.
        \end{equation}
Here, $\z_{\star}$ is the solution of the estimator and the distance $\textbf{dist}\lk\z_{\star},\x\rk$ is defined in Section \ref{setup}.
This error rate is valid without imposing restriction on the energy of the truth signal $\x$.
In this way, they extended the results of \cite{chen2017solving} to the low-energy regime.
The focus on the low-energy regime is motivated by biological applications, where only a low illumination dose can be applied to avoid damaging sensitive specimens such as viruses \cite{glaeser1971limitations}. 
In ptychography, this challenge is further amplified since the same object is measured repeatedly, resulting in extremely low photon counts, poor signal-to-noise ratios, and limited reconstruction quality with existing methods.
Although the error bound \eqref{prior of low-dose} in~\cite{dirksen2025spectral} extends to the low-energy regime, 
it still falls short of attaining the minimax lower bound established in~\cite{chen2017solving}, even in the high-energy regime.
Moreover, the error bound \eqref{prior of low-dose} does not vanish as the signal energy decreases; instead, it remains bounded by $\widetilde{\mathcal{O}}\lk\left(\frac{n}{m}\right)^{1/4}\rk$\footnote{The notation $\widetilde{\mathcal{O}}$ denotes an asymptotic upper bound that holds up to logarithmic factors.} in the low-energy regime, which contradicts the fundamental property of Poisson noise—its intensity diminishes as the signal energy decreases.

    To summarize, the Poisson model \eqref{poisson} currently faces some major bottlenecks: 
    Current theoretical analyses have not yet achieved the known minimax lower bound $\Omega\lk\sqrt{\frac{n}{m}}\rk$ in the high-energy regime.
    Moreover, in the low-energy regime, the error estimate of existing method does not decay in line with the decreasing energy of true signal, and a corresponding minimax theory for this regime is lacking.

\begin{table}[ht!]
\caption{Phase Retrieval under Poisson Model} 
\centering
\renewcommand\arraystretch{1.4}
\begin{threeparttable}
\resizebox{\textwidth}{!}{
\begin{tabular}{ccc}
    \hline\hline
    Reference & Estimator  & Error Bound \\
    \hline
    Chen and Cand{\`e}s \cite{chen2017solving} & Poisson log-likelihood &  $\mO(1)$\tnote{1} \\
    \hline
    Dirksen et al. \cite{dirksen2025spectral} & Spectral method &  $\widetilde{\mO}\left( \lk1+\lV\x\rV_2\rk\cdot\left(\frac{n}{m}\right)^{1/4} \right)$ \\
    \hline
    \textbf{Our paper} & NCVX-LS &  
    \begin{tabular}[c]{@{}c@{}}
    $\mO\lk\sqrt{\frac{n}{m}}\rk$ (high-energy)\\
    $\mO\lk\lV\x\rV_2^{1/4}\cdot\lk\frac{n}{m}\rk^{1/4}\rk$ (low-energy)
    \end{tabular} \\
    & CVX-LS  & 
    \begin{tabular}[c]{@{}c@{}}
    $\mathcal{O}\left(\sqrt{\frac{n}{m}}\right)$ (high-energy)\\
    $\mathcal{O}\left(\sqrt{\frac{1}{\|\x\|_2}}\cdot\sqrt{\frac{n}{m}}\right)$ (low-energy)
    \end{tabular} \\
    \hline\hline
\end{tabular}
}
\begin{tablenotes}
\footnotesize
\item[1] The guarantee in \cite{chen2017solving} does not apply to the low-energy regime.
\item[2] The error bounds in the above results are all evaluated using the distance $\textbf{dist}\lk\z_{\star}, \x\rk$.
\end{tablenotes}
\end{threeparttable}
\label{tab:1}
\end{table}

\subsubsection{Heavy-tailed Model}

We proceed to review results on additive random noise models, with particular attention to heavy-tailed model \eqref{heavy}; see Table~\ref{tab:2} for a summary.
Eldar and Mendelson \cite{eldar2014phase} aimed to understand the stability of phase retrieval under symmetric mean-zero sub-Gaussian noise (with sub-Gaussian norm\footnote{For $\alpha \ge 1$, the $\psi_\alpha$-norm of a random variable $X$ is
$\lV X\rV_{\psi_\alpha} := \inf\{ t >0: \mathbb{E} \exp(\lv X\rv^\alpha/t^\alpha) \leq 2\}.$
Specifically, $\alpha=2$ yields the sub-Gaussian norm, and $\alpha=1$ yields the sub-exponential norm.
Equivalent definitions of these two norms can be found in \cite[Section 2]{vershynin2018high}.} bounded by $\sqrt{n}$) and established an error bound {\small$\mathcal{O}\lk \lV\xi\rV_{\psi_2}\cdot\sqrt{\frac{n\log^2 n}{m}}\rk$} in a squared-error sense for empirical $\ell_q$ risk minimization, where the parameter $q$ should be chosen close to $1$ and specified by other parameters.
Cai and Zhang \cite{cai2015rop}, building on the PhaseLift framework of \cite{candes2013phaselift}, proposed a constrained convex optimization problem and established that at the sampling rate $m=\mathcal{O}\lk n\log n\rk$, the estimation error measured by $\lV \Z_{\star}-\x\x^*\rV_F$ (where $\Z_{\star}$ denotes the estimator’s solution) is bounded by {\small$\mathcal{O}\lk\lV\xi\rV_{\psi_2}\cdot\min\left\{\frac{n\log m}{m}+\sqrt{\frac{n}{m}},1\right\}\rk$} for i.i.d. mean-zero sub-Gaussian noise.
Lecu{\'e} and Mendelson \cite{lecue2015minimax} investigated least squares estimator (i.e., empirical $\ell_2$ risk minimization) and obtained an error bound {\small$\mathcal{O}\lk\frac{\lV\xi\rV_{\psi_2}}{\lV\x\rV_2}\cdot\sqrt{\frac{n\log m}{m}}\rk$}
with respect to $\textbf{dist}\lk\z_{\star},\x\rk$ under i.i.d. mean-zero sub-Gaussian noise. 
In addition, they further pointed out that in the case of i.i.d. Gaussian noise $\mathcal{N}\lk0,\sigma^2\rk$, no estimator can achieve a mean squared error better than {\small$\Omega\lk \min\left\{\frac{\sigma}{\lV\x\rV_2}\sqrt{\frac{n}{m}},\lV\x\rV_2\right\}\rk$}.
Cai et al. \cite{cai2016optimal} and Wu and Rebeschini \cite{wu2023nearly}
implemented the minimax error estimation for the sparse phase retrieval algorithm in the presence of independent centered sub-exponential noise.
In the non-sparse setting, their results yield the error bound {\small$\mathcal{O}\lk\frac{\lV\xi\rV_{\psi_1}}{\lV\x\rV_2}\cdot\sqrt{\frac{n\log n}{m}}\rk$}, which matches the minimax lower bound of \cite{lecue2015minimax} when $\lV\x\rV_2$ is sufficiently large, up to a logarithmic factor.
	
In a recent work \cite{chen2022error}, Chen and K.Ng also considered the same least squares estimator as \cite{lecue2015minimax}. 
They first established an improved upper bound applicable to bounded noise, and from this, derived an error bound
 {\small$\mathcal{O}\lk\frac{\lV\xi\rV_{\psi_1}}{\lV\x\rV_2}\cdot\sqrt{\frac{n\lk\log m\rk^2}{m}}\rk$} for i.i.d. mean-zero sub-exponential noise.
Therefore, this result is nearly comparable to those established in \cite{cai2016optimal,wu2023nearly}.
Moreover, they extended their analysis to i.i.d. symmetric heavy-tailed noise using a truncation technique. 
Assuming the noise has a finite moment of order $q >1$ (a necessary condition for their bound to converge), 
they obtained an error bound 
\begin{equation}\label{prior of heavy}
  \textbf{dist}\lk\z_{\star},\x\rk\lesssim \frac{\lV\xi\rV_{L_q}}{\lV\x\rV_2}\cdot\lk\sqrt{\frac{n}{m}}\rk^{1-\frac{1}{q}} \lk\log m\rk^2.  
\end{equation}
However, their result significantly deviated from the minimax lower bound $\Omega\lk\frac{\sigma}{\lV\x\rV_2}\sqrt{\frac{n}{m}}\rk$ for Gaussian noise \cite{lecue2015minimax} when $\lV\x\rV_2$ is sufficiently large.
Moreover, their analysis is limited in that it provides guarantees only for a specific signal $\x$, rather than uniformly over all $\x\in\C^n$.

In light of these bottlenecks, Chen and K.Ng in \cite{chen2022error} explicitly posed an \textbf{\textit{open problem}}: whether faster convergence rates or uniform recovery guarantees could be achieved under heavy-tailed noise (see the “Concluding Remarks” section of~\cite{chen2022error}).
Furthermore, as in the Poisson model \eqref{poisson}, the corresponding minimax theory for the low-energy regime remains undeveloped, with existing analyses primarily focusing on the high-energy regime where $\lV\x\rV_2$ is sufficiently large.

\begin{table}[ht!]
\caption{Phase Retrieval under Heavy-tailed Model} 
\centering
\renewcommand\arraystretch{1.4}
\begin{threeparttable}
\scriptsize
\begin{tabular}{ccc}
    \hline\hline
    Reference & Noise Type  & Error Bound \\
    \hline
    Eldar and Mendelson \cite{eldar2014phase} & symmetric sub-Gaussian  & $\mathcal{O}\lk \lV\xi\rV_{\psi_2}\cdot\sqrt{\frac{n\log^2 n}{m}}\rk$ \\
    \hline
    Cai and Zhang \cite{cai2015rop} & sub-Gaussian & $\mO\lk\lV\xi\rV_{\psi_2}\cdot\min\left\{ \frac{n\log m}{m} + \sqrt{\frac{n}{m}}, 1 \right\} \rk$ \\
    \hline
    Lecué and Mendelson \cite{lecue2015minimax} & sub-Gaussian  & $\mO \lk \frac{\lV\xi\rV_{\psi_2}}{\lV\x\rV_2}\cdot\sqrt{\frac{n\log m}{m}} \rk$ \\
    \hline
    Cai et al. \cite{cai2016optimal};Wu and Rebeschini \cite{wu2023nearly}  & sub-exponential  & $\mO \lk\frac{\lV\xi\rV_{\psi_1}}{\lV\x\rV_2}\cdot\sqrt{\frac{n\log n}{m}} \rk$ \\
    \hline
    Chen and K. Ng \cite{chen2022error} & symmetric heavy-tailed ($q > 1$)  & $\mO \lk \frac{\lV\xi\rV_{L_q}}{\lV\x\rV_2}\cdot\lk\sqrt{\frac{n}{m}} \rk^{1 - \frac{1}{q}} (\log m)^2 \rk$ \\
    \hline
    \textbf{Our paper (NCVX-LS)} & heavy-tailed ($q > 2$)  & $\mO\lk\min\left\{\frac{\lV\xi\rV_{L_q}}{\lV\x\rV_2}\cdot\sqrt{\frac{n}{m}},\,\sqrt{\lV\xi\rV_{L_q}}\cdot\lk\frac{n}{m}\rk^{1/4}\right\}\rk$ \\
    \cline{1-3}
    \textbf{Our paper (CVX-LS)} & heavy-tailed ($q > 2$)  & $\mO\lk \frac{\lV\xi\rV_{L_q}}{\lV\x\rV_2}\cdot\sqrt{\frac{n}{m}} \rk$ \\
    \hline\hline
\end{tabular}
\begin{tablenotes}
\footnotesize
\item[1] The error bounds in \cite{eldar2014phase,cai2015rop} are measured in a squared-error sense or the Frobenius norm,
whereas\\ the other works use the distance $\textbf{dist}\lk\z_{\star}, \x\rk$ to quantify recovery accuracy.
\item[2] The result in \cite{chen2022error} does not establish uniform recovery guarantees valid for all signals.
\end{tablenotes}
\end{threeparttable}
\label{tab:2}
\end{table}

\subsubsection{Stable Phase Retrieval}

Numerous works on phase retrieval have investigated its stability properties \cite{bandeira2014saving,balan2015invertibility,cahill2016phase,alaifari2019stable,freeman2024stable,cahill2025group,FREEMAN2026101801} or stable recovery guarantees under bounded noise \cite{candes2013phaselift,candes2014solving,iwen2017robust,chen2017solving,kueng2017low,kabanava2016stable,zhang2017nonconvex,wang2018phase,pfander2019robust,krahmer2020complex,chen2022error}.
Here, stability often refers to lower Lipschitz bounds of the nonlinear phaseless operator \cite{balan2015invertibility,FREEMAN2026101801}, which can quantify the robustness of phase retrieval under bounded noise, whether deterministic or adversarial.
For least squares estimators or $\ell_2$-loss-based iterative algorithms, the error bound under bounded noise typically takes the form $\mathcal{O}\lk \frac{\lV\pmb{\xi}\rV_2}{\sqrt{m}\lV\x\rV_2}\rk$ \cite{chen2017solving,kabanava2016stable,zhang2017nonconvex, wang2018phase,chen2022error}.
However, for the Poisson and heavy-tailed models considered in this paper, such a bound is far from optimal \cite{chen2017solving,chen2022error}.
Another line of work \cite{hand2017phaselift,li2016low,zhang2018median,duchi2019solving,huang2023robust,huang2025adversarial,buna2024robust,kim2025robust} investigated the robustness of phase retrieval in the presence of outliers, which often arise due to sensing errors or model mismatches \cite{yeh2015experimental}.
Most of these studies typically focused on mixed noise settings, where the observation model includes both bounded noise (or random noise) and outliers. 
Notably, the outliers may be adversarial—deliberately corrupting part of the observed data \cite{duchi2019solving,huang2023robust,huang2025adversarial}. 
Thereby, the treatment in these works also differs significantly from random noise models considered in this paper. 

\subsection{Contributions of This Paper}

This paper investigates stable recovery guarantees for phase retrieval under two realistic and challenging noise settings, Poisson model~\eqref{poisson} and heavy-tailed model~\eqref{heavy}, using both nonconvex least squares (NCVX-LS) and convex least squares (CVX-LS) estimators.
Our key contributions are summarized as follows:
\begin{enumerate}
\item For the Poisson model~\eqref{poisson}, we demonstrate that both NCVX-LS and CVX-LS estimators attain the minimax optimal error rate $\mO\lk \sqrt{\frac{n}{m}} \rk$ once $\lV\x\rV_2$ exceeds a certain threshold. 
In this high-energy regime, the error bound is signal-independent.
In contrast, in the low-energy regime, the NCVX-LS estimator attains an error bound {\small$\mO\lk \lV\x\rV_2^{1/4}\cdot \lk \frac{n}{m} \rk^{1/4} \rk$}, which decays as the signal energy decreases.
By establishing the corresponding minimax lower bound, we further show that this rate remains nearly optimal with respect to the oversampling ratio.
These results improve upon the theoretical guarantees of Chen and Candès~\cite{chen2017solving} and Dirksen et al.~\cite{dirksen2025spectral}.
To the best of our knowledge, this is the first work that provides minimax optimal guarantees for the Poisson model in the high-energy regime, along with recovery bounds that explicitly adapt to the signal energy in the low-energy regime.

\item For the heavy-tailed model~\eqref{heavy}, we show that both the NCVX-LS and CVX-LS estimators achieve an error bound  {\small$\mathcal{O}\left(\frac{\lV \xi \rV_{L_q}}{\lV \x \rV_2} \cdot \sqrt{\frac{n}{m}}\right)$} in the high-energy regime, where the noise variables are heavy-tailed with a finite $q$-th moment ($q>2$) and may exhibit dependence on the sampling vectors.
This bound holds uniformly over all signals and matches the minimax optimal rate.
In the low-energy regime, the NCVX-LS estimator further achieves an error bound {\small$\mO\lk \sqrt{\lV \xi \rV_{L_q}}\cdot\lk \frac{n}{m} \rk^{1/4} \rk$}, which is likewise minimax optimal by our newly established minimax lower bound in this regime.
These results strengthen existing guarantees and resolve the open problem posed by Chen and K. Ng~\cite{chen2022error}.

\item  We propose a unified framework for analyzing the minimax stable performance of phase retrieval.
The key innovations in our framework are twofold: leveraging multiplier inequalities to handle noise that may depend on the sampling vectors, and providing a novel perspective on Poisson noise, which behaves as sub-exponential in the high-energy regime but heavy-tailed in the low-energy regime.
We further extend our framework to related problems, including sparse phase retrieval, low-rank positive semidefinite (PSD) matrix recovery, and random blind deconvolution, highlighting the broad applicability and theoretical strength of our approach.
\end{enumerate}

\subsection{Notation and Outline}

Throughout this paper, absolute constants are denoted by $c,c_1,C,C_1,L,\widetilde{L},L_1$, etc.
The notation $a \lesssim b$ implies that there are absolute constants $C$ for which $ a \le Cb$, $a \gtrsim b$ implies that $ a \ge Cb$,
and $a \asymp b$ implies that there are absolute constants $
0 < c < C$ for which $cb\le a\le Cb$.
The analogous notation $a \lesssim_{K} b$ and $a \gtrsim_K b$ refer to a constant that depends only on the parameter $K$. 
We also recall that $[n] = \{1,\hdots,n\}$.

We employ a variety of norms and spaces.
Let $\lV\, \cdot\, \rV_2$ be the standard Euclidean norm, and let $\ell_2^n$ be the normed space $\lk \C^n,\lV\, \cdot\, \rV_2\rk$. 
Let $\left\{\lambda_{k}\lk\Z\rk\right\}_{k=1}^{r}$ be a singular value sequence of a rank-$r$ matrix $\Z$ in descending order.
Let $\lV\Z\rV_{*}=\sum_{k=1}^{r}\lambda_{k}\lk\Z\rk$ denote the the nuclear norm; 
$\lV\Z\rV_{F}=\lk\sum_{k=1}^{r}\lambda^{2}_{k}\lk\Z\rk\rk^{1/2}$ is the Frobenius norm;
and $\lV\Z\rV_{op}=\lambda_{1}\lk\Z\rk$ denotes the operator norm. 
Let $\mathbb{S}^{n-1}$ denote the Euclidean unit sphere in $\C^n$ with respect to $\lV\, \cdot\, \rV_2$ and $\mathbb{S}_{F}$ denote the unit sphere in $\C^{n\times n}$ with respect to $\lV\, \cdot\, \rV_F$.
Let $\mathcal{S}^n$ denotes the vector space of all Hermitian matrices in $\C^{n\times n}$ and $\mathcal{S}^{n}_{+}$ denotes the set of all PSD Hermitian matrices in $\C^{n\times n}$.
The expectation is denoted by $\E$, and $\mathbb{P}$ denotes the probability of an event. 
The $L_p$-norm of a random variable $X$ is defined as $\lV X\rV_{L_p} = \lk\E \lv X\rv^p\rk^{1/p}$. 	

The organization of this paper is as follows.
Section~\ref{setup} presents the problem setup, and Section~\ref{main} states the main results. 
Section~\ref{Architecture} outlines the overall proof framework.
Section~\ref{ineq} introduces the multiplier inequality, a key technical tool, and Section~\ref{small0} describes the small ball method and the lower isometry property.
Section~\ref{proof of main} provides detailed proofs of the main theoretical results, and Section~\ref{minimax} establishes  minimax lower bounds for both two models.
Numerical simulations validating our theory are presented in Section~\ref{exp}, and additional applications of our framework are explored in Section~\ref{app}.
Section~\ref{discussion} concludes with a discussion of contributions and future research directions. 
Supplementary proofs are included in the Appendix.

\section{Problem Setup}\label{setup}

In this paper, we analyze the stable performance of phase retrieval in the presence of Poisson and heavy-tailed noise using the widely adopted least squares approach, as explored in \cite{candes2015phase4,lecue2015minimax,cai2016optimal,zhang2017nonconvex,sun2018geometric,chen2022error,buna2024robust,mcrae2025nonconvex}. 
Specifically, we examine two different estimators, with the first being the \textbf{nonconvex least squares (NCVX-LS)} approach,
        \begin{equation}\label{model1}
		\begin{array}{ll}
			\text{minimize}   & \quad\lV\ppp\lk\z\rk-\y\rV_2\\
			\text{subject to} & \quad  \z\in \C^{n},\\				 
		\end{array}
	\end{equation}	
where $\y:=\left\{y_k\right\}_{k=1}^{m}$ denotes the observation and $\ppp\lk\z\rk$ represents the phaseless operator $$\ppp\lk\z\rk:=\left\{\lv\lg\pp_k,\z\rg\rv^2\right\}_{k=1}^{m}.$$
Since it is impossible to recover the global sign (we cannot distinguish $\x$ from $e^{i\varphi}\x$), we will evaluate the solution using the euclidean distance modulo a global sign: 
for complex-valued signals, the distance between the solution $\z_{\star}$ of \eqref{model1} and the true signal $\x$ is 
 \begin{eqnarray}\label{dist}
		\textbf{dist}\lk\z_{\star},\x\rk:=\min_{\varphi\in\lz0,2\pi\rk}\lV e^{i\varphi}\z_{\star}-\x\rV_{2}.
					\end{eqnarray}		
                        
By the well known lifting technique \cite{candes2015phase,candes2013phaselift,candes2014solving}, the phaseless equations (\ref{phaseless}) can be transformed into the linear form $y_k=\lg\pp_k\pp_k^*,\x\x^*\rg$.
This reformulation allows the phase retrieval problem to be cast as a low-rank PSD matrix recovery problem.
Accordingly, the second estimator we consider in this paper is the 
\textbf{convex least squares (CVX-LS)} approach,

  \begin{equation}\label{model2}
		\begin{array}{ll}
			\text{minimize}   & \quad\lV\mA\lk\Z\rk-\y\rV_2\\
			\text{subject to} & \quad \Z\in\mS_{+}^{n}.\\				 
		\end{array}
	\end{equation}
	Here, $\mA\lk\Z\rk$ denotes the linear operator $\mA\lk\Z\rk:=\left\{\lg\pp_k\pp_k^*,\Z\rg\right\}_{k=1}^{m}$ and $\mS_{+}^{n}$ represents the PSD cone
	in $\C^{n\times n}$.
    Owing to the convexity of the formulation in \eqref{model2}, its global solution can be efficiently and reliably computed via convex programming.
	Denote the solution of \eqref{model2} by $\Z_{\star}$. Since we do not claim that $\Z_{\star}$ has low rank, we suggest estimating $\x$ by extracting the largest rank-1 component; see, e.g., \cite{candes2013phaselift}. 
	In other words, we write $\Z_{\star}$ as
	$$\Z_{\star}=\sum_{i=1}^{n}\lambda_{i}\lk\Z_{\star}\rk\xu_i\xu_i^{*},$$
	where its eigenvalues are in decreasing order and $\left\{\xu_i\right\}_{i=1}^{n}$ are mutually orthogonal, and we set
	 \begin{eqnarray}\label{solution}
	 \z_{\star}=\sqrt{\lambda_{1}\lk\Z_{\star}\rk}\xu_1 
	 \end{eqnarray}
	  as an alternative solution.

We now outline the required sampling and noise assumptions.
Following the setup in \cite{eldar2014phase,chen2015exact,cai2015rop,krahmer2020complex,chen2022error,huang2025low,mcrae2025nonconvex}, we consider sub-Gaussian sampling.

\begin{assumption}[Sampling]\label{sample}
The sampling vectors $\left\{\pp_k\right\}_{k=1}^{m}$ are independent copies of a random vector $\pp\in\C^n$, whose entries $\left\{\varphi_j\right\}_{j=1}^{n}$ are independent copies of a variable $\varphi$ satisfying:
 $\lV\varphi\rV_{\psi_2}=K,\E\lk\varphi\rk=\E\lk\varphi^2\rk=0,\E\lk\lv\varphi\rv^2\rk=1$ and $\E\lk\lv\varphi\rv^4\rk=1+\mu$ with $\mu>0$.
\end{assumption}

As stated before, we take into account two different noise models, namely Poisson model \eqref{poisson} and heavy-tailed model \eqref{heavy}. 
For the latter, we require certain statistical properties to hold.

\begin{assumption}[Noise]\label{noise0}
The two different noise models we consider are:
\begin{itemize}
\item[$\mathrm{(a)}$] Poisson model in (\ref{poisson}), that is, the probability 
\begin{eqnarray}
\mathbb{P}\lk y_k=\ell\rk=\frac{1}{\ell!}e^{-\lv\lg\pp_{k},\x \rg\rv^{2}}\lk\lv\lg\pp_{k},\x \rg\rv^{2}\rk^{\ell},\quad \ell=0,1,2,\cdots;
\end{eqnarray}
\item[$\mathrm{(b)}$] Heavy-tailed model in (\ref{heavy}) involve noise terms $\left\{\xi_k\right\}_{k=1}^{m}\in\R^m$, which are independent copies of a random variable $\xi$ satisfying $\E\lk\xi\mid\pp\rk=0$ (note that $\xi$ is not necessarily independent of $\pp$). 
Moreover, $\xi$ belongs to the space $L_q$ for some $q > 2$, that is, 
$\lV\xi\rV_{L_q}=\lk\E\lk\lv\xi^q\rv\rk\rk^{\frac{1}{q}}<\infty.$
\end{itemize}
\end{assumption}
 
We take a moment to elaborate on our assumptions.
For the sampling assumption, we require $\E\lk\varphi\rk=0$ and $\E\lk\lv\varphi\rv^2\rk=1$, thus $\pp$ is a complex isotropic random vector satisfying $\E\lk\pp\rk=\pmb{0}$ and $\E\lk\pp\pp^*\rk=\pmb{I}_n$.
In addition, we impose the conditions $\E\lk\lv\varphi\rv^4\rk=1+\mu$ with $\mu>0$ and $\E\lk\varphi^{2}\rk = 0$ to avoid certain ambiguities.
If instead $\E\lk\lv\varphi\rv^4\rk=\E\lk\lv\varphi\rv^2\rk=1$ (i.e., $\lv\varphi\rv=1$ almost surely, with the Rademacher variable as a special case),
then the standard basis vectors of $\C^n$ would become indistinguishable.
Similarly, if $\E\lk\lv\varphi^{2}\rv\rk=\E\lk\lv\varphi\rv^2\rk=1$ (i.e., $\varphi=\lambda\tilde{\varphi}$ almost surely for some fixed $\lambda\in\C$ and  $\tilde{\varphi}\in\R$ is a real random variable), then $\x$ would be indistinguishable from its complex conjugate $\overline{\x}$.
Hence, we assume $\E\lk\varphi^{2}\rk=0$ for the sake of simplicity.
For a more detailed discussion on these conditions, see \cite{krahmer2020complex}.
As an example, the complex Gaussian variable $\varphi=\frac{1}{\sqrt{2}}\lk X+iY\rk$, where $X,Y \sim \mathcal{N}(0,1)$ are independent, satisfies the conditions on $\varphi$ in Assumption~\ref{sample}, with its sub-Gaussian norm $K$ being an absolute constant.

Regarding the noise assumption, Poisson noise is a standard case and has been extensively discussed in \cite{chen2017solving,chang2018total,barmherzig2019holographic,diederichs2024wirtinger,romer2024one,dirksen2025spectral,allain2025phasebook}.
For heavy-tailed noise, it appears necessary for the least squares estimator that the moment condition $\lV\xi\rV_{L_q}<\infty$ holds for some $q>2$ (see, e.g., \cite{han2019convergence}), and this requirement is commonly adopted in the literature (see, e.g., \cite{lecue2018regularization}).
One could potentially relax this condition by using alternative robust estimators or by imposing additional restrictions on the noise.
Notably, we assume $\E\lk\xi\mid\pp\rk=0$, which implies that $\xi$ is generally not independent of $\pp$, thereby broadening the class of admissible noise models.
For example, Poisson noise can serve as a special case. 
We can treat the noise in Poisson model \eqref{poisson} as an additive term, denoted by $\xi$, and we rewrite it as:
\begin{eqnarray*}
\xi=\text{Poisson}\lk\lv\lg\pp,\x \rg\rv^{2}\rk-\lv\lg\pp,\x \rg\rv^{2}.
\end{eqnarray*}
It is evident that $\xi$ depends on both the sampling term $\pp$ and the true signal $\x$, yet satisfies $\E\lk\xi\mid\pp\rk=0$; moreover, it is evident that its noise level is governed by both $\pp$ and $\x$.

\section{Main Results}\label{main}

In this paper, we demonstrate that, under appropriate conditions on the sampling vectors and noise,
the estimation errors of NCVX-LS  (\ref{model1}) and CVX-LS (\ref{model2}) attain the minimax optimal rates under both the Poisson model \eqref{poisson} and the heavy-tailed model \eqref{heavy}.
Moreover, we establish  adaptive behavior with respect to the signal energy in both  models.

\subsection{Poisson Model}

We begin with a result for the Poisson model \eqref{poisson} that applies uniformly across the entire range of signal energy. 

\begin{theorem}\label{thm poisson}
Suppose that sampling vectors $\left\{\pp_k\right\}_{k=1}^{m}$ satisfy Assumption \ref{sample}, and that the Poisson model \eqref{poisson} follows the distribution specified in Assumption~\ref{noise0}~(a).
Then there exist some universal constants $L,c,C_1,C_2,C_3 > 0$ dependent only on $K$ and $\mu$ such that when $m\ge L n$, with probability at least $1-\mathcal{O}\lk e^{-c n}\rk$, simultanesouly for all signals $\x\in\C^n$, 
the estimates produced by the NCVX-LS estimator obey
\begin{align}
\label{dist1}
\textbf{dist}\lk\z_\star,\x\rk
\le C_1 \min \bigg\{ 
    & \max\left\{ K, \frac{1}{\lV\x\rV_2} \right\} \cdot \sqrt{\frac{n}{m}}, \notag \\
    & \max\left\{ 1, \sqrt{K\lV\x\rV_2} \right\} \cdot \lk \frac{n}{m} \rk^{1/4} 
\bigg\}.
\end{align}
For the CVX-LS estimator, one has 
\begin{eqnarray}\label{dist2}
\lV\Z_{\star}-\x\x^*\rV_{F}
\le C_2 \max\left\{1, K\lV\x\rV_{2} \right\}\cdot \sqrt{\frac{n}{m}}.
\end{eqnarray}
By finding the largest eigenvector with largest eigenvalue of $\Z_{\star}$, one can also construct an estimate obeying
 \begin{eqnarray}\label{dist3}
		\textbf{dist}\lk\z_{\star},\x\rk\le C_3 \max\left\{K,\frac{1}{\lV\x\rV_{2}} \right\}\cdot\sqrt{\frac{n}{m}}.
		\end{eqnarray}	
\end{theorem}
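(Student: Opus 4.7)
My plan is to treat the NCVX-LS and CVX-LS estimators in parallel through the lifting $\x\mapsto\x\x^{*}$, so that both reduce to comparing a signal term $\lV\mA(\M)\rV_{2}$ against a noise multiplier process $\lg\mA(\M),\xi\rg$, where $\xi_{k}:=y_{k}-\lv\lg\pp_{k},\x\rg\rv^{2}$ is the centered Poisson noise. Optimality of each estimator against the ground truth yields the squared basic inequality
\begin{equation*}
\lV\mA(\M_{\star})\rV_{2}^{2}\le 2\lg\mA(\M_{\star}),\xi\rg,
\end{equation*}
where $\M_{\star}:=\Z_{\star}-\x\x^{*}$ in the convex case and $\M_{\star}:=\z_{\star}\z_{\star}^{*}-\x\x^{*}$ (after the global-sign correction) in the nonconvex case. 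In either case $\M_{\star}$ lies in a structured cone of Hermitian matrices, which will be crucial for the uniform control.

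The first ingredient is the lower isometry property announced in Section \ref{small0}: using the small-ball method and the anticoncentration afforded by $\mu>0$ in Assumption \ref{sample}, I would establish
\begin{equation*}
\frac{1}{\sqrt{m}}\lV\mA(\M)\rV_{2}\gtrsim \lV\M\rV_{F},\qquad \M\in T,
\end{equation*}
uniformly over the appropriate cone $T$ once $m\gtrsim n$. Combined with the basic inequality, the estimation error reduces to bounding $\sup_{\M\in T,\,\lV\M\rV_{F}\le 1}\sum_{k}\xi_{k}\lg\pp_{k}\pp_{k}^{*},\M\rg$, to which the multiplier inequality of Section \ref{ineq} will be applied. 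The bound \eqref{dist3} would then be obtained by combining \eqref{dist2} with the standard perturbation inequality $\textbf{dist}(\z_{\star},\x)\lesssim \lV\Z_{\star}-\x\x^{*}\rV_{F}/\lV\x\rV_{2}$ applied to the top eigenvector of $\Z_{\star}$.

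The decisive step, and what produces the two regimes inside the minimum in \eqref{dist1}, is the dual reading of the Poisson multiplier $\xi_{k}$: conditionally on $\pp_{k}$, it has variance $\lv\lg\pp_{k},\x\rg\rv^{2}$ and a sub-exponential norm of order $\max\{1,\lv\lg\pp_{k},\x\rg\rv^{2}\}$. When $\lV\x\rV_{2}$ is large, $\xi_{k}$ is essentially sub-exponential on the scale $\asymp\lV\x\rV_{2}$; feeding this into the multiplier inequality together with a chaining-type bound over $T$ yields the sub-exponential rate $\max\{K,1/\lV\x\rV_{2}\}\sqrt{n/m}$ in \eqref{dist1} and \eqref{dist3}, as well as the $\max\{1,K\lV\x\rV_{2}\}\sqrt{n/m}$ bound of \eqref{dist2}. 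When $\lV\x\rV_{2}$ is small, Poisson counts are erratic integers whose tails are polynomial rather than exponential on the target scale; I would then treat $\xi_{k}$ as a heavy-tailed multiplier whose $L_{q}$-moments are controlled by $\lV\x\rV_{2}$, plug it into the heavy-tailed branch of the multiplier inequality, and obtain the slower rate $\max\{1,\sqrt{K\lV\x\rV_{2}}\}(n/m)^{1/4}$; taking the minimum of the two gives \eqref{dist1}.

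The main obstacle is controlling the empirical process $\sup_{\M\in T}\sum_{k}\xi_{k}\lg\pp_{k}\pp_{k}^{*},\M\rg$ when the multipliers $\xi_{k}$ depend on the very vectors $\pp_{k}$ parametrizing the chaos, which forecloses naive conditioning arguments. The multiplier inequality is designed precisely to decouple multipliers from the underlying process in an $L_{q}$ sense, but it is delicate in the low-energy regime, where the conditional variance $\lv\lg\pp_{k},\x\rg\rv^{2}$ is itself heavy-tailed relative to $\lV\x\rV_{2}^{2}$, so decoupling and chaining must be performed at the level of the weighted empirical process rather than after taking tail estimates. A secondary subtlety is the shape of the nonconvex cone: $\z_{\star}\z_{\star}^{*}-\x\x^{*}$ is a signed rank-at-most-2 Hermitian matrix whose Frobenius norm scales like $\textbf{dist}(\z_{\star},\x)(\textbf{dist}(\z_{\star},\x)+\lV\x\rV_{2})$, and recovering both endpoints of this quadratic behavior after inverting the lower isometry is what yields the two-term minimum structure in \eqref{dist1}.
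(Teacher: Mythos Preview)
Your overall architecture---basic inequality, small-ball lower isometry for $\lV\mA(\M)\rV_2$, multiplier-process upper bound for $\lg\mA(\M),\xi\rg$, and eigenvector perturbation for \eqref{dist3}---matches the paper. The place where you diverge is the source of the two branches in the minimum of \eqref{dist1}.

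In the paper's proof of Theorem~\ref{thm poisson}, \emph{both} branches are obtained from a \emph{single} noise bound: Proposition~\ref{psi_1 of poisson} shows that the centered Poisson variable is sub-exponential with $\lV\xi\rV_{\psi_1}\lesssim\max\{1,K\lV\x\rV_2\}$, and this is fed once into the sub-exponential multiplier inequality (Corollary~\ref{mul inq for subexponential}) to get $\beta\lesssim_K\max\{1,K\lV\x\rV_2\}\sqrt{mn}$. The minimum in \eqref{dist1} then arises not from two different noise analyses but purely from the two-sided distance inequality of Proposition~\ref{dis1},
\[
\lV\z_\star\z_\star^*-\x\x^*\rV_F\;\ge\;\tfrac12\max\bigl\{\textbf{dist}(\z_\star,\x)\cdot\lV\x\rV_2,\ \textbf{dist}^2(\z_\star,\x)\bigr\},
\]
which yields $\textbf{dist}(\z_\star,\x)\le\min\bigl\{\tfrac{4\beta}{\alpha\lV\x\rV_2},\,2\sqrt{\beta/\alpha}\bigr\}$ as in \eqref{error1}. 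Taking the square root of $\beta/\alpha\asymp\max\{1,K\lV\x\rV_2\}\sqrt{n/m}$ is exactly what produces the $\max\{1,\sqrt{K\lV\x\rV_2}\}(n/m)^{1/4}$ term---you actually identify this mechanism correctly in your last paragraph, but it is inconsistent with your penultimate paragraph.

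Your plan to obtain the $(n/m)^{1/4}$ branch via the heavy-tailed multiplier inequality would not prove Theorem~\ref{thm poisson} as stated: invoking Part~(b) of Theorem~\ref{mul inq} (or Corollary~\ref{mul inq for heavy}) only delivers probability $1-\mathcal{O}(m^{-(q/2-1)}\log^q m)$, which is incompatible with the claimed $1-\mathcal{O}(e^{-cn})$. The heavy-tailed reading of Poisson noise (via its $L_4$ norm, Proposition~\ref{L_q of poisson}) is precisely what the paper uses for Theorem~\ref{low-dose}, where a sharper low-energy constant is bought at the cost of that weaker probability. For Theorem~\ref{thm poisson} you should drop the heavy-tailed step entirely and let the quadratic distance inversion do all the work.

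A secondary point on the CVX-LS side: the admissible set $\{\Z-\x\x^*:\Z\in\mS_+^n\}$ is not approximately low-rank, so a single lower isometry over ``the appropriate cone $T$'' will not suffice. The paper splits it into an approximately-low-rank piece $\mathcal{E}_{\text{cvx,1}}$ (handled with the Frobenius-norm SLBC/NUBC exactly as for $\mathcal{E}_{\text{ncvx}}$) and an almost-PSD piece $\mathcal{E}_{\text{cvx,2}}$ (handled with a nuclear-norm lower isometry, Lemma~\ref{approximate}, and the nuclear-norm NUBC); the two are then combined into \eqref{dist2} via \eqref{error2}.
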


We compare our results with those of Chen and Cand{\'e}s \cite{chen2017solving} and Dirksen et al. \cite{dirksen2025spectral}; see Table~\ref{tab:1} for a brief sketch.
Theorem \ref{thm poisson} establishes that, in the high-energy regime when $\lV\x\rV_2\ge\frac{1}{K}$, at the optimal sampling order $m=\mathcal{O}\lk n\rk$, for a broader class of sub-Gaussian sampling, both the NCVX-LS and CVX-LS estimators achieves at least the following error bound:
\begin{eqnarray}\label{sqrt}
\textbf{dist}\lk\z_{\star},\x\rk\le C\lk K,\mu\rk\sqrt{\frac{n}{m}}.
\end{eqnarray}
This result improves upon the existing upper bounds established in \cite{chen2017solving} and \cite{dirksen2025spectral}.
Specifically, the error bound $\mathcal{O}\lk 1\rk$ in \cite{chen2017solving} does not vanish as the oversampling ratio increases, 
and the error bound $\widetilde{\mO}\left( \lV\x\rV_2\cdot\left(\frac{n}{m}\right)^{1/4} \right)$ (see \eqref{prior of low-dose} in Section~\ref{prior}) in \cite{dirksen2025spectral} roughly grows linearly with $\lV\x\rV_2$ and exhibits a suboptimal convergence rate of $\widetilde{\mO}\lk\lk\frac{n}{m}\rk^{1/4}\rk$.
In contrast, our result \eqref{sqrt} achieves the minimax optimal rate $\mO\lk\sqrt{\frac{n}{m}}\rk$ without dependence on
$\lV\x\rV_2$.
The corresponding minimax lower bound is provided in Theorem~\ref{minimax poisson} below.

For the low-energy regime when $\lV\x\rV_2\le \frac{1}{K}$, Theorem~\ref{thm poisson} establishes that the NCVX-LS estimator achieves the following error bound:
\begin{eqnarray}\label{1/4}
 \textbf{dist}\lk\z_\star,\x\rk\le C_1\min\left\{ \frac{1}{\lV\x\rV_{2}} \cdot \sqrt{\frac{n}{m}},\,\lk\frac{n}{m}\rk^{1/4}\right\}\le C_1 \lk\frac{n}{m}\rk^{1/4}.
		\end{eqnarray}
The result in \cite{chen2017solving} does not apply in this low-energy regime. 
Our result \eqref{1/4} matches the error bound $\widetilde{\mO}\left(\left(\frac{n}{m}\right)^{1/4} \right)$ (see \eqref{prior of low-dose} in Section~\ref{prior}) given in \cite{dirksen2025spectral}, but slightly improves upon it by moving certain logarithmic factors.
For the CVX-LS estimator, Theorem~\ref{thm poisson} establishes an error bound $\mO\lk\sqrt{\frac{n}{m}}\rk$ with respect to the distance $\lV\Z_{\star}-\x\x^*\rV_{F}$, and $\mO\lk\frac{1}{\lV\x\rV_{2}}\sqrt{\frac{n}{m}}\rk$ with respect to the distance $\textbf{dist}\lk\z_{\star},\x\rk$.
The latter is slightly weaker than that for the NCVX-LS estimator in this regime.

Note that the intensity of Poisson noise diminishes as the energy of $\x$ decreases.
However, in the low-energy regime, apart from the result of \cite{chen2017solving}, which does not apply, the error bounds in \cite{dirksen2025spectral} and in our Theorem~\ref{thm poisson} (e.g., \eqref{dist1}, \eqref{dist2}) remain independent of $\lV\x\rV_2$, and therefore do not diminish as $\lVert\x\rVert_2$ decreases.
Hence, in this regime, we expect the error bounds to improve accordingly, scaling with the energy of $\x$.
To capture this behavior more precisely, we present the following theorem, at the cost of a slightly weaker probability guarantee compared to Theorem~\ref{thm poisson}.

\begin{theorem}\label{low-dose}
Suppose that sampling vectors $\left\{\pp_k\right\}_{k=1}^{m}$ satisfy Assumption~\ref{sample}, and that the Poisson model \eqref{poisson} follows the distribution specified in Assumption~\ref{noise0}~(a).
Let $\Gamma:=\left\{\x\in\C^n:\lV\x\rV_2\le\frac{1}{K}\right\}$.
Then there exist some universal constants $L,c,C_1,C_2,C_3 > 0$ dependent only on $K$ and $\mu$ such that when $m\ge L n$, with probability at least 
\begin{eqnarray*}
1-\mathcal{O}\lk \frac{\log^4m}{m}\rk-\mathcal{O}\lk e^{-c n}\rk,
\end{eqnarray*}
simultanesouly for all signals $\x\in\Gamma$, the estimates produced by the NCVX-LS estimator obey
 \begin{eqnarray}
\textbf{dist}\lk\z_\star,\x\rk \le C_1 \min\left\{\sqrt{\frac{K}{\lV\x\rV_2}}\cdot\sqrt{\frac{n}{m}},\,\lk K\lV\x\rV_{2}\rk^{1/4}\cdot\lk\frac{n}{m}\rk^{1/4}\right\}.		
\end{eqnarray}
For the CVX-LS estimator, we can obtain 
\begin{eqnarray}\label{low-dose:F}
\lV\Z_{\star}-\x\x^*\rV_{F}
\le C_2 \sqrt{ K\lV\x\rV_{2} }\cdot \sqrt{\frac{n}{m}}. 
\end{eqnarray}
By finding the largest eigenvector with largest eigenvalue of $\Z_{\star}$, we can construct an estimate obeying
 \begin{eqnarray}\label{low-dose:dist}
		\textbf{dist}\lk\z_{\star},\x\rk\le C_3 \sqrt{\frac{K}{\lV\x\rV_{2}}} \cdot\sqrt{\frac{n}{m}}.
					\end{eqnarray}	
                    \end{theorem}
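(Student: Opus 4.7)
The plan is to leverage the paper's key insight that Poisson noise behaves as a heavy-tailed random variable when the signal energy is small, and then reuse the multiplier-inequality framework (Sections~\ref{ineq}--\ref{proof of main}) developed for the heavy-tailed theorems. Write the Poisson noise as $\xi_k := y_k - \lv\lg\pp_k,\x\rg\rv^2$, which satisfies $\E\lz\xi_k\mid\pp_k\rz=0$. The only nontrivial input is a sharp moment bound: using the Poisson central moment formula $\E\lz\lv y-\lambda\rv^4\mid\pp\rz = 3\lambda^2+\lambda$ with $\lambda=\lv\lg\pp,\x\rg\rv^2$ and integrating against the sub-Gaussian $\pp$, one obtains $\lV\xi\rV_{L_4}^4 \lesssim K^4\lV\x\rV_2^4 + \lV\x\rV_2^2$. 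Because $\lV\x\rV_2 \le 1/K$ on $\Gamma$, the first term is absorbed into $K^2\lV\x\rV_2^2$, so $\lV\xi\rV_{L_4} \lesssim \sqrt{K\lV\x\rV_2}$. This quantitatively realizes the ``heavy-tailed in the low-energy regime'' picture: the effective $L_4$ noise scale is $\sqrt{K\lV\x\rV_2}$ and vanishes with the signal energy.

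I would then carry out the standard basic-inequality expansion of the NCVX-LS minimizer. Optimality at $\z_\star$ gives $\sum_k \Delta_k^2 \le 2\sum_k \xi_k \Delta_k$ with $\Delta_k := \lv\lg\pp_k,\z_\star\rg\rv^2 - \lv\lg\pp_k,\x\rg\rv^2$. The left-hand side is bounded below via the lower isometry / small ball argument of Section~\ref{small0}, which holds with probability at least $1-e^{-cn}$ and yields the usual two-regime lower bound of order $m\lV\x\rV_2^2\,\textbf{dist}^2(\z_\star,\x)$ for small perturbations and $m\,\textbf{dist}^4(\z_\star,\x)$ for large ones. The right-hand side is controlled by the multiplier inequality of Section~\ref{ineq} applied with the $L_4$ envelope $\sqrt{K\lV\x\rV_2}$ established above; Markov at level $1/m$ produces the $\mO(\log^4 m/m)$ probability factor and an upper bound on the multiplier process at the rate dictated by this effective noise scale. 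Matching the linear multiplier bound against each of the two quadratic branches yields precisely the two terms in the $\min$ of~\eqref{dist1}: the $\sqrt{K/\lV\x\rV_2}\cdot\sqrt{n/m}$ branch from the $m\lV\x\rV_2^2\,\textbf{dist}^2$ regime, and the $(K\lV\x\rV_2)^{1/4}(n/m)^{1/4}$ branch from the $m\,\textbf{dist}^4$ regime. The CVX-LS bound~\eqref{low-dose:F} is the lifted analogue, obtained by applying the same lower isometry and multiplier inequality to $\mA$ on the rank-two perturbation cone $\lk\Z-\x\x^*:\Z\in\mS_{+}^n\rk$, and \eqref{low-dose:dist} follows from a rank-one truncation plus a Weyl/Wedin-type perturbation step applied to $\Z_\star$.

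The main obstacle is calibrating the multiplier inequality to the \emph{signal-adaptive} envelope $\sqrt{K\lV\x\rV_2}$ rather than a uniform $L_q$ bound: since $\xi_k$ depends on both $\pp_k$ and $\x$, the multiplier process is not a standard empirical process with independent multipliers, and the $\x$-dependence has to be propagated through the multiplier inequality's expectation bound while still producing a statement uniform over $\x\in\Gamma$. A related subtlety is that switching from sub-exponential to heavy-tailed-moment concentration at $q=4$ is precisely what degrades the probability guarantee from the exponential bound of Theorem~\ref{thm poisson} to the polynomial $\mO(\log^4 m/m)$ here; the threshold $\lV\x\rV_2 \le 1/K$ marks exactly the signal scale below which this trade-off becomes advantageous, since the adaptive noise level $\sqrt{K\lV\x\rV_2}$ then beats the worst-case level $\max\lk1,K\lV\x\rV_2\rk$ implicit in Theorem~\ref{thm poisson}.
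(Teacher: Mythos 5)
Your proposal follows exactly the paper's route: use the fourth central moment of the Poisson distribution to establish $\lV\xi\rV_{L_4}\lesssim\sqrt{K\lV\x\rV_2}$ in the regime $\lV\x\rV_2\le1/K$ (this is Proposition~\ref{L_q of poisson}), feed this signal-adaptive envelope into the $q=4$ case of the heavy-tailed multiplier inequality (Corollary~\ref{mul inq for heavy}) to get $\beta,\widetilde\beta\lesssim_K\sqrt{K\lV\x\rV_2}\,\sqrt{mn}$, keep the small-ball/lower-isometry bounds $\alpha\gtrsim_{K,\mu}m$ and $\widetilde\alpha\ge m/36$ unchanged, and conclude via the decomposition and distance inequalities of Section~\ref{Architecture}; the $\mathcal{O}(\log^4m/m)$ failure probability is precisely the $q=4$ polynomial tail in Lemma~\ref{multiplier}(b), as you correctly identify. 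The only slip is describing the CVX-LS admissible set $\{\Z-\x\x^*:\Z\in\mS_+^n\}$ as a ``rank-two perturbation cone''—in the paper it is the full set $\mathcal{E}_{\text{cvx}}$, handled by the two-part partition of Propositions~\ref{one} and~\ref{pro low-rank} into an approximately low-rank piece and an almost-PSD piece—but this does not change the substance of the argument.
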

\begin{remark}
In contrast to Theorem~\ref{thm poisson}, which exploits the sub-exponential behavior of Poisson noise, Theorem~\ref{low-dose} relies on a different insight: 
in the low-energy regime, the observation $\text{Poisson}\lk\lv\lg\pp,\x \rg\rv^{2}\rk$ is highly likely to take value zero, while nonzero outcomes occur only rarely. 
These nonzero observations induce large relative deviations from the true intensity $\lv\lg\pp,\x \rg\rv^{2}$ and can thus be regarded as heavy-tailed outliers.
This heavy-tailed interpretation naturally leads to a slightly weaker high-probability guarantee in Theorem~\ref{low-dose} compared to Theorem~\ref{thm poisson}.
\end{remark}

Theorem~\ref{low-dose} significantly refines the recovery guarantees in the low-energy regime.
Specifically, the NCVX-LS estimator achieves an error bound 
\begin{eqnarray}
\mathcal{O}\lk \lV\x\rV_{2}^{1/4}\cdot\lk\frac{n}{m}\rk^{1/4}\rk.
\end{eqnarray}
This result refines the explicit dependence on $\lV\x\rV_2$, thereby offering a nontrivial decay in error as the energy of $\x$ decreases. 
Moreover, by Theorem~\ref{minimax poisson} below, this bound is nearly optimal with respect to the oversampling ratio $\frac{m}{n}$.
In contrast, the guarantee in~\cite{dirksen2025spectral} remains fixed at the rate $\widetilde{\mathcal{O}}\left(\left(\tfrac{n}{m}\right)^{1/4}\right)$, regardless of the signal energy.
Besides, the bounds for the CVX-LS estimator also benefits from this adaptive behavior. 
Although \eqref{low-dose:F} and \eqref{low-dose:dist} in Theorem~\ref{low-dose} do not attain the same error rate as the NCVX-LS estimator,
\eqref{low-dose:F} nonetheless scales as $\mathcal{O}\lk\sqrt{\lV\x\rV_{2}} \cdot\sqrt{\frac{n}{m}}\rk$ in Frobenius norm,
exhibiting a decay in error as the energy of $\x$ decreases.
Meanwhile, \eqref{low-dose:dist} provides a bound on $\mathrm{dist}\left(\z_{\star},\x\right)$ with an inverse square-root dependence on $\lV\x\rV_2$, improving upon \eqref{dist3} in Theorem~\ref{thm poisson}.


We further establish fundamental lower bounds on the minimax estimation error for the Poisson model~\eqref{poisson} under complex Gaussian sampling.

\begin{theorem}\label{minimax poisson}
Suppose that $\left\{\pp_k\right\}_{k=1}^m \overset{\text{i.i.d.}}{\sim} \mathcal{C} \mathcal{N}\lk \pmb{0}, \pmb{I}_n \rk$, where $m,n$ are sufficiently large and $m \ge Ln$ for some sufficiently large constant $L > 0$.
With probability approaching 1, the minimax risk under the Poisson model \eqref{poisson} obeys:
    \begin{itemize}
		\item[$\mathrm{(a)}$] If $\frac{m}{n^2}\le\frac{L_1}{\log^3 m}$ for some universal constant 
$L_1>0$, then for any $\x\in\C^n\setminus\{\pmb{0}\}$,
        \begin{equation*}
\inf\limits_{\widehat{\x}}\sup\limits_{\x\in\C^n}\E\left[\textbf{dist}\lk\widehat{\x},\x\rk\right]\ge C_1 \min\left\{\lV\x\rV_2,\frac{\sqrt{\frac{n}{m}}}{1+\frac{\log^{3/4} m}{\sqrt{\lV\x\rV_2}}\cdot\lk \frac{m}{n}\rk^{1/4}}\right\};
\end{equation*}
\item[$\mathrm{(b)}$] If $\frac{m}{n}\le L_2\log m$ for some universal constant 
$L_2>0$,
then for any $\x\in\C^n\setminus\{\pmb{0}\}$ such that $\lV\x\rV_2=o\lk\frac{\sqrt\frac{n}{m}}{\log^{3/2}m}\rk$,
\begin{equation*}
\inf\limits_{\widehat{\x}}\sup\limits_{\x\in\C^n}\E\left[\textbf{dist}\lk\widehat{\x},\x\rk\right]\ge C_2 \sqrt{\lV\x\rV_2}\cdot\frac{\lk\frac{n}{m}\rk^{1/4}}{\log^{5/4}m}.
\end{equation*}
 \end{itemize}
Here, $C_1,C_2> 0$ are universal constants independent of $n$ and $m$, and the infimum is over all estimators $\widehat{\x}$.
\end{theorem}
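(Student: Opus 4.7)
The plan is to derive both parts through the standard information-theoretic reduction to hypothesis testing via KL divergences of Poisson product laws, following the template used in Theorem 1.6 of~\cite{chen2017solving} but refined to produce the energy-adaptive form claimed here. Part (a) is attacked by Fano's method on a local packing; part (b) reduces to a clean Le Cam two-point argument, which is more natural in the deep low-energy regime where the packing construction degenerates.

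For part (a), I would first fix a base signal $\x$ and, given the target separation $\delta$ matching the claimed bound, construct a local packing $\{\x_1,\dots,\x_N\}\subset B(\x,\delta')$ of size $N\asymp e^{cn}$ satisfying $\textbf{dist}(\x_i,\x_j)\ge\delta/2$. This can be built from a volumetric packing of the sphere combined with a standard argument to eliminate near-antipodal pairs under the phase-modulo metric. Next, for fixed design vectors, the KL divergence between Poisson product laws is
\begin{equation*}
D_{\mathrm{KL}}\lk P_{\x_i}\,\|\,P_{\x_j}\rk=\sum_{k=1}^m\lz\lv\lg\pp_k,\x_j\rg\rv^2-\lv\lg\pp_k,\x_i\rg\rv^2+\lv\lg\pp_k,\x_i\rg\rv^2\log\frac{\lv\lg\pp_k,\x_i\rg\rv^2}{\lv\lg\pp_k,\x_j\rg\rv^2}\rz.
\end{equation*}
A second-order Taylor expansion of the logarithm, together with $\E[\pp\pp^*]=\pmb{I}_n$, reduces the expected KL to a Fisher-information-type quantity of order $m\,\|\x_i\x_i^*-\x_j\x_j^*\|_F^2/\|\x\|_2^2$, provided the denominator $\lv\lg\pp_k,\x\rg\rv^2$ is bounded away from zero. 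On the bad event where some $\lv\lg\pp_k,\x\rg\rv^2$ is small, a truncation/conditioning argument replaces the offending terms by their contribution to the Poisson entropy and produces the polylogarithmic corrections $\log^{3/4}m$ visible in the denominator. Plugging the resulting bound, together with $\log N\asymp n$, into Fano's inequality and solving for the critical $\delta$ yields precisely the interpolated rate.

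For part (b), since $\lV\x\rV_2=o(\sqrt{n/m}/\log^{3/2}m)$ is deep in the low-energy regime, each Poisson parameter $\lv\lg\pp_k,\x\rg\rv^2$ is tiny and the observation is overwhelmingly zero. I would therefore apply Le Cam's method with two candidates $\x_0,\x_1$ of comparable small energy and separation $\textbf{dist}(\x_0,\x_1)\asymp\sqrt{\lV\x\rV_2}\,(n/m)^{1/4}/\log^{5/4}m$. In this regime the Poisson distributions are essentially Bernoulli, so the expected KL contracts to roughly $m\,\E\bigl[(\lv\lg\pp,\x_0\rg\rv^2-\lv\lg\pp,\x_1\rg\rv^2)^2/\lv\lg\pp,\x\rg\rv^2\bigr]$ up to logarithmic factors arising from extreme-order statistics of $\lv\lg\pp,\x\rg\rv^2$ and the divergence of $-\log p$ at small $p$; calibrating this to a constant yields the claimed rate.

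The main obstacle I anticipate is controlling the logarithmic term $\lv\lg\pp,\x_i\rg\rv^2\log(\lv\lg\pp,\x_i\rg\rv^2/\lv\lg\pp,\x_j\rg\rv^2)$ uniformly over the randomness of the design when some inner products $\lv\lg\pp,\x\rg\rv^2$ are near zero, which occurs with nontrivial probability even for complex Gaussian $\pp$. This is exactly what forces the polylogarithmic factors into the statement, and distinguishes the present argument from the clean Gaussian-noise lower bound of~\cite{lecue2015minimax}. The required truncation level has to be tuned as a function of $\lV\x\rV_2$, $n$, and $m$ so that the truncated Gaussian mass still supports a packing of cardinality $e^{cn}$ (for part (a)) while keeping the residual entropy negligible, and the final probabilistic ``with probability approaching 1'' statement corresponds to restricting to the high-probability event on which these truncations succeed.
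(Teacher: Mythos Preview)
Your plan for Part~(a) is broadly aligned with the paper's: Fano over an exponential local packing, with the Poisson KL bounded by terms of the form $\sum_k|\pp_k^\top(\z-\x)|^2$ and $\sum_k|\pp_k^\top(\z-\x)|^4/|\pp_k^\top\x|^2$. One methodological difference worth flagging: you describe handling small denominators $|\pp_k^\top\x|^2$ by a ``truncation/conditioning argument'' that discards bad terms. The paper (Lemma~\ref{up for KL}, following the construction in~\cite{chen2017solving}) instead builds the packing \emph{jointly with the design}: it orders the $\pp_k$ by $|\pp_k^\top\x|$, splits indices into a small group (size $\asymp n/\log m$) where $|\pp_k^\top\x|$ may be tiny and a large group where it is not, and then retains only those random hypotheses $\z^{(i)}$ for which $|\pp_k^\top(\z^{(i)}-\x)|$ is correspondingly small on the first group. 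This is what produces the factor $m^2\log^3 m/n^2$ in the ratio bound and hence the $\log^{3/4}m\cdot(m/n)^{1/4}$ term in the denominator. A generic truncation of bad $k$'s would not obviously preserve an $e^{cn}$-sized packing while keeping the KL under control, so you should be explicit that the packing must be adapted to the realized design.

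For Part~(b) there is a genuine gap. A two-point Le Cam argument cannot deliver the stated rate: calibrating $D_{\mathrm{KL}}(P_{\x_0}\|P_{\x_1})=O(1)$ forces, in the dominant quartic term, $m\delta^4\cdot\mathrm{polylog}(m)/\lV\x\rV_2^2\lesssim 1$, which gives at best $\delta\lesssim\lV\x\rV_2^{1/2}/(m^{1/4}\,\mathrm{polylog}\,m)$ and loses the $n^{1/4}$ factor present in the claim. The paper does \emph{not} switch methods in the low-energy regime; it reuses Fano with the same exponential packing, so the KL budget is $\asymp n$ rather than $O(1)$. What changes is the ratio control: under the extra hypothesis $m/n\lesssim\log m$, Part~(b) of Lemma~\ref{up for KL} sharpens the bound on $|\pp_k^\top(\z-\x)|^2/|\pp_k^\top\x|^2$ to $O(\log^5 m)\,\lV\z-\x\rV_2^2/\lV\x\rV_2^2$, leading to the KL condition $m\delta^2+m\log^5 m\,\delta^4/\lV\x\rV_2^2\lesssim n$ and hence $\delta\asymp\lV\x\rV_2^{1/2}(n/m)^{1/4}/\log^{5/4}m$. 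The assumption $\lV\x\rV_2=o(\sqrt{n/m}/\mathrm{polylog}\,m)$ then guarantees $\lV\x\rV_2\ll\delta$, so that $\textbf{dist}(\z^{(i)},\x)\asymp\lV\z^{(i)}-\x\rV_2$ and the phase quotient does not collapse the packing. You should replace the Le Cam step by this Fano argument.
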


Building on the minimax lower bounds established above, we now examine the optimality of our results in Theorem~\ref{thm poisson} and Theorem~\ref{low-dose}:
    \begin{enumerate}
\item \textbf{High-energy regime:}
\textbf{Part}~$(\mathrm{a})$ of Theorem~\ref{minimax poisson} implies that, if 
\begin{equation*}
\lV\x\rV_2=\Omega\lk\log^{3/2}m\cdot\sqrt\frac{m}{n}\rk,
\end{equation*}
then no estimator can attain an estimation error smaller than $\Omega\left(\sqrt{\frac{n}{m}}\right)$.
This lower bound matches the upper bound $\mathcal{O}\lk\sqrt{\frac{n}{m}}\rk$ achieved by both the NCVX-LS and CVX-LS estimators in Theorem~\ref{thm poisson} when $\lV\x\rV_2\ge1/K$, thereby confirming their minimax optimality under the Poisson model~\eqref{poisson} in the high-energy regime.
\textbf{Part}~$(\mathrm{a})$ of Theorem~\ref{minimax poisson} holds under the condition $Ln\le m\le L_1\frac{ n^2}{\log^3 m}$, which broadens the result of \cite{chen2017solving}, where the minimax lower bound was established only for a fixed oversampling ratio $\frac{m}{n}$.
\item \textbf{Intermediate-energy regime:}  If if $c_1 \sqrt{\frac{n}{m}}\le\lV\x\rV_2\le c_2\sqrt{\frac{m}{n}}$ for some positive constants $c_1,c_2$, then \textbf{Part}~$(\mathrm{a})$ of Theorem~\ref{minimax poisson} implies a minimax lower bound at the oder of $\lV\x\rV_2\asymp\sqrt\frac{n}{m}$, which nearly matches the performance of both NCVX-LS and CVX-LS in Theorem~\ref{low-dose} for fixed oversampling ratio $\frac{m}{n}$.
\item \textbf{Low-energy regime:}
In the low-energy regime that $\lV\x\rV_2=o\lk\frac{\sqrt\frac{n}{m}}{\log^{5/2}m}\rk$, \textbf{Part}~$(\mathrm{b})$ of  Theorem~\ref{minimax poisson} provide a minimax lower bound 
\begin{equation*}
\Omega\lk \sqrt{\lV\x\rV_2}\cdot\frac{\lk\frac{n}{m}\rk^{1/4}}{\log^{5/4}m}\rk.
\end{equation*}
This rate depends on both $\lV\x\rV_2$ and the oversampling ratio $\frac{m}{n}$, scaling as $\sqrt{\lV\x\rV_2}$ and $\lk\frac{n}{m}\rk^{1/4}$.
Our NCVX-LS estimator in Theorem~\ref{low-dose} achieves an error bound $\mathcal{O}\lk\lV\x\rV_2^{1/4}\cdot\lk\frac{n}{m}\rk^{1/4}\rk$,
which scales as $\lV\x\rV_2^{1/4}$ and $\lk\frac{n}{m}\rk^{1/4}$.
Thus, this upper bound is nearly optimal with respect to the oversampling ratio $\frac{m}{n}$, up to a $\log^{5/4}m$ factor.
However, there remains a small gap in the dependence on $\lV\x\rV_2$ between the minimax lower bound and our upper bound. 
This gap may be closed by considering alternative estimators; see Section~\ref{discussion} for further comments. 
    \end{enumerate}

\subsection{Heavy-tailed Model}

We state our results for phase retrieval under heavy-tailed model \eqref{heavy} here.

\begin{theorem}\label{thm heavy}
Suppose that sampling vectors $\left\{\pp_k\right\}_{k=1}^{m}$ satisfy Assumption~\ref{sample}, and the heavy-tailed model \eqref{heavy} satisfies the condition in Assumption \ref{noise0} $\mathrm{(b)}$ with $q>2$.
Then there exist some universal constants $L,c,C_1,C_2,C_3 > 0$ dependent only on $K,\mu$ and $q$ such that when provided that $m\ge L n$, with probability at least $$1-\mathcal{O}\lk m^{-\lk\lk q/2\rk-1\rk}\log^{q} m\rk-\mathcal{O}\lk e^{-c n}\rk,$$ simultanesouly for all signals $\x\in\C^n$, the estimates produced by the NCVX-LS estimator obey
 \begin{eqnarray}
		\textbf{dist}\lk\z_\star,\x\rk \le C_1 \min\left\{\frac{\lV\xi\rV_{L_q}}{\lV\x\rV_2}\cdot\sqrt{\frac{n}{m}},\,\sqrt{\lV\xi\rV_{L_q}}\cdot\lk\frac{n}{m}\rk^{1/4}\right\}.
				\end{eqnarray}	
For the CVX-LS estimator, we have 
 \begin{eqnarray}
		\lV\Z_{\star}-\x\x^*\rV_{F}\le C_2\lV\xi\rV_{L_q}\cdot\sqrt{\frac{n}{m}}.
					\end{eqnarray}	
By finding the largest eigenvector with largest eigenvalue of $\Z_{\star}$, one can construct an estimate obeying
 \begin{eqnarray}
		\textbf{dist}\lk\z_{\star},\x\rk\le C_3\frac{\lV\xi\rV_{L_q}}{\lV\x\rV_2}\cdot\sqrt{\frac{n}{m}}.
					\end{eqnarray}	
\end{theorem}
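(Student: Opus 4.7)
The plan is to follow the standard least-squares recipe—optimality inequality, then lower isometry on one side and a multiplier inequality on the other—while exploiting the two different ways the Frobenius norm of the ``lifted error'' $H := z_\star z_\star^* - xx^*$ relates to $\mathrm{dist}(z_\star,x)$. Writing $\Phi(z) = \mathcal{A}(zz^*)$, the NCVX-LS optimality $\lV \Phi(z_\star)-y\rV_2 \le \lV \Phi(x)-y\rV_2$ combined with $y = \Phi(x) + \xi$ yields, after squaring and rearranging, the basic inequality
\begin{equation*}
\lV \mathcal{A}(H)\rV_2^2 \le 2\,\lg \mathcal{A}(H),\,\xi\rg.
\end{equation*}
For CVX-LS the same inequality holds with $H := Z_\star - xx^* \in \mathcal{S}^n$ (differences of PSD matrices), since $\mathcal{A}$ is linear. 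So both estimators are controlled by the same deterministic template.

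Next I would invoke the lower isometry property from Section~\ref{small0}: with probability at least $1-\mathcal{O}(e^{-cn})$ and provided $m \ge L n$,
\begin{equation*}
\lV \mathcal{A}(H)\rV_2 \ge c\sqrt{m}\,\lV H\rV_F
\end{equation*}
holds uniformly for all $H$ in the relevant cone (rank-$\le 2$ Hermitian matrices for NCVX-LS; the full PSD difference cone for CVX-LS, using that $Z_\star - xx^*$ has rank at most $n+1$ but the argument can be localised via a peeling/Mendelson small-ball argument). Together with the basic inequality this gives
\begin{equation*}
\lV H\rV_F \;\le\; \frac{2}{c\,m}\cdot \frac{|\lg \mathcal{A}(H), \xi\rg|}{\lV H\rV_F}.
\end{equation*}

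The core step—and the one I expect to be the main obstacle—is bounding the multiplier process $\sup_{H}\frac{|\lg \mathcal{A}(H), \xi\rg|}{\lV H\rV_F}$ when $\xi$ may depend on $\varphi$ and only has $q$-th moment for $q>2$. Here I would apply the multiplier inequality of Section~\ref{ineq}. The heuristic bound is
\begin{equation*}
\sup_{H \in \mathcal{K},\,\lV H\rV_F=1}|\lg \mathcal{A}(H), \xi\rg| \;\lesssim\; \lV\xi\rV_{L_q}\sqrt{m n},
\end{equation*}
with failure probability $\mathcal{O}(m^{-(q/2-1)}\log^q m)$ arising from the tail term $\max_k|\xi_k|$ via Markov on $\E\max_k|\xi_k|^q \le m\lV\xi\rV_{L_q}^q$ (and the extra $\log^q m$ from truncation levels). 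Combining this with the previous display yields the master bound
\begin{equation*}
\lV H\rV_F \;\lesssim\; \lV\xi\rV_{L_q}\sqrt{\tfrac{n}{m}}.
\end{equation*}

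Finally I would translate this Frobenius bound into the two forms of the theorem using the standard phase-retrieval identity $\lV zz^*-xx^*\rV_F \asymp \mathrm{dist}(z,x)\cdot \max\{\lV z\rV_2,\lV x\rV_2\}$. Writing $d := \mathrm{dist}(z_\star,x)$, the triangle inequality gives $\lV z_\star\rV_2 \ge \lV x\rV_2 - d$ and also $\lV z_\star\rV_2 \ge d - \lV x\rV_2$, so $\lV H\rV_F \gtrsim d\cdot \lV x\rV_2$ (useful when $\lV x\rV_2 \gtrsim d$, producing the high-energy bound $d\lesssim \lV\xi\rV_{L_q}/\lV x\rV_2 \cdot \sqrt{n/m}$) and also $\lV H\rV_F \gtrsim d^2$ when $d \gtrsim \lV x\rV_2$ (producing the low-energy bound $d \lesssim \sqrt{\lV\xi\rV_{L_q}}\cdot (n/m)^{1/4}$). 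Taking the smaller of the two yields the NCVX-LS claim. For CVX-LS, the Frobenius bound on $Z_\star - xx^*$ is already the first assertion; the bound on the extracted top eigenvector then follows by a standard Davis–Kahan / Weyl-type argument, giving $\mathrm{dist}(z_\star,x) \lesssim \lV Z_\star - xx^*\rV_F/\lV x\rV_2$ in the regime where $\lV x\rV_2$ dominates. The main technical burden is verifying the multiplier inequality with the right failure probability under the $\E[\xi\mid\varphi]=0$ assumption; everything else is bookkeeping from the general framework of Section~\ref{Architecture}.
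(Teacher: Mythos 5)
Your NCVX-LS argument is essentially the paper's: the optimality inequality, the small-ball lower isometry over the rank-$\le 2$ cone $\mathcal{E}_{\text{ncvx}}$, the Mendelson-type multiplier inequality under a finite $q$-th moment (which gives exactly the failure probability $\mathcal{O}(m^{-(q/2-1)}\log^q m)$), and then the translation to $\textbf{dist}$ via $\lV zz^*-xx^*\rV_F \gtrsim \max\{\textbf{dist}\cdot\lV\x\rV_2,\textbf{dist}^2\}$ all match Proposition~\ref{dis1}, Lemma~\ref{small ball1}, and Corollary~\ref{mul inq for heavy} in the paper. The final reduction of the CVX-LS Frobenius bound to a $\textbf{dist}$ bound by Davis--Kahan (Proposition~\ref{dis2}) is also correct.

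There is, however, a genuine gap in your CVX-LS step. You write that the lower isometry $\lV\mA(H)\rV_2 \ge c\sqrt{m}\lV H\rV_F$ ``holds uniformly for all $H$ in \dots the full PSD difference cone \dots but the argument can be localised via a peeling/Mendelson small-ball argument.'' This cannot work as stated: $\mathcal{E}_{\text{cvx}}=\{Z-\x\x^*:\ Z\succeq 0\}$ is not a low-complexity cone — a generic $Z_\star$ may have full rank, so $\mathcal{E}_{\text{cvx}}\cap\mathbb{S}_F$ has Gaussian width of order $n$, and the small-ball method (or any uniform lower bound of this type with respect to $\lV\cdot\rV_F$) would require $m\gtrsim n^2$, not $m\gtrsim n$. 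Peeling does not rescue this because the complexity obstruction is intrinsic to the set, not to the scale. The paper's resolution — which is the actual technical content of the CVX-LS proof — is Proposition~\ref{one} plus the partition $\mathcal{E}_{\text{cvx}} = \mathcal{E}_{\text{cvx,1}} \cup \mathcal{E}_{\text{cvx,2}}$: on $\mathcal{E}_{\text{cvx,1}}$ the matrix is effectively rank-one (nuclear $\le 3\cdot$Frobenius, Proposition~\ref{pro low-rank}), so the Frobenius small-ball bound applies with $m\gtrsim n$; on $\mathcal{E}_{\text{cvx,2}}$ the matrix is almost PSD, and one instead proves a lower isometry in the \emph{nuclear} norm (Lemma~\ref{approximate}, via the $\ell_1$-to-nuclear argument using the ordinary Johnson--Lindenstrauss type bound on $\sum_k|\langle \pp_k,\xu\rangle|^2$), paired with the multiplier bound $|\langle\sum_k\xi_k\pp_k\pp_k^*,\M\rangle|\lesssim \lV\xi\rV_{L_q}\sqrt{mn}\lV\M\rV_*$. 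Without this two-norm decomposition, your chain of inequalities does not close at the claimed sample complexity for the convex estimator.
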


We highlight the distinctions and improvements of Theorem~\ref{thm heavy} over prior work; see Table~\ref{tab:2} for a summary.
Specifically, Theorem~\ref{thm heavy} shows that for all signal $\x\in\C^n$ and i.i.d. mean-zero heavy-tailed noise $\xi$,
which may depend on the sampling term and satisfies
a finite $q$-th moment for some $q > 2$, both the NCVX-LS and CVX-LS estimators attain the error bound \begin{eqnarray*}
\mathcal{O}\lk\frac{\lV\xi\rV_{L_q}}{\lV\x\rV_2}\cdot\sqrt{\frac{n}{m}}\rk.
\end{eqnarray*}
We will later show in Theorem~\ref{minimax heavy} that this rate is nearly minimax optimal in the high-energy regime (i.e., when $\lV\x\rV_2$ exceeds a certain threshold).
Moreover, the NCVX-LS estimator achieves the error bound 
\begin{eqnarray*}
\mathcal{O}\lk\sqrt{\lV\xi\rV_{L_q}}\cdot\lk\frac{n}{m}\rk^{1/4}\rk,
\end{eqnarray*}
which is also nearly minimax optimal, as discussed after Theorem~\ref{minimax heavy}.

Our results improve upon the previous error bound (see~\eqref{prior of heavy} in Section~\ref{prior}) in~\cite{chen2022error} by eliminating the dependence on $q$ in the oversampling ratio $\frac{m}{n}$ and by providing uniform guarantees for all signals $\x\in\C^n$, thereby resolving the open question posed therein of whether faster convergence rates than~\eqref{prior of heavy} and uniform recovery under heavy-tailed noise can be achieved.
Our analysis also removes two restrictive assumptions imposed in~\cite{chen2022error}, namely, the symmetry of the noise and its independence from the sampling vectors.
This substantially broadens the applicability of our results to more realistic and potentially dependent noise models.
Our results answer the question posed in \cite{chen2022error} affirmatively for the regime $q > 2$, whereas \cite{chen2022error} considered the broader regime $q > 1$.
For the low-moment regime $1 \le q \le 2$, or in the absence of moment assumptions, stronger structural conditions on the noise (such as the symmetry assumption in~\cite{chen2022error} or specific distributional assumptions in~\cite{shen2025computationally}) and more robust estimation techniques (e.g., the Huber estimator~\cite{sun2020adaptive,yu2024low,shen2025computationally}) may be required.
A comprehensive study of this low-moment setting is left for future work.

We conclude this section with the following theorem, which establishes fundamental minimax lower bounds for the estimation error under Gaussian noise. 
This theorem provides a benchmark for evaluating the stability of estimators in the heavy-tailed model~\eqref{heavy}.
The result in \textbf{Part}~$(\mathrm{a})$  aligns with that of Lecué and Mendelson~\cite{lecue2015minimax}, whereas \textbf{Part}~$(\mathrm{b})$ appears to be novel.

\begin{theorem}\label{minimax heavy}
Consider the noise model $y_{k}=\lv\lg\pp_{k},\x \rg\rv^{2}+\xi_k,\,k\in[m]$,
where $\left\{\pp_k\right\}_{k=1}^m \overset{\text{i.i.d.}}{\sim} \mathcal{C} \mathcal{N}\lk \pmb{0}, \pmb{I}_n \rk$ and $\left\{\xi_k\right\}_{k=1}^m \overset{\text{i.i.d.}}{\sim}\mathcal{N}\lk 0, \sigma^2 \rk$ are independent of $\left\{\pp_k\right\}_{k=1}^m$.
Suppose that $m,n$ are sufficiently large and $m \ge Ln$ for some sufficiently large constant $L > 0$.
With probability approaching 1,
the minimax risk obeys:
    \begin{itemize}
		\item[$\mathrm{(a)}$] For any $\x\in\C^n\setminus\{\pmb{0}\}$,
        \begin{equation*}
\inf\limits_{\widehat{\x}}\sup\limits_{\x\in\C^n}\E\left[\textbf{dist}\lk\widehat{\x},\x\rk\right]\ge C_1 \min\left\{\lV\x\rV_2,\frac{\sqrt{\frac{n}{m}}}{\lV\x\rV_2\sqrt{\log m}/\sigma+\lk\frac{\log m}{\sigma^2}\rk^{1/4}\cdot\lk \frac{n}{m}\rk^{1/4}}\right\};
\end{equation*}
\item[$\mathrm{(b)}$] For any $\x\in\C^n\setminus\{\pmb{0}\}$ such that $\lV\x\rV_2=o\lk \sqrt{\sigma}\cdot\frac{\lk\frac{n}{m}\rk^{1/4}}{\log^{1/4}m}\rk$,
\begin{equation*}
\inf\limits_{\widehat{\x}}\sup\limits_{\x\in\C^n}\E\left[\textbf{dist}\lk\widehat{\x},\x\rk\right]\ge C_2 \sqrt{\sigma}\cdot\frac{\lk\frac{n}{m}\rk^{1/4}}{\log^{1/4}m}.
\end{equation*}
 \end{itemize}
Here, $C_1,C_2> 0$ are universal constants independent of $n$ and $m$, and the infimum is over all estimators $\widehat{\x}$.
\end{theorem}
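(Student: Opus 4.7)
The plan is to prove both parts by Fano's inequality, conditioning on the sampling vectors $\{\pp_k\}_{k=1}^m$. Under the given Gaussian noise model, conditional on $\{\pp_k\}$ the data $\y$ forms a Gaussian location family with mean $|\langle\pp_k,\x\rangle|^2$, so for any two candidate signals $\x_0,\x_1\in\C^n$ the conditional KL divergence is
\[
D_{\mathrm{KL}}(P_{\x_0}\,\|\,P_{\x_1})\;=\;\frac{1}{2\sigma^2}\sum_{k=1}^{m}\bigl(|\langle\pp_k,\x_0\rangle|^2-|\langle\pp_k,\x_1\rangle|^2\bigr)^2.
\]
The entire argument reduces to controlling this sum of quartic forms in complex Gaussian vectors uniformly over a suitably chosen packing, then converting the KL bound into a distance bound via Fano.

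For Part (a), I would fix $\x\neq\pmb{0}$ and construct a local packing $\x_j=\x+\delta\h_j$, $j=1,\ldots,M$, where $\{\h_j\}$ is produced by a Gilbert--Varshamov-type construction on the unit sphere of $\x^{\perp}\subset\C^n$, ensuring $|\langle\h_i,\h_j\rangle|\le 1-c$ for $i\neq j$ and $M\ge e^{c_0 n}$. Since $\h_j\perp\x$, a short computation shows $\textbf{dist}(\x_i,\x_j)\asymp\delta$. Expanding
\[
|\langle\pp_k,\x_i\rangle|^2-|\langle\pp_k,\x_j\rangle|^2=2\delta\,\mathrm{Re}\bigl(\langle\pp_k,\x\rangle\overline{\langle\pp_k,\h_i-\h_j\rangle}\bigr)+\delta^2\bigl(|\langle\pp_k,\h_i\rangle|^2-|\langle\pp_k,\h_j\rangle|^2\bigr),
\]
squaring, summing over $k$, and applying Hanson--Wright-type concentration with a union bound over $\binom{M}{2}\le e^{2c_0 n}$ pairs of hypotheses, one sees that with probability approaching $1$ the conditional KL is uniformly bounded by $C(m\delta^2\|\x\|_2^2/\sigma^2+m\delta^4/\sigma^2)\log m$. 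Setting this $\lesssim\log M\asymp n$ and solving for $\delta$ produces exactly the denominator appearing in the stated bound; Fano then delivers the claimed lower bound. The outer $\min$ with $\|\x\|_2$ is the trivial safeguard reflecting the fact that the estimator $\widehat{\x}\equiv\pmb{0}$ already achieves error $\|\x\|_2$.

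For Part (b), I would take $\x_0=\pmb{0}$ and $\x_j=\delta\h_j$ with $\{\h_j\}$ a $1/4$-separated packing of $\mathbb{S}^{n-1}$ of cardinality $M\ge e^{c_0 n}$. The conditional KL now simplifies to $(2\sigma^2)^{-1}\sum_{k=1}^m\delta^4|\langle\pp_k,\h_j\rangle|^4$, which by Hanson--Wright applied to the truncated quartic form and a uniform union bound over the net is at most $Cm\delta^4\log m/\sigma^2$ with high probability. Fano then forces $\delta^4\lesssim \sigma^2 n/(m\log m)$, i.e., $\delta\gtrsim\sqrt{\sigma}(n/m)^{1/4}/\log^{1/4}m$, as claimed. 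The standing hypothesis $\|\x\|_2=o(\sqrt{\sigma}(n/m)^{1/4}/\log^{1/4}m)$ is precisely what makes the true signal $\x$ indistinguishable from elements of this packing of the origin, so that the local minimax lower bound extends to cover $\x$ itself.

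The main technical obstacle is obtaining the uniform concentration of the KL divergence over the exponentially many pairs in the packing, since the quartic forms $|\langle\pp_k,\h\rangle|^4$ are only sub-exponential and their sums have heavier-than-Gaussian tails. I expect to handle this by truncating $|\langle\pp_k,\h\rangle|^2$ at threshold $C\log m$ (the truncation event has probability $m^{-\Omega(1)}$, which a union bound absorbs) and then applying Hanson--Wright to the truncated quartic sum. The truncation threshold is precisely what produces the $\sqrt{\log m}$ and $\log^{1/4}m$ degradations visible in Parts (a) and (b), respectively. Converting the multi-hypothesis Fano bound into the stated local-minimax-at-$\x$ formulation is then routine.
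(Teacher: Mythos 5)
Your high-level blueprint — Fano via a local packing, conditional on the design, with the Gaussian-noise conditional KL divergence written as $\frac{1}{2\sigma^2}\sum_k\bigl(|\langle\pp_k,\x_0\rangle|^2-|\langle\pp_k,\x_1\rangle|^2\bigr)^2$ — is the same as the paper's. The expansion of the quartic difference and the scaling analysis in $\delta$ are also essentially as in the paper (Lemma~\ref{KL2} and the rescaling step in Section~\ref{minimax}). The genuine gap is in the uniform concentration step, and it is not a cosmetic one.

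You propose to control the quartic sums uniformly over the packing by ``Hanson--Wright with a union bound over $\binom{M}{2}\le e^{2c_0 n}$ pairs'' in Part~(a), and by ``truncating $|\pp_k^\top\h|^2$ at threshold $C\log m$ \ldots which a union bound absorbs'' in Part~(b). Neither of these survives the union bound over exponentially many hypotheses. Hanson--Wright applies to quadratic forms, not quartic forms, so you need a fourth-order concentration inequality, and the relevant tail behaves like $\exp\lk-c\,t^{1/2}\rk$; forcing a union bound over $e^{\Omega(n)}$ hypotheses then requires $t\gtrsim n^2$, which gives $\sum_k|\langle\pp_k,\h_j\rangle|^4\lesssim m+n^2$ rather than $m\log m$. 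The truncation version fares no better: for a single $\h$ and $k$, $\mathbb{P}\{|\pp_k^\top\h|^2>C\log m\}\lesssim m^{-cC}$, but a union bound over $m\cdot e^{c_0 n}$ index--hypothesis pairs needs $cC>1+c_0 n/\log m$, so with $m\asymp n$ the truncation threshold would have to grow like $n/\log n$, not stay at $C\log m$; that wipes out the $\log^{1/4}m$ and $\sqrt{\log m}$ factors you are trying to produce and in fact recovers only the weaker $n$ factor. In short, any route that demands $\max_k|\pp_k^\top\h_j|^2\lesssim\log m$ \emph{simultaneously for all} $j$ in an exponential packing is doomed.

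The paper sidesteps this with a pruning argument rather than a union bound (see Lemma~\ref{up for KL}, Part~(c), and the ``second group'' counting in Appendix~\ref{pf_KL}). Roughly: one samples $\exp(n/20)$ random perturbations of $\x$, shows that the probability a single hypothesis violates the bound $|\pp_k^\top(\z-\x)|^2\le 16\log m\,\|\z-\x\|_2^2$ for some $k$ is $O(1/m)$, hence the \emph{expected} number of bad hypotheses is a vanishing fraction, and then applies Markov's inequality to conclude that with probability $1-O(1/\log m)$ one can discard the bad hypotheses and still retain $\exp(n/200)$ good ones. This is why the theorem only asserts ``with probability approaching $1$'' at a polylogarithmic rate rather than exponentially, and it is the step that buys the $\log m$ factor. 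Your sketch correctly identifies where the difficulty lives, but the proposed fix (uniform truncation plus a union bound) cannot achieve the stated rates; you would need to replace it with the expectation-plus-Markov pruning argument, or an equivalent device, to close the proof.
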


We next examine the minimax optimality of our results in Theorem~\ref{thm heavy}. 
   \begin{enumerate}
\item \textbf{High-energy regime:}
\textbf{Part}~$(\mathrm{a})$ of Theorem~\ref{minimax heavy} states that, if 
\begin{equation*}
\lV\x\rV_2=\Omega\lk\sqrt{\sigma}\cdot\log^{5/4}m\lk\frac{n}{m}\rk^{1/4}\rk,
\end{equation*}
then no estimator can attain an  error rate smaller than 
$\Omega\left(\frac{\sigma}{\lV\x\rV_2}\cdot\sqrt{\frac{n}{m\log m}}\right).$
This lower bound coincides, up to a $\sqrt{\log m}$ factor, with the upper bound $\mathcal{O}\lk\frac{\lV\xi\rV_{L_q}}{\lV\x\rV_2}\cdot\sqrt{\frac{n}{m}}\rk$, attained by both the NCVX-LS and CVX-LS estimators in Theorem~\ref{thm heavy}, thereby establishing their minimax optimality under the heavy-tailed model~\eqref{heavy} in the high-energy regime.
\item \textbf{Intermediate-energy regime:}  If $\lV\x\rV_2\asymp\sqrt{\sigma}\cdot\lk\frac{n}{m}\rk^{1/4}$,
 then \textbf{Part}~$(\mathrm{a})$ of Theorem~\ref{minimax heavy} yields a minimax lower bound of order $\lV\x\rV_2\asymp\sqrt{\sigma}\cdot\lk\frac{n}{m}\rk^{1/4}$, up to logarithmic factors.
This rate coincides with the performance achieved by both the NCVX-LS and CVX-LS estimators in Theorem~\ref{thm heavy}.
\item \textbf{Low-energy regime:}
If $\lV\x\rV_2=o\lk \sqrt{\sigma}\cdot\frac{\lk\frac{n}{m}\rk^{1/4}}{\log^{1/4}m}\rk$, \textbf{Part}~$(\mathrm{b})$ of Theorem~\ref{minimax heavy} establishes a minimax lower bound of
\begin{equation*}
\Omega\lk  \sqrt{\sigma}\cdot\frac{\lk\frac{n}{m}\rk^{1/4}}{\log^{1/4}m}\rk,
\end{equation*}
which matches, up to a $\log^{1/4}m$ factor, the upper bound achieved by our NCVX-LS estimator in Theorem~\ref{thm heavy},
thereby establishing its minimax optimality in the low-energy regime.
\end{enumerate}

\section{Towards An Architecture}\label{Architecture}

To unify the treatment of Poisson model \eqref{poisson} and heavy-tailed model \eqref{heavy}, we express the Poisson observations as follows:
\begin{eqnarray*}
			y_{k}=\lv\lg\pp_{k},\x \rg\rv^{2}+\xi_k ,\quad k=1,\cdots,m,
				\end{eqnarray*}
	where $\xi_k:=\text{Poisson}\lk\lv\lg\pp_{k},\x \rg\rv^{2}\rk-\lv\lg\pp_{k},\x \rg\rv^{2}$.
	Note that in this case, the noise term $\left\{\xi_k\right\}_{k=1}^{m}$ depends on both the sampling vectors $\left\{\pp_k\right\}_{k=1}^{m}$ and the ground truth $\x$.
     
In order to handle the NCVX-LS estimator \eqref{model1}, we first perform a natural decomposition on $\ell_2$-loss as in \cite{mendelson2015learning,lecue2018regularization,chen2022error},
which allows us to obtain the empirical form
 \begin{eqnarray*}
 \begin{aligned}
		\mathcal{P}_{m}\lk\z\rk:&=\lV\ppp\lk\z\rk-\y\rV_{2}^{2}-\lV\ppp\lk\x\rk-\y\rV_{2}^{2}\\
	        &=\sum_{k=1}^{m}\lv\lg\pp_k\pp_k^*,\z\z^*-\x\x^{*}\rg\rv^2-2\sum_{k=1}^{m}\xi_k\lg\pp_k\pp_k^*,\z\z^*-\x\x^{*}\rg.
		\end{aligned}
				\end{eqnarray*}
				
Hence, one may bound $\mathcal{P}_{m}\lk\z\rk$ from below by showing that with high probability for some specific admissible set $\mathcal{E}\subset\C^{n\times n}$,
\begin{itemize}
\item the \textbf{\textit{Sampling Lower Bound Condition}} (\textit{SLBC}) with respect to the Frobenius norm ($\lV\,\cdot\,\rV_{F}$) holds, that is, there exists a positive constant $\alpha$ such that
 \begin{eqnarray}\label{non}
		\sum_{k=1}^{m}\lv\lg\pp_k\pp_k^*,\M\rg\rv^2\ge \alpha \lV\M\rV^{2}_{F}, \quad\forall \ \M\in\mathcal{E},
				\end{eqnarray}
\item the \textbf{\textit{Noise Upper Bound Condition}} (\textit{NUBC}) with respect to the Frobenius norm ($\lV\,\cdot\,\rV_{F}$) holds, that is, there exists a positive constant $\beta$ such that
 \begin{eqnarray}\label{noise}
		\lv \sum_{k=1}^{m}\xi_k\lg\pp_k\pp_k^*,\M\rg \rv\le \beta \lV\M\rV_{F}, \quad\forall \ \M\in\mathcal{E}.
				\end{eqnarray}
\end{itemize}

By the optimality of $\z_\star$, we have $\mathcal{P}_{m}\lk\z_\star\rk\le0$. 
Therefore, if we define the admissible set $\mathcal{E}$ as
\begin{eqnarray}\label{E1}
		\mathcal{E}_{\text{ncvx}}:=\left\{ \z\z^*-\x\x^*:\z,\x\in\C^{n}\right\}
	\end{eqnarray}
and if the sampling vectors $\{\pp_k\}_{k=1}^{m}$ satisfy both \textit{SLBC} \eqref{non} and \textit{NUBC} \eqref{noise} with respect to $\lV\,\cdot\,\rV_{F}$, then, conditioned on that event, the estimation error for the NCVX-LS estimator  \eqref{model1}  over all $\x\in\C^n$ is bounded by
 \begin{eqnarray}\label{noise2}
		\lV\z_\star\z_\star^{*}-\x\x^{*}\rV_{F}\le\frac{2\beta}{\alpha}.
				\end{eqnarray}
To derive a $\textbf{dist}(\z_{\star}, \x)$-type estimation bound defined in \eqref{dist}, we present the following distance inequality.
	 \begin{proposition}\label{dis1}
The distance between $\textbf{dist}\lk\z_{\star},\x\rk$ and $\lV\z_\star\z_\star^{*}-\x\x^{*}\rV_{F}$ satisfies that
\begin{equation*}\label{f1}
\lV\z_\star\z_\star^{*}-\x\x^{*}\rV_{F}\ge\frac{1}{2}\max\left\{\textbf{dist}\lk\z_{\star},\x\rk\cdot \lV\x\rV_2,\textbf{dist}^2\lk\z_{\star},\x\rk\right\}.
 \end{equation*}
\end{proposition}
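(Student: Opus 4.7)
My plan is to reduce the problem to a single aligned residual. Let $\varphi_\star \in [0, 2\pi)$ achieve the minimum in $\textbf{dist}(\z_\star, \x) = \min_{\varphi}\lV e^{i\varphi}\z_\star - \x\rV_2$, set $\tilde{\z} := e^{i\varphi_\star}\z_\star$, and define $\h := \tilde{\z} - \x$, so that $\lV \h\rV_2 = \textbf{dist}(\z_\star, \x)$. Since $\z_\star\z_\star^* = \tilde{\z}\tilde{\z}^*$, expanding gives the fundamental identity
\[
\z_\star \z_\star^* - \x\x^* \;=\; \x\h^* + \h\x^* + \h\h^*.
\]
The key structural input from phase optimality is that $\x^*\tilde{\z} = |\lg \z_\star, \x\rg|$ must be real and nonnegative, which forces $\alpha := \x^*\h$ to be \emph{real} and to satisfy $\alpha \geq -\lV \x\rV_2^2$. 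This sign constraint is indispensable in the second regime below.

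Next I would compute $\lV M\rV_F^2$ for $M := \z_\star\z_\star^* - \x\x^*$ by a direct expansion of $\trace(M^2)$, using cyclicity of the trace together with the fact that $\alpha$ is real. Collecting the nine resulting terms into the three scalars $\alpha$, $\beta := \lV\h\rV_2^2 = \textbf{dist}^2(\z_\star, \x)$, and $\gamma := \lV \x\rV_2^2$ yields the clean identity
\[
\lV M\rV_F^2 \;=\; 2\alpha^2 + 2\beta\gamma + 4\alpha\beta + \beta^2 \;=\; 2(\alpha + \beta)^2 + \beta(2\gamma - \beta).
\]

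From here the argument splits into two regimes. If $\textbf{dist}(\z_\star, \x) \leq \lV \x\rV_2$ (i.e.\ $\beta \leq \gamma$), then the second term alone gives $\lV M\rV_F^2 \geq \beta(2\gamma - \beta) \geq \beta\gamma$, hence $\lV M\rV_F \geq \textbf{dist}(\z_\star,\x) \cdot \lV \x\rV_2$; this is even stronger than the claimed bound because $\max\{\textbf{dist}\cdot\lV \x\rV_2, \textbf{dist}^2\} = \textbf{dist}\cdot\lV \x\rV_2$ in this regime. The harder case is $\textbf{dist}(\z_\star, \x) > \lV \x\rV_2$ (i.e.\ $\beta > \gamma$), where $\beta(2\gamma - \beta)$ is negative and must be absorbed into $2(\alpha + \beta)^2$. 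Here the phase-optimality bound $\alpha \geq -\gamma$ yields $\alpha + \beta \geq \beta - \gamma > 0$, so
\[
\lV M\rV_F^2 \;\geq\; 2(\beta - \gamma)^2 + \beta(2\gamma - \beta) \;=\; (\beta - \gamma)^2 + \gamma^2.
\]
A one-line discriminant check on $3\beta^2/4 - 2\beta\gamma + 2\gamma^2$, viewed as a quadratic in $\beta$ with discriminant $-2\gamma^2 \leq 0$, shows this last expression dominates $\beta^2/4$. Therefore $\lV M\rV_F \geq \textbf{dist}^2(\z_\star,\x)/2$, which matches the claim since the maximum equals $\textbf{dist}^2$ in this regime.

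The main obstacle is precisely this second regime, where $\z_\star$ is much larger than $\x$: without the sign information $\alpha \geq -\lV \x\rV_2^2$ supplied by the phase minimization, the negative contribution $\beta(2\gamma - \beta)$ is uncontrolled and one cannot recover a bound of order $\textbf{dist}^2$ from $2(\alpha + \beta)^2$ alone. Once this constraint is in place, every remaining step is elementary algebra.
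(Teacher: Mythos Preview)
Your proof is correct, and it takes a genuinely different route from the paper's argument. The paper never introduces the residual $\h$; instead it uses the closed-form identity $\lV\z_\star\z_\star^{*}-\x\x^{*}\rV_F^2 = \lV\z_\star\rV_2^4 + \lV\x\rV_2^4 - 2|\langle\z_\star,\x\rangle|^2$, rewrites this as a difference of squares $(\sqrt{\lV\z_\star\rV_2^4+\lV\x\rV_2^4}-\sqrt{2}\,c)(\sqrt{\lV\z_\star\rV_2^4+\lV\x\rV_2^4}+\sqrt{2}\,c)$ with $c=|\langle\z_\star,\x\rangle|$, and then applies two elementary inequalities in sequence (essentially $a^2+b^2\ge\tfrac12(a+b)^2$ and the triangle inequality) to reach the bound without any case distinction. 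Your approach, by contrast, first derives the exact identity $\lV M\rV_F^2 = 2(\alpha+\beta)^2 + \beta(2\gamma-\beta)$ and then splits according to whether $\beta\le\gamma$. The paper's route is shorter and uniform; yours is more explicit about \emph{where} the phase-optimality input $\alpha\ge -\gamma$ is actually needed (only in the far regime $\beta>\gamma$), and in the near regime you even obtain the bound without the factor $\tfrac12$. Both arguments are elementary and of comparable difficulty.
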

\begin{proof}
See Appendix \ref{app dis1}.
\end{proof}
\noindent Combining \eqref{noise2} with Proposition \ref{dis1}, we obtain the following error bound for the NCVX-LS estimator \eqref{model1}:
                \begin{eqnarray}\label{error1}
		\textbf{dist}\lk\z_\star,\x\rk \le\min\left\{\frac{1}{\lV\x\rV_{2}}\cdot\frac{4\beta}{\alpha},2\sqrt{\frac{\beta}{\alpha}}\right\}.
				\end{eqnarray}
				
Using a similar approach, we handle the CVX-LS estimator \eqref{model2}.
By natural decomposition and for all $\Z\in\mS_{+}^{n}$, we have
\begin{eqnarray*}
 \begin{aligned}
		\mathcal{P}_{m}\lk\Z\rk:&=\lV\mA\lk\Z\rk-\y\rV_{2}^{2}-\lV\mA\lk\x\x^{*}\rk-\y\rV_{2}^{2}\\
		  &=\sum_{k=1}^{m}\lv\lg\pp_k\pp_k^*,\Z-\x\x^{*}\rg\rv^2-2\sum_{k=1}^{m}\xi_k\lg\pp_k\pp_k^*,\Z-\x\x^{*}\rg.
		\end{aligned}
				\end{eqnarray*}
In this case and to establish a uniform recovery result over all $\x\in\C^n$, we define the admissible set as
\begin{eqnarray}\label{E2}
		\mathcal{E}_{\text{cvx}}:=\left\{\Z-\x\x^*:\Z\in\mS_{+}^{n},\x\in\C^n\right\}.
	\end{eqnarray}
	Unlike the admissible set $\mathcal{E}_{\text{ncvx}}$, which is confined to a low-rank structure (the elements in $\mathcal{E}_{\text{ncvx}}$ have rank at most 2), $\mathcal{E}_{\text{cvx}}$ spans the entire PSD cone. 
	As a result, its geometric complexity is nearly as large as that of the entire ambient space.	
	To address this, we adopt the strategy outlined in \cite{krahmer2020complex}, which partitions the admissible set $\mathcal{E}_{\text{cvx}}$ into two components.
    This strategy can be viewed as a variation of the rank null space properties (rank NSP) \cite{recht2011null,kabanava2016stable}.
	 In particular, the following proposition states that any matrix in $\mathcal{E}_{\text{cvx}}$ possesses at most one negative eigenvalue. 
     
\begin{proposition}[\cite{krahmer2020complex}]\label{one}
Suppose that $\M\in \mathcal{E}_{\text{cvx}}$. 
Then $\M$ has at most one strictly negative eigenvalue.
\end{proposition}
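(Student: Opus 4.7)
The plan is to argue by contradiction using a standard dimension-counting argument on the negative eigenspace of $\M$. Write $\M = \Z - \x\x^*$ with $\Z \in \mS^n_+$ and $\x \in \C^n$. Suppose for contradiction that $\M$ has at least two strictly negative eigenvalues $\lambda_1,\lambda_2 < 0$. By the spectral theorem (and the min-max characterization of eigenvalues), there then exists a $2$-dimensional subspace $V \subset \C^n$ spanned by the corresponding eigenvectors on which $\M$ is strictly negative definite; that is, $\v^* \M \v < 0$ for every nonzero $\v \in V$.

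Next, I would consider the linear functional $\ell : V \to \C$ defined by $\ell(\v) = \langle \v, \x \rangle = \v^* \x$. Since $\dim V = 2$ and $\mathrm{rank}(\ell) \le 1$, the kernel $\ker(\ell) \subset V$ has (complex) dimension at least one. Pick any nonzero $\v_0 \in \ker(\ell)$, so that $\v_0^* \x = 0$ and hence $|\v_0^* \x|^2 = 0$. Then
\begin{equation*}
\v_0^* \M \v_0 \;=\; \v_0^* \Z \v_0 - |\v_0^* \x|^2 \;=\; \v_0^* \Z \v_0 \;\ge\; 0,
\end{equation*}
where the last inequality uses that $\Z \in \mS^n_+$. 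This directly contradicts the strict negative definiteness of $\M$ on $V$, completing the proof.

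There is no serious obstacle here: the argument is purely linear-algebraic and hinges only on the rank-one nature of the subtracted piece $\x\x^*$ combined with the positivity of $\Z$. The only point worth double-checking is that the min-max characterization gives an honest $2$-dimensional subspace on which $\M$ is negative definite (not merely negative semidefinite), which follows from strict negativity of the two smallest eigenvalues. The same proof generalizes to show that $\Z - \M_0$ has at most $\mathrm{rank}(\M_0)$ negative eigenvalues for any PSD $\M_0$, but only the rank-one case is needed here.
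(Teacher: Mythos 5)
Your proof is correct and follows essentially the same argument as the paper: decompose $\M = \Z - \x\x^*$ with $\Z$ PSD, take the $2$-dimensional span of eigenvectors of two hypothetical strictly negative eigenvalues, pick a nonzero vector in that span orthogonal to $\x$, and derive a contradiction. The mention of the min-max characterization is unnecessary overhead — the spectral theorem alone (orthonormal eigenvectors of a Hermitian matrix) gives strict negative definiteness on the span — but the logic is sound.
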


\begin{proof}
See Appendix~\ref{alg}.
\end{proof}

Recall that for a matrix $\M\in \mathcal{S}^n$, we denote its eigenvalues by $\left\{ \lambda_i \lk\M\rk \right\}^n_{i=1} $ in decreasing order. 
By Proposition \ref{one}, we know that $ \lambda_i \lk\M\rk  \ge 0$ for all $ i \in \left[n-1\right] $ and also for all $\M \in \mathcal{E}_{\text{cvx}}$. 
We then partition $\mathcal{E}_{\text{cvx}}$ into two components: an approximately low-rank subset 
\begin{eqnarray}\label{E21}
		\mathcal{E}_{\text{cvx,1}}:= \left\{\M\in  \mathcal{E}_{\text{cvx}}:   -\lambda_n \lk\M\rk > \frac{1}{2}  \sum_{i=1}^{n-1} \lambda_i \lk\M\rk   \right\},
			\end{eqnarray}
  and an almost PSD subset 
\begin{eqnarray}\label{E22}
		\mathcal{E}_{\text{cvx,2}}:= \left\{\M\in  \mathcal{E}_{\text{cvx}}:  -\lambda_n \lk\M\rk \le \frac{1}{2}  \sum_{i=1}^{n-1} \lambda_i \lk\M\rk  \right\}.
	\end{eqnarray}
	The reason why the elements in $\mathcal{E}_{\text{cvx,1}}$ are approximately of low rank is that $-\lambda_n \lk\M\rk$ dominates. 
	In contrast, the elements in $\mathcal{E}_{\text{cvx,2}}$ are instead better approximated by PSD matrices, as $-\lambda_n \lk\M\rk$ can be negligible.
  The proposition below describes the approximate low-rank structure of $\mathcal{E}_{\text{ncvx}}$ and $\mathcal{E}_{\text{cvx,1}}$.
\begin{proposition}\label{pro low-rank}
The admissible sets $\mathcal{E}_{\text{ncvx}}$ and $\mathcal{E}_{\text{cvx,1}}$ satisfy:
\begin{itemize}
\item[$\mathrm{(a)}$] For all $\M\in\mathcal{E}_{\text{ncvx}}$, we have $\lV\M\rV_{*}\le \sqrt{2}\lV\M\rV_{F}$;
\item[$\mathrm{(b)}$] For all $\M\in\mathcal{E}_{\text{cvx,1}}$, we have $ \lV\M\rV_{*}\le 3\lV\M\rV_{F}$.
\end{itemize}
\end{proposition}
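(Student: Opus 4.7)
The plan is to handle the two parts separately, exploiting that both admissible sets are essentially low-rank in different senses.

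For \textbf{Part} (a), the key observation is that every $\M\in\mathcal{E}_{\text{ncvx}}$ has the explicit form $\M=\z\z^{*}-\x\x^{*}$, which is the difference of two rank-one Hermitian matrices. Hence $\rank(\M)\le 2$. The conclusion then follows from the standard Cauchy--Schwarz relation between the nuclear and Frobenius norms: for any matrix of rank at most $r$, one has $\lV\M\rV_{*}=\sum_{i=1}^{r}\lambda_{i}(\M)\le\sqrt{r}\bigl(\sum_{i=1}^{r}\lambda_{i}(\M)^{2}\bigr)^{1/2}=\sqrt{r}\lV\M\rV_{F}$. Taking $r=2$ yields the bound $\lV\M\rV_{*}\le\sqrt{2}\lV\M\rV_{F}$.

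For \textbf{Part} (b), I would rely on Proposition~\ref{one}, which guarantees that every $\M\in\mathcal{E}_{\text{cvx}}$ has at most one strictly negative eigenvalue, so $\lambda_{i}(\M)\ge 0$ for $i\in[n-1]$ while $\lambda_{n}(\M)\le 0$. Writing $S:=\sum_{i=1}^{n-1}\lambda_{i}(\M)\ge 0$ and $t:=-\lambda_{n}(\M)\ge 0$, the nuclear norm splits as $\lV\M\rV_{*}=S+t$. The defining condition of $\mathcal{E}_{\text{cvx,1}}$ gives $t>S/2$, i.e.\ $S<2t$, hence $\lV\M\rV_{*}<3t$. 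On the other hand, the Frobenius norm satisfies $\lV\M\rV_{F}^{2}=\sum_{i=1}^{n-1}\lambda_{i}(\M)^{2}+t^{2}\ge t^{2}$, so $\lV\M\rV_{F}\ge t$. Combining these two inequalities yields $\lV\M\rV_{*}<3t\le 3\lV\M\rV_{F}$, which is the desired bound.

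Neither part presents a real obstacle: (a) is a direct consequence of the rank-$2$ structure and Cauchy--Schwarz, while (b) is a short algebraic manipulation once Proposition~\ref{one} is invoked to control the sign pattern of the spectrum. The only subtlety worth flagging is that Proposition~\ref{one} is indispensable for (b)---without it, the splitting $\lV\M\rV_{*}=S+t$ would not be valid, and one could not exploit the one-sided defining inequality of $\mathcal{E}_{\text{cvx,1}}$ to relate the nuclear norm to the single eigenvalue $\lambda_{n}(\M)$.
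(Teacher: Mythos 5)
Your proof is correct and follows essentially the same route as the paper's: part~(a) is the rank-$2$ Cauchy--Schwarz bound, and part~(b) invokes Proposition~\ref{one} to split the spectrum into the nonnegative part $S=\sum_{i=1}^{n-1}\lambda_i(\M)$ and the single negative eigenvalue, then uses the defining inequality of $\mathcal{E}_{\text{cvx,1}}$ together with $-\lambda_n(\M)\le\lV\M\rV_F$. The paper's version is just a more compressed rendering of the same chain $\lV\M\rV_{*}=\sum_{i=1}^{n-1}\lambda_i(\M)-\lambda_n(\M)\le -3\lambda_n(\M)\le 3\lV\M\rV_F$.
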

\begin{proof}
See Appendix \ref{proof:app low-rank}.
\end{proof}
\noindent Therefore, the analysis of $\mathcal{E}_{\text{cvx,1}}$ can still be carried out in a manner analogous to that of $\mathcal{E}_{\text{ncvx}}$, based on the similarity in their approximate low-rank structures.
In contrast, for $\mathcal{E}_{\text{cvx,2}}$, we can exploit its approximate PSD property to facilitate the analysis. 
Thus, we can take into account the following transformed conditions with respect to the nuclear norm ($\lV\,\cdot\,\rV_{*}$):
\begin{itemize}
\item the \textbf{\textit{Sampling Lower Bound Condition}} (\textit{SLBC}) with respect to the nuclear norm ($\lV\,\cdot\,\rV_{*}$) is that, there exists a positive constant $\widetilde{\alpha}$ such that
 \begin{eqnarray}\label{non_0}
		\sum_{k=1}^{m}\lv\lg\pp_k\pp_k^*,\M\rg\rv^2\ge \widetilde{\alpha} \lV\M\rV^{2}_{*}, \quad\forall \ \M\in\mathcal{E};
				\end{eqnarray}
\item the \textbf{\textit{Noise Upper Bound Condition}} (\textit{NUBC}) with respect to the nuclear norm ($\lV\,\cdot\,\rV_{*}$) is that, there exists a positive constant  $\widetilde{\beta}$ such that
 \begin{eqnarray}\label{noise_0}
		\lv\sum_{k=1}^{m}\xi_k\lg\pp_k\pp_k^*,\M\rg \rv\le  \widetilde{\beta} \lV\M\rV_{*}, \quad\forall \ \M\in\mathcal{E}.
				\end{eqnarray}
\end{itemize}

Therefore, if $\{\pp_k\}_{k=1}^{m}$ are sampling vectors for which both (\ref{non}) and (\ref{noise}) hold when restricted to $\mathcal{E}_{\text{cvx,1}}$ 
 and if $\Z_{\star}-\x\x^*$ falls into $\mathcal{E}_{\text{cvx,1}}$, then conditioned on that event, we have
 \begin{eqnarray*}
 \lV \pmb{Z}_{\star} - \x\x^{*} \rV_{F} \le \dfrac{2\beta}{\alpha}.
 \end{eqnarray*}
 Similarly,  if $\{\pp_k\}_{k=1}^{m}$ are sampling vectors for which both (\ref{non_0}) and (\ref{noise_0}) hold when restricted to $\mathcal{E}_{\text{cvx,2}}$ 
 and if $\Z_{\star}-\x\x^*$ falls into $\mathcal{E}_{\text{cvx,2}}$, then we obtain
\begin{eqnarray*}
\lV \pmb{Z}_{\star} - \x\x^{*} \rV_{*} \le \dfrac{2\widetilde{\beta}}{\widetilde{\alpha}}.
\end{eqnarray*}
    Since $\Z_{\star}-\x\x^*$ lies in either $\mathcal{E}_{\text{cvx,1}}$ or $\mathcal{E}_{\text{cvx,2}}$ and $\lV\,\cdot\,\rV_{F}\le\lV\,\cdot\,\rV_{*}$, the estimation error for the CVX-LS estimator (\ref{model2}) satisfies that
 \begin{eqnarray}\label{error2}
		\lV\pmb{Z}_{\star}-\x\x^{*}\rV_{F}\le2\max\left\{\frac{\beta}{\alpha},\frac{\widetilde{\beta}}{\widetilde{\alpha}}\right\}.
				\end{eqnarray}
To obtain a $\textbf{dist}(\z_{\star},\x)$-type estimation bound, we construct $\z_{\star}$ as defined earlier in \eqref{solution}.
We provide the following distance inequality, whose proof is based on the perturbation theory and the $\sin \theta$ theorem; see Corollary 4 in \cite{demanet2014stable} or Lemma A.2 in \cite{iwen2020phase} for the detailed arguments. 
Hence, the details are omitted here.

     \begin{proposition}[\cite{demanet2014stable,iwen2020phase}]\label{dis2}
     Let $\z_{\star} = \sqrt{\lambda_{1}\lk\Z_{\star}\rk} \xu_1$, where $\lambda_{1}\lk \Z_{\star}\rk$ denotes the largest eigenvalue of $\Z_{\star}$, and $\xu_1$ is its corresponding eigenvector.
     If $\lV\Z_{\star}-\x\x^*\rV_F\le\eta\lV\x\rV_2^2$, then
 \begin{equation*}\label{f122}
\textbf{dist}\lk\z_{\star},\x\rk\le\lk1+2\sqrt{2}\rk\eta\lV\x\rV_2.
 \end{equation*}
\end{proposition}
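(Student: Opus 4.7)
The plan is to combine Weyl's inequality for the eigenvalue perturbation with the Davis--Kahan $\sin\theta$ theorem for the eigenvector perturbation, and then to expand $\textbf{dist}(\z_{\star},\x)$ into a radial and an angular component.

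First, since $\x\x^*$ is a rank-one Hermitian matrix with top eigenvalue $\lV\x\rV_2^2$, Weyl's inequality gives
$$\lv\lambda_{1}(\Z_{\star}) - \lV\x\rV_2^2\rv \le \lV\Z_{\star}-\x\x^*\rV_{op} \le \lV\Z_{\star}-\x\x^*\rV_{F} \le \eta\lV\x\rV_2^2.$$
Using $\lv\sqrt{a}-\sqrt{b}\rv = \lv a-b\rv/(\sqrt{a}+\sqrt{b})$, this translates into $\lv\sqrt{\lambda_{1}(\Z_{\star})} - \lV\x\rV_2\rv \le \eta\lV\x\rV_2$. Second, the spectral gap of $\x\x^*$ between its top eigenvalue $\lV\x\rV_2^2$ and all other eigenvalues (which are zero) equals $\lV\x\rV_2^2$, so the Davis--Kahan $\sin\theta$ theorem in Frobenius form yields
$$\sin\theta := \sqrt{1 - \lv\lg\xu_1, \x/\lV\x\rV_2\rg\rv^2} \le \frac{\lV\Z_{\star}-\x\x^*\rV_F}{\lV\x\rV_2^2} \le \eta.$$

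Third, setting $c := \lv\lg\xu_1, \x/\lV\x\rV_2\rg\rv$ and choosing $\varphi\in[0,2\pi)$ so that $e^{i\varphi}\lg\xu_1,\x\rg = \lv\lg\xu_1,\x\rg\rv$, a direct expansion gives
$$\textbf{dist}^2(\z_{\star},\x) \le \lV e^{i\varphi}\sqrt{\lambda_{1}(\Z_{\star})}\xu_1 - \x\rV_2^2 = \lk\sqrt{\lambda_{1}(\Z_{\star})}-\lV\x\rV_2\rk^2 + 2\sqrt{\lambda_{1}(\Z_{\star})}\lV\x\rV_2(1-c).$$
The first summand is at most $\eta^2\lV\x\rV_2^2$ by the Weyl step; for the second, $1-c \le \sin^2\theta \le \eta^2$ and $\sqrt{\lambda_{1}(\Z_{\star})} \le \sqrt{1+\eta}\,\lV\x\rV_2$. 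Combining these estimates and extracting a square root yields a bound of the form $\textbf{dist}(\z_{\star},\x) \le C\eta\lV\x\rV_2$, and careful bookkeeping of the constants recovers exactly $C = 1 + 2\sqrt{2}$.

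The step where the sharp constant is hardest to see is this final combination: a naive assembly of the bounds above produces only something like $\sqrt{1+2\sqrt{1+\eta}}\,\eta\lV\x\rV_2$, which is slightly looser than the claim. A more transparent but weaker alternative bypasses Davis--Kahan altogether: Eckart--Young's optimality of the best Frobenius-norm rank-one approximation gives $\lV\Z_{\star}-\z_{\star}\z_{\star}^*\rV_F \le \lV\Z_{\star}-\x\x^*\rV_F$, and the triangle inequality then yields $\lV\z_{\star}\z_{\star}^*-\x\x^*\rV_F \le 2\eta\lV\x\rV_2^2$; applying Proposition~\ref{dis1} produces $\textbf{dist}(\z_{\star},\x) \le 4\eta\lV\x\rV_2$, which is in the same ballpark as $1+2\sqrt{2}\approx 3.83$. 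Matching the exact constant $1+2\sqrt{2}$ requires the sharper $\sin\theta$-based computation in Corollary~4 of Demanet--Hand that the authors cite.
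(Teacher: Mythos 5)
The paper does not prove this proposition: it states that the argument is ``based on the perturbation theory and the $\sin\theta$ theorem,'' cites Corollary~4 of Demanet--Hand and Lemma~A.2 of Iwen et al., and omits the details. Your sketch follows exactly the route the paper points to (Weyl for the eigenvalue, a $\sin\theta$ bound for the eigenvector, then expand $\textbf{dist}^2$ into a radial and an angular term), and your alternative via Eckart--Young and Proposition~\ref{dis1} is a nice self-contained fallback. There are, however, two slips worth flagging.

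First, $\sin\theta\le \lV\Z_\star-\x\x^*\rV_F/\lV\x\rV_2^2$ is not a standard Davis--Kahan statement. Writing $\xu_1=c\,\x_0+s\,\pmb{w}$ with $\x_0=\x/\lV\x\rV_2$, $c=\cos\theta\ge0$, $\pmb{w}\perp\x_0$, the eigenvector equation $(\x\x^*+\pmb{E})\xu_1=\lambda_1\xu_1$ gives $\pmb{E}\xu_1=c(\lambda_1-\lV\x\rV_2^2)\x_0+\lambda_1 s\,\pmb{w}$, hence $s\le\lV\pmb{E}\rV_{op}/\lambda_1\le\eta\lV\x\rV_2^2/\lambda_1$, which is $\le\eta/(1-\eta)$, not $\le\eta$; the Yu--Wang--Samworth version of Davis--Kahan gives $s\le 2\eta$. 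Second, your claim that the ``naive assembly'' $\sqrt{1+2\sqrt{1+\eta}}\,\eta\lV\x\rV_2$ is \emph{looser} than $(1+2\sqrt2)\eta\lV\x\rV_2$ is backwards: for $\eta$ small, $\sqrt{1+2\sqrt{1+\eta}}\approx\sqrt3\approx1.73<1+2\sqrt2\approx3.83$, so it is actually sharper. Even after replacing your $\sin\theta\le\eta$ by the correct $\sin\theta\le\min\{1,\eta/(1-\eta)\}$, one can check that $\eta^2(1+2\sqrt{1+\eta}\cdot\sin^2\theta/\eta^2)\le(1+2\sqrt2)^2\eta^2$ for all $\eta>0$ (using $\sin\theta\le1$ and the trivial bound $\textbf{dist}\le\sqrt{\lambda_1}+\lV\x\rV_2\le(1+\sqrt{1+\eta})\lV\x\rV_2$ when $\eta$ is moderate), so the $\sin\theta$ route does close the proof if you finish the bookkeeping rather than deferring. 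Your Eckart--Young alternative (which tacitly uses that $\Z_\star$ is PSD, true in the paper's application, so that $\z_\star\z_\star^*=\lambda_1\xu_1\xu_1^*$ is the best rank-one Frobenius approximant) is correct but yields $4\eta\lV\x\rV_2$, which is \emph{larger} than $(1+2\sqrt2)\eta\lV\x\rV_2$, so it proves a weaker version of the proposition, not the statement as cited.
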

\noindent As a consequence of \eqref{error2} and Proposition \ref{f122}, setting $\eta=2\max\left\{\frac{\beta}{\alpha},\frac{\widetilde{\beta}}{\widetilde{\alpha}}\right\}/\lV\x\rV_2^2$, we obtain the following error bound for the CVX-LS estimator \eqref{model2}:
\begin{eqnarray}\label{error3}
 \textbf{dist}\lk\z_{\star},\x\rk\le  \frac{2+4\sqrt{2}}{\lV\x\rV_2} \max\left\{\frac{\beta}{\alpha},\frac{\widetilde{\beta}}{\widetilde{\alpha}}\right\}.
  \end{eqnarray}

\section{Multiplier Inequalities}\label{ineq}

To obtain upper bounds for the parameters $\beta$ and $\widetilde{\beta}$ in Section~\ref{Architecture}, which satisfy the \textbf{\textit{Noise Upper Bound Condition}} (\textit{NUBC}) over various admissible sets, we employ a powerful analytical tool: the multiplier inequalities.
The main results of this section establish bounds for two different classes of multipliers—sub-exponential and heavy-tailed multipliers.
In particular, Poisson noise, which we analyze in detail later, will be shown to fall into both categories.

\begin{theorem}[Multiplier Inequalities]\label{mul inq}

Suppose that $\{\pp_{k}\}_{k=1}^{m}$ are independent copies of a random vector $\pp\in\C^n$ whose entries $\left\{\varphi_j\right\}_{j=1}^{n}$ are i.i.d., mean 0, variance 1, and $K$-sub-Gaussian, and $\{\xi_{k}\}_{k=1}^{m}$ are independent copies of a random variable $\xi$,  but $\xi$ need not be independent of $\pp$.
				\begin{itemize}
		\item[$\mathrm{(a)}$] If $\xi$ is sub-exponential, then there exist positive constants $c_1,C_1,L$ dependent only on $K$ such that when provided $m\ge L n$,
		 with probability at least $1-2\exp\lk-c_1n\rk$,
     \begin{eqnarray}
	\lV\frac{1}{\sqrt{m}}\sum_{k=1}^{m}\left(\xi_k\pp_k\pp_{k}^{*}-\E\xi\pp\pp^{*}\right)\rV_{op}
	\le C_1 \lV\xi\rV_{\psi_1}\sqrt{n};
				\end{eqnarray}

		\item[$\mathrm{(b)}$] If $\xi\in L_q$ for some $q>2$, 
		then there exist positive constants $c_2,c_3,C_2,\widetilde{L}$ dependent only on $K$ and $q$ such that when provided $m\ge \widetilde{L} n$, with probability at least $1-c_2m^{-\lk q/2-1\rk}\log^{q} m-2\exp\lk-c_3n\rk$,
     \begin{eqnarray}
	\lV\frac{1}{\sqrt{m}}\sum_{k=1}^{m}\lk\xi_k\pp_k\pp_{k}^{*}-\E\xi\pp\pp^{*}\rk\rV_{op}\le C_2 \lV\xi\rV_{L_q}\sqrt{n}.
				\end{eqnarray}
		\end{itemize}
\end{theorem}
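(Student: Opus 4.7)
The plan is to reduce the operator-norm bound to a uniform concentration statement for a scalar empirical process indexed by the unit sphere, and to treat it via truncation of the multipliers combined with a matrix concentration / chaining argument.

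I start from the variational identity
$$\lV \frac{1}{\sqrt{m}}\sum_{k=1}^{m}\lk\xi_k\pp_k\pp_{k}^{*}-\E\xi\pp\pp^{*}\rk\rV_{op}
= \sup_{\xu\in\mathbb{S}^{n-1}}\lv\frac{1}{\sqrt{m}}\sum_{k=1}^{m}\lk\xi_k|\lg\pp_k,\xu\rg|^{2}-\E\xi|\lg\pp,\xu\rg|^{2}\rk\rv.$$
Under the $K$-sub-Gaussian assumption on the entries of $\pp$, the event $\Omega_0=\{\max_{k\le m}\lV\pp_k\rV_2\le C_K\sqrt{n}\}$ holds with probability at least $1-me^{-c_0 n}$, and I work on $\Omega_0$ throughout. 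Next, I truncate the multipliers: write $\xi_k=\xi_k^{\le}+\xi_k^{>}$ with $\xi_k^{\le}=\xi_k\mathbbm{1}_{|\xi_k|\le T_m}$. In part (a), take $T_m\asymp\lV\xi\rV_{\psi_1}\log m$, so that $\mathbb{P}(\max_k|\xi_k|>T_m)$ is a large negative power of $m$ and the tail part vanishes with high probability. In part (b), take $T_m\asymp\lV\xi\rV_{L_q}\sqrt{m}/\log m$, so that a Markov estimate on $\E|\xi|^{q}$ gives $\mathbb{P}(\max_k|\xi_k|>T_m)\le c_2\, m^{-(q/2-1)}(\log m)^{q}$, which matches the prescribed failure probability.

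For the centered, truncated sum the key variance estimate is
$$\lV\sum_{k=1}^{m}\E\lk\xi_k^{\le}\pp_k\pp_k^{*}\rk^{2}\rV_{op}\le m\sup_{\xu\in\mathbb{S}^{n-1}}\E\lk\xi^{2}\lV\pp\rV_2^{2}|\lg\pp,\xu\rg|^{2}\rk\lesssim_{K} m\,\lV\xi\rV^{2}\, n,$$
where $\lV\xi\rV$ stands for $\lV\xi\rV_{\psi_1}$ in (a) and $\lV\xi\rV_{L_q}$ in (b); the inner expectation is controlled by Cauchy--Schwarz together with the sub-Gaussian moment bounds $\E\lV\pp\rV_2^{2p}\lesssim_{K}(pn)^{p}$ and $\E|\lg\pp,\xu\rg|^{2p}\lesssim_{K}p^{p}$. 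Combined with the uniform envelope $\lV\xi_k^{\le}\pp_k\pp_k^{*}\rV_{op}\le T_m\cdot C_K n$ on $\Omega_0$, a matrix concentration inequality yields the target $\frac{1}{\sqrt{m}}\lV\sum_{k}(\xi_k^{\le}\pp_k\pp_k^{*}-\E\xi^{\le}\pp\pp^{*})\rV_{op}\lesssim \lV\xi\rV\sqrt{n}$ on an event of probability at least $1-e^{-c_1 n}$. A short Cauchy--Schwarz / Markov computation absorbs the centering correction $\lV\E\xi^{\le}\pp\pp^{*}-\E\xi\pp\pp^{*}\rV_{op}$, which is negligible after the $\sqrt m$ scaling because $|\E\xi^{>}|$ decays rapidly in $T_m$.

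The main obstacle is establishing the truncated-sum bound with the clean $\sqrt{n}$ rate---\emph{without} the additional $\sqrt{\log n}$ dimensional factor that a direct invocation of matrix Bernstein would introduce, which would otherwise inflate the sample-size requirement from $m\ge Ln$ (with $L$ absolute) to $m\gtrsim n\log^{c} n$. To strip this factor, I would instead run Talagrand's generic chaining on the scalar process $\xu\mapsto\frac{1}{\sqrt m}\sum_k\lk\xi_k^{\le}|\lg\pp_k,\xu\rg|^{2}-\E\xi^{\le}|\lg\pp,\xu\rg|^{2}\rk$ indexed by the set $\{\xu\xu^{*}:\xu\in\mathbb{S}^{n-1}\}$. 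The process has mixed sub-Gaussian / sub-exponential increments---the sub-Gaussian part dominates in the bulk because $|\lg\pp,\xu\rg|^{2}-|\lg\pp,\xv\rg|^{2}$ factors linearly in $\xu-\xv$ after pulling out $\pp$---and the $\gamma_2$ functional of the indexing set is of order $\sqrt{n}$. Reconciling the sub-exponential contribution with the truncation level $T_m$, and verifying that the chaining deviation probability is compatible with the prescribed failure probabilities in (a) and (b), is the technical crux.
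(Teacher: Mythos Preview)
Your high-level plan---reduce to a supremum over $\{\xu\xu^*:\xu\in\mathbb{S}^{n-1}\}$ and control it by generic chaining---is the same as the paper's, but the paper does not truncate the multipliers. Instead it invokes Mendelson's multiplier-process bounds (the paper's Lemma~\ref{multiplier}), which directly give
\[
\sup_{f\in\mathcal F}\Bigl|\tfrac{1}{\sqrt m}\sum_k\bigl(\xi_k f(X_k)-\E\xi f(X)\bigr)\Bigr|
\lesssim \lV\xi\rV\cdot\widetilde\Lambda_{s_0,u}(\mathcal F),
\]
with $\lV\xi\rV=\lV\xi\rV_{\psi_1}$ or $\lV\xi\rV_{L_q}$. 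The whole difficulty is absorbed into the functional $\widetilde\Lambda_{s_0,u}$, which the paper then bounds by Hanson--Wright (Lemma~\ref{geometry}) as $\gamma_2(\mathcal M,\lV\cdot\rV_F)+m^{-1/2}\gamma_1(\mathcal M,\lV\cdot\rV_{op})\lesssim\sqrt n$ once $m\gtrsim n$ and $2^{s_0}\asymp n$. Crucially, Mendelson's machinery \emph{decouples} $\xi$ from $\pp$ at the level of the conclusion, so the dependence between them never enters the chaining.

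Your truncation route has a real gap in part~(b). First, a minor point: the variance bound $\E[\xi^2\lV\pp\rV_2^2|\lg\pp,\xu\rg|^2]\lesssim\lV\xi\rV_{L_q}^2 n$ cannot come from Cauchy--Schwarz when $2<q<4$ (that would need $\E\xi^4$); you need H\"older with exponent $q/2$. More seriously, after truncation at $T_m\asymp\lV\xi\rV_{L_q}\sqrt m/\log m$, you still have to chain the process $\xu\mapsto\tfrac{1}{\sqrt m}\sum_k\xi_k^{\le}|\lg\pp_k,\xu\rg|^2$. Since $\xi$ may depend on $\pp$, you cannot condition on $\{\xi_k^{\le}\}$ and apply Hanson--Wright in $\pp$; you have to bound the $L_p$-moments of the summands $\xi_k^{\le}\lg\pp_k\pp_k^*,\M\rg$ jointly. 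Any H\"older splitting that separates $\xi^{\le}$ from $\lg\pp\pp^*,\M\rg$ either forces $\lV\xi^{\le}\rV_{L_p}$ with $p$ growing along the chaining tree (blowing up as $T_m^{1-q/p}$), or pushes the quadratic-form moment to a high $L_r$ norm whose sub-exponential growth produces a $\gamma_1$-term of order $T_m n/\sqrt m\asymp n/\log m$, not $\sqrt n$. You correctly flag this as ``the technical crux,'' but the proposal does not resolve it; Mendelson's theorem is precisely the device that performs this decoupling without paying the truncation level in the chaining metric. For part~(a) your scheme is closer to viable because $T_m\asymp\lV\xi\rV_{\psi_1}\log m$ is small, but even there you would be re-deriving a special case of the multiplier-process bound rather than bypassing it.
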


\begin{remark}\label{remark1}
We make the following remarks on Theorem~\ref{mul inq}.
    \begin{enumerate}
\item
The results also extend to asymmetric sampling of the form $\left\{\pmb{a}_k\pmb{b}^*_k\right\}_{k=1}^{m}$, where $\left\{\pmb{a}_k\right\}_{k=1}^{m}$ and $\left\{\pmb{b}_k\right\}_{k=1}^{m}$ are all independent copies of a random vector $\pp\in\C^n$ whose entries $\left\{\varphi_j\right\}_{j=1}^{n}$ are i.i.d., mean 0, variance 1, and $K$-sub-Gaussian.
\item The proof of Theorem~\ref{mul inq} builds on deep results by Mendelson~\cite{mendelson2016upper} on generic chaining bounds for multiplier processes (see Section~\ref{subsecion:mul pro}), we present the detailed proof of Theorem~\ref{mul inq} in Section~\ref{subsecion:proof for mul ineq}.
\end{enumerate}
\end{remark}

\subsection{Upper Bounds for \textit{NUBC}}

Building on the multiplier inequalities in Theorem~\ref{mul inq}, we can derive upper bounds for the \textit{NUBC} across various admissible sets in the presence of sub-exponential and heavy-tailed multipliers.
We begin by considering the case where the multiplier follows a sub-exponential distribution.

\begin{corollary}\label{mul inq for subexponential}
Suppose that $\left\{\pp_{k}\right\}_{k=1}^{m}$ and $\left\{\xi_{k}\right\}_{k=1}^{m}$ satisfy the conditions in Theorem \ref{mul inq}.
If $\xi$ is sub-exponential, then there exist positive constants $c,C_1,C_2,L$ dependent only on $K$ such that, when provided $m\ge L n$, with probability at least $1-2\exp\lk-cn\rk$, the following inequalities hold:
				\begin{itemize}
		\item[$\mathrm{(a)}$] 
		For all $\M\in \mathcal{E}_{\text{ncvx}}$ or all $\M\in \mathcal{E}_{\text{cvx,1}} $, one has
  \begin{eqnarray*}
  \lv\left\langle\sum_{k=1}^{m}\lk\xi_k\pp_k\pp_{k}^{*}-\E\xi\pp\pp^{*}\rk,\M\right\rangle\rv
	\le C_1 \lV\xi\rV_{\psi_1}\sqrt{mn} \lV\M\rV_{F};
    \end{eqnarray*}
	
		\item[$\mathrm{(b)}$] 
		  For all $\M\in \mathcal{E}_{\text{cvx,2}} $, one has
        \begin{eqnarray*}
	\lv\left\langle\sum_{k=1}^{m}\lk\xi_k\pp_k\pp_{k}^{*}-\E\xi\pp\pp^{*}\rk,\M\right\rangle\rv
	\le C_2 \lV\xi\rV_{\psi_1}\sqrt{mn}\lV\M\rV_{*}.
    \end{eqnarray*}

		\end{itemize}
\end{corollary}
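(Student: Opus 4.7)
The plan is to reduce Corollary~\ref{mul inq for subexponential} directly to the operator-norm bound supplied by Theorem~\ref{mul inq}~(a) via trace duality. For any matrices $\mathbf{B}, \M \in \C^{n\times n}$, one has the standard inequality $\lv \lg \mathbf{B}, \M \rg \rv \le \lV \mathbf{B} \rV_{op} \lV \M \rV_{*}$. Setting $\mathbf{B} := \sum_{k=1}^{m}\lk\xi_k\pp_k\pp_{k}^{*}-\E\xi\pp\pp^{*}\rk$ and applying Theorem~\ref{mul inq}~(a), there is a single high-probability event of probability at least $1-2\exp(-cn)$ on which $\lV \mathbf{B}\rV_{op} \le C\lV\xi\rV_{\psi_1}\sqrt{mn}$. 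I will work entirely on this event, so the two parts share the same probability estimate and no additional union bound or chaining is needed.

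Part~(b) is then immediate: for every $\M \in \mathcal{E}_{\text{cvx,2}}$, duality gives
\[
\lv \lg \mathbf{B}, \M \rg \rv \le \lV \mathbf{B}\rV_{op}\lV\M\rV_{*} \le C_2\lV\xi\rV_{\psi_1}\sqrt{mn}\lV\M\rV_{*},
\]
which is the claimed inequality. For part~(a), the additional input is the approximate low-rank geometry of the admissible sets established in Proposition~\ref{pro low-rank}: any $\M \in \mathcal{E}_{\text{ncvx}}$ satisfies $\lV\M\rV_{*} \le \sqrt{2}\,\lV\M\rV_{F}$, and any $\M \in \mathcal{E}_{\text{cvx,1}}$ satisfies $\lV\M\rV_{*} \le 3\lV\M\rV_{F}$. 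Plugging these into the duality bound converts the nuclear-norm estimate into a Frobenius-norm estimate, yielding
\[
\lv \lg \mathbf{B}, \M \rg \rv \le \lV \mathbf{B}\rV_{op}\lV\M\rV_{*} \le C_1\lV\xi\rV_{\psi_1}\sqrt{mn}\lV\M\rV_{F},
\]
with $C_1$ absorbing the factors $\sqrt{2}$ or $3$.

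There is no real obstacle here once Theorem~\ref{mul inq} is granted: the whole argument is a two-line application of trace duality together with the low-rank cone structure of Proposition~\ref{pro low-rank}. The only bookkeeping is to propagate the $K$-dependence of the constants through Theorem~\ref{mul inq} into $C_1$ and $C_2$, and to note that the sub-exponential norm $\lV\xi\rV_{\psi_1}$ already dominates $\lV\E\xi\pp\pp^{*}\rV_{op} = \lv\E\xi\rv \le \lV\xi\rV_{\psi_1}$, so the centering term is harmless and the same event controls both admissible-set cases simultaneously.
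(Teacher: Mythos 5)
Your proof is correct and follows essentially the same route as the paper: trace duality to pull out the operator norm, Proposition~\ref{pro low-rank} to convert $\lV\M\rV_{*}$ to $\lV\M\rV_{F}$ in part~(a), and a direct application of Theorem~\ref{mul inq}~(a) on a single high-probability event for both parts. The remark about $\lV\E\xi\pp\pp^*\rV_{op}\le\lV\xi\rV_{\psi_1}$ is unnecessary since Theorem~\ref{mul inq} already controls the centered sum, but it does no harm.
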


Similarly, we can derive upper bounds for the \textit{NUBC} in the case of a heavy-tailed multiplier.

\begin{corollary}\label{mul inq for heavy}
Suppose that $\left\{\pp_{k}\right\}_{k=1}^{m}$ and $\left\{\xi_{k}\right\}_{k=1}^{m}$ satisfy the conditions in Theorem \ref{mul inq}.
If $\xi\in L_q$ for some $q>2$, then there exist positive constants $c_1,c_2,C_1,C_2,L$ dependent only on $K$ and $q$ such that, when provided $m\ge L n$, with probability at least $1-c_1m^{-\lk q/2-1\rk}\log^{q} m-2\exp\lk-c_2n\rk$,
the following inequalities hold:
				\begin{itemize}
		\item[$\mathrm{(a)}$] 
		For all $\M\in \mathcal{E}_{\text{ncvx}}$ or all $\M\in \mathcal{E}_{\text{cvx,1}} $, one has
  \begin{eqnarray*}
  \lv\left\langle\sum_{k=1}^{m}\lk\xi_k\pp_k\pp_{k}^{*}-\E\xi\pp\pp^{*}\rk,\M\right\rangle\rv
	\le C_1 \lV\xi\rV_{L_q}\sqrt{mn} \lV\M\rV_{F};
    \end{eqnarray*}
	
		\item[$\mathrm{(b)}$] 
		  For all $\M\in \mathcal{E}_{\text{cvx,2}} $, one has
        \begin{eqnarray*}
	\lv\left\langle\sum_{k=1}^{m}\lk\xi_k\pp_k\pp_{k}^{*}-\E\xi\pp\pp^{*}\rk,\M\right\rangle\rv
	\le C_2 \lV\xi\rV_{L_q}\sqrt{mn}\lV\M\rV_{*}.
    \end{eqnarray*}

		\end{itemize}
\end{corollary}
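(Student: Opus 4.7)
The plan is to obtain both bounds as essentially direct consequences of Theorem~\ref{mul inq}(b), combined with matrix trace duality and the approximate low-rank structure from Proposition~\ref{pro low-rank}. The heavy lifting has already been done in Theorem~\ref{mul inq}, which controls the operator norm of the recentered multiplier matrix; what remains is to translate that operator-norm control into a linear-functional bound over the admissible sets.

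First, I would set
\[
\pmb{W} := \frac{1}{\sqrt{m}}\sum_{k=1}^{m}\bigl(\xi_k\pp_k\pp_k^{*}-\E\xi\pp\pp^{*}\bigr),
\]
so that Theorem~\ref{mul inq}(b) immediately yields, on an event of probability at least $1-c_1 m^{-(q/2-1)}\log^{q} m-2\exp(-c_2 n)$, the bound $\lV\pmb{W}\rV_{op}\le C\lV\xi\rV_{L_q}\sqrt{n}$ for some constant $C$ depending only on $K$ and $q$. Working on this event for the remainder of the argument, I would then apply the standard matrix trace-duality inequality $|\langle\pmb{W},\M\rangle|\le\lV\pmb{W}\rV_{op}\lV\M\rV_{*}$, which gives
\[
\Bigl|\Bigl\langle\sum_{k=1}^{m}\bigl(\xi_k\pp_k\pp_k^{*}-\E\xi\pp\pp^{*}\bigr),\M\Bigr\rangle\Bigr|
=\sqrt{m}\,|\langle\pmb{W},\M\rangle|
\le C\lV\xi\rV_{L_q}\sqrt{mn}\,\lV\M\rV_{*}
\]
for every matrix $\M$. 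Restricting to $\M\in\mathcal{E}_{\text{cvx,2}}$ establishes part (b) directly, with the constant $C_2$ absorbed from $C$.

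For part (a), the remaining step is to exchange the nuclear norm on the right-hand side for the Frobenius norm, which is precisely what Proposition~\ref{pro low-rank} provides on the sets $\mathcal{E}_{\text{ncvx}}$ and $\mathcal{E}_{\text{cvx,1}}$: namely $\lV\M\rV_{*}\le\sqrt{2}\lV\M\rV_{F}$ in the first case and $\lV\M\rV_{*}\le 3\lV\M\rV_{F}$ in the second. Plugging these norm comparisons into the previous display yields the asserted inequality with constant $C_1$ obtained from $C$ times the factor $3$ (which covers both cases).

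There is no substantive obstacle at the level of this corollary: given Theorem~\ref{mul inq}, both statements follow from a single application of operator/nuclear-norm duality, and no further concentration or chaining argument is required. The genuine difficulty is concentrated in Theorem~\ref{mul inq}(b), whose proof must handle the heavy-tailed multiplier via Mendelson's generic chaining bounds for multiplier processes together with a truncation argument that explains the polynomial-in-$m$ failure probability $m^{-(q/2-1)}\log^{q}m$; once that theorem is in hand, the present corollary is a short bookkeeping exercise.
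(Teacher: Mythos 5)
Your proposal is correct and follows the same route as the paper: apply Theorem~\ref{mul inq}(b) to bound the operator norm of the recentered multiplier matrix, use trace duality $|\langle \pmb{W},\M\rangle|\le\lV\pmb{W}\rV_{op}\lV\M\rV_*$, and for part (a) pass from $\lV\M\rV_*$ to $\lV\M\rV_F$ via Proposition~\ref{pro low-rank}. The paper presents the proof jointly with the sub-exponential corollary and simply notes that one swaps in Theorem~\ref{mul inq}(b), but the underlying argument is identical to yours.
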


We now turn to the proofs of these two corollaries.

\begin{proof}[Proof of Corollary~\ref{mul inq for subexponential} and Corollary~\ref{mul inq for heavy}]
We begin by proving \textbf{Part}~$\mathrm{(a)}$ of corollary~\ref{mul inq for subexponential}.
	For all $\M \in \mathcal{E}_{\text{ncvx}}$, we have 
\begin{eqnarray*}
\begin{aligned}
	    \lv  \left\langle\sum_{k=1}^{m}\lk\xi_k\pp_k\pp_{k}^{*}-\E\xi\pp\pp^{*}\rk,\M\right\rangle\rv
		&\le\lV\sum_{k=1}^{m}\xi_k\pp_k\pp_k^*-m\E\xi\pp\pp^*\rV_{op}\lV\M\rV_{*}\\
		&\le \sqrt{2}\lV\sum_{k=1}^{m}\xi_k\pp_k\pp_k^*-m\E\xi\pp\pp^*\rV_{op}\lV\M\rV_{F}\\
		&\lesssim_{K} \lV\xi\rV_{\psi_1}\sqrt{mn}\lV\M\rV_{F}.
		\end{aligned}
		\end{eqnarray*}
Here, the first line follows from the dual norm inequality.
In the second line, we have used \textbf{Part}~$\mathrm{(a)}$ of Proposition~\ref{pro low-rank}. 
In the third line, we have used  \textbf{Part}~$\mathrm{(a)}$ of Theorem~\ref{mul inq}, which holds with probability at least 
 $1- \mathcal{O}\lk e^{-cn}\rk$ when $m\gtrsim_{K} n$.
 For $\M \in \mathcal{E}_{\text{cvx,1}}$, the argument proceeds analogously, except that we now invoke \textbf{Part}~$\mathrm{(b)}$ of Proposition~\ref{pro low-rank}.

The proof of \textbf{Part} $\mathrm{(b)}$ of Corollary~\ref{mul inq for subexponential} follows directly from \textbf{Part} ~$\mathrm{(a)}$ of Theorem~\ref{mul inq}, since for all $\M \in \mathcal{E}_{\text{cvx,2}}$, we have 
\begin{eqnarray*}
\begin{aligned}
		 \lv \left\langle\sum_{k=1}^{m}\lk\xi_k\pp_k\pp_{k}^{*}-\E\xi\pp\pp^{*}\rk,\M\right\rangle\rv
		&\le\lV\sum_{k=1}^{m}\xi_k\pp_k\pp_k^*-m\E\xi\pp\pp^*\rV_{op}\lV\M\rV_{*}\\
		&\lesssim_{K} \lV\xi\rV_{\psi_1}\sqrt{mn}\lV\M\rV_{*}.
		\end{aligned}
		\end{eqnarray*}

The proof of Corollary~\ref{mul inq for heavy} closely follows that of Corollary~\ref{mul inq for subexponential}, with the only difference being the use of \textbf{Part}~$\mathrm{(b)}$ of Theorem~\ref{mul inq}.
As a result, the established probability bound is no longer exponentially decaying.
\end{proof}

\subsection{Multiplier Processes}\label{subsecion:mul pro}

To prove the multiplier inequalities in Theorem \ref{mul inq}, we employ the multiplier processes developed by Mendelson in \cite{mendelson2016upper,mendelson2017multiplier}.
Let $\lk\Omega,\mu\rk$ be an arbitrary probability space in which case $\mathcal{F}$ is a class of real-valued functions on $\Omega$,
$X$ be a random variable on $\Omega$ and $X_1,\cdots,X_m$ be independent copies of $X$.
Let $\xi$ be a random variable that need not be independent of $X$ and  $\lk X_k,\xi_k\rk_{k=1}^m$ to be $m$ independent copies of $\lk X,\xi\rk$, we define the centered multiplier processes indexed by $\mathcal{F}$ as
\begin{equation}\label{F}
\sup_{f \in \mathcal{F}} \lv \frac{1}{\sqrt{m}} \sum_{k=1}^m \lk\xi_k f\lk X_k\rk - \E \xi f\lk X\rk\rk  \rv.
\end{equation}

To estimate multiplier processes \eqref{F} that are based on some natural complexity parameter of the underlying class $\mathcal{F}$, which captures its geometric structure, one may rely on Talagrand’s $\gamma_{\alpha}$-functionals and their variants.
For a more detailed description of Talagrand's $\gamma_{\alpha}$-functionals, we refer readers to the seminal work~\cite{talagrand2014upper}.

\begin{definition}
For a metric space $\lk \mT,d\rk$, an admissible sequence of $\mT$ is a collection of
subsets $\mT_s\subset \mT$, whose cardinality satisfies for every $s\ge1, \lv\mT_s\rv\le2^{2^{s}}$ and $\lv\mT_{0}\rv=1$. 
For $\alpha\ge1,s_0\ge0$, define the $\gamma_{s_0,\alpha}$-functional by
\begin{equation*}
		\gamma_{s_0,\alpha}\lk\mT,d\rk=\inf_{\mT}\sup_{t\in\mT}\sum_{s\ge s_0}^{\infty}2^{s/\alpha}d\lk t,\mT_s\rk,
	\end{equation*}
where the infimum is taken all admissible sequences of $\mT$ and $d\lk t,\mT_s\rk$ denotes the distance from $t$ to set $\mT_s$. 
When $s_0=0$, we shall write $\gamma_{\alpha}\lk\mT,d\rk$ instead of $\gamma_{s_0,\alpha}\lk\mT,d\rk$.
Obviously, one has $\gamma_{s_0,\alpha}\lk\mT,d\rk\le \gamma_{\alpha}\lk\mT,d\rk$.
\end{definition}

The $\gamma_{2}$-functional effectively characterizes \eqref{F} when $\mathcal{F}\subset L_2$. 
However, once $\mF$ extends beyond this regime, the $\gamma_{2}$-functional along with its variant $\gamma_{s_0,2}$-functional, is no longer sufficient.
This motivates the introduction of its related functionals.
Following the language in \cite{mendelson2016upper}, we provide the following definition.

\begin{definition}\label{Tala}
For a random variable $Z$ and $p \ge 1$, set
\begin{equation*}
\lV Z\rV_{\lk p\rk} = \sup_{1 \le q \le p} \frac{\lV Z\rV_{L_q}}{\sqrt{q}}.
\end{equation*}
Given a class of functions $\mF$, $u \ge 1$ and $s_0 \ge 0$, put
\begin{equation} \label{gamma}
{\Lambda}_{s_0,u}\lk \mF\rk = \inf \sup_{f \in \mF} \sum_{s \ge s_0} 2^{s/2} \lV f - \pi_s f\rV_{\lk u^2 2^s\rk},
\end{equation}
where the infimum is taken with respect to all sequences $\lk\mF_s\rk_{s \ge 0}$ of subsets of $\mF$, and of cardinality $\lv \mF_s\rv \le 2^{2^s}$. 
$\pi_s f$ is the nearest point in $\mF_s$ to $f$ with respect to the $\lV\, \cdot\, \rV_{ \lk u^2 2^s\rk}$ norm.
Finally, let
\begin{equation*}
\widetilde{\Lambda}_{s_0,u}\lk\mF\rk={\Lambda}_{s_0,u}\lk\mF\rk+2^{s_0/2}\sup_{f \in \mF} \lV\pi_{s_0}f\rV_{\lk u^2 2^{s_0}\rk}.
\end{equation*}
\end{definition}

We provide additional explanations and perspectives on the above definition.
$\lV Z\rV_{\lk p\rk} $ measures the local sub-Gaussian behavior of random variable $Z$, which means that it  takes into account the growth of $Z$'s moments up to a fixed level $p$.
In comparison, the $\lV\,\cdot\,\rV_{\psi_2}$ norm of $Z$ captures its behavior across arbitrary moment orders,
\begin{equation*}
\lV Z \rV_{\psi_2} \asymp \sup_{q \ge 2} \frac{\lV Z\rV _{L_q}}{\sqrt{q}}.
\end{equation*}
This implies  that for any $2 \le p <\infty$, $\lV Z\rV_{\lk p\rk}\le \lV Z \rV_{\psi_2}$.
In fact, for any $u \ge 1$ and $s \ge s_0$, by definition of $\Lambda_{s_0,u}\lk\mF\rk$, one has
\begin{equation*}
{\Lambda}_{s_0,u}\lk\mF\rk \lesssim \inf \sup_{f \in \mF} \sum_{s \ge s_0} 2^{s/2} \lV f - \pi_s f\rV_{\psi_2},
\end{equation*}
and thus $\widetilde{\Lambda}_{0,u}\lk\mF\rk\lesssim \gamma_2\lk \mF,\psi_2\rk$.
Hence, we may rely on $\widetilde{\Lambda}_{s_0,u}(\mF)$ to yield satisfactory bounds in the case where $\mF$ does not belong to $L_2$.
We now provide the following estimates from \cite{mendelson2016upper}, which state that $\widetilde{{\Lambda}}_{s_0,u}\lk\mF\rk$ can be used to bound multiplier processes in a relatively general situation.

\begin{lemma}[\cite{mendelson2016upper}]\label{multiplier}
Let $\{X_{k}\}_{k=1}^{m}$ be independent copies of $X$ and $\{\xi_{k}\}_{k=1}^{m}$ be independent copies of  $\xi$,
and $\xi$ need not be independent of $X$.
				\begin{itemize}
		\item[$\mathrm{(a)}$] Let $\xi$ be sub-exponential.
		There are some absolute constants $c_0,c_1,c_2,c_3$ and $C$ for which the following holds.
		 Fix an integer $s_0 \ge 0$ and $w,u>c_0$. 
		 Then with probability at least $1-2\exp\lk-c_1m w^2\rk-2\exp\lk-c_2u^2 2^{s_0}\rk$,
\begin{eqnarray*}
\sup_{f \in \mF} \lv\frac{1}{\sqrt{m}} \sum_{k=1}^m \lk \xi_k f\lk X_k\rk - \E \xi f \lk X\rk\rk \rv \le C w u\lV \xi\rV_{\psi_1}\widetilde{\Lambda}_{s_0,c_3u}\lk\mF\rk;
\end{eqnarray*}

		\item[$\mathrm{(b)}$] Let $\xi\in L_q$ for some $q>2$.
		There are some positive constants $\widetilde{c_0},\tilde{c_1},\tilde{c_2},\tilde{c_3}$ and $\widetilde{C}$ that depend only on $q$ for which the following holds.
		 Fix an integer $s_0 \ge 0$ and $w,u>\widetilde{c_0}$. 
Then with probability at least $1-\tilde{c_1}w^{-q} m^{-\lk q/2-1\rk}\log^{q} m-2\exp\lk-\tilde{c_2}u^2 2^{s_0}\rk$,
\begin{eqnarray*}
\sup_{f \in \mF} \lv\frac{1}{\sqrt{m}} \sum_{k=1}^m \lk \xi_k f\lk X_k\rk - \E \xi f \lk X\rk\rk \rv \le \widetilde{C} w u\lV \xi\rV_{L_q}\widetilde{\Lambda}_{s_0,\tilde{c_3}u}\lk\mF\rk.
\end{eqnarray*}
		\end{itemize}
\end{lemma}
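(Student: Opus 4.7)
The plan is to use generic chaining based on an admissible sequence $\lk \mF_s \rk_{s \ge s_0}$ achieving (or nearly achieving) the infimum in the definition of $\widetilde{\Lambda}_{s_0,u}\lk\mF\rk$. For each $f \in \mF$ and each $k$, I would write the telescoping decomposition
\begin{equation*}
\xi_k f\lk X_k\rk - \E\xi f\lk X\rk = \lk \xi_k \pi_{s_0} f\lk X_k\rk - \E \xi \pi_{s_0} f\lk X\rk\rk + \sum_{s > s_0} \lk \xi_k \lk \pi_s f - \pi_{s-1} f\rk\lk X_k\rk - \E \xi \lk \pi_s f - \pi_{s-1} f\rk\lk X\rk\rk.
\end{equation*}
Summing over $k$ and dividing by $\sqrt{m}$, the base term contributes at most a factor proportional to $\lV \xi\rV_\bullet \cdot 2^{s_0/2} \sup_{f \in \mF}\lV \pi_{s_0} f\rV_{\lk u^2 2^{s_0}\rk}$ after applying a uniform Bernstein/moment bound over $\lv \mF_{s_0}\rv \le 2^{2^{s_0}}$ base points. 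The chaining term is bounded scale-by-scale, with a union bound over at most $\lv\mF_s\rv \lv\mF_{s-1}\rv \le 2^{2^{s+1}}$ pairs at level $s$.

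For \textbf{Part} $(\mathrm{a})$, the key inequality at each link is a Bernstein-type bound for sums of centered products $\xi_k g\lk X_k\rk$ with $g = \pi_s f - \pi_{s-1} f$. Since products of sub-exponential and variables with bounded moments up to order $p = u^2 2^s$ still enjoy moment control of the same order, I would use a Bernstein-type tail
\begin{equation*}
\mathbb{P}\lk \lv \frac{1}{\sqrt{m}} \sum_{k=1}^m \lk\xi_k g\lk X_k\rk - \E \xi g\lk X\rk\rk \rv \ge t\rk \le 2 \exp\lk -c \min\left\{ \frac{t^2}{\lV \xi\rV_{\psi_1}^2 \lV g\rV_{\lk u^2 2^s\rk}^2}, \sqrt{m} \cdot \frac{t}{\lV \xi\rV_{\psi_1} \lV g\rV_{\lk u^2 2^s\rk}}\right\}\rk,
\end{equation*}
calibrated with $t \asymp w u \lV \xi\rV_{\psi_1} 2^{s/2}\lV g\rV_{\lk u^2 2^s\rk}$ so that the Gaussian part of the tail dominates once $w^2 m$ and $u^2 2^s$ are sufficiently large. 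Summing over $s$ then yields the ${\Lambda}_{s_0,c_3 u}\lk\mF\rk$ part, and the combined failure probability is $2\exp\lk -c_1 m w^2\rk + 2\exp\lk -c_2 u^2 2^{s_0}\rk$ after collecting the two regimes.

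For \textbf{Part} $(\mathrm{b})$, the obstacle is that $\xi$ has only a $q$-th moment, so Bernstein-type exponential concentration is unavailable; this is where the polynomial rate $m^{-\lk q/2 - 1\rk}\log^q m$ must enter. The standard device is a truncation at level $\tau \asymp m^{1/2}/\log m$: write $\xi_k = \xi_k \mathbbm{1}_{\lv \xi_k\rv \le \tau} + \xi_k \mathbbm{1}_{\lv \xi_k\rv > \tau}$. The truncated part is bounded and sub-exponential-like at scale $\tau$, so the chaining argument of Part $(\mathrm{a})$ applies with $\lV \xi\rV_{\psi_1}$ replaced by a constant multiple of $\lV \xi\rV_{L_q}$ (up to $\log m$ factors from the truncation level); the untruncated part is controlled by Markov's inequality on $\sum_k \lv \xi_k\rv^q \mathbbm{1}_{\lv \xi_k\rv > \tau}$, contributing the $m^{-\lk q/2 - 1\rk}\log^q m$ failure probability. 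Choosing $\tau$ to balance these two contributions produces the stated bound.

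The main obstacle I anticipate is the moment-based chaining in \textbf{Part} $(\mathrm{b})$: unlike the sub-exponential case where a single Bernstein estimate suffices at each level, the heavy-tailed case requires the increments to be controlled in $\lV \cdot \rV_{\lk u^2 2^s\rk}$ with moments just up to $q$, so one has to verify that the union bound over $2^{2^{s+1}}$ pairs at each scale remains feasible despite the polynomial (rather than exponential) tails. This is precisely why the functional $\widetilde{\Lambda}_{s_0,u}\lk \mF\rk$—which truncates the moment parameter at $u^2 2^s$—is the right complexity measure, and checking that the truncation level $\tau$ can be tuned to match the scale $2^{s/2}$ at every link in the chain is the technical crux.
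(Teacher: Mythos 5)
The paper does not prove Lemma~\ref{multiplier}: it is taken verbatim from Mendelson's work, with the Remark immediately following stating that Part~$\mathrm{(a)}$ is a minor modification of the proof of Theorem~4.4 in \cite{mendelson2016upper} and Part~$\mathrm{(b)}$ is Theorem~1.9 there. So your proposal is a self-contained reconstruction, and the relevant comparison is to Mendelson's own argument.

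For Part~$\mathrm{(a)}$, your plan — fix a near-optimal admissible sequence for $\widetilde{\Lambda}_{s_0,u}(\mF)$, chain from the root at level $s_0$, apply a Bernstein-type bound to each link with the union bound over $|\mF_s||\mF_{s-1}|\le 2^{2^{s+1}}$ pairs, and collect the two probability regimes — is indeed the skeleton of Mendelson's proof of Theorem~4.4, modulo the sub-Gaussian-to-sub-exponential adjustment. One point you gloss over is that $\xi$ may depend on $X$, so the product $\xi\,g(X)$ is not a product of independent factors; the required moment control of $\xi\,g(X)$ must be obtained by Hölder, splitting the moment budget between $\xi$ and the increment $g$, and this is precisely what the $\|\cdot\|_{(u^2 2^s)}$ truncation in the definition of $\Lambda_{s_0,u}$ is engineered to accommodate. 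Your sketch acknowledges this in passing but does not verify it.

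For Part~$\mathrm{(b)}$, your truncation-plus-Markov route is genuinely different from Mendelson's argument, which uses an order-statistics (monotone rearrangement) decomposition of $\sum_k \xi_k g(X_k)$ rather than hard truncation of $\xi$. More importantly, there is a concrete gap in your plan. After truncating at $\tau\asymp m^{1/2}/\log m$, the truncated variable $\tilde\xi := \xi\mathbbm{1}_{|\xi|\le\tau}$ has $\|\tilde\xi\|_{\psi_1}\asymp\tau$, not $\asymp\|\xi\|_{L_q}$. Feeding this into the Part~$\mathrm{(a)}$ conclusion "with $\|\xi\|_{\psi_1}$ replaced by a constant multiple of $\|\xi\|_{L_q}$" is therefore not automatic: the literal substitution yields a prefactor $\tau\sim m^{1/2}/\log m$, which diverges. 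To recover $\|\xi\|_{L_q}$ one must exploit the two-parameter Bernstein structure — the Gaussian regime governed by $\operatorname{Var}(\tilde\xi g(X))\lesssim\|\xi\|_{L_q}^2\|g\|_{(u^2 2^s)}^2$, and the exponential regime governed by $\tau$ — and then show the Gaussian regime dominates at every level $s\ge s_0$ for the chosen $t\asymp wu\|\xi\|_{L_q}2^{s/2}\|g\|_{(u^2 2^s)}$. You assert this crossover should work but do not check it, and it is exactly this check (together with the Hölder split between the moments of $\xi$ and of $g$ when $q$ is close to $2$) that forms the technical crux; Mendelson's rearrangement argument avoids it entirely. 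Finally, you admit "up to $\log m$ factors from the truncation level," but the lemma as stated has no such factor in its bound, so even if the rest were repaired, your version would be strictly weaker than the claim.
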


\begin{remark}
\textbf{Part}~$\mathrm{(a)}$ of Lemma~\ref{multiplier} can be derived from the proof of Theorem 4.4 in \cite{mendelson2016upper}, which assumes $\xi$ to be sub-Gaussian. 
We found that with only minor adjustments, the result holds when $\xi$ is sub-exponential.
\textbf{Part}~$\mathrm{(b)}$ of Lemma~\ref{multiplier} follows from Theorem 1.9 in \cite{mendelson2016upper}.
\end{remark}
 
 \subsection{Proof of Theorem~\ref{mul inq}}\label{subsecion:proof for mul ineq}
 
To employ the multiplier processes in Lemma~\ref{multiplier}, we present the following lemma, which characterizes the geometric structure of the function class $\mathcal{F}$ in our setting.
 
 \begin{lemma}\label{geometry}
 For any $\M\in\mathcal{S}^n$, we have
  \begin{eqnarray}
 \lV\sum_{k=1}^{m}\left\langle\pp_k\pp_k^*-m\E\pp\pp^*,\M\right\rangle\rV_{L_q}\lesssim K^2\lk \sqrt{qm}\lV\M\rV_{F}+q\lV\M\rV_{op}\rk.
 \end{eqnarray}
 \end{lemma}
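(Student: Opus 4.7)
The plan is to realize the left-hand side as a single centered quadratic form in a higher-dimensional sub-Gaussian vector, and then invoke the moment form of the Hanson--Wright inequality. Concretely, I would stack the sampling vectors into $\Pi:=(\pp_1^{\tp},\ldots,\pp_m^{\tp})^{\tp}\in\C^{mn}$ and define the block-diagonal matrix $\widetilde{\M}:=\mathrm{diag}(\M,\ldots,\M)\in\C^{mn\times mn}$, which is Hermitian because $\M\in\mathcal{S}^n$. A direct expansion then gives the algebraic identity
$$\sum_{k=1}^{m}\lg\pp_k\pp_k^{*}-\E\pp\pp^{*},\M\rg = \Pi^{*}\widetilde{\M}\Pi - \E\Pi^{*}\widetilde{\M}\Pi.$$
Under Assumption~\ref{sample} the $mn$ entries of $\Pi$ are i.i.d.\ mean-zero, variance-one, $K$-sub-Gaussian random variables, so $\Pi$ is a bona-fide sub-Gaussian vector in $\C^{mn}$.

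Next, I would apply the Hanson--Wright moment inequality (see, e.g., Theorem~6.2.1 in~\cite{vershynin2018high}), which yields, for every $q\ge 2$,
$$\lV \Pi^{*}\widetilde{\M}\Pi - \E\Pi^{*}\widetilde{\M}\Pi\rV_{L_q} \lesssim K^{2}\lk \sqrt{q}\lV \widetilde{\M}\rV_{F}+q\lV \widetilde{\M}\rV_{op}\rk.$$
The block-diagonal structure immediately gives $\lV \widetilde{\M}\rV_{F}^{2}=m\lV \M\rV_{F}^{2}$ and $\lV \widetilde{\M}\rV_{op}=\lV \M\rV_{op}$, and substituting these identities into the previous display produces exactly the claimed bound. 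The remaining range $1\le q<2$ follows from Jensen's inequality applied to the $q=2$ case.

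The only nontrivial point, and what I expect to be the main technical obstacle, is the complex-to-real reduction for Hanson--Wright, since the standard statement is for real sub-Gaussian vectors. To handle this I would decompose each entry $\varphi_j=\varphi_{j,R}+i\varphi_{j,I}$; Assumption~\ref{sample} together with $\E\varphi^{2}=0$ and $\E|\varphi|^{2}=1$ forces $\varphi_{j,R},\varphi_{j,I}$ to be uncorrelated, mean zero, of variance $1/2$, and with $\psi_2$-norms $\lesssim K$. Writing the Hermitian $\widetilde{\M}=\widetilde{\M}_{R}+i\widetilde{\M}_{I}$ (symmetric real part, antisymmetric imaginary part), one can represent $\Pi^{*}\widetilde{\M}\Pi$ as a real quadratic form on $\R^{2mn}$ with a symmetric matrix whose Frobenius and operator norms are comparable, up to absolute constants, to those of $\widetilde{\M}$. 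The standard real Hanson--Wright moment bound then transfers with only constant-factor losses, which are absorbed into $\lesssim$.
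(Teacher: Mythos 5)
Your proof is essentially the same as the paper's: both stack the $\pp_k$ into one long vector, identify the sum as a centered quadratic form with the block-diagonal Hermitian matrix $\mathrm{diag}(\M,\dots,\M)$, and then appeal to Hanson--Wright, using $\lV\widetilde{\M}\rV_F^2=m\lV\M\rV_F^2$ and $\lV\widetilde{\M}\rV_{op}=\lV\M\rV_{op}$. The only substantive difference is that the paper applies the Hanson--Wright \emph{tail} bound of Rudelson--Vershynin and then converts tail to $L_q$-moments by hand, via $\E|X|^q=\int_0^\infty q t^{q-1}\mathbb{P}(|X|>t)\,dt$ and the elementary bound $\Gamma(q+1)\le 2(2q/e)^q$, whereas you invoke a packaged moment form directly. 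Be aware that Theorem~6.2.1 in Vershynin's book is the concentration (tail) statement, not a moment inequality, so strictly speaking that citation still requires the same tail-to-moment integration the paper carries out; replacing it with a reference that actually states the $L_q$ version, or including the short integration argument, would make the write-up self-contained. Your remark on the complex-to-real reduction is a correct and worthwhile addition — the paper applies Rudelson--Vershynin (stated for real vectors) to a complex $\pp$ without comment — though it is not strictly a gap in the paper since the reduction for Hermitian $\M$ is routine and preserves the Frobenius and operator norms up to absolute constants.
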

 
 \begin{proof}
 By Hanson-Wright inequality in \cite{rudelson2013hanson}, there exists universal constant $c>0$, such that
 for random variable 
 $$\sum_{k=1}^{m}\pp_k^*\M\pp_k=\begin{pmatrix}
 \pp_{1}^{*}& \cdots&\pp_{m}^{*} \\
\end{pmatrix}
\begin{pmatrix}
\M &   &   \\
	  & \ddots &   \\
	  &   & \M
\end{pmatrix}
\begin{pmatrix}
 \pp_{1}  \\
	 \vdots    \\
	  \pp_{m}
	  \end{pmatrix},$$
	   for any $t>0$, we have, 
\begin{eqnarray*}
\begin{aligned}
\mathbb{P}&\lk\lv\sum_{k=1}^{m}\pp_k^*\M\pp_k-m\E\pp^*\M\pp\rv > t \rk \\
&\quad \quad \quad \quad\quad \quad \le 2 \exp\lk-c\min\left\{\frac{t^2}{K^4m\lV \M\rV^{2}_{F}},\frac{t}{K^2\lV \M\rV_{op}}\right\}\rk.
\end{aligned}
\end{eqnarray*}
Then, we can obtain
\begin{equation}\label{E of hanson}
\begin{aligned}
\mathbb{E} \left| \sum_{k=1}^{m} \pp_k^* \pmb{M} \pp_k - m\, \mathbb{E}  \pp^* \pmb{M} \pp  \right|^q
&= \int_{0}^{\infty} q t^{q-1} \, \mathbb{P} \left( \left| \sum_{k=1}^{m} \pp_k^* \pmb{M} \pp_k - m\, \mathbb{E}  \pp^* \pmb{M} \pp  \right| > t \right) dt \\
&\le 2q \int_{0}^{\infty} t^{q-1} \exp\left( -c \frac{t^2}{K^4 m \|\pmb{M}\|_F^2} \right) dt \\
&\quad + 2q \int_{0}^{\infty} t^{q-1} \exp\left( -c \frac{t}{K^2 \|\pmb{M}\|_{op}} \right) dt \\
&= 2q\, K^{2q} m^{q/2} \|\pmb{M}\|_F^q \int_{0}^{\infty} x^{q-1} \exp(-c x^2) dx \\
&\quad + 2q\, K^{2q} \|\pmb{M}\|_{\mathrm{op}}^q \int_{0}^{\infty} x^{q-1} \exp(-c x) dx \\
&= 2q\, \Gamma\left( \frac{q}{2} \right) c^{q/2 - 1} K^{2q} m^{q/2} \|\pmb{M}\|_F^q \\
&\quad + 2q\, \Gamma(q)\, c^{q - 1} K^{2q} \|\pmb{M}\|_{op}^q.
\end{aligned}
\end{equation}
where $\Gamma\lk q\rk$ denotes the Gamma function. 
We outline a property of the Gamma function below.
Note that for any $q>0$,
 \begin{eqnarray}\label{gamma pro}
\Gamma\lk q+1\rk=\int_0^\infty \lk x^q e^{-\frac{x}{2}} \rk e^{-\frac{x}{2}} dx \le \lk 2q\rk^q e^{-q}\int_0^\infty e^{-\frac{x}{2}} dx = 2 \lk \frac{2q}{e} \rk^q,
	\end{eqnarray}
where we have used the fact that $x^qe^{-\frac{x}{2}}$ attains maximum at $x=2q$ as
 \begin{eqnarray*} 
 \frac{d}{dx} \lk x^qe^{-\frac{x}{2}} \rk = x^{q-1} e^{-\frac{x}{2}} \lk q-\frac{x}{2} \rk.	
 \end{eqnarray*}
 Thus when we substitute \eqref{gamma pro} into \eqref{E of hanson}, we obtain
 \begin{eqnarray}
 \begin{aligned}
 \lV\sum_{k=1}^{m}\left\langle\pp_k\pp_k^*-m\E\pp\pp^*,\M\right\rangle\rV_{L_q}&=\lk\E \lv  \sum_{k=1}^{m}\pp_k^*\M\pp_k-m\E\pp^*\M\pp\rv^{q}\rk^{1/q}\\
 &\lesssim K^2\lk \sqrt{qm}\lV\M\rV_{F}+q\lV\M\rV_{op}\rk.
 \end{aligned}
 \end{eqnarray}
 \end{proof}
 
Now, we are ready to proceed with the proof of Theorem~\ref{mul inq}.
We set $\Omega=\C^{n\times n}, X=\pp\pp^*$ and $\mathcal{F}=\left\{\lg\cdot,\M\rg:\M\in\mM\right\}$, where $\mM$ is a subset of $\mS^n$.
In our case later, we will take $\mM=\left\{\z\z^*:\z\in\mathbb{S}^{n-1}\right\}$.
By Lemma~\ref{multiplier}, it suffices to upper bound $\widetilde{\Lambda}_{s_0,u}\lk\mF\rk$ and invoke the probability bounds established therein.

By Lemma~\ref{geometry} and the definition of $\lV\,\cdot\,\rV_{\lk p\rk}$ norm, we have that
  \begin{eqnarray*}
  \begin{aligned}
 &\lV\left\langle\frac{1}{\sqrt{m}}\sum_{k=1}^{m}\lk\pp_k\pp_{k}^{*}-\E\pp\pp^{*}\rk,\M\right\rangle\rV_{\lk p\rk}\\
 &\quad\quad\quad= \sup\limits_{1 \le q \le p} \frac{\lV\left\langle \frac{1}{\sqrt{m}}\sum_{k=1}^{m}\lk\pp_k\pp_{k}^{*}-\E\pp\pp^{*}\rk,\M\right\rangle\rV_{L_q} }{\sqrt{q}}\\
 &\quad\quad\quad\lesssim K^2\lk \lV\M\rV_{F}+\sqrt{\frac{p}{m}}\lV\M\rV_{op}\rk,
 \end{aligned}
 \end{eqnarray*}
 and thus
  \begin{eqnarray*}
 \lV\left\langle\frac{1}{\sqrt{m}}\sum_{k=1}^{m}\lk\pp_k\pp_{k}^{*}-\E\pp\pp^{*}\rk,\M\right\rangle\rV_{\lk u^2 2^s\rk}&\lesssim K^2\lk \lV\M\rV_{F}+\frac{u2^{s/2}}{\sqrt{m}}\lV\M\rV_{op}\rk.
 \end{eqnarray*}
 Hence, by the definition of ${\Lambda}_{s_0,u}\lk \mF\rk$-functional, we can obtain
 \begin{equation} \label{gamma3}
 \begin{aligned}
{\Lambda}_{s_0,u}\lk \mF\rk
&\lesssim  K^2 \inf \sup_{\M\in \mM} \lk \sum_{s\ge s_0}2^{s/2}\lV\M - \pi_s \lk\M\rk\rV_{F} + \sum_{s\ge s_0}\frac{u2^s}{\sqrt{m}}\lV\M - \pi_s \lk\M\rk\rV_{op} \rk\\
&\lesssim K^2\lk\gamma_{s_0,2} \lk \mM,\lV\,\cdot\,\rV_{F}\rk+\frac{u}{\sqrt{m}}\gamma_{s_0,1}\lk\mM,\lV\,\cdot\,\rV_{op}\rk\rk,
\end{aligned}
\end{equation}
and then
 \begin{equation}\label{gamma4}
 \begin{aligned}
\widetilde{{\Lambda}}_{s_0,u}\lk \mF\rk&\lesssim K^2\lk\gamma_{s_0,2} \lk \mM,\lV\,\cdot\,\rV_{F}\rk+2^{s_0/2}\sup_{ \mM} \|\pi_{s_0}\lk\M\rk\|_{F}\rk\\
&+K^2\frac{u}{\sqrt{m}}\lk \gamma_{s_0,1}\lk \mM,\lV\,\cdot\,\rV_{op}\rk+ 2^{s_0}\sup_{\mM} \|\pi_{s_0}\lk\M\rk\|_{op}\rk.\\
\end{aligned}
\end{equation}

We now turn to our specific case, where $\mM=\left\{\z\z^*:\z\in\mathbb{S}^{n-1}\right\}$.
Thus  
\begin{equation*}
\sup_{\mM} \|\pi_{s_0}\lk\M\rk\|_{op}= \sup_{\mM} \|\pi_{s_0}\lk\M\rk\|_{F}=1.
\end{equation*}
By Lemma 3.1 in \cite{candes2011tight}, the  covering number $\mN\lk\mM, \lV\cdot\rV_{F},\epsilon \rk$ satisfies that
 \begin{eqnarray*}
 \mN\lk\mM, \lV\,\cdot\,\rV_{F},\epsilon \rk\le \lk\frac{9}{\epsilon}\rk^{2n+1}.
 \end{eqnarray*}
 Then by the Dudley integral (see, e.g., \cite[Theorem 11.17]{ledoux2013probability}), we have
\begin{equation*}
\begin{aligned}
\gamma_{s_0,2} \left( \mM,\lV\,\cdot\,\rV_{F} \right) 
&\le \gamma_{2} \left( \mM, \lV\,\cdot\,\rV_{F} \right) \\
&\lesssim \int_{0}^{1} \sqrt{ \log \mN\left( \mM, \lV\,\cdot\,\rV_{F}, \epsilon \right) } \, d\epsilon \\
&\le \int_{0}^{1} \sqrt{ \left(2n + 1\right) \cdot \log \left( \frac{9}{\epsilon} \right) } \, d\epsilon \lesssim \sqrt{n},
\end{aligned}
\end{equation*}
and
\begin{equation*}
\begin{aligned}
\gamma_{s_0,1} \left( \mM, \lV\,\cdot\,\rV_{op} \right)
&\le \gamma_{1} \left( \mM, \lV\,\cdot\,\rV_{op} \right)\le \gamma_{1} \left( \mM, \lV\,\cdot\,\rV_{F} \right) \\
&\lesssim \int_{0}^{1}  \log \mN\left( \mM, \lV\,\cdot\,\rV_{F}, \epsilon \right)  \, d\epsilon \\
&\lesssim \int_{0}^{1} \left(2n + 1\right) \cdot \log \left( \frac{9}{\epsilon} \right) \, d\epsilon \lesssim n.
\end{aligned}
\end{equation*}
Finally, we select $s_0$ sufficiently large such that $K^22^{s_0/2}\lesssim \sqrt{n}$ and $K^22^{s_0}\lesssim n$, and take $u$ and $w$ in Lemma~\ref{multiplier} to be of order 1, independent of other parameters.
With these choices and by ensuring $M \gtrsim_{K} n$, the proof is then complete.

\section{Small Ball Method and Lower Isometry Property}\label{small0}

The purpose of this section is to lower bound the parameters $\alpha$ and $\widetilde{\alpha}$ in Section \ref{Architecture} that satisfies the  \textbf{\textit{Sampling Lower Bound Condition}} (\textit{SLBC}) over different admissible sets.
We employ the small ball method and the lower isometry property to obtain lower bounds for these two parameters, respectively.

\subsection{Small Ball Method}\label{small_ball}

We present the following result, which establishes lower bounds for the \textit{SLBC} with respect to $\lV\,\cdot\,\rV_F$.

	\begin{lemma}\label{small ball1}
		Suppose that $\left\{\pp_{k}\right\}_{k=1}^{m}$ satisfy Assumption~\ref{sample}.
		There exist positive constants $L,c,C$, depending only on $K$ and $\mu$, such that if $m \ge L n$, the following holds with probability at least $1 - e^{-c m}$:   
for all $\M\in \mathcal{E}_{\text{ncvx}}$ or all $\M\in \mathcal{E}_{\text{cvx,1}}$, one has
\begin{equation*}
\sum_{k=1}^{m}\lv\lg\pp_k\pp_k^*,\M\rg\rv^2\ge C_1m \lV\M\rV^{2}_{F};
\end{equation*}
	\end{lemma}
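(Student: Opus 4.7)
The plan is to apply Mendelson's small-ball method (Koltchinskii--Mendelson, Tropp) on the Frobenius-normalized cone section $\mathcal{E}_F := \{\M \in \mathcal{E} : \lV\M\rV_F = 1\}$, where $\mathcal{E}$ is either $\mathcal{E}_{\text{ncvx}}$ or $\mathcal{E}_{\text{cvx,1}}$. By positive homogeneity of both index sets and of the quadratic form in $\M$, it suffices to establish the lower bound uniformly on $\mathcal{E}_F$. The starting truncation
\begin{equation*}
\sum_{k=1}^m \lv\lg \pp_k\pp_k^*,\M\rg\rv^2 \;\ge\; \tau^2 \sum_{k=1}^m \mathbbm{1}\left\{\lv\lg \pp_k\pp_k^*,\M\rg\rv \ge \tau\right\}
\end{equation*}
reduces the task to uniformly lower bounding the indicator count, which the small-ball inequality controls in terms of a marginal tail probability $Q$ minus the Rademacher width
\begin{equation*}
W_m(\mathcal{E}_F) := \E \sup_{\M \in \mathcal{E}_F} \lv \tfrac{1}{\sqrt m}\sum_{k=1}^m \epsilon_k \lg \pp_k\pp_k^*, \M\rg \rv.
\end{equation*}

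\textbf{Step 1: Uniform marginal tail.}
First I would produce $\tau, p > 0$ depending only on $K$ and $\mu$ such that $\mathbb{P}(\lv\lg \pp\pp^*,\M\rg\rv \ge \tau) \ge 2p$ for every $\M \in \mathcal{E}_F$, via Paley--Zygmund. For the second moment, expanding $\E \lv \pp^*\M\pp\rv^2$ in the eigenbasis of $\M$ and using $\E \varphi = \E \varphi^2 = 0$, $\E \lv\varphi\rv^2 = 1$ and $\E \lv\varphi\rv^4 = 1+\mu$ delivers $\E \lv \pp^*\M\pp\rv^2 \gtrsim \min\{1,\mu\} \lV\M\rV_F^2$; the conditions $\mu > 0$ and $\E \varphi^2 = 0$ are precisely what rule out the degeneracies flagged after Assumption~\ref{sample}. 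For the matching fourth moment I would combine Proposition~\ref{pro low-rank} (so $\lV\M\rV_* \le 3 \lV\M\rV_F$) with Proposition~\ref{one} to peel off the lone negative eigenvalue, decomposing $\M$ into pieces of bounded stable rank, and then apply a sub-Gaussian Hanson--Wright estimate to obtain $\E \lv\pp^*\M\pp\rv^4 \lesssim_K \lV\M\rV_F^4$. Paley--Zygmund closes the step.

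\textbf{Step 2: Bounding the Rademacher width.}
By trace duality applied pointwise to each realization, together with Proposition~\ref{pro low-rank},
\begin{equation*}
W_m(\mathcal{E}_F) \;\le\; \sup_{\M \in \mathcal{E}_F}\lV\M\rV_* \cdot \E \lV \tfrac{1}{\sqrt m}\sum_{k=1}^m \epsilon_k \pp_k\pp_k^* \rV_{op} \;\lesssim_K\; \sqrt n,
\end{equation*}
where the first factor is an absolute constant and the operator-norm expectation is bounded by Theorem~\ref{mul inq}$(\mathrm{a})$ applied with the Rademacher multiplier $\xi = \epsilon$ (sub-exponential, independent of $\pp$, so $\E \xi \pp\pp^* = 0$) and integration of tails. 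Under $m \ge L(K,\mu)\, n$ with $L$ sufficiently large, $W_m(\mathcal{E}_F) \le \tfrac{p}{4}\sqrt m$.

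\textbf{Step 3: Assembly and main obstacle.}
Taking $t \asymp \sqrt m$ in the small-ball inequality and combining Steps~1--2 yields, with probability at least $1 - e^{-cm}$, the bound $\sum_{k=1}^m \mathbbm{1}\{\lv\lg \pp_k\pp_k^*,\M\rg\rv \ge \tau\} \ge pm/2$ uniformly on $\mathcal{E}_F$, whence the lemma with $C_1 = \tau^2 p/2$. The principal difficulty is the uniform fourth-moment control in Step~1 over $\mathcal{E}_{\text{cvx,1}}$: since its elements are only approximately low-rank (in nuclear-to-Frobenius ratio) rather than literally so, one cannot apply a rank-$r$ Hanson--Wright bound off the shelf, and the specific geometry afforded by Propositions~\ref{one} and~\ref{pro low-rank} must be used to reduce to a bounded stable-rank situation before invoking the chaos estimate. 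This is the sole point in the argument where the structure of $\mathcal{E}_{\text{cvx,1}}$ beyond its nuclear-ball relaxation is essential.
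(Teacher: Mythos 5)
Your proposal lands on the paper's own strategy: Tropp--Mendelson small ball with Paley--Zygmund for the marginal tail and trace duality plus the approximate-low-rank bound of Proposition~\ref{pro low-rank} for the Rademacher width. But you have misdiagnosed where the work is, and as a result propose an unnecessary construction in the one spot you flag as the ``principal difficulty.''

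In Step~1 you claim the fourth-moment control over $\mathcal{E}_{\text{cvx,1}}$ is the hard part, requiring a decomposition of $\M$ into pieces of bounded stable rank before Hanson--Wright can be invoked. This is a phantom obstacle. The Hanson--Wright inequality (the paper's Lemma~\ref{geometry} with $m=1$, $q=4$) gives $\E\lv\pp^*\M\pp - \E\pp^*\M\pp\rv^4 \lesssim K^8\lk\lV\M\rV_F^4 + \lV\M\rV_{op}^4\rk$ for \emph{any} Hermitian $\M$, and since $\lV\M\rV_{op}\le\lV\M\rV_F$ always, this is $\lesssim K^8\lV\M\rV_F^4$ with no structural assumption. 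The only remaining term is $(\E\pp^*\M\pp)^4 = (\text{Tr}\,\M)^4$, and the paper's Proposition~\ref{small ball function} (stated for arbitrary $\mM\subset\mathbb{S}_F$) dispenses with even bounding this: it keeps $(\text{Tr}\,\M)^4$ in both the numerator and denominator of the Paley--Zygmund ratio and applies a mediant inequality, so that $\mathcal{Q}_u \gtrsim \min\{\mu^2,1\}/(K^8+1)$ holds regardless of rank. If you prefer to bound $\lv\text{Tr}\,\M\rv\le\lV\M\rV_*\le 3\lV\M\rV_F$ via Proposition~\ref{pro low-rank}, that also closes the step in one line, with no peeling of eigenvalues and no stable-rank decomposition. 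The approximate-low-rank geometry is genuinely needed only for Step~2, which you treat correctly.

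One smaller point on Step~2: you propose getting $\E\lV\frac{1}{\sqrt m}\sum_k\epsilon_k\pp_k\pp_k^*\rV_{op}\lesssim_K\sqrt n$ from Theorem~\ref{mul inq}$(\mathrm{a})$ ``and integration of tails.'' As stated, Theorem~\ref{mul inq}$(\mathrm{a})$ gives a bound at a single confidence level, not a tail function in a free parameter $t$; to integrate you would need to re-run Lemma~\ref{multiplier} with the $w$ parameter varying, which is more work than the lemma you cite grants. The paper sidesteps this by using Lemma~\ref{chaos1} (a Talagrand-style empirical-chaos bound in expectation) directly, which is the cleaner route and is what I would recommend you use here.
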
	

    \begin{remark}
    We make some remarks on Lemma~\ref{small ball1}.
    \begin{enumerate}
\item
Lemma~\ref{small ball1} provides lower bounds for the parameter $\alpha$ over admissible sets $\mathcal{E}_{\text{ncvx}}$ and $\mathcal{E}_{\text{cvx,1}}$, establishing that $\alpha\gtrsim_{K,\mu} m$ in both cases, i.e., up to a constant depending only on $K$ and $\mu$.
\item 
The result also holds for asymmetric sampling of the form $\left\{\pmb{a}_k\pmb{b}^*_k\right\}_{k=1}^{m}$, where $\left\{\pmb{a}_k\right\}_{k=1}^{m}$ and $\left\{\pmb{b}_k\right\}_{k=1}^{m}$ are formed from independent copies of $\pp\in\C^n$ satisfying the conditions in Remark \ref{remark1}.
\item A similar formulation of Lemma~\ref{small ball1} can be found in \cite[Lemma 3]{krahmer2020complex}, where it is proved for a different set and by an analysis different from ours, namely using the covering number analysis instead of our empirical chaos process approach (see Lemma~\ref{chaos1} below).
\end{enumerate}
\end{remark}

A standard and effective approach for establishing such lower bounds is the small ball method—a widely used probabilistic technique for deriving high-probability lower bounds on nonnegative empirical processes; see, e.g., \cite{mendelson2015learning,tropp2015convex, kueng2017low, krahmer2020complex, krahmer2021convex, chinot2022robustness,huang2025low}.

The proof relies on several auxiliary results.
We begin with the first, which states the small ball method \cite{mendelson2015learning,tropp2015convex} tailored to our setting.
For brevity, we omit its proof, which can be found in~\cite[Proposition 5.1]{tropp2015convex}.

\begin{proposition}[\cite{tropp2015convex}]\label{small ball}
Let matrix set $\mM\subset \mathcal{S}^n$ and $\left\{\pp_k\right\}_{k=1}^{m}$ be independent copies of a random vector $\pp$ in $\C^{n}$. 
For $u>0$, let the small ball function be
    \begin{eqnarray}\label{sbf}     
     \mathcal{Q}_{u}\lk \mM;\pp\pp^*\rk=\inf_{\M\in\mM}\mathbb{P}\lk\lv\pp\pp^*,\pmb{M}\rv\ge u\rk 
	         \end{eqnarray}
	         and the supremum of Rademacher empirical process be
	          \begin{eqnarray}	\label{Rade}    
	            \mathcal{W}_{m}\lk \mM;\pp\pp^*\rk
	            =\E\sup_{\pmb{M}\in\mM}\lv\frac{1}{\sqrt{m}}\sum_{k=1}^{m}\varepsilon_{k}\lg\pp_k\pp_k^*,\mM\rg\rv,
	      	         \end{eqnarray} 
where $\{\varepsilon_{k}\}_{k=1}^{m}$ is a Rademacher sequence independent of everything else.        
		         
	Then for any $u>0$ and $t>0$, with probability at least $1-\exp\lk-2t^{2}\rk$,
	\begin{eqnarray}
	\begin{aligned}
	     \inf_{\M\in\mM}
	&\lk\sum_{k=1}^{m}\lv\lg\pp_k\pp_k^*,\M\rg\rv^2\rk^{1/2}\\		
	 &\quad\quad\quad\ \ge u \sqrt{m} \mathcal{Q}_{2u}\lk \mM;\pp\pp^*\rk-2 \mathcal{W}_{m}\lk \mM;\pp\pp^*\rk-u t.
	 \end{aligned}
	    \end{eqnarray}  
\end{proposition}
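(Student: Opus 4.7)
The plan is to apply the small ball method from Proposition~\ref{small ball} with the index set $\mM$ chosen to be the normalized version of $\mathcal{E}_{\text{ncvx}}\cup\mathcal{E}_{\text{cvx,1}}$. Since the target inequality is homogeneous of degree $2$ in $\M$, it suffices to prove it for matrices $\M$ with $\lV\M\rV_F=1$. By Proposition~\ref{pro low-rank}, every $\M\in\mathcal{E}_{\text{ncvx}}\cup\mathcal{E}_{\text{cvx,1}}$ obeys $\lV\M\rV_*\le 3\lV\M\rV_F$, so it is enough to establish the bound uniformly over the enlarged set
\begin{equation*}
\mM:=\left\{\M\in\mathcal{S}^n:\lV\M\rV_F=1,\ \lV\M\rV_*\le 3\right\}.
\end{equation*}
Proposition~\ref{small ball} then reduces the task to two ingredients: a uniform lower bound on the small-ball function $\mathcal{Q}_{2u}(\mM;\pp\pp^*)$ and a uniform upper bound on the Rademacher process $\mathcal{W}_m(\mM;\pp\pp^*)$.

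For the small-ball function I would use a Paley–Zygmund type argument. Under Assumption~\ref{sample}, a direct moment computation (exploiting $\E\lk\varphi\rk=\E\lk\varphi^2\rk=0$, $\E\lk\lv\varphi\rv^2\rk=1$, $\E\lk\lv\varphi\rv^4\rk=1+\mu$) yields a second-moment lower bound $\E\lv\lg\pp\pp^*,\M\rg\rv^2\gtrsim_{\mu}\lV\M\rV_F^2$ for every Hermitian $\M$, where the constant crucially depends on $\mu>0$. The matching fourth-moment upper bound $\E\lv\lg\pp\pp^*,\M\rg\rv^4\lesssim_K \lV\M\rV_F^4$ follows from sub-Gaussian hypercontractivity (e.g.\ Hanson–Wright in the form used in Lemma~\ref{geometry}). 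Paley–Zygmund then delivers an absolute constant $c_0=c_0(K,\mu)>0$ and some $u_0=u_0(K,\mu)>0$ such that $\mathcal{Q}_{2u_0}(\mM;\pp\pp^*)\ge c_0$ uniformly over $\mM$.

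For the Rademacher term I would apply the trace duality $\lv\lg\pp_k\pp_k^*,\M\rg\rv\le \lV\M\rV_*\lV\pp_k\pp_k^*\rV_{op}$ to pass from a supremum over $\mM$ to an operator-norm bound, using the nuclear-norm constraint built into $\mM$:
\begin{equation*}
\mathcal{W}_m(\mM;\pp\pp^*)\le 3\,\E\lV\frac{1}{\sqrt{m}}\sum_{k=1}^m\varepsilon_k\pp_k\pp_k^*\rV_{op}.
\end{equation*}
The right-hand side is a symmetric matrix Rademacher sum of sub-Gaussian rank-one terms, which I would control by the empirical chaos/matrix deviation machinery (Lemma~\ref{chaos1} alluded to by the authors, or equivalently a standard $\gamma_2$ computation as in the proof of Theorem~\ref{mul inq}) to obtain $\E\lV\cdots\rV_{op}\lesssim_{K}\sqrt{n}$, hence $\mathcal{W}_m(\mM;\pp\pp^*)\lesssim_{K}\sqrt{n}$.

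Plugging these two ingredients into Proposition~\ref{small ball} with $u=u_0$ and $t=c_1\sqrt{m}$ for a small absolute constant $c_1$, I obtain, with probability at least $1-e^{-cm}$, the lower bound
\begin{equation*}
\inf_{\M\in\mM}\lk\sum_{k=1}^m\lv\lg\pp_k\pp_k^*,\M\rg\rv^2\rk^{1/2}\gtrsim_{K,\mu}\sqrt{m}-C\sqrt{n},
\end{equation*}
which becomes $\gtrsim_{K,\mu}\sqrt{m}$ once $m\ge Ln$ for a sufficiently large $L=L(K,\mu)$. Squaring and homogenizing back in $\lV\M\rV_F$ finishes the proof. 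The main obstacle I anticipate is the Rademacher step: the set $\mM$ is not itself low-rank, only low nuclear-norm, so the operator-norm reduction must be combined with a sharp sub-Gaussian matrix deviation bound to avoid extraneous $\sqrt{n\log n}$ factors; this is exactly what the chaos-process approach in Lemma~\ref{chaos1} is designed to handle, and it is the reason the authors note their argument differs from the covering-number route in \cite{krahmer2020complex}.
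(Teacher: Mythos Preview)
Your proposal does not address the stated proposition at all. Proposition~\ref{small ball} is the \emph{general} small-ball lower bound (cited from \cite{tropp2015convex}): it asserts that for \emph{any} index set $\mM$, the empirical $\ell_2$-infimum is controlled from below by the small-ball function $\mathcal{Q}_{2u}$ minus the Rademacher complexity $\mathcal{W}_m$ minus a fluctuation term. What you have written is instead a proof of Lemma~\ref{small ball1} --- the \emph{application} of Proposition~\ref{small ball} to the specific sets $\mathcal{E}_{\text{ncvx}}$ and $\mathcal{E}_{\text{cvx,1}}$. You explicitly invoke Proposition~\ref{small ball} as a black box in your first paragraph, so nothing in your text establishes it.

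To actually prove Proposition~\ref{small ball}, one argues pointwise in $\M$ via a soft-indicator lower bound of the type
\[
\Bigl(\sum_{k=1}^m\lv\lg\pp_k\pp_k^*,\M\rg\rv^2\Bigr)^{1/2}\;\ge\; u\sum_{k=1}^m\mathbbm{1}_{\{\lv\lg\pp_k\pp_k^*,\M\rg\rv\ge 2u\}},
\]
then centers the indicator sum, symmetrizes, applies a contraction (the indicator is a Lipschitz function of $u^{-1}\lg\pp_k\pp_k^*,\M\rg$ clipped to $[-1,1]$), and finally uses a bounded-differences concentration inequality to absorb the supremum fluctuation into the $ut$ term. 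None of these steps appears in your write-up.

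As a side remark, what you wrote is essentially the paper's own proof of Lemma~\ref{small ball1} (Section~6.1.1): the paper likewise reduces the Rademacher term to an operator-norm bound via the nuclear-norm constraint in Proposition~\ref{pro low-rank}, bounds that operator norm through Lemma~\ref{chaos1}, and handles the small-ball function via Paley--Zygmund (Proposition~\ref{small ball function}). So your argument is correct --- just for the wrong statement.
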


To employ the preceding proposition, one should obtain a lower bound for the small ball function and an upper bound for the supremum of the Rademacher empirical process.
The following lemma provides the latter.
This result can be interpreted as a Rademacher-type empirical chaos process, generalizing Theorem 15.1.4 in \cite{talagrand2014upper}.

\begin{lemma}\label{chaos1} 
Let $\pp\in\C^n$ be a random vector whose entries $\left\{\varphi_j\right\}_{j=1}^{n}$ are i.i.d., mean 0, variance 1, and $K$-sub-Gaussian.
For any matrix set $\mM\subset\mathcal{S}^{n}$ that satisfies $\mM=-\mM$, we have 
         \begin{eqnarray}\label{rade0}
         \begin{aligned}
 	 \mathcal{W}_{m}&\lk \mM;\pp\pp^*\rk\\
	& \le C_1 K^2\lk\gamma_{2}\lk\mM,\lV\,\cdot\,\rV_{F}\rk +  \frac{\gamma_{1}\lk\mM,\lV\,\cdot\,\rV_{op}\rk }{\sqrt{m}}\rk+C_2\sup\limits_{\M\in\mM}\text{Tr}\lk\M\rk,
	 \end{aligned}
	  \end{eqnarray}
      where $C_1,C_2>0$ are absolute constants.
 \end{lemma}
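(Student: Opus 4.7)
The plan is to treat $\mathcal{W}_m\lk\mM;\pp\pp^*\rk$ as a Rademacher-weighted empirical process, peel off the trace contribution using $\E\pp\pp^*=\pmb{I}_n$, and control the centered remainder by generic chaining with mixed-tail increments. First I would write
\[
\pp_k^*\M\pp_k \;=\; \trace\lk\M\rk + \bigl(\pp_k^*\M\pp_k - \trace\lk\M\rk\bigr),
\]
which splits $\mathcal{W}_m\lk\mM;\pp\pp^*\rk$ into a trace part and a centered part. The trace part equals $\trace\lk\M\rk\cdot m^{-1/2}\sum_k \varepsilon_k$; its expected supremum over $\mM$ is at most $\sup_{\M\in\mM}\lv\trace\lk\M\rk\rv$, and the assumption $\mM=-\mM$ converts this to $\sup_{\M\in\mM}\trace\lk\M\rk$, accounting for the $C_2$ term in \eqref{rade0}.

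For the centered process $\widetilde{Y}_\M := m^{-1/2}\sum_k\varepsilon_k\lk\pp_k^*\M\pp_k-\trace\lk\M\rk\rk$, the crucial step is a mixed-tail increment bound. Writing $\Delta=\M-\M'$ and $W_k := \pp_k^*\Delta\pp_k-\trace\lk\Delta\rk$, the Hanson-Wright inequality \cite{rudelson2013hanson} yields
\[
\mathbb{P}\lk\lv W_k\rv\ge t\rk \;\le\; 2\exp\lk-c\min\left\{t^2/(K^4\lV\Delta\rV_F^2),\, t/(K^2\lV\Delta\rV_{op})\right\}\rk.
\]
Since $\varepsilon_k W_k$ is symmetric with the same tail, Bernstein's inequality for sums of independent variables with mixed sub-Gaussian/sub-exponential tails gives
\[
\mathbb{P}\lk\lv\widetilde{Y}_\M-\widetilde{Y}_{\M'}\rv\ge v\rk \;\le\; 2\exp\lk-c\min\left\{v^2/(K^4\lV\Delta\rV_F^2),\,v\sqrt{m}/(K^2\lV\Delta\rV_{op})\right\}\rk,
\]
equivalently $\mathbb{P}\lk\lv\widetilde{Y}_\M-\widetilde{Y}_{\M'}\rv\ge \sqrt{u}\,d_2\lk\M,\M'\rk+u\,d_1\lk\M,\M'\rk\rk\le 2e^{-u}$ for all $u\ge 0$, with $d_2\lk\M,\M'\rk\asymp K^2\lV\Delta\rV_F$ and $d_1\lk\M,\M'\rk\asymp K^2\lV\Delta\rV_{op}/\sqrt{m}$.

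With these mixed-tail increments, I would invoke the generic chaining bound of Bernstein type (as developed in \cite{talagrand2014upper}), which asserts $\E\sup_{\M\in\mM}\lv\widetilde{Y}_\M-\widetilde{Y}_{\M_0}\rv \lesssim \gamma_2\lk\mM,d_2\rk+\gamma_1\lk\mM,d_1\rk$. By linearity of the Talagrand functionals in the metric,
\[
\gamma_2\lk\mM,d_2\rk \asymp K^2\gamma_2\lk\mM,\lV\,\cdot\,\rV_F\rk,\qquad \gamma_1\lk\mM,d_1\rk \asymp \frac{K^2}{\sqrt{m}}\gamma_1\lk\mM,\lV\,\cdot\,\rV_{op}\rk,
\]
which produces the first term of \eqref{rade0}. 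The property $\mM=-\mM$ together with $\widetilde{Y}_{-\M}=-\widetilde{Y}_\M$ allows the centered supremum and $\sup_\M\lv\widetilde{Y}_\M\rv$ to be equated up to a factor of two, and adding back the trace contribution completes the proof.

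The main obstacle is the careful derivation of the Bernstein-type mixed-tail increment bound with the precise $\sqrt{m}$ normalization that separates the sub-Gaussian (variance-driven) and sub-exponential (tail-driven) regimes, and verifying that this increment fits the canonical $\sqrt{u}\,d_2+u\,d_1$ form required by the mixed-tail chaining theorem. Once this increment is established, the remainder of the argument is a direct invocation of standard chaining machinery.
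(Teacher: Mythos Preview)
Your proposal is correct and follows essentially the same route as the paper: both split off the trace via $\E\pp\pp^*=\pmb{I}_n$, handle it by $\E\lv\sum_k\varepsilon_k\rv\lesssim\sqrt{m}$ together with $\mM=-\mM$, and control the centered chaos by Talagrand's mixed-tail generic chaining driven by Hanson--Wright increments. The only tactical difference is that the paper first applies Gin\'e--Zinn symmetrization to the centered piece, replacing $\E_\varepsilon\E_\pp\sup_\M\langle\sum_k\varepsilon_k(\pp_k\pp_k^*-\pmb{I}_n),\M\rangle$ by $2\,\E_\pp\sup_\M\langle\sum_k(\pp_k\pp_k^*-\pmb{I}_n),\M\rangle$ and then citing Theorem~15.1.4 of \cite{talagrand2014upper} as a black box, whereas you keep the Rademacher signs and establish the Bernstein-type increment bound directly before invoking the chaining theorem. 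Both arguments are valid and rest on the same Hanson--Wright/chaining machinery; the paper's symmetrization step is just a shortcut that lets it quote an existing chaos bound rather than re-derive the increment estimate.
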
  

\begin{proof}
We have that 
\begin{equation}\label{99}
\begin{aligned}
\sqrt{m} \, \mathcal{W}_m(\mathcal{M}; \pp\pp^*) 
&= \mathbb{E} \sup_{\pmb{M} \in \mathcal{M}} \sum_{k=1}^{m} \varepsilon_k \left\langle \pp_k \pp_k^*, \pmb{M} \right\rangle \\
&\leq \mathbb{E}_{\varepsilon} \mathbb{E}_{\pp} \sup_{\pmb{M} \in \mM} \left\langle \sum_{k=1}^{m} \varepsilon_k \lk \pp_k \pp_k^* - \mathbb{E}_{\pp} \pp \pp^* \rk, \pmb{M} \right\rangle\\
&\quad + \mathbb{E}_{\varepsilon} \mathbb{E}_{\pp} \sup_{\pmb{M} \in \mM }\left\langle \sum_{k=1}^{m} \varepsilon_k \mathbb{E}_{\pp} \pp \pp^*, \pmb{M} \right\rangle \\
&\leq 2 \,\mathbb{E}_{\pp} \sup_{\pmb{M} \in \mM} \left\langle \sum_{k=1}^{m} \left(\pp_k \pp_k^* - \mathbb{E}_{\pp} \pp \pp^* \right), \pmb{M} \right\rangle \\
&\quad + \mathbb{E}_{\varepsilon} \sup_{\pmb{M} \in \mM} \left\langle \sum_{k=1}^{m} \varepsilon_k \pmb{I}_n, \pmb{M} \right\rangle
\end{aligned}
\end{equation}
The first line is due to $\mM=-\mM$.
In the second inequality, we have used Giné--Zinn symmetrization principle \cite[Lemma 6.4.2]{vershynin2018high} and $\E_{\pp}\pp\pp^{*}=\pmb{I}_n$. 
By adapting the proof of Theorem 15.1.4 in~\cite{talagrand2014upper} to the empirical setting and generalizing it to the sub-Gaussian case, we can obtain the following bound:
     \begin{equation}\label{chaos0}
\begin{aligned}
\mathbb{E}_{\pp} \sup_{\pmb{M} \in \mathcal{M}} 
\left\langle 
\sum_{k=1}^{m} \left( \pp_k \pp_k^* - \mathbb{E}_{\pp} \pp_k \pp_k^* \right),\, \pmb{M} 
\right\rangle
&\;\lesssim\; 
K^2 \sqrt{m} \, \gamma_2\left( \mathcal{M}, \|\cdot\|_F \right) \\
&\quad\quad+\, 
K^2 \, \gamma_1\left( \mathcal{M}, \|\cdot\|_{\mathrm{op}} \right).
\end{aligned}
\end{equation}
      For the second term on the last line of \eqref{99}, we have that
	     \begin{eqnarray}\label{epsilon}
  \begin{aligned}
 	  \E_{\varepsilon}\sup_{\M\in\mM}\lg\sum_{k=1}^{m}\varepsilon_{k}\pmb{I}_n,\M \rg
	  &=\E_{\varepsilon}\sup_{\M\in\mM}\sum_{k=1}^{m}\varepsilon_{k}\text{Tr}\lk\M\rk\\
	  &\le\E_{\varepsilon}\lv\sum_{k=1}^{m}\varepsilon_{k}\rv\sup_{\M\in\mM}\text{Tr}\lk\M\rk\\
	    &\lesssim \sqrt{m} \sup_{\M\in\mM}\text{Tr}\lk\M\rk.
	\end{aligned}
	  \end{eqnarray}
	In the last line, we have used $\E_{\varepsilon}\lv\sum\limits_{k=1}^{m}\varepsilon_{k}\rv\lesssim\sqrt{m}$.
	 Thus, by \eqref{chaos0} and \eqref{epsilon}, we have finished the proof.
     \end{proof}     
     
     \begin{remark}
     We make the following observations regarding Lemma~\ref{chaos1}.
        \begin{enumerate}
        \item Lemma~\ref{chaos1} can also be proved via the multiplier processes in Lemma~\ref{multiplier} with multiplier $\xi$ chosen as a Rademacher random variable, though we obtain it more directly from a classical result on empirical chaos process in \cite{talagrand2014upper}.
\item
     In \cite{maly2023robust}, Maly has proved that
     \begin{eqnarray}\label{K44}
 	 \mathcal{W}_m\lk\mathcal{M}; \pp\pp^*\rk \le C\lk\sqrt{\mathcal{R}_0}\gamma_{2}\lk\mM,\lV\,\cdot\,\rV_{F}\rk +  \frac{\gamma_{1}\lk\mM,\lV\,\cdot\,\rV_{op}\rk }{\sqrt{m}}\rk,
		  \end{eqnarray}
where the factor $\mathcal{R}_0$ is defined by \small{$\mathcal{R}_0:=\sup\limits_{\M\in\mM}\frac{\lV\M\rV^2_{*}}{\lV\M\rV^2_{F}}$} and $C>0$ is a constant dependent only on $K$.
This factor reduces the sharpness of the estimation of $\mathcal{W}_m\lk\mathcal{M}; \pp\pp^*\rk$ in many cases of interest.
For instance, if $\mM:=\left\{\M\in\mathcal{S}^n:\text{rank}\lk\M\rk\le r,\lV\M\rV_F=1\right\}$, then $\mathcal{R}_0=r$. By the Dudley integral together with the covering number bound in Lemma 3.1 of \cite{candes2011tight}, we bound that
\begin{eqnarray*}
\gamma_{2}\lk\mM,\lV\,\cdot\,\rV_{F}\rk\lesssim \sqrt{rn}\quad \text{and}\quad\gamma_{1}\lk\mM,\lV\,\cdot\,\rV_{op}\rk\lesssim rn.
\end{eqnarray*}
Consequently, \eqref{K44} is of order $r^{3/2}\sqrt{n}$, whereas \eqref{rade0} is only of order $\sqrt{rn}$ when $m \gtrsim_K rn$.
We can also provide a detailed comparison between \eqref{rade0} and \eqref{K44}, and observe that
     \begin{eqnarray}
       \begin{aligned}
\sqrt{\mathcal{R}_0}\cdot\gamma_{2}\lk\mM,\lV\,\cdot\,\rV_{F}\rk&=\sup\limits_{\M\in\mM}\frac{\lV\M\rV_{*}}{\lV\M\rV_{F}}\cdot\gamma_{2}\lk\mM,\lV\,\cdot\,\rV_{F}\rk\\
	 &\gtrsim\sup\limits_{\M\in\mM}\frac{\lV\M\rV_{*}}{\lV\M\rV_{F}}\cdot\text{diam}\lk\mM\rk
	 \ge \sup_{\X\in\mM}\text{Tr}\lk\M\rk.
	\end{aligned}
	  \end{eqnarray}
Since $\mathcal{R}_0\ge1$, our bound \eqref{rade0} is a substantial improvement over \eqref{K44}. 
    \end{enumerate}
     \end{remark}     

The next proposition provides a lower bound for the small ball function, obtained by refining the analysis in~\cite{krahmer2020complex}.
\begin{proposition}\label{small ball function}    
Assume that $\pp$  is a random vector satisfies the conditions in Assumption~\ref{sample}.
For any  matrix set $\mM\subset\mathbb{S}_F$, we have 
\begin{equation}\label{small1}
\mathcal{Q}_{u}\lk\mathcal{M};\, \pp\pp^{*} \rk 
\ge 
C_0 \frac{\min\left\{ \mu^2,\, 1 \right\}}{K^8 + 1},
\end{equation}
where $0<u\le\sqrt{\frac{\min\left\{\mu,1\right\}}{2}}$ and $C_0>0$ is an absolute constant.
 \end{proposition}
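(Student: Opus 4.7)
The plan is to invoke the Paley--Zygmund inequality on $Z := \langle \pp\pp^{*}, \M\rangle = \pp^{*} \M \pp$ and bound $\mathbb{P}(\lv Z\rv \ge u)$ from below uniformly over $\M \in \mathcal{M} \subset \mathbb{S}_F$. Applied to the nonnegative variable $\lv Z\rv^{2}$ with $\theta := u^{2}/\mathbb{E}\lv Z\rv^{2}$, Paley--Zygmund gives
\begin{equation*}
\mathbb{P}(\lv Z\rv \ge u) \;\ge\; (1 - \theta)^{2} \cdot \frac{(\mathbb{E}\lv Z\rv^{2})^{2}}{\mathbb{E}\lv Z\rv^{4}}.
\end{equation*}
So the task reduces to a second-moment lower bound (to keep $\theta$ bounded away from $1$) and a fourth-moment upper bound (to keep the ratio large).

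For the second moment, I would expand $\mathbb{E}\lv Z\rv^{2} = \mathbb{E}(Z\overline{Z})$ as a fourfold sum $\sum_{i,j,k,l} M_{ij}\overline{M_{kl}} \E[\overline{\varphi_i}\varphi_j\varphi_k\overline{\varphi_l}]$ and use Assumption~\ref{sample}, namely $\E\varphi = \E\varphi^{2} = 0$, $\E\lv\varphi\rv^{2}=1$, $\E\lv\varphi\rv^{4}=1+\mu$. Only the pairings $(i=j,k=l)$ and $(i=k,j=l)$ survive---the pairing $(i=l,j=k)$ is killed by $\E\varphi^{2}=0$, which is the one place where this sampling assumption is essential---and tracking the all-equal overlap yields
\begin{equation*}
\mathbb{E}\lv Z\rv^{2} \;=\; \lv\mathrm{Tr}(\M)\rv^{2} + \lV\M\rV_F^{2} + (\mu-1)\sum_{i=1}^{n} \lv M_{ii}\rv^{2}.
\end{equation*}
Since $\lV\M\rV_F = 1$ and $t := \sum_i \lv M_{ii}\rv^{2} \in [0,1]$, separating the cases $\mu \ge 1$ and $\mu < 1$ gives $\mathbb{E}\lv Z\rv^{2} \ge \min\{\mu, 1\}$ uniformly over $\mathcal{M}$. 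Consequently, the hypothesis $u^{2} \le \min\{\mu,1\}/2$ forces $\theta \le 1/2$, so $(1-\theta)^{2} \ge 1/4$.

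For the fourth moment, I would apply the $m=1$, $q=4$ case of the Hanson--Wright-type bound in Lemma~\ref{geometry}, which gives $\lV Z - \E Z\rV_{L_4} \lesssim K^{2}(\lV\M\rV_F + \lV\M\rV_{op}) \lesssim K^{2}$ using $\lV\M\rV_{op} \le \lV\M\rV_F = 1$. Combined with $\E Z = \mathrm{Tr}(\M)$ and the triangle inequality, this yields $\mathbb{E}\lv Z\rv^{4} \lesssim K^{8} + \lv\mathrm{Tr}(\M)\rv^{4}$. Substituting into Paley--Zygmund,
\begin{equation*}
\mathbb{P}(\lv Z\rv \ge u) \;\gtrsim\; \frac{\bigl(\lv\mathrm{Tr}(\M)\rv^{2} + \min\{\mu,1\}\bigr)^{2}}{K^{8} + \lv\mathrm{Tr}(\M)\rv^{4}}.
\end{equation*}
Setting $s = \lv\mathrm{Tr}(\M)\rv^{2}$ and $a = \min\{\mu,1\} \le 1$, a one-line algebraic check shows $(s+a)^{2}(K^{8}+1) \ge a^{2}(K^{8}+s^{2})$ for every $s \ge 0$, so the right-hand side is at least $C_0 \min\{\mu^{2},1\}/(K^{8}+1)$ uniformly in $\M$. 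Taking the infimum over $\mathcal{M}$ gives the claimed lower bound on $\mathcal{Q}_{u}(\mathcal{M}; \pp\pp^{*})$. The only delicate step is the combinatorial moment computation; everything else is a clean application of Paley--Zygmund and the Hanson--Wright bound already in hand.
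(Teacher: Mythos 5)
Your proposal is correct and follows essentially the same route as the paper: Paley--Zygmund applied to $|\pp^{*}\M\pp|^{2}$, a second-moment lower bound $\E|\pp^{*}\M\pp|^{2} \ge |\mathrm{Tr}(\M)|^{2} + \min\{\mu,1\}\lV\M\rV_F^{2}$ (the paper cites Lemma 9 of Krahmer--Stöger for the same identity you re-derive by direct expansion of the fourfold sum), a fourth-moment upper bound from Lemma~\ref{geometry} with $m=1$, $q=4$ together with the triangle inequality, and the same elementary algebraic comparison in $s = |\mathrm{Tr}(\M)|^{2}$ to conclude. The only cosmetic difference is that you track the Paley--Zygmund parameter $\theta = u^{2}/\E|Z|^{2}$ explicitly and show $\theta \le 1/2$, whereas the paper fixes the threshold at $\E|Z|^{2}/2$ from the start; both give the same constant.
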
  

 \begin{proof}
 See Appendix~\ref{proof:small ball function}.
 \end{proof}

We are now fully equipped to proceed with the proof of Lemma~\ref{small ball1}.

\subsubsection{Proof of Lemma~\ref{small ball1}}
In this subsection, we set $\mM:=\left\{\z\z^{*}:\z\in\mathbb{S}^{n-1}\right\}$.
By Lemma~\ref{chaos1},  we can obtain that
\begin{eqnarray}\label{up for W}
 \mathcal{W}_{m}\lk \mM;\pp\pp^*\rk
 \le C_1K^2\lk\sqrt{n}+\frac{n}{\sqrt{m}}\rk+C_2.
	\end{eqnarray}	
	Here, we have used $\gamma_{2}\lk\mM,\lV\,\cdot\,\rV_{F}\rk\lesssim \sqrt{n}$ and $\gamma_{1}\lk\mM,\lV\,\cdot\,\rV_{op}\rk \lesssim n$, as we have established in Section~\ref{mul inq} and 
	$\sup\limits_{\z\in\mathbb{S}^{n-1}}\text{Tr}\lk\z\z^*\rk=1$.
	Therefore, we can get
\begin{align}
\mathcal{W}_{m}\left(\mathcal{E}_{\text{ncvx}} \cap \mathbb{S}_F;\, \pp\pp^* \right) 
&\le \mathbb{E} \left\| \frac{1}{\sqrt{m}} \sum_{k=1}^{m} \varepsilon_k \pp_k \pp_k^* \right\|_{\mathrm{op}}  \|\pmb{M}\|_* \notag \\
&\le \sqrt{2} \, \mathcal{W}_m\left( \mathcal{M};\, \pp\pp^* \right) \notag \\
&\le \sqrt{2}C_1K^2\lk\sqrt{n}+\frac{n}{\sqrt{m}}\rk+2\sqrt{2}C_2.\label{eq:rademacher_bound}
\end{align}
In the second line we have used \textbf{Part}~$(\mathrm{a})$ of Proposition~\ref{pro low-rank}.

Now we set {\small$u=\frac{1}{2}\sqrt{\frac{\min\left\{\mu,1\right\}}{2}},t=\frac{\sqrt{m}C_0\min\left\{ \mu^2,\, 1 \right\}}{2\lk K^8 + 1\rk}$}.
By Proposition~\ref{small ball function}, we have
\begin{eqnarray*}
\mathcal{Q}_{2u}\lk \mathcal{E}_{\text{ncvx}} \cap \mathbb{S}_F;\pp\pp^*\rk	\ge C_0 \cdot \frac{\min\left\{ \mu^2,\, 1 \right\}}{K^8 + 1}.
\end{eqnarray*}   
Then, by Proposition~\ref{small ball}, with probability at least $1- e^{-cm}$, where {\small$c=\frac{C_0^2\min\left\{\mu^4,1\right\}}{2\lk K^8+1\rk^2}$}, we obtain for all $\M\in\mathcal{E}_{\text{ncvx}}$,
 \begin{eqnarray}
\sum_{k=1}^{m}\lv\lg\pp_k\pp_k^*,\M\rg\rv^2
       \ge \widetilde{C} m \frac{\min\left\{\mu^{6},1\right\}}{K^{16}+1}\lV\M\rV_{F}^{2},
	\end{eqnarray}	
	provided that $m\ge L n$ for some sufficiently large constant $L>0$ depending only on $K$ and $\mu$. 
    
	We can establish the similar result for $\mathcal{E}_{\text{cvx,1}}$, where the difference lies in bounding $\mathcal{W}_{m}\left( \mathcal{E}_{\text{cvx,1}} \cap \mathbb{S}_F;\, \pp\pp^* \right)$ using \textbf{Part}~$(\mathrm{b})$ of Proposition~\ref{pro low-rank}.

\subsection{Lower Isometry Property}\label{ell_1}

To identify the parameter $\widetilde{\alpha}$ in Section~\ref{Architecture} that satisfies the \textit{SLBC} with respect to $\lV\,\cdot\,\rV_{*}$, we follow the idea of the lower isometry property in \cite{candes2013phaselift,krahmer2020complex}.

	\begin{lemma}\label{approximate}
		Suppose that $\{\pp_{k}\}_{k=1}^{m}$ are independent copies of a random vectors $\pp\in \C^n$, whose entries $\left\{\varphi_j\right\}_{j=1}^{n}$ are i.i.d., mean 0, variance 1, and $K$-sub-Gaussian.
		Then there exist positive constants $L,c$, depending only on $K$, such that if $m\ge L n$, the following holds with probability at least $1- 2 e^{-cm}$:
        for all $\M \in  \mathcal{E}_{\text{cvx,2}}$, we have
		\begin{eqnarray}
		 \sum_{k=1}^{m}\lv\lg\pp_k\pp_k^*,\M\rg\rv^2\ge \frac{1}{36} m \lV\M\rV^{2}_{*}.
		\end{eqnarray}
	\end{lemma}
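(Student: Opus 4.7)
The plan is to reduce the lemma to a single, well-known concentration event: the operator-norm bound $\lV\frac{1}{m}\sum_k \pp_k\pp_k^* - \pmb{I}_n\rV_{op} \le \tfrac{1}{6}$. The key geometric observation is that elements of $\mathcal{E}_{\text{cvx,2}}$ are, by construction, almost positive semidefinite, so one-sided control of the \emph{signed} sum $\sum_k \lg\pp_k\pp_k^*,\M\rg$ — combined with Cauchy--Schwarz — already suffices to lower-bound the sum of squares. Concretely, by Proposition~\ref{one} every $\M \in \mathcal{E}_{\text{cvx,2}}$ has at most one strictly negative eigenvalue; writing $\M = \M_{+} - \M_{-}$ as its positive/negative parts on orthogonal subspaces gives $\lV\M\rV_{*} = \trace(\M_{+}) + \trace(\M_{-})$, and the defining inequality of $\mathcal{E}_{\text{cvx,2}}$ becomes $\trace(\M_{-}) \le \tfrac{1}{2}\trace(\M_{+})$. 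In particular,
\begin{equation*}
\trace(\M) \;=\; \trace(\M_{+}) - \trace(\M_{-}) \;\ge\; \tfrac{1}{2}\trace(\M_{+}) \;\ge\; \tfrac{1}{3}\lV\M\rV_{*}.
\end{equation*}

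Next I would apply Cauchy--Schwarz to bypass the absolute values, obtaining
\begin{equation*}
\sum_{k=1}^{m}\lv\lg\pp_k\pp_k^*,\M\rg\rv^2 \;\ge\; \tfrac{1}{m}\lk\sum_{k=1}^{m}\lg\pp_k\pp_k^*,\M\rg\rk^2,
\end{equation*}
so it suffices to prove the one-sided uniform bound $\sum_k \lg\pp_k\pp_k^*,\M\rg \ge \tfrac{m}{6}\lV\M\rV_{*}$ on $\mathcal{E}_{\text{cvx,2}}$. Splitting $\sum_k \lg\pp_k\pp_k^*,\M\rg = m\,\trace(\M) + \lg\sum_k(\pp_k\pp_k^* - \pmb{I}_n),\M\rg$, the first summand is at least $\tfrac{m}{3}\lV\M\rV_{*}$ by the previous display, while the second is controlled by nuclear/operator duality via $\lv\lg\sum_k(\pp_k\pp_k^* - \pmb{I}_n),\M\rg\rv \le \lV\sum_k(\pp_k\pp_k^* - \pmb{I}_n)\rV_{op}\,\lV\M\rV_{*}$. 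Under Assumption~\ref{sample}, $\pp$ is isotropic with $\lV\pp\rV_{\psi_2}\lesssim K$ (a direct consequence of its i.i.d.\ mean-zero, unit-variance, $K$-sub-Gaussian entries), so the classical sample-covariance bound for isotropic sub-Gaussian vectors (e.g., Theorem~4.6.1 of \cite{vershynin2018high}) yields $\lV\frac{1}{m}\sum_k \pp_k\pp_k^* - \pmb{I}_n\rV_{op} \le \tfrac{1}{6}$ with probability at least $1 - 2e^{-cm}$ whenever $m \ge Ln$, for constants $L, c > 0$ depending only on $K$.

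Combining these bounds gives $\sum_k \lg\pp_k\pp_k^*,\M\rg \ge \lk\tfrac{1}{3} - \tfrac{1}{6}\rk m\lV\M\rV_{*} = \tfrac{m}{6}\lV\M\rV_{*}$, and the Cauchy--Schwarz step then produces exactly the advertised constant, as $(1/6)^2 = 1/36$. There is no real obstacle: the threshold $\tfrac{1}{2}$ in the definition of $\mathcal{E}_{\text{cvx,2}}$ is calibrated precisely so that the deterministic trace $\trace(\M)$ is a definite fraction of $\lV\M\rV_{*}$, leaving just enough slack to absorb the $\mathcal{O}(\sqrt{n/m})$ fluctuation of the empirical covariance. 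This is, in effect, the PSD-cone counterpart of the rank null space property used in \cite{kabanava2016stable,krahmer2020complex}, and unlike the low-rank regime $\mathcal{E}_{\text{cvx,1}}$ (which requires a small-ball argument as in Lemma~\ref{small ball1}), the almost-PSD regime $\mathcal{E}_{\text{cvx,2}}$ only needs a bulk concentration of the identity.
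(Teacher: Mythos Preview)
Your proof is correct and essentially identical to the paper's: both use the concentration event $\lV\tfrac{1}{m}\sum_k \pp_k\pp_k^* - \pmb{I}_n\rV_{op}\le\tfrac{1}{6}$ from \cite[Theorem~4.6.1]{vershynin2018high}, exploit the almost-PSD structure of $\mathcal{E}_{\text{cvx,2}}$ to lower-bound the signed sum $\sum_k\lg\pp_k\pp_k^*,\M\rg\ge\tfrac{m}{6}\lV\M\rV_*$, and finish with Cauchy--Schwarz. The only cosmetic difference is that the paper diagonalizes $\M$ and applies the scalar RIP bound to each eigenvector separately (splitting into the cases $\lambda_n(\M)\ge 0$ and $\lambda_n(\M)<0$), whereas you package the same algebra via nuclear/operator-norm duality and the trace inequality $\trace(\M)\ge\tfrac{1}{3}\lV\M\rV_*$, which is slightly slicker but not a different route.
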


\begin{remark}
  Some remarks on Lemma~\ref{approximate} are given as follows.
    \begin{enumerate}
\item
Lemma~\ref{approximate} provides a lower bound for the parameter $\widetilde{\alpha}$, indicating that $\widetilde{\alpha}\ge\frac{1}{36} m$.
\item Notably, the validity of Lemma~\ref{approximate} does not rely on the fourth-moment condition $\E\lk\lv\varphi\rv^4\rk=1+\mu$ with $\mu>0$, as stated in Assumption~\ref{sample}.
\item Lemma~\ref{approximate} can be deduced from \cite[Lemma 4]{krahmer2020complex}.
For completeness, we provide a full proof below.
\end{enumerate}
\end{remark}

\subsubsection{Proof of Lemma~\ref{approximate}}

By Theorem 4.6.1 in \cite{vershynin2018high}, for any $0\le\delta\le1$,
there exist positive constants $\widetilde{L}$ and $\tilde{c}$ dependent on $K$ and $\delta$, such that if $m\ge \widetilde{L} n$, with probability at least $1-2e^{-\tilde{c}m}$, the following holds:
   \begin{eqnarray}\label{l_2}
 \lk 1-\delta\rk\lV\z\rV^2_{2}\le  \frac{1}{m}\sum_{k=1}^{m}\lv\lg\pp_k,\z\rg\rv ^2\le \lk1+\delta\rk\lV\z\rV^2_{2},\quad \forall \z\in\C^n.
          \end{eqnarray}	   

We set $\M\in \mathcal{E}_{\text{cvx,2}}$ has eigenvalue decomposition $\M = \sum\limits_{i=1}^{n} \lambda_i \lk\M\rk \xu_i \xu^*_i.$
We obtain
\begin{align*}
\sum_{k=1}^{m} \bigl|\langle \pp_k \pp_k^*, \M \rangle \bigr|
&\;\ge\; \sum_{k=1}^{m} \langle \pp_k \pp_k^*, \M \rangle \\
&= \sum_{k=1}^{m} \Bigl\langle \pp_k \pp_k^*, \sum_{i=1}^{n} \lambda_i(\M)\, \xu_i \xu_i^* \Bigr\rangle \\
&= \sum_{i=1}^{n} \lambda_i(\M) \left( \sum_{k=1}^{m} \bigl|\langle \pp_k, \xu_i \rangle \bigr|^2 \right).
\end{align*}
Proposition~\ref{one} states that $\M$ has at most one negative eigenvalue. 
If all eigenvalues $\lambda_i \lk\M\rk$ are positive and if we choose $\delta =\frac{1}{6}$ in \eqref{l_2}, then, on the event that \eqref{l_2} occurs, we obtain
\begin{equation}\label{eq1}
\sum_{k=1}^{m}\lv\lg\pp_k\pp_k^*,\M\rg\rv \ge \frac{5}{6} m \sum_{i=1}^{n} \lambda_i \lk\M\rk =\frac{5}{6} m\lV\M\rV_{*}.
\end{equation}
If $\lambda_n \lk\M\rk <0$,
since the elements in $\mathcal{E}_{\text{cvx,2}}$ satisfy $-\lambda_n \lk\M\rk \le \frac{1}{2}  \sum\limits_{i=1}^{n-1} \lambda_i \lk\M\rk$, we obtain
\begin{equation}\label{eq2}
\begin{split}
\sum_{k=1}^{m}\lv\lg\pp_k\pp_k^*,\M\rg\rv
 &\ge  \frac{5}{6} m \sum_{i=1}^{n-1} \lambda_i \lk\M\rk  +\frac{7}{6}  m\lambda_n \lk\M\rk\\
 &\ge  \frac{1}{4} m \sum_{i=1}^{n-1} \lambda_i \lk\M\rk\ge\frac{1}{6} m \lV\M\rV_{*}.
\end{split}
\end{equation}
In the last inequality, we have used 
\begin{equation*}
\lV\M\rV_{*}= \sum_{i=1}^{n-1} \lambda_i \lk\M\rk -\lambda_n \lk\M\rk \le  \frac{3}{2}\sum_{i=1}^{n-1} \lambda_i \lk\M\rk.
\end{equation*}

Hence, by combining (\ref{eq1}) and (\ref{eq2}) with the Cauchy–Schwarz inequality, we deduce that
\begin{eqnarray*}
		 \sum_{k=1}^{m}\lv\lg\pp_k\pp_k^*,\M\rg\rv^2\ge \frac{1}{m}\lk\sum_{k=1}^{m}\lv\lg\pp_k\pp_k^*,\M\rg\rv\rk^2\ge\frac{1}{36}m\lV\M\rV^{2}_{*}.
		\end{eqnarray*}

\section{Proofs of Main Results}\label{proof of main}

We adhere to the framework outlined in Section~\ref{Architecture} to prove Theorem~\ref{thm poisson} and Theorem~\ref{low-dose} for Poisson model, and Theorem~\ref{thm heavy} for heavy-tailed model.
We will identify distinct parameters $\alpha,\beta,\widetilde{\alpha}$, and $\widetilde{\beta}$ for the respective admissible sets.

\subsection{Key Properties of Poisson Noise}

We first present the following proposition, which demonstrates that the behavior of Poisson noise can be approximated by sub-exponential noise.

\begin{proposition}\label{psi_1 of poisson}
Let random variable 
\begin{eqnarray*}
\xi=\text{Poisson}\lk \lv\lg\pp,\x\rg\rv^{2}\rk-\lv\lg\pp,\x\rg\rv^{2},
\end{eqnarray*}
where the entries $\left\{\varphi_j\right\}_{j=1}^{n}$ of random vector $\pp$ are independent, mean-zero and $K$-sub-Gaussian.
Then we have
 \begin{eqnarray*} 
\lV\xi\rV_{\psi_1}\lesssim \max\left\{1, K\lV\x\rV_{2} \right\}.
 \end{eqnarray*}
\end{proposition}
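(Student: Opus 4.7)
The plan is to exploit the conditional structure of $\xi$: given $\pp$, $\xi$ is a centered Poisson with (random) parameter $\lambda := \lv\lg\pp,\x\rg\rv^2$, so we first bound its conditional moments, then take expectation with respect to $\pp$ and convert back to a $\psi_1$-bound via the standard equivalence $\lV\xi\rV_{\psi_1}\asymp \sup_{p\ge 1} \lV\xi\rV_{L_p}/p$ (see, e.g., \cite{vershynin2018high}).

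First, I would invoke the following moment estimate for a centered Poisson variable $Z_\lambda\sim\text{Poisson}\lk\lambda\rk$: for every $p\ge 1$,
\begin{equation*}
\lV Z_\lambda - \lambda\rV_{L_p} \;\le\; C\lk\sqrt{\lambda p}+p\rk,
\end{equation*}
for an absolute constant $C$. This can be derived either directly from the MGF identity $\E\exp\lk t\lk Z_\lambda-\lambda\rk\rk=\exp\lk\lambda\lk e^t-1-t\rk\rk$ (splitting into a sub-Gaussian regime $\lv t\rv\lesssim 1$ with variance proxy $\lambda$ and a sub-exponential tail regime) or by quoting a standard Poisson concentration result. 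Conditioning on $\pp$ and using $\lk a+b\rk^p\le 2^{p-1}\lk a^p+b^p\rk$, this yields
\begin{equation*}
\E\lz\lv\xi\rv^p\mid\pp\rz \;\le\; \lk 2C\rk^{p}\lk\lv\lg\pp,\x\rg\rv^{p}p^{p/2}+p^{p}\rk.
\end{equation*}

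Next, I would take expectations with respect to $\pp$. Because $\lg\pp,\x\rg$ is a weighted sum of independent mean-zero $K$-sub-Gaussian coordinates, the rotation-invariance of sub-Gaussian variables \cite[Prop.~2.6.1]{vershynin2018high} gives $\lV\lg\pp,\x\rg\rV_{\psi_2}\lesssim K\lV\x\rV_2$, hence $\lk\E\lv\lg\pp,\x\rg\rv^p\rk^{1/p}\lesssim K\lV\x\rV_2\sqrt{p}$. Combining with the display above,
\begin{equation*}
\lV\xi\rV_{L_p} \;\lesssim\; K\lV\x\rV_2 \cdot p + p \;\lesssim\; \max\left\{1,K\lV\x\rV_2\right\}\cdot p,
\end{equation*}
uniformly in $p\ge 1$. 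Applying the equivalence $\lV\xi\rV_{\psi_1}\asymp \sup_{p\ge 1}\lV\xi\rV_{L_p}/p$ then delivers the claimed bound $\lV\xi\rV_{\psi_1}\lesssim\max\left\{1,K\lV\x\rV_2\right\}$.

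The main obstacle is justifying the centered-Poisson moment bound cleanly, since it is the one place where the genuinely sub-exponential (and not sub-Gaussian) character of Poisson noise enters; an equivalent route is to argue on the MGF side, combining $\E\exp\lk t\xi\rv\pp\rk=\exp\lk\lv\lg\pp,\x\rg\rv^2\lk e^t-1-t\rk\rk$ with $\lv e^t-1-t\rv\le t^2$ on $\lv t\rv\le 1$ and the sub-exponential control of $\lv\lg\pp,\x\rg\rv^2$, but the moment route is the most transparent. Everything else is routine.
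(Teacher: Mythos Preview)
Your proof is correct and follows the same conditional-on-$\pp$ strategy as the paper, but you carry it out on the moment side whereas the paper works on the MGF side. Concretely, the paper computes the conditional MGF $\E\lk e^{\theta\xi}\mid X\rk=e^{g(\theta)X^2}$ with $X=\lv\lg\pp,\x\rg\rv$ and $g(\theta)=e^\theta-1-\theta$, then averages over $X$ by Taylor-expanding $e^{g(\theta)X^2}$, bounding $\E X^{2p}$ via the sub-Gaussian moment estimate, summing the resulting geometric series, and finally verifying the Bernstein-type MGF condition $\E e^{\theta\xi}\le e^{T_0^2\theta^2}$ for $\lv\theta\rv\le 1/T_0$ with $T_0\asymp\max\{1,K\lV\x\rV_2\}$. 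You instead invoke the centered-Poisson moment bound $\lV Z_\lambda-\lambda\rV_{L_p}\lesssim\sqrt{\lambda p}+p$, average over $\pp$ using the same sub-Gaussian control of $\lg\pp,\x\rg$, and convert via $\lV\xi\rV_{\psi_1}\asymp\sup_{p\ge1}\lV\xi\rV_{L_p}/p$. Your route is shorter and arguably cleaner provided one is willing to take the Poisson $L_p$ bound as known; the paper's MGF computation is more self-contained and makes the sub-exponential structure explicit without quoting an external Poisson concentration result. Both arguments are sound, and indeed you already sketch the MGF alternative in your closing remark.
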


\begin{proof}
See Appendix \ref{psi_1}.
\end{proof}

Proposition \ref{psi_1 of poisson} provides an upper bound on the sub-exponential norm of $\xi$. 
However, in the low-energy regime where $\lV\x\rV_2\ll1/K$, we have $\lV\xi\rV_{\psi_1}\gtrsim 1$, which prevents the Poisson model analysis from capturing the decay in noise level as the signal energy diminishes.
Thus, we also present the following proposition, which characterizes the $L_{4}$ norm of $\xi$.
The underlying idea is that, in the low energy regime, the Poisson-type noise $\xi$ is more prone to deviating from its mean and thus becomes more susceptible to generating outliers, which makes it reasonable to model it as heavy-tailed noise.

\begin{proposition}\label{L_q of poisson}
Let random variable 
\begin{eqnarray*}
\xi=\text{Poisson}\lk \lv\lg\pp,\x\rg\rv^{2}\rk-\lv\lg\pp,\x\rg\rv^{2},
\end{eqnarray*}
where the entries $\left\{\varphi_j\right\}_{j=1}^{n}$ of random vector $\pp$ are independent, mean-zero and $K$-sub-Gaussian.
Then we have
 \begin{eqnarray*}
\lV\xi\rV_{L_{4}}\lesssim\max\left\{\lk K\lV\x\rV_2\rk^{1/2},K\lV\x\rV_2\right\}.
 \end{eqnarray*}
\end{proposition}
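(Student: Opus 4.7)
The plan is to proceed by conditioning on $\pp$ and then exploiting the explicit form of the central moments of a Poisson distribution. Set $\lambda := \lv\lg\pp,\x\rg\rv^{2}$, so that conditionally on $\pp$, the random variable $\xi$ has the law of $N-\lambda$ with $N \sim \mathrm{Poisson}(\lambda)$. I will invoke the well-known identity for the fourth central moment of a Poisson variable, namely $\E[(N-\lambda)^{4}\mid\lambda] = 3\lambda^{2}+\lambda$. By the tower property, this yields
\begin{equation*}
\lV\xi\rV_{L_{4}}^{4} \;=\; \E\big[\E[\xi^{4}\mid\pp]\big] \;=\; 3\,\E\lv\lg\pp,\x\rg\rv^{4} + \E\lv\lg\pp,\x\rg\rv^{2}.
\end{equation*}

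Next, I will estimate the two expectations on the right. For the second, the isotropy built into Assumption~\ref{sample} (recall $\E\lk\lv\varphi\rv^{2}\rk=1$ and $\E\lk\varphi\rk=\E\lk\varphi^{2}\rk=0$) gives immediately $\E\lv\lg\pp,\x\rg\rv^{2}=\lV\x\rV_{2}^{2}$. For the first, I will use the fact that $\lg\pp,\x\rg=\sum_{j}\overline{\varphi_{j}}x_{j}$ is a sum of independent mean-zero $K$-sub-Gaussian variables weighted by the coordinates of $\x$, so standard rotation invariance of sub-Gaussian sums (see, e.g., Proposition 2.6.1 in~\cite{vershynin2018high}) yields $\lV\lg\pp,\x\rg\rV_{\psi_{2}} \lesssim K\lV\x\rV_{2}$, hence $\E\lv\lg\pp,\x\rg\rv^{4}\lesssim K^{4}\lV\x\rV_{2}^{4}$.

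Putting these pieces together gives $\lV\xi\rV_{L_{4}}^{4}\lesssim \lV\x\rV_{2}^{2}+K^{4}\lV\x\rV_{2}^{4}$, and taking fourth roots produces
\begin{equation*}
\lV\xi\rV_{L_{4}} \;\lesssim\; \max\bigl\{\lV\x\rV_{2}^{1/2},\; K\lV\x\rV_{2}\bigr\} \;\le\; \max\bigl\{(K\lV\x\rV_{2})^{1/2},\;K\lV\x\rV_{2}\bigr\},
\end{equation*}
which is the claimed bound (using $K\ge 1$ for the last step, or absorbing the factor into the universal constant). The argument has no real obstacle: the only place requiring care is the identification of the Poisson fourth central moment and the correct handling of the conditional expectation; once these are in place the estimate follows by a single application of a standard sub-Gaussian moment bound. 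The fact that the $K^{4}\lV\x\rV_{2}^{4}$ term dominates in the high-energy regime while the lower-order $\lV\x\rV_{2}^{2}$ term dominates in the low-energy regime is precisely the heavy-tailed-versus-sub-exponential dichotomy that the paper exploits in Theorem~\ref{low-dose}.
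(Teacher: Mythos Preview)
Your proof is correct and follows essentially the same route as the paper: condition on $\pp$, use the Poisson fourth central moment formula $\E[(N-\lambda)^{4}]=3\lambda^{2}+\lambda$, apply the tower property, and bound the resulting moments of $\lv\lg\pp,\x\rg\rv$ via the sub-Gaussian norm. The only cosmetic differences are that the paper expands the fourth central moment by hand from the raw Poisson moments rather than quoting the formula, and it bounds $\E\lv\lg\pp,\x\rg\rv^{2}$ via the sub-Gaussian moment inequality $\E\lv X\rv^{p}\le (C_{0}K\lV\x\rV_{2}\sqrt{p})^{p}$ rather than via isotropy---which has the minor advantage of not relying on the variance-$1$ condition (the proposition as stated only assumes mean-zero $K$-sub-Gaussian entries) and of producing the factor $(K\lV\x\rV_{2})^{1/2}$ directly without the final appeal to $K\gtrsim 1$.
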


\begin{proof}
See Appendix \ref{L_q}.
\end{proof}

\subsection{Proof of Theorem \ref{thm poisson}}

We first focus on the analysis of the NCVX-LS estimator.
In this case, the admissible set is $\mathcal{E}_{\text{ncvx}}:=\left\{ \z\z^*-\x\x^*:\z,\x\in\C^{n}\right\}$.
By Lemma~\ref{small ball1}, for the \textit{SLBC} with respect to $\lV\,\cdot\,\rV_{F}$, we conclude that the parameter in~\eqref{non} satisfies $$\alpha\gtrsim_{K,\mu} m$$ 
with probability at least $1- \mathcal{O}\lk e^{-c_1m}\rk$, assuming $m\gtrsim_{K,\mu} n$.
		By \textbf{Part}~$\mathrm{(a)}$ of Corollary~\ref{mul inq for subexponential}, for the \textit{NUBC} with respect to $\lV\,\cdot\,\rV_{F}$, with probability at least $1- \mathcal{O}\lk e^{-c_2n}\rk$, one has for all $\M \in \mathcal{E}_{\text{ncvx}}$
\begin{eqnarray*}
\begin{aligned}  
	       \lv  \sum_{k=1}^{m}\xi_k\lg\pp_k\pp_k^*,\M\rg \rv
		&=\lv\sum_{k=1}^{m}\xi_k\lg\pp_k\pp_k^*-\E\,\xi\pp\pp^*,\M\rg\rv\\
		&\lesssim_{K} \lV\xi\rV_{\psi_1}\sqrt{mn}\lV\M\rV_{F}\\
		&\lesssim_{K} \max\left\{1, K\lV\x\rV_{2} \right\} \sqrt{mn}\lV\M\rV_{F},
		\end{aligned}
		\end{eqnarray*}
			provided $m\gtrsim_{K} n$.
Here, in the first line we have used $\E\,\xi\pp\pp^*=\pmb{0}$
and in the third line we have used Proposition \ref{psi_1 of poisson}.
Therefore, for the parameter in~\eqref{noise}, we have $$\beta\lesssim_{K} \max\left\{1, K\lV\x\rV_{2} \right\}\sqrt{mn}.$$
Then, by~\eqref{error1}, we can obtain the estimation error for the NCVX-LS estimator is
	                \begin{eqnarray}
		\textbf{dist}\lk\z_\star,\x\rk \lesssim_{K,\mu}\min\left\{  \max\left\{K, \frac{1}{\lV\x\rV_{2}} \right\}\cdot \sqrt{\frac{n}{m}},\,\max\left\{1, \sqrt{K\lV\x\rV_{2}} \right\}\cdot\lk\frac{n}{m}\rk^{1/4}\right\}.
				\end{eqnarray}
 
We next turn our attention to the CVX-LS estimator.
In this case, we take into account two admissible sets $\mathcal{E}_{\text{cvx,1}}$ and $\mathcal{E}_{\text{cvx,2}}$.
For $\mathcal{E}_{\text{cvx,1}}$, our argument follows the NCVX-LS estimator, and therefore we have 
\begin{eqnarray*}
\alpha\gtrsim_{K,\mu} m\quad\text{and}\quad\beta\lesssim_{K} \max\left\{1, K\lV\x\rV_{2} \right\} \sqrt{mn}.
\end{eqnarray*}
We next analyze $\mathcal{E}_{\text{cvx,2}}$.
By Lemma~\ref{approximate}, for the \textit{SLBC} with respect to $\lV\,\cdot\,\rV_{*}$,
we obtain that the parameter in \eqref{non_0} satisfies $$\widetilde{\alpha}\ge\frac{1}{36} m$$ with probability at least $1- 2 e^{-c_3m}$, provided $m\gtrsim_{K}n$.
		By \textbf{Part}~$\mathrm{(b)}$ of Corollary~\ref{mul inq for subexponential} and Proposition \ref{psi_1 of poisson}, for the \textit{NUBC} with respect to $\lV\,\cdot\,\rV_{*}$, with probability at least $1- \mathcal{O}\lk e^{-c_4n}\rk$, one has for all $\M \in \mathcal{E}_{\text{cvx,2}}$				
\begin{eqnarray*}
\begin{aligned}
		\lv \sum_{k=1}^{m}\xi_k\lg\pp_k\pp_k^*,\M\rg \rv
		&\lesssim_{K} \lV\xi\rV_{\psi_1}\sqrt{mn}\lV\M\rV_{*}\\
		&\lesssim_{K}  \max\left\{1, K\lV\x\rV_{2} \right\} \sqrt{mn}\lV\M\rV_{*},
		\end{aligned}
		\end{eqnarray*}
		provided $m\gtrsim_{K} n$.
Thus, for the parameter in~\eqref{noise_0} we have 
\begin{eqnarray*}
\widetilde{\beta}\lesssim_{K}  \max\left\{1, K\lV\x\rV_{2}\right\} \sqrt{mn}.
\end{eqnarray*}
Finally, by~\eqref{error2} and~\eqref{error3}, we can obtain the estimation error for the CVX-LS estimator is
              \begin{eqnarray}
		\lV\Z_{\star}-\x\x^*\rV_{F} \lesssim_{K,\mu} \max\left\{1,K\lV\x\rV_{2} \right\}\sqrt{\frac{n}{m}},
				\end{eqnarray}
			and 
	                \begin{eqnarray}
		\textbf{dist}\lk\z_\star,\x\rk\lesssim_{K,\mu} \max\left\{K, \frac{1}{\lV\x\rV_{2}} \right\} \sqrt{\frac{n}{m}}.
				\end{eqnarray}
			
\subsection{Proof of Theorem~\ref{low-dose}}

The proof of Theorem~\ref{low-dose} is nearly identical to that of Theorem~\ref{thm poisson}, differing mainly in the choice of parameters $\beta$ and $\widetilde{\beta}$ for the case $\lV\x\rV_2\le 1/K$ and in the probability bounds, which no longer decay exponentially.

The upper bounds for the parameters $\alpha$ and $\widetilde{\alpha}$ are the same as those established in the proof of Theorem \ref{thm poisson}.
Following the argument in the proof of Theorem \ref{thm poisson}, by \textbf{Part}~$\mathrm{(a)}$ of Corollary~\ref{mul inq for heavy}, with probability at least $1-c_5\frac{\log^{4}}{m} m-2\exp\lk-c_6n\rk$,
\begin{eqnarray*}
\begin{aligned}  
	    \lv  \sum_{k=1}^{m}\xi_k\lg\pp_k\pp_k^*,\M\rg \rv
		&\lesssim_{K} \lV\xi\rV_{L_4}\sqrt{mn}\lV\M\rV_{F}\\
		&\lesssim_{K} \max\left\{\sqrt{K\lV\x\rV_{2}}, K\lV\x\rV_{2} \right\} \sqrt{mn}\lV\M\rV_{F}\\
        &\lesssim_{K} \sqrt{K\lV\x\rV_{2}}\cdot \sqrt{mn}\lV\M\rV_{F},
		\end{aligned}
		\end{eqnarray*}
			provided $m\gtrsim_{K} n$.
Here, the second inequality follows from Proposition~\ref{L_q of poisson}, and the third inequality is due to $\lV\x\rV_2\le1/K$. 
Therefore, we have $$\beta\lesssim_{K} \sqrt{K\lV\x\rV_{2}}\cdot\sqrt{mn}.$$
Similarly, by \textbf{Part}~$\mathrm{(b)}$ of Corollary~\ref{mul inq for heavy}, we can also  obtain $\widetilde{\beta}\lesssim_{K} \sqrt{K\lV\x\rV_{2}}\cdot\sqrt{mn}.$
Thus, by \eqref{error1}, for the NVCX-LS estimator, we can obtain
	                \begin{eqnarray}
		\textbf{dist}\lk\z_\star,\x\rk \lesssim_{K,\mu} \min\left\{\sqrt{\frac{K}{\lV\x\rV_2}}\cdot\sqrt{\frac{n}{m}},\,\lk K\lV\x\rV_{2}\rk^{1/4}\cdot\lk\frac{n}{m}\rk^{1/4}\right\}.
				\end{eqnarray}
And by \eqref{error2} and \eqref{error3}, for the CVX-LS estimator, we can deduce that    
\begin{eqnarray}
		\lV\Z_{\star}-\x\x^*\rV_{F} \lesssim_{K,\mu} \sqrt{K\lV\x\rV_{2}}\cdot\sqrt{\frac{n}{m}},
				\end{eqnarray}
and 
 \begin{eqnarray}
		\textbf{dist}\lk\z_\star,\x\rk \lesssim_{K,\mu} \sqrt{\frac{K}{\lV\x\rV_{2}}}\cdot\sqrt{\frac{n}{m}}.
				\end{eqnarray}

\subsection{Proof of Theorem~\ref{thm heavy}}

The proof of Theorem~\ref{thm heavy}  follows a similar structure to that of Theorem \ref{thm poisson}. 
For the NCVX-LS estimator, we also have that $$\alpha\gtrsim_{K,\mu} m$$
holds with probability at least $1- \mathcal{O}\lk e^{-c_7m}\rk$, assuming $m\gtrsim_{K,\mu} n$.
By \textbf{Part}~$\mathrm{(a)}$ of Corollary~\ref{mul inq for heavy}, with probability at least $1-c_8m^{-\lk q/2-1\rk}\log^{q} m-2\exp\lk-c_9n\rk$,
we have 
\begin{eqnarray*}
\beta \lesssim _{K,q} \lV\xi\rV_{L_q}\sqrt{mn}
\end{eqnarray*}
when provided $m\gtrsim_K n$.
Therefore, by \eqref{error1}, we can obtain
	                \begin{eqnarray}
		\textbf{dist}\lk\z_\star,\x\rk \lesssim_{K,\mu,q} \min\left\{\frac{\lV\xi\rV_{L_q}}{\lV\x\rV_2}\cdot\sqrt{\frac{n}{m}},\,\sqrt{\lV\xi\rV_{L_q}}\cdot\lk\frac{n}{m}\rk^{1/4}\right\}.
				\end{eqnarray}
				
For the CVX-LS estimator,  applying Lemma~\ref{approximate} together with \textbf{Part}~$\mathrm{b}$ of Corollary~\ref{mul inq for heavy}, we similarly obtain 
\begin{eqnarray*}
\widetilde{\alpha} \ge \frac{1}{36}m\quad\text{and}\quad\widetilde{\beta} \lesssim_{K,q} \lV\xi\rV_{L_q}\sqrt{mn},
\end{eqnarray*}
with the same probability bounds as that established for the NCVX-LS estimator.
Thus by \eqref{error2} and \eqref{error3}, we can deduce that    
\begin{eqnarray}
		\lV\Z_{\star}-\x\x^*\rV_{F} \lesssim_{K,\mu,q}\lV\xi\rV_{L_q}\cdot\sqrt{\frac{n}{m}},
				\end{eqnarray}
and 
 \begin{eqnarray}
		\textbf{dist}\lk\z_\star,\x\rk \lesssim_{K,\mu,q} \frac{\lV\xi\rV_{L_q}}{\lV\x\rV_2}\cdot\sqrt{\frac{n}{m}}.
				\end{eqnarray}

\section{Minimax Lower Bounds}\label{minimax}

The goal of this section is to establish the minimax lower bounds stated in Theorem~\ref{minimax poisson} and Theorem~\ref{minimax heavy}.
The core idea is to follow the general framework presented in \cite{tsybakov2008nonparametric}, while refining the analysis in \cite{chen2017solving}. 
Specifically, we construct a finite set of well-separated hypotheses and apply a Fano-type minimax lower bound to derive the desired results. 
Since the hypotheses can be constructed in the real domain, it suffices to restrict our attention to the case where $\x\in\R^n$ and $\left\{\pp_k\right\}_{k=1}^m\overset{\text{i.i.d.}}{\sim}\mathcal{N}\lk\pmb{0},\pmb{I}_n\rk$.

For any two probability measures $\mathcal{P}$ and $\mathcal{Q}$, we denote by $\text{KL}\lk \mathcal{P}\|\mathcal{Q}\rk$ the Kullback-Leibler (KL) divergence between them:
 \begin{equation}\label{KL0}
		\text{KL}\lk \mathcal{P}\|\mathcal{Q}\rk:=\int\log\lk\frac{d \mathcal{P} }{d \mathcal{Q}}\rk d \mathcal{P}.
                \end{equation}
Below, we gather some results that will be used.
The first result provides an upper bound for the KL divergence between two Poisson-distributed datasets.
  
\begin{lemma}\label{KL1}
Fix a family of design vectors $\left\{\pp_k\right\}_{k=1}^m$. 
Let $\mathbb{P}\lk\y \mid \z\rk$ be the likelihood of $y_{k}\overset{\text{ind.}}{\sim}\text{Poisson}\lk\lv\lg\pp_{k},\z \rg\rv^{2}\rk$ conditional on $\left\{\pp_k\right\}_{k=1}^m$, where $k=1,2,\cdots,m$.
Then for any $\z,\x\in \R^n$, one has
 \begin{eqnarray}\label{KL ineq}
 \text{KL}\lk \mathbb{P}\lk\y \mid \z\rk \|\mathbb{P}\lk\y \mid \x\rk\rk
 \le\sum_{k=1}^m \lv \pp_k^{\top} \lk \z - \x \rk \rv^2\lk8+
        2\frac{\lv \pp_k^{\top} \lk \z - \x \rk \rv^2}{\lv \pp_k^{\top}  \x \rv^2}\rk.
 \end{eqnarray}
\end{lemma}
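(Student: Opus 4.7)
My plan is to reduce the inequality to a per-coordinate estimate for the KL divergence between two scalar Poisson laws, and then exploit a simple algebraic bound on $(a^2-b^2)^2/b^2$.

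First, conditional on $\{\pp_k\}_{k=1}^m$, the observations $\{y_k\}$ are independent Poisson random variables with means $|\pp_k^\top \z|^2$ under $\mathbb{P}(\cdot\mid\z)$ and $|\pp_k^\top \x|^2$ under $\mathbb{P}(\cdot\mid\x)$. By tensorization of KL divergence over independent product measures,
\begin{equation*}
\text{KL}\bigl(\mathbb{P}(\y\mid\z)\,\|\,\mathbb{P}(\y\mid\x)\bigr)
= \sum_{k=1}^{m} \text{KL}\bigl(\text{Poisson}(\lambda_k)\,\|\,\text{Poisson}(\mu_k)\bigr),
\end{equation*}
where $\lambda_k := |\pp_k^\top\z|^2$ and $\mu_k := |\pp_k^\top\x|^2$. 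A direct computation with the Poisson mass function gives the closed form $\text{KL}(\text{Poisson}(\lambda)\|\text{Poisson}(\mu)) = \lambda\log(\lambda/\mu) - \lambda + \mu$.

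Next, I will use the elementary inequality $\log u \le u - 1$ for $u>0$, applied to $u = \lambda/\mu$, to obtain $\lambda\log(\lambda/\mu) \le \lambda^2/\mu - \lambda$, and hence the single-coordinate bound
\begin{equation*}
\text{KL}\bigl(\text{Poisson}(\lambda)\,\|\,\text{Poisson}(\mu)\bigr) \;\le\; \frac{(\lambda-\mu)^2}{\mu}.
\end{equation*}
Writing $a = \pp_k^\top\z$ and $b = \pp_k^\top\x$, this becomes $(a^2-b^2)^2/b^2 = (a-b)^2(a+b)^2/b^2$. Expanding $(a+b)^2 = ((a-b)+2b)^2$ and applying $2\cdot 2b(a-b) \le (a-b)^2 + 4b^2$ (AM-GM) yields $(a+b)^2 \le 2(a-b)^2 + 8b^2$. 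Substituting back gives
\begin{equation*}
\frac{(a^2-b^2)^2}{b^2} \;\le\; 8(a-b)^2 + \frac{2(a-b)^4}{b^2},
\end{equation*}
which is exactly the per-summand bound in \eqref{KL ineq}. Summing over $k$ finishes the argument.

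I do not anticipate a serious obstacle: the only subtlety is the case $\mu_k = 0$, where both sides of the inequality are $+\infty$ (unless $\lambda_k = 0$ as well, in which case both vanish), so the stated bound is vacuously correct. The rest is the standard tensorization step, the scalar Poisson KL formula, the one-line $\log u \le u-1$ estimate, and a short AM-GM expansion.
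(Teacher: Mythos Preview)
Your proposal is correct and follows essentially the same approach as the paper: both tensorize the KL divergence, apply the scalar bound $\text{KL}(\text{Poisson}(\lambda)\|\text{Poisson}(\mu))\le(\lambda-\mu)^2/\mu$ via $\log u\le u-1$, and then expand $(a^2-b^2)^2/b^2=(a-b)^2(a+b)^2/b^2$ with the same elementary inequality $(a+b)^2\le 2(a-b)^2+8b^2$. The only cosmetic difference is that the paper reaches this last bound via the triangle inequality $|a+b|\le 2|b|+|a-b|$ followed by $(x+y)^2\le 2x^2+2y^2$, whereas you expand $((a-b)+2b)^2$ directly and bound the cross term; the two are equivalent.
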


\begin{proof}
Note that the KL divergence between two Poisson distributions with rates $\lambda_1$ and $\lambda_0$ satisfies 
 \begin{eqnarray*}
 \begin{aligned}
 \text{KL}\lk \text{Poisson}\lk\lambda_1\rk \|\text{Poisson}\lk\lambda_0\rk\rk
 &=\lambda_0-\lambda_1+\lambda_1\log\lk\frac{\lambda_1}{\lambda_0}\rk\\
 &\le \lambda_0-\lambda_1+\lambda_1\lk\frac{\lambda_1}{\lambda_0}-1\rk\\
 &=\frac{\lk\lambda_1-\lambda_0\rk^2}{\lambda_0}.
 \end{aligned}
 \end{eqnarray*}
  Thus, by the definition of the KL divergence and triangle inequality, we can further bound
 \begin{equation*}
    \begin{aligned}
        \text{KL}\lk \mathbb{P}\lk\y \mid \z\rk \|\mathbb{P}\lk\y \mid \x\rk\rk
        &\le\sum_{k=1}^m\frac{\lk\lv\lg\pp_k,\z\rg\rv^2-\lv\lg\pp_k,\x\rg\rv^2\rk^2}{\lv\lg\pp_k,\x\rg\rv^2}\\
        &\le\sum_{k=1}^m \lv \pp_k^{\top} \lk \z - \x \rk \rv^2\frac{\lk2\lv \pp_k^{\top}  \x  \rv+\lv \pp_k^{\top} \lk \z - \x \rk \rv\rk^2}{\lv \pp_k^{\top}  \x \rv^2}\\
        &\le\sum_{k=1}^m \lv \pp_k^{\top} \lk \z - \x \rk \rv^2\lk8+
        2\frac{\lv \pp_k^{\top} \lk \z - \x \rk \rv^2}{\lv \pp_k^{\top}  \x \rv^2}\rk.
    \end{aligned}
\end{equation*}

\end{proof}

The second result provides an upper bound for the KL divergence between two Gaussian-distributed datasets.

\begin{lemma}\label{KL2}
Fix a family of design vectors $\left\{\pp_k\right\}_{k=1}^m$. 
Let $\mathbb{P}\lk\y \mid \z\rk$ be the likelihood of $y_{k}\overset{\text{ind.}}{\sim} \lv\lg\pp_{k},\z \rg\rv^{2}+\xi_k$ conditional on $\left\{\pp_k\right\}_{k=1}^m$, where $\left\{\xi_k\right\}_{k=1}^{m}\overset{\text{i.i.d.}}{\sim} \mathcal{N}\lk 0,\sigma^2\rk$ and $k=1,2,\cdots,m$.
Then for any $\z,\x\in \R^n$, one has
 \begin{eqnarray}\label{KL ineq for Gaussian}
 \text{KL}\lk \mathbb{P}\lk\y \mid \z\rk \|\mathbb{P}\lk\y \mid \x\rk\rk
 \le\frac{1}{\sigma^2}\sum_{k=1}^m \lv \pp_k^{\top} \lk \z - \x \rk \rv^2\lk4\lv \pp_k^{\top}  \x  \rv^2+
        \lv \pp_k^{\top} \lk \z - \x \rk \rv^2\rk.
 \end{eqnarray}
\end{lemma}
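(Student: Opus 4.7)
The plan is to mirror the structure of the Poisson argument in Lemma~\ref{KL1}, but with the simpler and explicit KL formula available for Gaussians with common variance. Conditionally on $\left\{\pp_k\right\}_{k=1}^m$, the observations $y_k$ under parameter $\z$ are independent $\mathcal{N}\lk\lv\pp_k^{\top}\z\rv^2,\sigma^2\rk$ random variables. Since KL divergence tensorizes over independent coordinates, the first step is to write
\begin{equation*}
\text{KL}\lk \mathbb{P}\lk\y \mid \z\rk \|\mathbb{P}\lk\y \mid \x\rk\rk = \sum_{k=1}^m \text{KL}\lk \mathcal{N}\lk\lv\pp_k^{\top}\z\rv^2,\sigma^2\rk \,\|\, \mathcal{N}\lk\lv\pp_k^{\top}\x\rv^2,\sigma^2\rk \rk = \frac{1}{2\sigma^2}\sum_{k=1}^m \lk\lv\pp_k^{\top}\z\rv^2-\lv\pp_k^{\top}\x\rv^2\rk^2,
\end{equation*}
using the standard identity $\text{KL}\lk\mathcal{N}\lk\mu_1,\sigma^2\rk\|\mathcal{N}\lk\mu_0,\sigma^2\rk\rk=\lk\mu_1-\mu_0\rk^2/\lk2\sigma^2\rk$.

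The remainder is a short algebraic bound that parallels the computation in the proof of Lemma~\ref{KL1}. I would factor the difference of squares as $\lv\pp_k^{\top}\z\rv^2-\lv\pp_k^{\top}\x\rv^2 = \lk\lv\pp_k^{\top}\z\rv-\lv\pp_k^{\top}\x\rv\rk\lk\lv\pp_k^{\top}\z\rv+\lv\pp_k^{\top}\x\rv\rk$, then bound the first factor by $\lv\pp_k^{\top}\lk\z-\x\rk\rv$ via the reverse triangle inequality and the second by $2\lv\pp_k^{\top}\x\rv+\lv\pp_k^{\top}\lk\z-\x\rk\rv$ via the ordinary triangle inequality. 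Squaring and applying $\lk a+b\rk^2\le 2a^2+2b^2$ with $a=2\lv\pp_k^{\top}\x\rv$ and $b=\lv\pp_k^{\top}\lk\z-\x\rk\rv$ yields
\begin{equation*}
\lk\lv\pp_k^{\top}\z\rv^2-\lv\pp_k^{\top}\x\rv^2\rk^2 \le \lv\pp_k^{\top}\lk\z-\x\rk\rv^2 \lk 8\lv\pp_k^{\top}\x\rv^2 + 2\lv\pp_k^{\top}\lk\z-\x\rk\rv^2\rk.
\end{equation*}
Combining this with the prefactor $1/\lk2\sigma^2\rk$ produces exactly the stated bound with constants $4$ and $1$.

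There is essentially no serious obstacle here: unlike the Poisson case, no logarithmic inequality of the form $\log(1+t)\le t$ or denominator $\lv\pp_k^{\top}\x\rv^2$ arises, because the Gaussian variance is a constant $\sigma^2$ rather than the signal-dependent parameter $\lv\pp_k^{\top}\x\rv^2$. The only care needed is in the ordering of the triangle-inequality steps, which determines the specific constants $4$ and $1$ that appear; any other ordering would yield a bound of the same form up to multiplicative constants, but would not match the statement verbatim.
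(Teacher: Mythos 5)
Your proposal is correct and follows exactly the same path as the paper: tensorize the Gaussian KL divergence to $\frac{1}{2\sigma^2}\sum_{k}\lk\lv\pp_k^{\top}\z\rv^2-\lv\pp_k^{\top}\x\rv^2\rk^2$, bound the difference of squares by $\lv\pp_k^{\top}(\z-\x)\rv\lk2\lv\pp_k^{\top}\x\rv+\lv\pp_k^{\top}(\z-\x)\rv\rk$ via triangle inequalities, and finish with $(a+b)^2\le 2a^2+2b^2$. The algebra and resulting constants $4$ and $1$ match the paper verbatim.
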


\begin{proof}
The KL divergence between two Gaussian distributions $\mathcal{N}\lk \mu_1,\sigma^2\rk$ and $\mathcal{N}\lk \mu_2,\sigma^2\rk$ satisfies 
 \begin{eqnarray*}
 \text{KL}\lk \mathcal{N}\lk \mu_1,\sigma^2\rk \|\mathcal{N}\lk \mu_2,\sigma^2\rk\rk
 =\frac{1}{2\sigma^2}\lk\mu_1-\mu_2\rk^2.
 \end{eqnarray*}
  Thus we can further bound that
 \begin{equation*}
    \begin{aligned}
        \text{KL}\lk \mathbb{P}\lk\y \mid \z\rk \|\mathbb{P}\lk\y \mid \x\rk\rk
        &\le\frac{1}{2\sigma^2}\sum_{k=1}^m\lk\lv\lg\pp_k,\z\rg\rv^2-\lv\lg\pp_k,\x\rg\rv^2\rk^2\\
        &\le\frac{1}{2\sigma^2}\sum_{k=1}^m \lv \pp_k^{\top} \lk \z - \x \rk \rv^2\lk2\lv \pp_k^{\top}  \x  \rv+\lv \pp_k^{\top} \lk \z - \x \rk \rv\rk^2\\
        &\le\frac{1}{\sigma^2}\sum_{k=1}^m \lv \pp_k^{\top} \lk \z - \x \rk \rv^2\lk4\lv \pp_k^{\top}  \x  \rv^2+
        \lv \pp_k^{\top} \lk \z - \x \rk \rv^2\rk.
    \end{aligned}
\end{equation*}

\end{proof}

The quantities \eqref{KL ineq} and \eqref{KL ineq for Gaussian} in Lemma~\ref{KL1} and Lemma~\ref{KL2} turn out to be crucial in controlling the information divergence between different hypotheses.
To this end, we provide the following lemma, proved by modifying the argument in \cite{chen2017solving}, and which will be used to derive upper bounds for \eqref{KL ineq} and \eqref{KL ineq for Gaussian}.

\begin{lemma}\label{up for KL}
Suppose that $\left\{\pp_k\right\}_{k=1}^m \overset{\text{i.i.d.}}{\sim} \mathcal{N}\lk \pmb{0}, \pmb{I}_n \rk$, where $m,n$ are sufficiently large and $m \ge Ln$ for some sufficiently large constant $L > 0$.
Consider any $\x \in \R^n \setminus \{\pmb{0}\}$.  
There exists a collection $\mathcal{T}$ containing $\x$ with cardinality $\lv \mathcal{T} \rv = \exp \lk n / 200 \rk$, such that all $\pmb{\z}^{(i)} \in \mathcal{T}$ are distinct and satisfy the following properties:
\begin{itemize}
    \item[$\mathrm{(a)}$] With probability at least 
\begin{equation}\label{eq:pro for KL}
1 - \frac{3}{\log m}-5\exp \lk -\Omega \lk \frac{n}{\log m} \rk \rk-\exp \lk -\Omega \lk \frac{n^2}{m \log^2 n} \rk \rk,
\end{equation}
for all $\z^{(i)}, \z^{(j)} \in \mathcal{T}$,
   \begin{eqnarray}\label{distance}
    \frac{1}{\sqrt{8}} - (2n)^{-1/2} \le\lV \z^{(i)} - \z^{(j)} \rV_2 \le \frac{3}{2} + n^{-1/2},
    \end{eqnarray}
    and for all $\z \in \mathcal{T}\setminus \{\x\}$,
    \begin{eqnarray}\label{KL for poisson1}
    \frac{\lv \pp_k^{\top} \lk \z - \x \rk \rv^2}{\lv \pp_k^{\top} \x \rv^2} \le \lk 2 + 25600 \frac{m^2\log^3 m}{n^2} \rk\frac{\lV \z - \x \rV_2^2}{\lV \x \rV_2^2},\quad 1\le k\le m;
    \end{eqnarray}
    \item[$\mathrm{(b)}$] If $\frac{m}{n}\le\widetilde{L}\log m$ for some universal constant $\widetilde{L} > 0$, then
    with probability at least $1 - \frac{3}{\log m}-5\exp \lk -\Omega \lk \frac{m}{\log^4 m} \rk \rk$,
for all $\z^{(i)}, \z^{(j)} \in \mathcal{T}$, \eqref{distance} holds and for all $\z \in \mathcal{T}\setminus \{\x\}$,
    \begin{eqnarray}\label{KL for poisson2}
    \frac{\lv \pp_k^{\top} \lk \z - \x \rk \rv^2}{\lv \pp_k^{\top} \x \rv^2} \le \lk 2 + 16\log^5 m\rk\frac{\lV \z - \x \rV_2^2}{\lV \x \rV_2^2},\quad 1\le k\le m;
    \end{eqnarray}
    \item[$\mathrm{(c)}$] 
    With probability at least $1 - \frac{1}{\log m}-2\exp \lk -\Omega \lk n \rk \rk$,
for all $\z^{(i)}, \z^{(j)} \in \mathcal{T}$, \eqref{distance} holds and for all $\z \in \mathcal{T}\setminus \{\x\}$, 
    \begin{eqnarray}\label{KL for heavy1}
    \lv \pp_k^{\top} \lk \z - \x \rk \rv^2 \le 16 \log m \lV \z - \x \rV_2^2,\quad 1\le k\le m.
    \end{eqnarray}
\end{itemize}
\end{lemma}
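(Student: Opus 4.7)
The plan is to adapt the Gilbert--Varshamov-based construction of~\cite{chen2017solving} so that a single random choice of $\mathcal{T}$ simultaneously satisfies the geometric separation~\eqref{distance} and the data-dependent ratio and norm bounds in parts~(a)--(c). First I would fix a set of binary strings $\{\pmb{s}^{(i)}\}_{i=0}^{N-1}\subset\{-1,+1\}^n$ with $N=\exp(n/200)$ and pairwise Hamming distance in $[n/4,\,3n/4]$, whose existence follows from a standard probabilistic argument. Setting $\pmb{s}^{(0)}$ to be the all-ones vector and defining $\z^{(i)}:=\x+\tfrac{1}{\sqrt{8n}}(\pmb{s}^{(i)}-\pmb{s}^{(0)})$ ensures $\z^{(0)}=\x\in\mathcal{T}$ and that $\lV\z^{(i)}-\z^{(j)}\rV_2^2$ equals $\tfrac{1}{2n}$ times the Hamming distance between $\pmb{s}^{(i)}$ and $\pmb{s}^{(j)}$, which yields~\eqref{distance} with the slack $O(n^{-1/2})$.

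For each fixed $\z\in\mathcal{T}\setminus\{\x\}$, I would decompose $\z-\x=a\cdot\x/\lV\x\rV_2+b\pmb{w}$ with $\pmb{w}\perp\x$, $\lV\pmb{w}\rV_2=1$, and $a^2+b^2=\lV\z-\x\rV_2^2$. Writing $g_k:=\pp_k^\top\x/\lV\x\rV_2\sim\mathcal{N}(0,1)$ and $h_k:=\pp_k^\top\pmb{w}\sim\mathcal{N}(0,1)$, independent for each $k$, a direct computation yields
\begin{equation*}
\frac{\lv\pp_k^\top(\z-\x)\rv^2}{\lv\pp_k^\top\x\rv^2}\;\le\;\frac{2\lV\z-\x\rV_2^2}{\lV\x\rV_2^2}\,\Bigl(1+\frac{h_k^2}{g_k^2}\Bigr).
\end{equation*}
Thus for~(a) it suffices to control $h_k^2/g_k^2$ uniformly over $k\in[m]$ and over the finite family of unit vectors $\pmb{w}$ induced by $\mathcal{T}$. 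I would combine a uniform upper estimate on $h_k^2$ (via a Gaussian maximal inequality plus chaining on the at most $n$-dimensional span of these $\pmb{w}$) with a lower estimate $\min_k g_k^2\gtrsim n^2/(m^2\log^3 m)$, obtained from the anti-concentration bound $\mathbb{P}(\lv g_k\rv\le t)\lesssim t$ together with a union bound over $k\le m$ and the chaining cost; this delivers the polynomial factor in~\eqref{KL for poisson1} on an event of probability at least $1-\Theta(1/\log m)$. For~(b), in the regime $m\lesssim n\log m$, the cheaper bound $\min_k g_k^2\gtrsim 1/\log^4 m$ suffices and gives the $\log^5 m$ factor in~\eqref{KL for poisson2}. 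Part~(c) avoids the Cauchy ratio altogether: a $\chi_1^2$ tail and a union bound over $m$ indices and the $\exp(n/200)$ directions in $\mathcal{T}-\x$ (absorbed because $\log\lvert\mathcal{T}\rvert=n/200$) deliver $\lv\pp_k^\top(\z-\x)\rv^2\le 16\log m\lV\z-\x\rV_2^2$ uniformly.

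The principal obstacle is enforcing uniformity simultaneously over $\exp(n/200)$ hypotheses and $m$ design vectors in~(a)--(b): the ratio $h_k/g_k$ has a Cauchy-type heavy tail, so a direct Chernoff--union bound is impossible and the weaker $1-\Theta(1/\log m)$ probability guarantee in~\eqref{eq:pro for KL} is unavoidable. Carefully coupling a chaining argument for $\sup_{\pmb{w}}h_k^2$ (which is what produces the $\exp(-\Omega(n^2/(m\log^2 n)))$ term) with an anti-concentration truncation of $\min_k g_k^2$, on a single high-probability event that also enforces~\eqref{distance}, will be the technical heart of the proof; once that coupling is in place, the three bounds follow by straightforward case analysis.
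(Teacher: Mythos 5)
Your construction of the packing $\mathcal{T}$ via a Gilbert--Varshamov code around $\x$ is a reasonable alternative to the paper's random Gaussian perturbation and would indeed yield~\eqref{distance} (in fact with better constants and deterministically). However, the probabilistic argument you sketch for parts~(a)--(c) has a fatal gap, and it misses what is the actual technical heart of the paper's proof.

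The central problem is your claimed lower estimate $\min_{k\le m} g_k^2 \gtrsim n^2/(m^2\log^3 m)$. With $m$ i.i.d.\ standard Gaussians $g_1,\dots,g_m$, the anti-concentration bound $\mathbb{P}(\lvert g_k\rvert \le t)\lesssim t$ together with a union bound over $k\le m$ gives $\mathbb{P}(\min_k\lvert g_k\rvert \le t)\lesssim mt$, so to keep the failure probability at level $\Theta(1/\log m)$ you can afford at best $t\asymp 1/(m\log m)$, i.e.\ $\min_k g_k^2\gtrsim 1/(m^2\log^2 m)$. Your claimed threshold is larger than this by a factor $n^2/\log m$, and in the regime $m\ge Ln$ with $n$ large it is simply false with overwhelming probability: setting $t = n/(m\log^{3/2}m)$ gives $\mathbb{P}(\exists k: \lvert g_k\rvert\le t)\gtrsim 1-(1-t)^m \approx 1-e^{-n/\log^{3/2}m}\to 1$. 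The same issue kills your part~(b) argument ($1/\log^4 m$ is not an achievable lower bound for $\min_k g_k^2$ when $m\asymp n\log m$), and for part~(c) a union bound over the $\exp(n/200)$ directions $\pmb{w}$ against a $\chi_1^2$ tail $\exp(-t/2)$ forces a threshold $t\gtrsim n$, not $16\log m$.

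What the paper actually does, and what your proposal omits, is to make $\mathcal{T}$ \emph{data-dependent}: the set is constructed \emph{after} conditioning on $\{\pp_k\}$. The indices are reordered by $\lvert\pp_k^\top\x\rvert$, and a Hoeffding bound (the source of the $\exp(-\Omega(n^2/(m\log^2 n)))$ term in~\eqref{eq:pro for KL} — it does not come from chaining over $\pmb{w}$ as you suggest) shows that at most $t_0 = n/(25\log m)$ of the $g_k$ fall below $n/(40 m\log m)$. For the remaining $k> t_0$, one has $g_k^2 \gtrsim n^2/(m^2\log^2 m)$, and pairing this with the cheap bound $h_k^2\lesssim\log m$ yields the $m^2\log^3 m/n^2$ factor. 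For the $t_0$ "bad" indices, the argument instead \emph{selects} the hypotheses $\z^{(i)}$ so that the corresponding $h_k$ are $O(1/m)$: because $\lvert\mathcal{T}_1\rvert = \exp(n/20)$ is super-exponentially larger than $m^{t_0} = \exp(t_0\log m) = \exp(n/25)$, an exponentially large subfamily survives this pruning. No fixed (data-independent) code can accomplish this — once $\mathcal{T}$ is chosen before $\{\pp_k\}$ are drawn, the worst-case ratio $h_k^2/g_k^2$ over all $k$ and all $\z$ will be polynomially large in $m$, regardless of chaining. This data-dependent two-scale pruning is the missing idea in your proposal, and without it the stated ratio bounds cannot be established.
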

\begin{proof}
See Appendix \ref{pf_KL}.
\end{proof}
\begin{remark} 
 
From \eqref{distance}, we observe that any two hypotheses in $\mathcal{T}$ are located around $\x$ while remaining well separated by a distance on the order of 1.
\textbf{Part}~$\mathrm{(a)}$ will be used to establish an upper bound for \eqref{KL ineq} in the proof of \textbf{Part}~$\mathrm{(a)}$ of Theorem~\ref{minimax poisson}, while
\textbf{Part}~$\mathrm{(b)}$ will be used in the proof of \textbf{Part}~$\mathrm{(b)}$ of the same theorem.
Finally, \textbf{Part}~$\mathrm{(c)}$ will be invoked to derive an upper bound for \eqref{KL ineq for Gaussian} in the proof of Theorem~\ref{minimax heavy}.
\end{remark}

\subsection{Proof of Theorem~\ref{minimax poisson}}\label{proof of minimax poisson}

We first  prove \textbf{Part}~$(\mathrm{a})$ of Theorem~\ref{minimax poisson}.
Define $\pmb{\Phi} := \left[\pp_1,\pp_2,\cdots,\pp_m\right]^{\mathrm{T}}$, and let $\mathcal{E}_1$ denote the event $\mathcal{E}_1:=\left\{\lV\pmb{\Phi}\rV_{op}\le\sqrt{2m}\right\}$.
By \cite[Theorem 4.6.1]{vershynin2018high}, $\mathcal{E}_1$ holds with probability at least  $1-2\exp\lk-\Omega \lk m\rk\rk$.
Let $\mathcal{E}_2$ be the event under which \textbf{Part}~$\mathrm{(a)}$ of Lemma~\ref{up for KL} holds.
Now, conditioning on the events $\mathcal{E}_1$ and $\mathcal{E}_2$, 
Lemma~\ref{KL1} together with \eqref{KL for poisson1} of Lemma~\ref{up for KL} implies that the KL divergence satisfies
\begin{equation*}
    \begin{aligned}
        \text{KL}\lk \mathbb{P}\lk\y \mid \z\rk \|\mathbb{P}\lk\y \mid \x\rk\rk
            &\le \sum_{k=1}^{m} \lv \pp_k^{\top} \lk \z - \x \rk \rv^2\lk8+2
        \frac{\lv \pp_k^{\top} \lk \z - \x \rk \rv^2}{\lv \pp_k^{\top}  \x \rv^2}\rk\\
        &\le 20m\lV\z - \x\rV^2_2+ 51200\frac{m^3\log^3 m}{n^2}\frac{\lV\z-\x\rV_2^4}{\lV\x\rV_2^2}.
    \end{aligned}
\end{equation*}
We rescale the hypotheses in $\mathcal{T}$ of Lemma~\ref{up for KL} by the substitution:\ 
$\z\leftarrow\x+\delta\lk\z-\x\rk$.
In such a way, we have that
\begin{equation*}
\lV\z^{\lk i\rk}-\x\rV_2 \asymp\delta\quad\text{and}\quad \lV\z^{\lk i\rk}-\z^{\lk j\rk}\rV_2 \asymp\delta,\quad \forall\ \z^{\lk i\rk},\z^{\lk j\rk}\in\mathcal{T}\setminus \{\x\}\ \text{with}\ \z^{\lk i\rk}\neq\z^{\lk j\rk}.
\end{equation*}

By \cite[Theorem 2.7]{tsybakov2008nonparametric}, if the the conditional KL divergence obeys
\begin{equation}\label{KL up}
\frac{1}{\lv\mathcal{T}\rv-1}\sum\limits_{\z^{\lk i\rk}\in\mathcal{T}\setminus \{ \x\}}\text{KL}\lk \mathbb{P}\lk\y \mid \z^{\lk i\rk}\rk \|\mathbb{P}\lk\y \mid \x\rk\rk\le\frac{1}{10}\log\lk\lv
\mathcal{T}\rv-1\rk,
\end{equation}
then the Fano-type minimax lower bound asserts that
\begin{equation*}
\inf\limits_{\widehat{\x}}\sup\limits_{\x\in\mathcal{T}}\E\left[\lV\widehat{\x}-\x\rV_2\mid \left\{\pp_k\right\}\right]\gtrsim \min\limits_{\substack{\z^{\lk i\rk},\z^{\lk j\rk}\in\mathcal{T}\\\z^{\lk i\rk}\neq\z^{\lk j\rk}}}
\lV\z^{\lk i\rk}-\z^{\lk j\rk}\rV_2.
\end{equation*}
Since $\lv\mathcal{T}\rv=\exp\lk n/200\rk$, \eqref{KL up} would follow from 
\begin{equation}\label{eq:KL poisson}
20\lV\z - \x\rV^2_2+ 51200\frac{m^2\log^3 m}{n^2}\frac{\lV\z-\x\rV_2^4}{\lV\x\rV_2^2}\le\frac{n}{2000m},\quad \forall\ \z\in\mathcal{T}.
\end{equation}

In the real domain, we have that $\textbf{dist}\lk\z,\x\rk=\min\left\{\lV\z-\x\rV_2,\lV\z+\x\rV_2\right\}$.
\textbf{Part}~$(\mathrm{a})$ of Lemma~\ref{up for KL} implies that if we set $\delta\le \frac{1}{12}\lV\x\rV_2$, then all the hypotheses $\z^{\lk i\rk}$ are around $\x$ at a distance about $\delta$ that is smaller than $\frac{1}{2}\lV\x\rV_2$, thus for hypotheses $\z^{\lk i\rk}$, we have 
$\textbf{dist}\lk\z^{\lk i\rk},\x\rk=\lV\z^{\lk i\rk}-\x\rV_2$, which implies for any estimator, we have
$\textbf{dist}\lk\widehat{\x},\x\rk=\lV\widehat{\x}-\x\rV_2$.
To meet the  condition \eqref{eq:KL poisson} and $\delta\le \frac{1}{12}\lV\x\rV_2$, we choose $\delta^2$ as
\begin{equation*}
\min\left\{\frac{1}{144}\lV\x\rV_2^2,\frac{\frac{n}{4000m}}{10+3\sqrt{\frac{\log^3 m}{\lV\x\rV^2_2}\cdot\frac{m}{n}}}\right\}.
\end{equation*}
Thereby, we can obtain
\begin{equation}\label{minimax result for poisson 1}
\inf\limits_{\widehat{\x}}\sup\limits_{\x\in\mathcal{T}}\E\left[\textbf{dist}\lk\widehat{\x},\x\rk\mid \left\{\pp_k\right\}\right]\gtrsim \delta\asymp \min\left\{\lV\x\rV_2,\frac{\sqrt{\frac{n}{m}}}{1+\frac{\log^{3/4} m}{\sqrt{\lV\x\rV_2}}\cdot\lk \frac{m}{n}\rk^{1/4}}\right\}.
\end{equation}
To ensure that the probability \eqref{eq:pro for KL} tends to 1, we impose $\frac{m}{n^2}\le\frac{\widetilde{L}}{\log^3 m}$ for some universal constant $L>0$.

We turn to prove \textbf{Part}~$(\mathrm{b})$ of Theorem~\ref{minimax poisson}.
Let $\mathcal{E}_3$ be the event that \textbf{Part}~$\mathrm{(b)}$ of Lemma~\ref{up for KL} holds.
Now, conditioning on the events $\mathcal{E}_1$ and $\mathcal{E}_3$, 
Lemma~\ref{KL1} together with \eqref{KL for poisson2} of Lemma~\ref{up for KL} implies that \eqref{KL up} follows from 
\begin{equation}\label{eq:KL poisson2}
            20\lV\z - \x\rV^2_2+ 32\log^5 m\frac{\lV\z-\x\rV_2^4}{\lV\x\rV_2^2}\le\frac{n}{2000m},\quad \forall\ \z\in\mathcal{T}.
\end{equation}
If $\lV\x\rV_2=o\lk \frac{\sqrt\frac{n}{m}}{\log^{5/2}m}\rk$, we set 
\begin{equation*}
\delta\asymp\sqrt{\lV\x\rV_2}\cdot\frac{\lk\frac{n}{m}\rk^{1/4}}{\log^{5/4}m}.
\end{equation*}
Then the condition \eqref{eq:KL poisson2} holds and we have $\lV\x\rV_2\ll\delta$.
Thus, for any $\z\in\mathcal{T}\setminus \{ \x\}$, we have 
\begin{equation*}
\begin{aligned}
\textbf{dist}\lk\z,\x\rk&=\min\left\{\lV\z-\x\rV_2,\lV\z+\x\rV_2\right\}\\
&\ge \min\left\{\lV\z-\x\rV_2,\lV\z-\x\rV_2-2\lV\x\rV_2\right\}\\
&=\lV\z-\x\rV_2-2\lV\x\rV_2\asymp\lV\z-\x\rV_2,
\end{aligned}
\end{equation*}
which implies that
\begin{equation}\label{minimax result for poisson 2}
\inf\limits_{\widehat{\x}}\sup\limits_{\x\in\mathcal{T}}\E\left[\textbf{dist}\lk\widehat{\x},\x\rk\mid \left\{\pp_k\right\}\right]\gtrsim \delta\asymp \sqrt{\lV\x\rV_2}\cdot\frac{\lk\frac{n}{m}\rk^{1/4}}{\log^{5/4}m}.
\end{equation}

\subsection{Proof of Theorem~\ref{minimax heavy}}

We follow the steps in the proof of Theorem~\ref{minimax poisson}.
Let $\mathcal{E}_4$ be the event under which \textbf{Part}~$\mathrm{(c)}$ of Lemma~\ref{up for KL} holds.
Conditioning on the event $\mathcal{E}_1$ and $\mathcal{E}_4$,
Lemma~\ref{KL2} together with \textbf{Part}~$\mathrm{(c)}$ of Lemma~\ref{up for KL} implies that, in this case, the conditional KL divergence satisfies
\begin{equation*}
    \begin{aligned}
        \text{KL}\lk \mathbb{P}\lk\y \mid \z\rk \|\mathbb{P}\lk\y \mid \x\rk\rk
            &\le\frac{1}{\sigma^2}\sum_{k=1}^m \lv \pp_k^{\top} \lk \z - \x \rk \rv^2\lk4\lv \pp_k^{\top}  \x  \rv^2+
        \lv \pp_k^{\top} \lk \z - \x \rk \rv^2\rk\\
        &\le \frac{8}{\sigma^2}m\log m\lV\z - \x\rV^2_2\lV\x\rV_2^2+ \frac{32}{\sigma^2}m\log m\lV\z - \x\rV_2^4.
    \end{aligned}
\end{equation*}
We rescale the hypotheses by the substitution:\ 
$\z\leftarrow\x+\delta\lk\z-\x\rk$.
By \cite[Theorem 2.7]{tsybakov2008nonparametric} and noting that $\lv\mathcal{T}\rv=\exp\lk n/200\rk$,
we can obtain the Fano-type minimax lower bound provided that the following inequality holds
\begin{equation}\label{eq:KL gaussian}
8\log m\lV\z - \x\rV^2_2\lV\x\rV_2^2+ 32\log m\lV\z - \x\rV_2^4\le\frac{\sigma^2n}{2000m},\quad \forall\ \z\in\mathcal{T}.
\end{equation}

For \textbf{Part}~$(\mathrm{a})$ of Theorem~\ref{minimax heavy}, in order to satisfy condition~\eqref{eq:KL gaussian} and ensure that all hypotheses $\z^{(i)}$ obey $\textbf{dist}\lk\z^{\lk i\rk},\x\rk=\lV\z^{\lk i\rk}-\x\rV_2$, we choose $\delta^2$ as
\begin{equation*}
\min\left\{\frac{1}{144}\lV\x\rV_2^2,\frac{\frac{n}{4000m}}{8\log m\lV\x\rV^2_2/\sigma^2+\sqrt{\frac{2\log{m}}{125\sigma^2}\cdot\frac{n}{m}}}\right\}.
\end{equation*}
Thus, we can obtain
\begin{equation}\label{minimax result for gaussian 1}
\inf\limits_{\widehat{\x}}\sup\limits_{\x\in\mathcal{T}}\E\left[\textbf{dist}\lk\widehat{\x},\x\rk\mid \left\{\pp_k\right\}\right]\gtrsim \delta\asymp 
\min\left\{\lV\x\rV_2,\frac{\sqrt{\frac{n}{m}}}{\lV\x\rV_2\sqrt{\log m}/\sigma+\lk\frac{\log m}{\sigma^2}\rk^{1/4}\cdot\lk \frac{n}{m}\rk^{1/4}}\right\}.
\end{equation}

For \textbf{Part}~$(\mathrm{b})$ of Theorem~\ref{minimax heavy}, since $\lV\x\rV_2=o\lk \sqrt{\sigma}\cdot\frac{\lk\frac{n}{m}\rk^{1/4}}{\log^{1/4}m}\rk$, we set 
\begin{equation*}
\delta\asymp\sqrt{\sigma}\cdot\frac{\lk\frac{n}{m}\rk^{1/4}}{\log^{1/4}m}.
\end{equation*}
Thus, condition~\eqref{eq:KL gaussian} holds and we obtain $\lV\x\rV_2\ll\delta$,
which further implies that for any $\z^{(i)}\in\mathcal{T}\setminus \{\x\}$, we have $\textbf{dist}\lk\z^{\lk i\rk},\x\rk\asymp\lV\z^{\lk i\rk}-\x\rV_2$.
Finally, we can obtain
\begin{equation}\label{minimax result for gaussian 2}
\inf\limits_{\widehat{\x}}\sup\limits_{\x\in\mathcal{T}}\E\left[\textbf{dist}\lk\widehat{\x},\x\rk\mid \left\{\pp_k\right\}\right]\gtrsim \delta\asymp \sqrt{\sigma}\cdot\frac{\lk\frac{n}{m}\rk^{1/4}}{\log^{1/4}m}.
\end{equation}

\section{Numerical Simulations}\label{exp}

In this section, we carry out a series of numerical simulations to confirm the validity of our theory.
In particular, we demonstrate the stable performance of the NCVX-LS and CVX-LS estimators vis-à-vis Poisson noise and heavy-tailed noise.

\subsection{Numerical Performance for Poisson Model}\label{subsec:num for poisson}

We investigate the numerical performance of the NCVX-LS and CVX-LS estimators for Poisson model \eqref{poisson}.
We will use the relative mean squared error (MSE) and the mean absolute error (MAE) to measure performance.
Since a solution is only unique up to the global phase, we compute the distance modulo a global phase
term and define the relative MSE and MAE as
\begin{equation*}\label{eq:MSE}
\text{MSE}:=\inf\limits_{\lv c\rv=1}\frac{\lV c\z_{\star}-\x\rV^2_2}{\lV\x\rV^2_2}\quad \text{and}\quad \text{MAE}:=\inf\limits_{\lv c\rv=1}\lV c\z_{\star}-\x\rV_2.
\end{equation*}

\begin{figure}[htbp]
    \centering
    \includegraphics[width=1\textwidth]{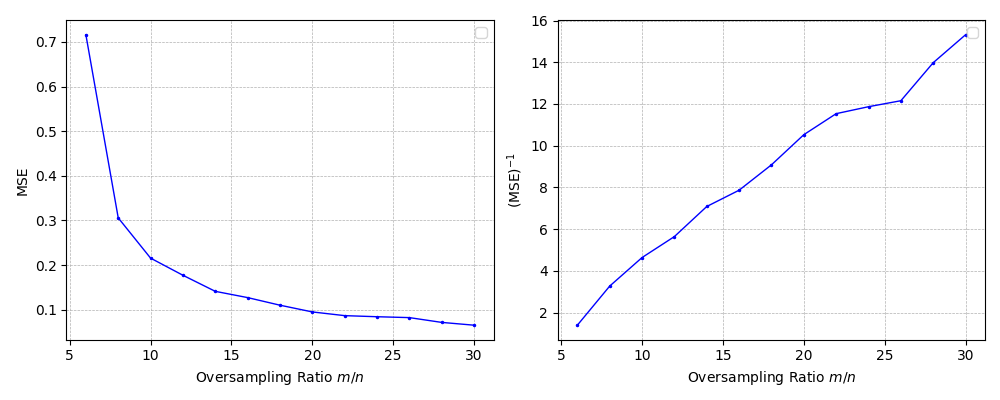}  
    \caption{Poisson: \textsc{NCVX-LS} with $m/n$.
}
    \label{fig1}
\end{figure}

\begin{figure}[htbp]
    \centering
    \includegraphics[width=1\textwidth]{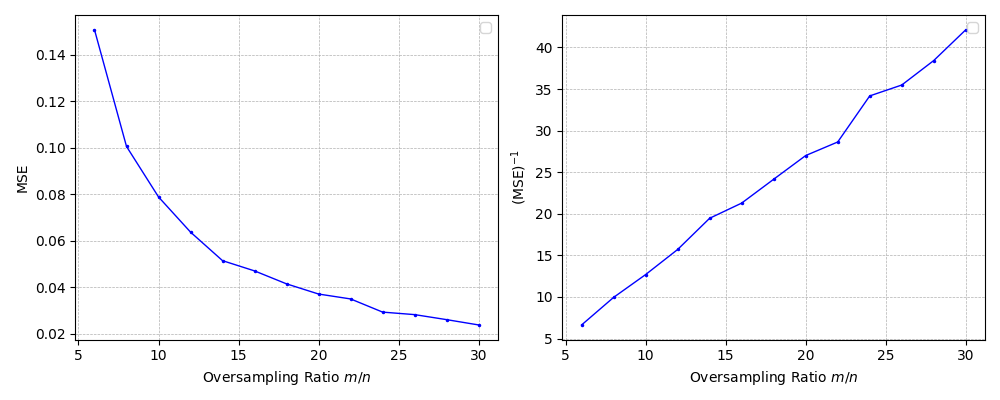}  
    \caption{Poisson: \textsc{CVX-LS} with $m/n$.
}
    \label{fig3}
\end{figure}

In the first experiment, we examine the performance of the NCVX-LS and CVX-LS estimators as the oversampling ratio $r := m/n$ increases under Poisson noise.
The NCVX-LS estimator is solved using the Wirtinger Flow (WF) algorithm (see \cite{candes2015phase4}).
The CVX-LS estimator is implemented in Python using MOSEK; to obtain an approximation $\z_{\star}$, we extract its largest rank-1 component as described in Section~\ref{setup}.
The test signal $\x \in \mathbb{C}^n$ is randomly generated and normalized to unit $\ell_2$-norm, i.e., $\lV\x \rV_2 = 1$;
we set $n=32$ for NCVX-LS and $n=16$ for CVX-LS, since the convex formulation incurs higher memory costs.
The sampling vectors are independently drawn from $\mathcal{CN}\lk\pmb{0}, \pmb{I}_n\rk$.
We vary the oversampling ratio $r$ from 6 to 30 in increments of 2.
For each value of $r$, the experiment is repeated 50 times and the average relative MSE is reported.

Figures~\ref{fig1} and \ref{fig3} plot the relative MSE of the NCVX-LS and CVX-LS estimators against the oversampling ratio.
The results show that the relative MSE decreases inversely with $r$, while its reciprocal grows nearly linearly in $r$.
Since $\lV\x \rV_2 = 1$, this empirical trend corroborates our theoretical prediction that, in the high-energy regime, the estimation error scales linearly with $\sqrt{n/m}$.

We examine the performance of the NCVX-LS estimator as the signal energy increases under Poisson noise.
The algorithm employs the truncated spectral initialization from \cite{chen2017solving} together with the iterative refinement method of \cite{candes2015phase4}.
The test signal $\x \in \mathbb{C}^n$ is randomly generated with length $n = 10$, normalized to unit $\ell_2$-norm, and then scaled by a factor $\alpha$ ranging from 0.01 to 1 in increments of 0.01.
The oversampling ratio is fixed at $r = 40$.
For each $\alpha$, the experiment is repeated 50 times with independently generated noise and measurement matrices, and the average MAE is reported.

Figure~\ref{fig2} plots the MAE against $\sqrt{\alpha}$. 
The results show that when $\sqrt{\alpha} \in (0,0.4)$, the MAE grows approximately linearly with $\sqrt{\alpha}$. 
Beyond the threshold $\sqrt{\alpha} \approx 0.4$, the MAE stabilizes within a narrow band between 0.13 and 0.15.
This empirical behavior aligns with our theoretical findings: witg a fixed oversampling ratio, the estimation error of the NCVX-LS estimator grows proportionally to $\sqrt{\lV\x\rV_2}$ in the low-energy regime, consistent with the minimax lower bound, whereas in the high-energy regime, the error becomes nearly independent of the signal energy.

\begin{figure}[htbp]
    \centering
    \includegraphics[width=0.75\textwidth]{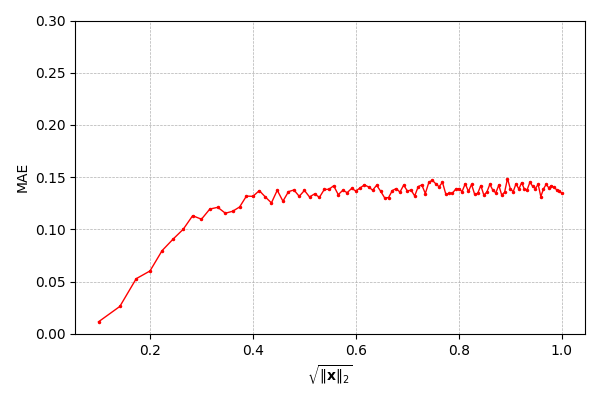}  
    \caption{Poisson: \textsc{NCVX-LS} with $\sqrt{\lV\x\rV_2}$.
}
    \label{fig2}
\end{figure}

\subsection{Numerical Performance for Heavy-tailed Model}

We investigate the numerical performance of the NCVX-LS and CVX-LS estimators for hevay-tailed model \eqref{heavy}.
Performance is measured using the relative MSE and MAE defined in Section~\ref{subsec:num for poisson}.
To model heavy-tailed corruption, we add independent additive noise to each measurement,
drawn from a Student's $t$-distributions with degrees of freedom (DoF) $\nu$, which will be specified subsequently.
The Student’s $t$-distribution is symmetric with heavier tails than the Gaussian distribution, and the tail heaviness is controlled by $\nu$: smaller $\nu$ produces heavier tails and more extreme outliers, 
while $\nu \to \infty$ recovers the standard normal distribution $\mathcal{N}\lk 0,1\rk$.

\begin{figure}[htbp]
    \centering
    \includegraphics[width=1\textwidth]{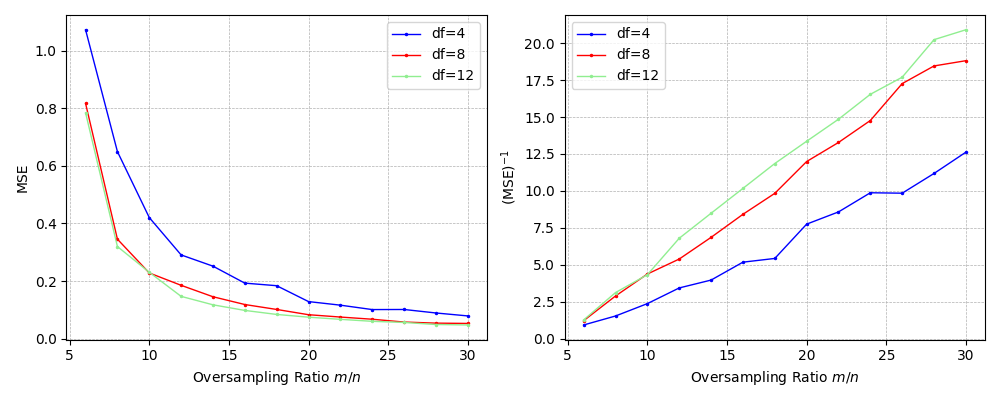}  
    \caption{Hevay-tail: \textsc{NCVX-LS} with $m/n$.}
    \label{fig4}
\end{figure}

\begin{figure}[htbp]
    \centering
    \includegraphics[width=1\textwidth]{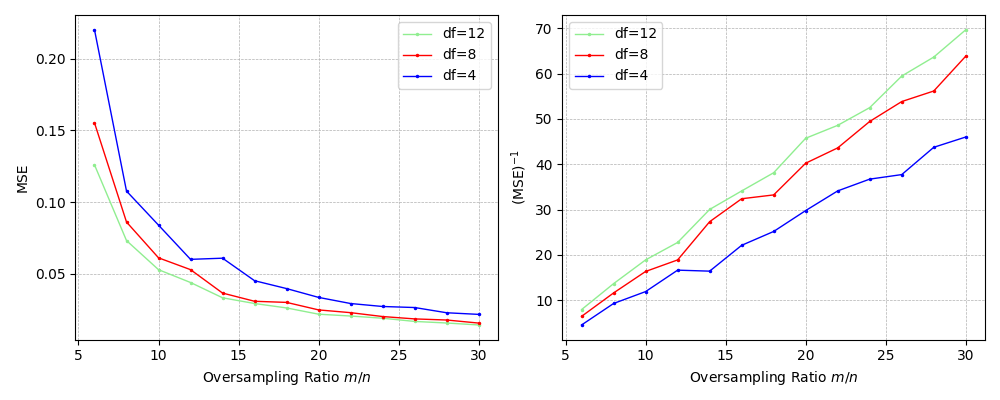}  
    \caption{Hevay-tail: \textsc{CVX-LS} with $m/n$.}
    \label{fig5}
\end{figure}
We investigate the performance of the NCVX-LS and CVX-LS estimators as the oversampling ratio $r$ increases under heavy-tailed noise.
The NCVX-LS estimator is solved using truncated spectral initialization \cite{chen2017solving} followed by WF iterations \cite{candes2015phase4}, while the CVX-LS estimator is implemented in Python with MOSEK.
The ratio $r$ ranges from 6 to 30 in increments of 2. 
In each trial, the true signal $\x$ is randomly generated and normalized to unit $\ell_2$-norm;
we set $n=32$ for NCVX-LS and $n=16$ for CVX-LS.
Independent sampling vectors are drawn from $\mathcal{CN}\lk\pmb{0},\pmb{I}_n\rk$ and heavy-tailed noise is generated from Student's $t$-distributions with $\nu \in \left\{4, 8, 12\right\}$.
For each combination of $r$ and $\nu$, the experiment is repeated 50 times, and the average relative MSE across trials is reported.

Figures~\ref{fig4} and \ref{fig5} show that the relative MSE decreases as the oversampling ratio increases, and its reciprocal grows approximately linearly with $r$.
This empirical trend is consistent with our theoretical prediction that the estimation error of both estimators scales as $\sqrt{n/m}$ in the high-energy regime.
Moreover, the estimation error decreases with increasing $\nu$: extremely heavy-tailed noise (small $\nu$) may destabilize the estimators, whereas lighter-tailed noise (larger $\nu$) improves accuracy, reflecting their robustness.

We also examine the performance of the NCVX-LS estimator as the signal energy increases under heavy-tailed noise. 
We solve the NCVX-LS estimator using the WF method with a prior-informed initialization. 
To mitigate the high sensitivity of the truncated spectral initialization to heavy-tailed noise in the low-energy regime, we initialize the algorithm at $s\x$, where the scaling factor $s\in[0.8,1.2]$ is randomly selected.
The test signal $\x \in \mathbb{C}^n$ is randomly generated with length $n = 10$, normalized to unit $\ell_2$-norm, and then scaled by a factor $\alpha$ ranging from 0.01 to 0.5 in increments of 0.01 and from 0.5 to 1.2 in increments of 0.03. The oversampling ratio is fixed at $r = 40$. 
For each $\alpha$, the experiment is repeated 50 times with independently generated noise drawn from a Student's $t$-distribution with $\nu = 8$, and the average MAE is reported.

Figure~\ref{fig6} plots the MAE against $\alpha$. 
The results show that when $\alpha \in (0,0.5)$, the MAE remains within the range of approximately 0.35 to 0.45. 
Beyond the threshold $\alpha \approx 0.5$, the MAE decreases as $\alpha$ continues to grow. 
This behavior reflects the experimental trend: with a fixed oversampling ratio, the estimation error of the NCVX-LS estimator remains relatively stable in the low-energy regime, whereas in the high-energy regime, it gradually decreases as the signal energy increases.

\begin{figure}[htbp]
    \centering
    \includegraphics[width=0.75\textwidth]{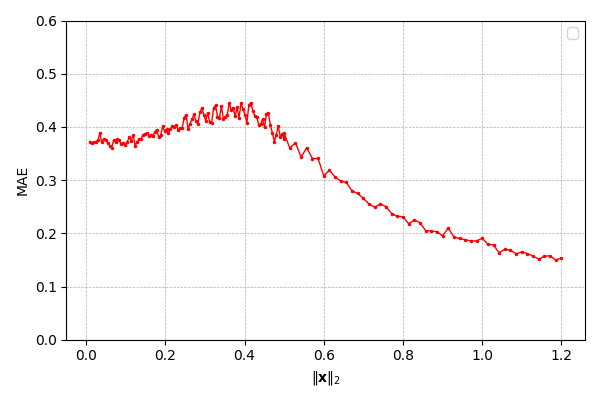}  
    \caption{Hevay-tail: \textsc{NCVX-LS} with $\lV\x\rV_2$.}
    \label{fig6}
\end{figure}

\section{Further Illustrations}\label{app}

In this section, we extend our analytical framework to three additional problems: sparse phase retrieval, low-rank PSD matrix recovery, and random blind deconvolution. 
We further derive the corresponding error bounds to characterize their stable performance of LS-type estimators in these settings.

\subsection{Sparse Phase Retrieval}\label{sub spr}

We first formulate the sparse phase retrieval problem. 
Specifically, we consider applying the NCVX-LS estimator to recover an $s$-sparse signal $\x\in\C^n$ and investigate its stable performance under the given noise settings.
Therefore, we modify the constraint set in the NCVX-LS estimator \eqref{model1} as follows:
          \begin{equation} \label{model3}
		\begin{array}{ll}
			\text{minimize}   & \quad\lV\ppp\lk\z\rk-\y\rV_2\\
			\text{subject to} & \quad \z\in\Sigma_{s}^{n}.\\				 
		\end{array}
	\end{equation}
Here, $\ppp\lk\z\rk$ denotes the phaseless operator as previously defined, $\y$ represents either Poisson model~\eqref{poisson} or heavy-tailed model \eqref{heavy}, and $\Sigma_{s}^{n}:=\left\{\lV\z\rV_{0}\le s:\z\in\C^{n}\right\}$ denotes the set of $s$-sparse signals in $\C^n$.
We refer to \eqref{model3} as the sparse NCVX-LS estimator.

	The following theorem addresses sparse phase retrieval under the Poisson model~\eqref{poisson}.
    
	 \begin{theorem}\label{sparse pr_poisson}
	 Let $\x$ be an $s$-sparse signal. 
     Suppose that $\left\{\pp_k\right\}_{k=1}^{m}$ satisfy Assumption \ref{sample} and the Poisson model \eqref{poisson} satisfies the distribution in Assumption~\ref{noise0} $\mathrm{(a)}$.
     Then there exist universal constants $L,\widetilde{L},c_1,c_2,C_1,C_2> 0$ that depend only on $K$ and $\mu$ such that the following holds:
	 	\begin{itemize}
		\item[$\mathrm{(a)}$]  
If $m\ge L s\log\lk\frac{en}{s}\rk$, then with probability at least $1-\mathcal{O}\lk e^{-c_1 s\log\lk en/s\rk}\rk$, the sparse NCVX-LS estimator satisfies the following error bound uniformly for all $\x\in\Sigma_{s}^{n}$, 
\begin{align}
\textbf{dist}\lk\z_\star,\x\rk
\le C_1 \min \bigg\{ 
    & \max\left\{ K, \frac{1}{\lV\x\rV_2} \right\} \cdot \sqrt{\frac{s \log(en/s)}{m}}, \notag \\
    & \max\left\{ 1, \sqrt{K\lV\x\rV_2} \right\} \cdot \lk \frac{s \log(en/s)}{m} \rk^{1/4} 
\bigg\}.
\end{align}
	\item[$\mathrm{(b)}$]
Let $\Gamma_s:=\left\{\x\in\Sigma_{s}^{n}:\lV\x\rV_2\le\frac{1}{K}\right\}$.
If $m\ge \widetilde{L} s\log\lk\frac{en}{s}\rk$, then with probability at least $1-\mathcal{O}\lk \frac{\log^4m}{m}\rk-\mathcal{O}\lk e^{-c_2 s\log\lk en/s\rk}\rk$, the sparse NCVX-LS estimator satisfies the following error bound uniformly for all $\x\in\Gamma_s$,
\begin{align}
\textbf{dist}\lk\z_\star,\x\rk
\le C_2 \min \bigg\{ 
     &\sqrt{\frac{K}{\lV\x\rV_2}} \cdot \sqrt{\frac{s \log(en/s)}{m}}, \notag \\
     &\left( K \lV\x\rV_2 \right)^{1/4} \cdot \lk \frac{s \log(en/s)}{m} \rk^{1/4} 
\bigg\}.
\end{align}
	\end{itemize}  
\end{theorem}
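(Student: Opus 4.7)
The plan is to adapt the unified framework of Section~\ref{Architecture} to the sparse admissible set
\begin{equation*}
\mathcal{E}_{\text{ncvx},s} := \left\{ \z\z^* - \x\x^* : \z, \x \in \Sigma_s^n \right\},
\end{equation*}
whose elements are at most rank 2 and whose supports lie in a union of $2s$-element index sets. As in the dense case, once we verify the Sampling Lower Bound Condition with parameter $\alpha$ and the Noise Upper Bound Condition with parameter $\beta$ on $\mathcal{E}_{\text{ncvx},s}$, the error estimate \eqref{error1} yields the claimed bound. The two regimes of Theorem~\ref{sparse pr_poisson} are then obtained by invoking the sub-exponential bound of Proposition~\ref{psi_1 of poisson} for part~$(\mathrm{a})$ and the $L_4$ bound of Proposition~\ref{L_q of poisson} for part~$(\mathrm{b})$, exactly as in the proofs of Theorem~\ref{thm poisson} and Theorem~\ref{low-dose}.

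The first step is to recompute the relevant geometric complexities on the sparse unit Frobenius sphere $\mathcal{M}_s := \{\z\z^* : \z \in \mathbb{S}^{n-1}\cap\Sigma_s^n\}$. A standard union bound over the $\binom{n}{s} \le (en/s)^s$ possible supports, combined with Lemma~3.1 of \cite{candes2011tight} on each $s$-dimensional slice, gives
\begin{equation*}
\log \mathcal{N}\lk \mathcal{M}_s, \lV\,\cdot\,\rV_F, \epsilon\rk \lesssim s\log(en/s) + s\log(1/\epsilon),
\end{equation*}
and hence, via the Dudley integral,
\begin{equation*}
\gamma_2\lk\mathcal{M}_s,\lV\,\cdot\,\rV_F\rk \lesssim \sqrt{s\log(en/s)}, \qquad \gamma_1\lk\mathcal{M}_s,\lV\,\cdot\,\rV_{op}\rk \lesssim s\log(en/s).
\end{equation*}
Plugging these into \eqref{gamma4} with $s_0$ chosen so that $K^2 2^{s_0/2} \lesssim \sqrt{s\log(en/s)}$ yields the sparse analogue of Theorem~\ref{mul inq}: with high probability and $m \gtrsim_K s\log(en/s)$,
\begin{equation*}
\sup_{\z \in \mathbb{S}^{n-1}\cap\Sigma_s^n} \lV \tfrac{1}{\sqrt{m}}\sum_{k=1}^m \lk \xi_k \pp_k\pp_k^* - \E\xi\pp\pp^*\rk \rV_{op,s} \lesssim_{K} \lV\xi\rV_{\psi_1}\sqrt{s\log(en/s)},
\end{equation*}
where $\lV\,\cdot\,\rV_{op,s}$ denotes the $s$-sparse operator norm; the heavy-tailed analogue follows identically from part~$(\mathrm{b})$ of Lemma~\ref{multiplier}. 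Since every $\M \in \mathcal{E}_{\text{ncvx},s}$ is $2s$-row-and-column sparse and has $\lV\M\rV_* \le \sqrt{2}\lV\M\rV_F$ by Proposition~\ref{pro low-rank}, the dual-norm argument of Corollary~\ref{mul inq for subexponential} then yields $\beta \lesssim_K \lV\xi\rV_{\psi_1}\sqrt{m\,s\log(en/s)}\,\lV\M\rV_F$, and similarly with $\lV\xi\rV_{L_4}$ for Corollary~\ref{mul inq for heavy}.

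The second, and somewhat more delicate, step is to establish the sparse SLBC. Here the small ball method of Proposition~\ref{small ball} applies verbatim, since the small ball function bound of Proposition~\ref{small ball function} is uniform over all elements of $\mathbb{S}_F$ and in particular over $\mathcal{E}_{\text{ncvx},s}\cap\mathbb{S}_F$. The Rademacher process $\mathcal{W}_m(\mathcal{E}_{\text{ncvx},s}\cap\mathbb{S}_F; \pp\pp^*)$ is bounded through Lemma~\ref{chaos1} using the sparse $\gamma_2$ and $\gamma_1$ estimates above together with $\sup_{\mathcal{M}_s}\trace(\M)=1$, producing $\mathcal{W}_m \lesssim_K \sqrt{s\log(en/s)} + s\log(en/s)/\sqrt{m}$. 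Choosing $u,t$ as in Section~\ref{small_ball} and requiring $m \ge L\, s\log(en/s)$ with $L$ depending only on $K,\mu$ yields $\alpha \gtrsim_{K,\mu} m$ with probability $1-\mathcal{O}(e^{-c_1 s\log(en/s)})$.

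Combining the two bounds through \eqref{error1} gives $\textbf{dist}(\z_\star,\x) \lesssim \min\{\beta/(\alpha \lV\x\rV_2), \sqrt{\beta/\alpha}\}$, which upon substituting $\beta \lesssim_K \max\{1,K\lV\x\rV_2\}\sqrt{m\,s\log(en/s)}$ gives part~$(\mathrm{a})$, and upon substituting $\beta \lesssim_K \sqrt{K\lV\x\rV_2}\sqrt{m\,s\log(en/s)}$ (valid when $\lV\x\rV_2 \le 1/K$) gives part~$(\mathrm{b})$. The main technical obstacle I anticipate is verifying that the complexity parameter $\sup_{\mathcal{M}_s}\trace(\M)$ and the small ball function behave uniformly on the sparse sphere — the former is immediate here but is the place where one must check that the geometry of $\mathcal{E}_{\text{ncvx},s}$ has not destroyed isotropy in the argument of Proposition~\ref{small ball function}, which it has not, since that proposition only uses properties of the marginal distribution of $\pp^*\M\pp$ for unit-Frobenius $\M$.
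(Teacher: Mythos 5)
Your proposal is correct and follows essentially the same architecture as the paper: define the sparse admissible set, prove sparse analogues of the \textit{NUBC} (via multiplier inequalities and the covering number $\log\mathcal{N}\lesssim s\log(en/s)+s\log(1/\epsilon)$) and the \textit{SLBC} (via the small ball method with the same covering numbers), and then plug $\alpha\gtrsim_{K,\mu} m$ and $\beta\lesssim_K\max\{1,K\lV\x\rV_2\}\sqrt{ms\log(en/s)}$ (or $\beta\lesssim_K\sqrt{K\lV\x\rV_2}\sqrt{ms\log(en/s)}$ when $\lV\x\rV_2\le 1/K$) into \eqref{error1}, using Propositions~\ref{psi_1 of poisson} and~\ref{L_q of poisson} exactly as in the dense Theorems~\ref{thm poisson} and~\ref{low-dose}.

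Two small remarks on the one place you deviate. For the \textit{NUBC}, the paper's Lemma~\ref{mul inq for spr} does not take the operator-norm-plus-duality detour: it applies Lemma~\ref{multiplier} directly to the function class indexed by $\mathcal{E}^{s}_{\text{ncvx}}\cap\mathbb{S}_F$, bounding $\gamma_2$ and $\gamma_1$ on that set (via Lemma 3.1 of \cite{candes2011tight} and a union bound over supports) and reading off the conclusion with $\lV\M\rV_F$ already on the right. Your version, which first controls a sparse-restricted operator norm of $\sum_k(\xi_k\pp_k\pp_k^*-\E\xi\pp\pp^*)$ over $\{\z\z^*:\z\in\mathbb{S}^{n-1}\cap\Sigma_s^n\}$ and then invokes $\lv\langle A,\M\rangle\rv\le\lV A\rV_{op,s}\lV\M\rV_*$ with $\lV\M\rV_*\le\sqrt{2}\lV\M\rV_F$, is equally valid and is closer in spirit to how the dense Corollary~\ref{mul inq for subexponential} is stated. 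The one imprecision is that the eigenvectors of $\z\z^*-\x\x^*$ with $\z,\x\in\Sigma_s^n$ live in $\mathrm{span}\{\z,\x\}$ and are therefore $2s$-sparse, not $s$-sparse; the duality step requires $\lV A\rV_{op,2s}$, so the rank-one index set should be $\{\z\z^*:\z\in\mathbb{S}^{n-1}\cap\Sigma_{2s}^n\}$. This replaces $s\log(en/s)$ by $2s\log(en/(2s))$, which only changes absolute constants, so the rate and the rest of the argument survive unchanged.
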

We provide some comments on Theorem~\ref{sparse pr_poisson}.
\textbf{Part}~$\mathrm{(a)}$ of Theorem~\ref{sparse pr_poisson} establishes that the sparse NCVX-LS estimator attains an error bound of {\small$\mO\lk\sqrt{\frac{s\log\lk en/s\rk}{m}}\rk$} in the high-energy regime.
This rate appears to be minimax optimal, since a matching lower bound of the same order can be obtained in this regime by adapting the proof of Theorem~\ref{minimax poisson}.
In contrast, \textbf{Part}~$\mathrm{(b)}$ of Theorem~\ref{sparse pr_poisson} demonstrates that, in the low-energy regime, the sparse NCVX-LS estimator achieves an error bound
{\small
$
\mO\lk\lV\x\rV_{2}^{1/4}\cdot\lk\frac{s\log\lk en/s\rk}{m}\rk^{1/4}\rk,
$}
which decays with the signal energy.
These results seem to be the first theoretical guarantee for sparse phase retrieval under Poisson noise, thereby establishing the provable performance of the proposed estimator.

We also provide the following theorem for sparse phase retrieval under heavy-tailed model \eqref{heavy}.

	 \begin{theorem}\label{sparse pr_heavy}
	 Let $\x$ be an $s$-sparse signal. 
     Suppose that $\left\{\pp_k\right\}_{k=1}^{m}$ satisfy Assumption~\ref{sample} and the heavy-tailed model~\eqref{heavy} satisfies the conditions in Assumption~\ref{noise0} $\mathrm{(b)}$ with $q>2$.
		 Then there exist universal constants $L,c,C> 0$ dependent only on $K,\mu$ and $q$ such that when provided
         $m\ge Ls\log\lk\frac{en}{s}\rk$, with probability at least 
         $$1-\mathcal{O}\lk m^{\lk q/2-1\rk}\log^{q} m\rk-\mathcal{O}\lk e^{-c s\log\lk en/s\rk}\rk,$$ simultaneously for all signals $\x\in\Sigma_{s}^{n}$, the sparse NCVX-LS estimates obey
 \begin{eqnarray}
		\textbf{dist}\lk\z_{\star},\x\rk\le C\min\left\{\frac{\lV\xi\rV_{L_q}}{\lV\x\rV_{2}}\cdot\sqrt{\frac{s\log\lk en/s\rk}{m}},\,\sqrt{\lV\xi\rV_{L_q}}\cdot\lk\frac{s\log\lk en/s\rk}{m}\rk^{1/4}\right\}.
					\end{eqnarray}	
	
\end{theorem}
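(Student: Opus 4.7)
The plan is to follow the unified framework of Section~\ref{Architecture}, specialized to the sparse admissible set
\begin{equation*}
\mathcal{E}_{\text{sparse}} := \lk\z\z^* - \x\x^* : \z,\x \in \Sigma_s^n\rk,
\end{equation*}
and then to invoke the error bound \eqref{error1} with appropriate choices of $\alpha$ and $\beta$. Every $\M \in \mathcal{E}_{\text{sparse}}$ has rank at most $2$ and is supported on a $2s \times 2s$ principal submatrix, so $\lV\M\rV_* \le \sqrt{2}\lV\M\rV_F$ by \textbf{Part}~$\mathrm{(a)}$ of Proposition~\ref{pro low-rank}. The analogue of \eqref{error1} therefore reduces the problem to establishing a sparse \textit{SLBC} and a sparse \textit{NUBC} over $\mathcal{E}_{\text{sparse}}$.

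For the sparse \textit{SLBC}, I would apply the small-ball method of Proposition~\ref{small ball} to the rescaled set $\mathcal{M}_s := \left\{\z\z^*:\z\in\Sigma_s^n\cap\mathbb{S}^{n-1}\right\}$. The small-ball function is lower-bounded by Proposition~\ref{small ball function}, which is insensitive to sparsity. For the Rademacher chaos process in Lemma~\ref{chaos1}, I would estimate the $\gamma$-functionals by a union bound over the $\binom{n}{s}$ coordinate subspaces combined with the covering bound from \cite{candes2011tight}; this yields
\begin{equation*}
\gamma_2\lk\mathcal{M}_s,\lV\,\cdot\,\rV_F\rk\lesssim \sqrt{s\log\lk en/s\rk},\qquad \gamma_1\lk\mathcal{M}_s,\lV\,\cdot\,\rV_{op}\rk\lesssim s\log\lk en/s\rk.
\end{equation*}
Plugging these into Proposition~\ref{small ball} and choosing $u,t$ as in the proof of Lemma~\ref{small ball1}, I obtain $\alpha \gtrsim_{K,\mu} m$ over $\mathcal{E}_{\text{sparse}}$ with probability at least $1-e^{-cm}$ as soon as $m\gtrsim_{K,\mu} s\log(en/s)$.

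For the sparse \textit{NUBC}, I would re-run the proof of Theorem~\ref{mul inq} with the function class $\mathcal{F}=\{\langle\,\cdot\,,\z\z^*\rangle:\z\in\Sigma_{2s}^n\cap\mathbb{S}^{n-1}\}$, replacing the ambient $\gamma$-functional estimates in \eqref{gamma3}--\eqref{gamma4} by the sparse ones above. Invoking \textbf{Part}~$\mathrm{(b)}$ of Lemma~\ref{multiplier} with $s_0$ chosen so that $2^{s_0}\asymp s\log(en/s)$ yields, on an event of probability at least $1-\mathcal{O}\lk m^{-(q/2-1)}\log^q m\rk-\mathcal{O}\lk e^{-cs\log(en/s)}\rk$,
\begin{equation*}
\lV\frac{1}{\sqrt{m}}\sum_{k=1}^m \lk \xi_k \pp_k\pp_k^* - \E\xi\pp\pp^*\rk\rV_{op,2s} \lesssim_{K,q} \lV\xi\rV_{L_q}\sqrt{s\log\lk en/s\rk},
\end{equation*}
where $\lV\,\cdot\,\rV_{op,2s}$ denotes the operator norm restricted to $2s$-sparse vectors. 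Using $\E(\xi\mid\pp)=0$, the dual-norm inequality, and the bound $\lV\M\rV_*\le\sqrt{2}\lV\M\rV_F$ from Proposition~\ref{pro low-rank}, this delivers $\beta\lesssim_{K,q}\lV\xi\rV_{L_q}\sqrt{ms\log(en/s)}$. Substituting these $\alpha$ and $\beta$ into \eqref{error1} and applying Proposition~\ref{dis1} yields the claimed bound.

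The main obstacle I anticipate is the sparse multiplier inequality: the proof of Theorem~\ref{mul inq} relies on the Hanson--Wright-based moment bound of Lemma~\ref{geometry}, which must be carefully propagated through Definition~\ref{Tala}'s $\Lambda_{s_0,u}$-functional so that the sparse $\gamma_1$ and $\gamma_2$ bounds enter with the correct constants and the union-bound over $\binom{n}{s}$ supports is absorbed into the $s\log(en/s)$ factor without degrading the $m^{-(q/2-1)}\log^q m$ probability guarantee inherited from Lemma~\ref{multiplier}\,$\mathrm{(b)}$. The sparse \textit{SLBC} is comparatively routine once the sparse $\gamma$-functionals are in hand, since sparsity does not interact with the small-ball function.
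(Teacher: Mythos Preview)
Your proposal is correct and follows essentially the same route as the paper: define the sparse admissible set $\mathcal{E}^s_{\text{ncvx}}$, establish the sparse \textit{SLBC} via the small-ball method (the paper's Lemma~\ref{small ball for spr}) and the sparse \textit{NUBC} via the multiplier inequality with the $\gamma$-functionals replaced by their sparse counterparts (the paper's Lemma~\ref{mul inq for spr}), then plug into \eqref{error1}. The only cosmetic difference is that the paper bounds $\widetilde{\Lambda}_{s_0,u}$ by computing the covering number of $\mathcal{E}^s_{\text{ncvx}}\cap\mathbb{S}_F$ directly, whereas you pass through the $2s$-sparse operator norm and then use the dual-norm inequality together with Proposition~\ref{pro low-rank}; both reductions yield the same $\gamma_2\lesssim\sqrt{s\log(en/s)}$ and $\gamma_1\lesssim s\log(en/s)$ bounds. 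Your anticipated obstacle is not a real issue: no union bound over supports is taken at the probability level, since the $\binom{n}{s}$ factor is absorbed into the covering number and hence into the $\gamma$-functionals, exactly as you propose, leaving the $m^{-(q/2-1)}\log^q m$ term from Lemma~\ref{multiplier}\,$\mathrm{(b)}$ untouched.
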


We discuss Theorem~\ref{sparse pr_heavy} and its relation to existing work.
In particular, \cite{lecue2015minimax} analyzed the same sparse NCVX-LS estimator under i.i.d., mean-zero, sub-Gaussian noise and derived an error bound {\small$\widetilde{\mO}\lk\frac{ \lV\xi\rV_{\psi_2}}{\lV\x\rV_2} \cdot\sqrt{\frac{s\log\lk en/s\rk}{m}}\rk$}.
For i.i.d. Gaussian noise $\mathcal{N}\lk0,\sigma^2\rk$, with sufficiently large signal energy, they showed that no estimator can achieve a smaller error than {\small$\Omega\lk\frac{\sigma}{\lV\x\rV_2}\cdot\sqrt{\frac{s\log\lk en/s\rk}{m}}\rk$}, establishing the minimax lower bound.
Subsequent work~\cite{cai2016optimal,wu2023nearly} considered independent, centered sub-exponential noise and proposed convergent algorithms attaining nearly minimax optimal rate {\small$\mO\lk\frac{\lV\xi\rV_{\psi_1}}{\lV\x\rV_2}\cdot \sqrt{\frac{s\log n}{m}}\rk$}.
Theorem~\ref{sparse pr_heavy} extends these results to the heavy-tailed model~\eqref{heavy}. 
Under suitable assumptions, the sparse NCVX-LS estimator achieves the minimax optimal rate
{\small
$\mO\lk\frac{\lV\xi\rV_{L_q}}{\lV\x\rV_{2}}\cdot\sqrt{\frac{s\log \lk en/s\rk}{m}}\rk$
} in the high-energy regime,
matching the best-known results in \cite{lecue2015minimax,cai2016optimal,wu2023nearly}.
In the low-energy regime, it achieves {\small$\mO\lk\sqrt{\lV\xi\rV_{L_q}}\cdot\lk\frac{s\log\lk en/s\rk}{m}\rk^{1/4}\rk$},
which also appears to be minimax optimal, as a matching lower bound can be established by adapting the proof of Theorem~\ref{minimax heavy}.

\subsection{Low-Rank PSD Matrix Recovery}\label{sub lr}
             
We focus on the recovery of low-rank PSD matrices.
Specifically, we investigate the use of the CVX-LS estimator for recovering a rank-$r$ PSD matrix $\X \in \mS^{n}$ and analyze its stable performance under two different observation models.
The observation vector $\y$ is considered under the following two models:
Poisson observation model
\begin{equation}\label{poisson PSD}
	y_{k}\overset{\text{ind.}}{\sim}\text{Poisson}\lk\lg\pp_{k}\pp_{k}^{*},\X \rg\rk,\quad k=1,\cdots,m,
	\end{equation}
	and  heavy-tailed observation model
	\begin{equation}\label{heavy PSD}
	y_{k}=\lg\pp_{k}\pp_{k}^{*},\X \rg+\xi_{k},\quad k=1,\cdots,m,
	\end{equation}
    where $\left\{\xi_k\right\}_{k=1}^m$ are i.i.d., heavy-tailed noise variables.
We recall that the CVX-LS estimator is given by
 \begin{equation}\label{model4}
		\begin{array}{ll}
			\text{minimize}   & \quad\lV\mA\lk\Z\rk-\y\rV_2\\
			\text{subject to} & \quad \Z\in\mS_{+}^{n},\\				 
		\end{array}
	\end{equation}
    where $\mS_+^n$ denotes the cone of PSD matrices in $\C^{n\times n}$, and $\mA(\Z)$ is the linear measurement operator given by $\mA\lk\Z\rk:=\left\{\lg\pp_k\pp_k^*,\Z\rg\right\}_{k=1}^{m}$.

We present the following theorem for low-rank PSD matrix recovery under the Poisson observation model~\eqref{poisson PSD}.
	 
	 \begin{theorem}\label{lr_poisson}
	 Let $\X$ be a rank-$r$ PSD matrix.
    Suppose that $\left\{\pp_k\right\}_{k=1}^{m}$ satisfy Assumption~\ref{sample}, and the observations follow the Poisson model in~\eqref{poisson PSD}.
     Then there exist some universal constants $L,\widetilde{L},c_1,c_2,C_1,C_2 > 0$ dependent only on $K$ and $\mu$ such that the following holds:
	 	\begin{itemize}
		\item[$\mathrm{(a)}$]  
If $m\ge L rn$, then with probability at least $1-\mathcal{O}\lk e^{-c_1 rn}\rk$, the CVX-LS estimator satisfies, simultaneously for all rank-$r$ PSD matrices $\X$, the following estimate:
 \begin{eqnarray}\label{dist2_lr}
\lV\Z_{\star}-\X\rV_{F}
\le C_1 \max\left\{1, K\sqrt{\lV\X\rV_{*}}\right\}\cdot \sqrt{\frac{rn}{m}}.
\end{eqnarray}

		\item[$\mathrm{(b)}$]
        Let $\Gamma^r:=\left\{\X\in\mS_+^n:\lV\X\rV_*\le\frac{1}{K^2}\right\}$.
		 If $m\ge \widetilde{L} rn$, then with probability at least $1-\mathcal{O}\lk \frac{\log^4m}{m}\rk-\mathcal{O}\lk e^{-c_2 rn}\rk$, the CVX-LS estimator satisfies, simultaneously for all rank-$r$ PSD matrices $\X\in\Gamma^r$, the following estimate:
 \begin{eqnarray}
		\lV\Z_{\star}-\X\rV_{F}\le C_2 K^{1/2}\lV\X\rV_*^{1/4}\cdot\sqrt{\frac{rn}{m}}.
					\end{eqnarray}	
		\end{itemize}
\end{theorem}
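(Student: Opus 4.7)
The plan is to transport the CVX-LS architecture of Section~\ref{Architecture} to the rank-$r$ PSD setting. Let $\mathcal{E}^{(r)}_{\mathrm{cvx}}:=\{\Z-\X:\Z\in\mS^{n}_{+},\ \X\in\mS^{n}_{+}\ \text{rank-}r\}$. Any direction $\v$ with $\v^{*}\M\v<0$ satisfies $\v^{*}\X\v>\v^{*}\Z\v\ge 0$, hence $\v\in\text{range}(\X)$, so each $\M\in\mathcal{E}^{(r)}_{\mathrm{cvx}}$ has at most $r$ strictly negative eigenvalues—the rank-$r$ analog of Proposition~\ref{one}. Partition
\begin{align*}
\mathcal{E}^{(r)}_{\mathrm{cvx},1}&:=\Bigl\{\M\in\mathcal{E}^{(r)}_{\mathrm{cvx}}:-\textstyle\sum_{i=n-r+1}^{n}\lambda_{i}(\M)>\tfrac{1}{2}\sum_{i=1}^{n-r}\lambda_{i}(\M)\Bigr\},\\
\mathcal{E}^{(r)}_{\mathrm{cvx},2}&:=\mathcal{E}^{(r)}_{\mathrm{cvx}}\setminus\mathcal{E}^{(r)}_{\mathrm{cvx},1};
\end{align*}
bounding the rank-$r$ negative part by Cauchy--Schwarz then yields $\lV\M\rV_{*}\le 3\sqrt{r}\,\lV\M\rV_{F}$ for every $\M\in\mathcal{E}^{(r)}_{\mathrm{cvx},1}$, which is the rank-$r$ analog of Proposition~\ref{pro low-rank}.

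For the \emph{Sampling Lower Bound Condition}, the small-ball method (Proposition~\ref{small ball}) together with Lemma~\ref{chaos1} applies to $\mathcal{E}^{(r)}_{\mathrm{cvx},1}$, with the $\gamma_{\alpha}$-functionals evaluated over rank-$O(r)$ matrices on the Frobenius unit sphere ($\gamma_{2}\lesssim\sqrt{rn}$, $\gamma_{1}\lesssim rn$); this gives $\alpha\gtrsim_{K,\mu}m$ whenever $m\gtrsim_{K,\mu}rn$. Lemma~\ref{approximate} extends verbatim to $\mathcal{E}^{(r)}_{\mathrm{cvx},2}$ and yields $\widetilde\alpha\ge m/C$. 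For the \emph{Noise Upper Bound Condition}, Corollary~\ref{mul inq for subexponential} combined with $\lV\M\rV_{*}\le 3\sqrt{r}\lV\M\rV_{F}$ produces $\beta\lesssim_{K}\sqrt{r}\lV\xi\rV_{\psi_{1}}\sqrt{mn}$ on $\mathcal{E}^{(r)}_{\mathrm{cvx},1}$ and $\widetilde\beta\lesssim_{K}\lV\xi\rV_{\psi_{1}}\sqrt{mn}$ on $\mathcal{E}^{(r)}_{\mathrm{cvx},2}$; for Part~(b) we switch to Corollary~\ref{mul inq for heavy} with $q=4$, replacing $\lV\xi\rV_{\psi_{1}}$ by $\lV\xi\rV_{L_{4}}$ at the cost of the weaker $\mathcal{O}(\log^{4}m/m)$ failure probability. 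Assembling through $\lV\Z_{\star}-\X\rV_{F}\le 2\max\{\beta/\alpha,\widetilde\beta/\widetilde\alpha\}$ then delivers a bound of order $\lV\xi\rV\cdot\sqrt{rn/m}$ in each regime.

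The main obstacle is establishing the Poisson moment bounds for the quadratic form $\xi=\text{Poisson}(\pp^{*}\X\pp)-\pp^{*}\X\pp$ beyond the rank-one case of Propositions~\ref{psi_1 of poisson} and~\ref{L_q of poisson}. Since $\pp^{*}\X\pp$ is no longer the square of a sub-Gaussian, one must control it via Hanson--Wright, which gives $\lV\pp^{*}\X\pp\rV_{L_{q}}\lesssim\trace(\X)+qK^{2}\lV\X\rV_{F}\lesssim qK^{2}\lV\X\rV_{*}$, where the PSD-specific inequality $\lV\X\rV_{F}\le\lV\X\rV_{*}$ is what produces the correct scaling in $\lV\X\rV_{*}$. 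Conditioning on $\pp$ and using $\lV\text{Poisson}(\lambda)-\lambda\rV_{L_{q}}\lesssim\sqrt{q\lambda}+q$ then yields $\lV\xi\rV_{\psi_{1}}\lesssim\max\{1,K\sqrt{\lV\X\rV_{*}}\}$ and, in the low-energy regime $\lV\X\rV_{*}\le 1/K^{2}$, $\lV\xi\rV_{L_{4}}\lesssim K^{1/2}\lV\X\rV_{*}^{1/4}$; plugging these into the error formula recovers Parts~(a) and~(b). Tracking the $K$-dependence carefully enough to reach the sharp low-energy exponent $K^{1/2}\lV\X\rV_{*}^{1/4}$, rather than the cruder bound that a direct $L_{4}$ moment computation produces, is the most delicate calculation.
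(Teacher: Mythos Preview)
Your overall architecture matches the paper's, but there is a genuine gap in the partition step. Your split is based on the sum of the \emph{last} $r$ eigenvalues, regardless of sign. Since only $k\le r$ of them are actually negative, $-\sum_{i=n-r+1}^{n}\lambda_i(\M)$ equals the negative mass minus whatever positive mass sits among those last $r$ indices, and the lower-isometry argument of Lemma~\ref{approximate} does \emph{not} extend verbatim to your $\mathcal{E}^{(r)}_{\mathrm{cvx},2}$. Concretely, with $r=2$ and eigenvalues $(0,\ldots,0,1,-0.9)$, one has $-\sum_{i=n-1}^{n}\lambda_i=-0.1\le 0$, so $\M$ lands in your $\mathcal{E}^{(r)}_{\mathrm{cvx},2}$; yet $\tfrac{5}{6}\cdot 1-\tfrac{7}{6}\cdot 0.9<0$, and the trace lower bound gives nothing. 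The paper repairs this by first stratifying $\mathcal{E}^{r}_{\mathrm{cvx}}$ according to the \emph{actual} number $k$ of negative eigenvalues, and only then applying the $\tfrac{1}{2}$-split within each stratum; that extra layer is precisely what guarantees $(\text{negative part})\le\tfrac{1}{2}(\text{positive part})$ on all of $\mathcal{E}^{r}_{\mathrm{cvx},2}$, so that Lemma~\ref{approximate for lr} goes through.

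Two smaller remarks. Your claim that $\xv^{*}\M\xv<0\Rightarrow\xv\in\text{range}(\X)$ is false (take $\X=\pmb{e}_1\pmb{e}_1^{*}$, $\M=-\X$, $\xv=\pmb{e}_1+\pmb{e}_2$); what you actually get is $\xv\notin\ker(\X)$, and the eigenvalue count follows from the dimension argument that the negative eigenspace of $\M$ cannot meet $\ker(\X)$ nontrivially. For the Poisson moments, the paper's route is shorter than Hanson--Wright: spectral decomposition plus the triangle inequality for $\psi_1$ give $\lV\pp^{*}\X\pp\rV_{\psi_1}\le\sum_j\lambda_j(\X)\lV\pp^{*}\xu_j\rV_{\psi_2}^{2}\lesssim K^{2}\lV\X\rV_{*}$, so $\sqrt{\pp^{*}\X\pp}$ \emph{is} sub-Gaussian with parameter $\asymp K\sqrt{\lV\X\rV_{*}}$, and Propositions~\ref{psi_1 of poisson}--\ref{L_q of poisson} transfer verbatim under the substitution $K\lV\x\rV_{2}\mapsto K\sqrt{\lV\X\rV_{*}}$.
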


Theorem~\ref{lr_poisson} states that, in the high-energy regime ($\lV\X\rV_{*}\ge\frac{1}{K^2}$),
the CVX-LS estimator achieves the error bound {\small$\mO\lk \sqrt{\lV\X\rV_{*}}\cdot\sqrt{\frac{rn}{m}}\rk$}.
In the low-energy regime ($\lV\X\rV_{*}\le\frac{1}{K^2}$), it yields {\small$\mO\lk\lV\X\rV_*^{1/4}\cdot\sqrt{\frac{rn}{m}}\rk$}, which decreases as the nuclear norm of $\X$ diminishes.
Although related work, such as \cite{cao2015poisson,mcrae2021low} on matrix completion and \cite{zhang2021low} on tensor completion with Poisson observations, has achieved notable advances, differences in problem formulation render their results not directly comparable to ours.

We then state the following theorem, which characterizes the recovery of low-rank PSD matrices under the heavy-tailed observation model \eqref{heavy PSD}.

	 \begin{theorem}\label{lr_heavy}
	 Let $\X$ be a rank-$r$ PSD matrix.
     Suppose that $\left\{\pp_k\right\}_{k=1}^{m}$ satisfy Assumption~\ref{sample} and the observations follow the heavy-tailed model in~\eqref{heavy PSD} where $\left\{\xi_k\right\}_{k=1}^{m}$ satisfy the conditions in Assumption~\ref{noise0} $\mathrm{(b)}$ with $q>2$.
		 Then there exist universal constants $L,c,C> 0$ dependent only on $K,\mu$ and $q$ such that when provided that 
         $m\ge Lrn$, with probability at least 
          \begin{eqnarray}
          1-\mathcal{O}\lk m^{\lk q/2-1\rk}\log^{q} m\rk-\mathcal{O}\lk e^{-crn} \rk,
          \end{eqnarray}
         simultaneously for all rank-$r$ PSD matrices $\X$, the estimates obtained from the CVX-LS estimator satisfy
 \begin{eqnarray}\label{dist3_lr}
\lV\Z_{\star}-\X\rV_{F}
\le C \lV\xi\rV_{L_q}\cdot \sqrt{\frac{rn}{m}}.
\end{eqnarray}
\end{theorem}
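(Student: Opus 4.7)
The plan is to adapt the unified architecture of Section~\ref{Architecture} from rank-$1$ phase retrieval to rank-$r$ PSD matrix recovery. Starting from $\mathcal{P}_m(\Z_\star) \le 0$, it suffices to control the $\textit{SLBC}$ and $\textit{NUBC}$ parameters over the admissible set $\mathcal{E} := \{\Z - \X : \Z \in \mS_+^n\}$, where $\X$ ranges over rank-$r$ PSD matrices. A Weyl-type extension of Proposition~\ref{one} shows that each $\M \in \mathcal{E}$ has at most $r$ strictly negative eigenvalues, since $-\X$ contributes only $r$ negative eigenvalues while $\Z$ is PSD. I would then partition $\mathcal{E} = \mathcal{E}_1 \cup \mathcal{E}_2$ according to whether the magnitude of the negative eigenvalues exceeds half the sum of the positive ones. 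A Cauchy--Schwarz estimate on the at-most-$r$ negative eigenvalues then gives the direct analogue of Proposition~\ref{pro low-rank}\textit{(b)}: $\lV \M \rV_* \le 3\sqrt{r}\,\lV \M \rV_F$ on $\mathcal{E}_1$.

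Next I would bound the parameters on each piece. For $\textit{SLBC}$ on $\mathcal{E}_1$, re-running the small ball argument of Lemma~\ref{small ball1}, Proposition~\ref{small ball function} remains applicable without rank restriction, while combining Lemma~\ref{chaos1} with the dual-norm bound $\lV\M\rV_* \le 3\sqrt{r}\lV\M\rV_F$ gives $\mathcal{W}_m(\mathcal{E}_1 \cap \mathbb{S}_F; \pp\pp^*) \lesssim K^2\sqrt{rn}$, so that $\alpha \gtrsim_{K,\mu} m$ once $m \gtrsim rn$, with failure probability $\exp(-cm) \le \exp(-crn)$. For $\textit{SLBC}$ on $\mathcal{E}_2$, the proof of Lemma~\ref{approximate} extends verbatim: under the $(1\pm\delta)$-isometry event on $\ell_2$, each of the at-most-$r$ negative eigenvalues is absorbed by the dominating positive mass, yielding $\widetilde\alpha \ge m/36$ with respect to $\lV \cdot \rV_*$. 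For $\textit{NUBC}$, applying Lemma~\ref{multiplier}\textit{(b)} directly to the index class $\mathcal{F} := \{\lg \cdot, \M\rg : \M \in \mathcal{E}_1 \cap \mathbb{S}_F\}$---whose chaining functionals scale like $\gamma_2 \lesssim \sqrt{rn}$ and $\gamma_1 \lesssim rn$ by Dudley's integral on the nuclear ball of radius $3\sqrt{r}$---produces $\beta \lesssim_{K,q} \lV \xi \rV_{L_q}\sqrt{rmn}$, with probability $1 - \mathcal{O}(m^{-(q/2-1)}\log^q m) - \exp(-crn)$ upon choosing $2^{s_0} \asymp rn$. On $\mathcal{E}_2$, the dual-norm inequality combined with Theorem~\ref{mul inq}\textit{(b)} and $\E(\xi\mid\pp)=0$ yields $\widetilde\beta \lesssim_{K,q} \lV \xi \rV_{L_q}\sqrt{mn}$, exactly as in the rank-$1$ case.

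Combining these, the optimality of $\Z_\star$ gives $\lV \Z_\star - \X \rV_F \le 2\beta/\alpha \lesssim \lV \xi \rV_{L_q}\sqrt{rn/m}$ on $\mathcal{E}_1$, and $\lV \Z_\star - \X \rV_F \le \lV \Z_\star - \X \rV_* \le 2\widetilde\beta/\widetilde\alpha \lesssim \lV \xi \rV_{L_q}\sqrt{n/m}$ on $\mathcal{E}_2$. Since $\Z_\star - \X$ must lie in one of the two pieces, the worst case yields the claimed rate uniformly over all rank-$r$ PSD matrices $\X$. The main obstacle I anticipate is threading $r$ correctly through both mechanisms: it must appear once via the low-rank ratio $\sqrt{r}$ in $\lV\M\rV_*/\lV\M\rV_F$ (producing the $\sqrt{r}$ in $\beta$) and once via the complexity functionals (producing the $e^{-crn}$ tail). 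A naive route---using the operator-norm bound of Theorem~\ref{mul inq}\textit{(b)} on $\sum_k \xi_k \pp_k\pp_k^*$ and then applying $\lV\M\rV_* \le 3\sqrt{r}\lV\M\rV_F$---recovers the correct $\beta$ but only the weaker tail $e^{-cn}$; avoiding this gap requires invoking Lemma~\ref{multiplier}\textit{(b)} directly on the enlarged class $\mathcal{E}_1 \cap \mathbb{S}_F$ and calibrating the scale parameter $s_0$ against the generalized entropy of the nuclear-radius-$\sqrt{r}$ ball.
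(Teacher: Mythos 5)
Your proposal follows the paper's architecture exactly: the $(r+1)$-negative-eigenvalue extension of Proposition~\ref{one} is the paper's Proposition~\ref{r}; your binary partition of the admissible set (negative eigenvalues dominate or not) is the paper's split into $\mathcal{E}^{r}_{\text{cvx,1}}$ and $\mathcal{E}^{r}_{\text{cvx,2}}$; your Cauchy--Schwarz bound $\lV\M\rV_*\le 3\sqrt{r}\lV\M\rV_F$ is Proposition~\ref{pro low-rank_0}; and the parameter estimates $\alpha\gtrsim m$, $\widetilde\alpha\ge m/36$, $\beta\lesssim\lV\xi\rV_{L_q}\sqrt{mrn}$, $\widetilde\beta\lesssim\lV\xi\rV_{L_q}\sqrt{mn}$ are exactly those of Lemma~\ref{small ball for lr}, Lemma~\ref{approximate for lr}, and Lemma~\ref{mul inq for psd}. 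So the core argument is correct and matches the paper.

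Your closing remark about the probability tail is the one genuinely interesting point, and I want to flag it explicitly. The paper's actual proof of the NUBC (Lemma~\ref{mul inq for psd}, parts (c)--(d)) is precisely the ``naive route'' you describe: it applies the operator-norm multiplier bound of Theorem~\ref{mul inq}(b), which holds with tail $e^{-c_2 n}$, and then feeds in the dual-norm plus $\lV\M\rV_*\le 3\sqrt{r}\lV\M\rV_F$ estimate. Hence the paper's own lemma only delivers a tail of $e^{-cn}$, not $e^{-crn}$, and the probability statement in Theorem~\ref{lr_heavy} appears to overclaim. You identified this correctly. However, your proposed fix---invoking Lemma~\ref{multiplier}(b) directly on $\mathcal{F}=\{\lg\cdot,\M\rg:\M\in\mathcal{E}_1\cap\mathbb{S}_F\}$ with $\gamma_1(\mathcal{E}_1\cap\mathbb{S}_F,\lV\cdot\rV_{op})\lesssim rn$ and $2^{s_0}\asymp rn$---is not substantiated: $\mathcal{E}_1\cap\mathbb{S}_F$ is not a low-rank set (the positive part $\M^+$ can have full rank; only the ratio $\lV\M\rV_*/\lV\M\rV_F$ is controlled), so the covering-number estimate of \cite[Lemma~3.1]{candes2011tight} used in the paper's rank-bounded computations does not apply, and a $\gamma_1$ bound for a nuclear ball in operator norm is a substantially harder quantity than $\gamma_2$, since the Dudley integral for $\gamma_1$ integrates $\log N$ rather than $\sqrt{\log N}$ and tends to diverge for convex bodies that are not low-dimensional. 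The honest conclusion is that the naive route is the correct one and the achievable tail is $e^{-cn}$; if you want $e^{-crn}$, you would need a genuinely new argument, not a recalibration of $s_0$.
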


Theorem~\ref{lr_heavy} shows that the CVX-LS estimator achieves the minimax optimal error bound {\small$\mO\lk\lV\xi\rV_{L_q}\cdot \sqrt{\frac{rn}{m}}\rk$}, matching the minimax lower bounds derived in \cite{candes2011tight,cai2015rop}.
Previous work, such as \cite{lecue2018regularization,fan2021shrinkage} addressed low-rank matrix recovery under heavy-tailed noise via LS-type estimators, attaining bounds comparable to ours—the former through regularization and the latter via a shrinkage mechanism to mitigate the effect of heavy-tailed observations.
Similarly, \cite{yu2024low} studied a related problem using robust estimation with the Huber loss and obtained comparable performance. 
In contrast, our CVX-LS estimator requires neither regularization nor data preprocessing, yet still achieves minimax optimal guarantees, thereby offering a conceptually simpler and more direct optimization procedure.
Investigations of low-rank matrix recovery under heavy-tailed noise in various problem settings have also been conducted in \cite{elsener2018robust,wang2025robust,shen2025computationally}.

\subsection{Random Blind Deconvolution}\label{sub bd}

We consider a special case of random blind deconvolution.
Suppose we aim to recover a pair of unknown signals $\x, \pmb{h} \in \C^{n}$ from a collection of $m$ nonlinear measurements given by 
\begin{equation}\label{heavy BD} 
y_k = \pmb{b}_k^* \x \h^* \pmb{a}_k + \xi_k, \quad k = 1, \dots, m, 
\end{equation} 
where $\left\{\pmb{a}_k\right\}_{k=1}^{m}$ and $\left\{\pmb{b}_k\right\}_{k=1}^{m}$ are known sampling vectors, and $\left\{\xi_k\right\}_{k=1}^{m}$ denotes the additive noise.
The goal is to accurately recover both $\x$ and $\pmb{h}$ from the bilinear measurements in \eqref{heavy BD}. 
This problem of solving bilinear systems arises in various domains, with blind deconvolution being a particularly notable application \cite{ahmed2013blind,ling2015self}.

To address the non-convexity inherent in the problem, a popular strategy is to lift the bilinear system to a higher-dimensional space.
Specifically, we consider the following constrained LS estimator: 
\begin{equation} \label{model5}
\begin{aligned}
	\underset{\Z \in \C^{n \times n}}{\text{minimize}} 
	&\quad \lV \mathcal{B}\lk\Z\rk -\y\rV_2 \\
	\text{subject to} 
	&\quad \lV \Z \rV_* \le \lV\x\rV_2\cdot\lV\h\rV_2,
\end{aligned}
\end{equation}
where $\mathcal{B}\lk\Z\rk$ is the linear measurement operator $\mathcal{B}\lk\Z\rk:=\left\{\lg\pmb{a}_k\pmb{b}_k^*,\Z\rg\right\}_{k=1}^{m}$, and $\lV\x\rV_2\cdot\lV\h\rV_2$ is the nuclear norm of $\x\h^*$. 
We consider the setting in which both $\left\{\pmb{a}_k\right\}_{k=1}^{m}$ and $\left\{\pmb{b}_k\right\}_{k=1}^{m}$ are random sub-Gaussian sampling vectors \cite{cai2015rop,charisopoulos2021low,chen2023convex}, while the observations $\y:=\left\{y_k\right\}_{k=1}^{m}$ are contaminated by heavy-tailed noise $\left\{\xi_k\right\}_{k=1}^{m}$.
Another common setting considers $\left\{\pmb{a}_k\right\}_{k=1}^{m}$ as random Gaussian sampling vectors, while $\left\{\pmb{b}_k\right\}_{k=1}^{m}$ consists of the first $n$ columns of the unitary discrete Fourier transform (DFT) matrix $\pmb{F}\in\C^{m\times m}$ obeying $\pmb{F}\pmb{F}^*=\pmb{I}_m$ \cite{li2019rapid,ma2019implicit,krahmer2021convex,chen2023convex,kostin2025robust}; this setting is beyond the scope of the present work.

The following theorem establishes the performance of the constrained LS estimator~\eqref{model5} under heavy-tailed noise.

\begin{theorem}\label{thm:bd}
Suppose that $\left\{\pmb{a}_k\right\}_{k=1}^{m}$ and $\left\{\pmb{b}_k\right\}_{k=1}^{m}$ are all independent copies of a random vector $\pp\in\C^n$ whose entries $\left\{\varphi_j\right\}_{j=1}^{n}$ are i.i.d., mean 0, variance 1, and $K$-sub-Gaussian,
and the noise term $\left\{\xi_k\right\}_{k=1}^{m}$ in~\eqref{heavy BD} satisfies the conditions in Assumption \ref{noise0} $\mathrm{(b)}$ with $q>2$.
Then there exist universal constants $L,c,C> 0$ dependent only on $K$ and $q$ such that when provided that $m\ge Ln$, with probability at least 
\begin{eqnarray*}
1-\mathcal{O}\lk m^{-\lk q/2-1\rk}\log^{q} m\rk-\mathcal{O}\lk e^{-cn}\rk,
\end{eqnarray*}
simultaneously for all $\x,\h\in\C^n$, the output $\Z_\star$ of the constrained LS estimator satisfies
\begin{eqnarray}
\lV\Z_\star- \x\h^*\rV_F \le C \lV\xi\rV_{L_q}\cdot\sqrt{\frac{n}{m}}.
\end{eqnarray}
\end{theorem}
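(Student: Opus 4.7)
The plan is to follow the architecture of Section~\ref{Architecture} adapted to the asymmetric sampling setting $\{\pmb{a}_k \pmb{b}_k^*\}_{k=1}^m$ and to the nuclear-norm-constrained admissible set
\begin{equation*}
\mathcal{E}_{\text{bd}} := \lk\left\{ \Z - \x\h^* : \Z \in \C^{n \times n},\, \lV\Z\rV_* \le \lV\x\rV_2\lV\h\rV_2,\, \x,\h \in \C^n \right\}\rk.
\end{equation*}
By the natural decomposition $\mathcal{P}_m(\Z) = \lV\mathcal{B}(\Z)-\y\rV_2^2 - \lV\mathcal{B}(\x\h^*)-\y\rV_2^2$ and the optimality $\mathcal{P}_m(\Z_\star)\le 0$, it suffices to establish a Sampling Lower Bound Condition $\sum_{k} |\langle \pmb{a}_k\pmb{b}_k^*, \M\rangle|^2 \gtrsim m \lV\M\rV_F^2$ and a Noise Upper Bound Condition $|\sum_k \xi_k \langle \pmb{a}_k\pmb{b}_k^*, \M\rangle| \lesssim \lV\xi\rV_{L_q}\sqrt{mn}\,\lV\M\rV_F$ uniformly over $\M \in \mathcal{E}_{\text{bd}}$, from which $\lV \Z_\star - \x\h^*\rV_F \lesssim \lV\xi\rV_{L_q}\sqrt{n/m}$ follows.

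First, I would establish the approximate low-rank structure of $\mathcal{E}_{\text{bd}}$. Let $T$ denote the tangent space to the rank-one variety at $\x\h^*$, whose real dimension is at most $4n-2$ and where every element has rank at most $2$. Decompose $\M = \M_T + \M_{T^c}$. The constraint $\lV\Z\rV_* \le \lV\x\h^*\rV_*$ together with the nuclear-norm decomposition property $\lV \x\h^* + \M_{T^c}\rV_* = \lV\x\h^*\rV_* + \lV\M_{T^c}\rV_*$ and the triangle inequality yields the null-space-type bound $\lV\M_{T^c}\rV_* \le \lV\M_T\rV_*$. Since $\mathrm{rank}(\M_T)\le 2$, this gives $\lV\M\rV_* \le 2\lV\M_T\rV_* \le 2\sqrt{2}\lV\M_T\rV_F \le 2\sqrt{2}\lV\M\rV_F$. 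Thus $\mathcal{E}_{\text{bd}}$ has the same approximate low-rank structure as $\mathcal{E}_{\text{ncvx}}$ and $\mathcal{E}_{\text{cvx,1}}$ in Proposition~\ref{pro low-rank}, up to an absolute constant.

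For the SLBC, I would invoke the asymmetric version of Lemma~\ref{small ball1} indicated in its remark (2), which applies the small ball method of Proposition~\ref{small ball} to $\pmb{a}_k \pmb{b}_k^*$. The Rademacher empirical process $\mathcal{W}_m(\mathcal{E}_{\text{bd}} \cap \mathbb{S}_F; \pmb{a}\pmb{b}^*)$ is controlled by Lemma~\ref{chaos1} adapted to the asymmetric case, together with the approximate low-rank bound above and the Dudley integral estimates $\gamma_2(\cdot,\lV\cdot\rV_F)\lesssim \sqrt{n}$, $\gamma_1(\cdot,\lV\cdot\rV_{op})\lesssim n$ carried out in Section~\ref{ineq}. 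The small ball function bound for asymmetric sub-Gaussian rank-one matrices follows from a two-sided Paley–Zygmund argument as in Proposition~\ref{small ball function}, using independence of $\pmb{a}$ and $\pmb{b}$ (which in fact simplifies the analysis, since no fourth-moment gap $\mu>0$ is needed). Combining these yields the SLBC with $\alpha \asymp_K m$ once $m \gtrsim_K n$, with probability $1-\mathcal{O}(e^{-cn})$.

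For the NUBC, I would apply Part~(b) of Theorem~\ref{mul inq} in the asymmetric form noted in Remark~\ref{remark1}: when $m \gtrsim_K n$, with probability at least $1-\mathcal{O}(m^{-(q/2-1)}\log^q m) - \mathcal{O}(e^{-cn})$,
\begin{equation*}
\lV \tfrac{1}{\sqrt{m}}\sum_{k=1}^m (\xi_k \pmb{a}_k\pmb{b}_k^* - \E\,\xi\pmb{a}\pmb{b}^*)\rV_{op} \lesssim_{K,q} \lV\xi\rV_{L_q}\sqrt{n}.
\end{equation*}
Since $\E(\xi \mid \pmb{a},\pmb{b}) = 0$ gives $\E\,\xi\pmb{a}\pmb{b}^* = \pmb{0}$, combining the dual norm inequality $|\langle \cdot,\M\rangle| \le \lV\cdot\rV_{op}\lV\M\rV_*$ with the approximate low-rank bound $\lV\M\rV_* \lesssim \lV\M\rV_F$ yields $\beta \lesssim_{K,q} \lV\xi\rV_{L_q}\sqrt{mn}$. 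Finally, the standard optimality argument (analogous to \eqref{error1}) gives $\lV\Z_\star - \x\h^*\rV_F \le 2\beta/\alpha \lesssim_{K,q} \lV\xi\rV_{L_q}\sqrt{n/m}$.

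The main technical obstacle is verifying that the small ball method and the empirical chaos bound of Lemma~\ref{chaos1} genuinely transfer to the asymmetric products $\pmb{a}_k\pmb{b}_k^*$ with a constant depending only on $K$; this is asserted in the remarks following Theorem~\ref{mul inq} and Lemma~\ref{small ball1}, but requires one to redo the symmetrization step and the Hanson–Wright-type bound (Lemma~\ref{geometry}) for asymmetric quadratic forms, which is standard but must be checked carefully since the independence of $\pmb{a}$ and $\pmb{b}$ replaces the role of the fourth-moment gap $\mu$. A secondary subtlety is that uniformity over all $\x,\h \in \C^n$ is automatic once the SLBC/NUBC are established on the entire set $\mathcal{E}_{\text{bd}}$, since the admissible set is parameter-free after absorbing the scaling into $\Z$ via the nuclear-norm constraint.
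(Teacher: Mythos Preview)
Your proposal is correct and follows essentially the same route as the paper: the paper defines the admissible set as the union of nuclear-norm descent cones $\widetilde{\mathcal{E}}=\bigcup_{\x,\h}\mD(\x\h^*)$, cites the bound $\lV\M\rV_*\le 2\sqrt{2}\lV\M\rV_F$ (your tangent-space argument is exactly the standard proof of this), and then establishes the SLBC via the small ball method with a direct Paley--Zygmund computation showing $\E|\pmb{a}^*\M\pmb{b}|^2=\lV\M\rV_F^2$ (confirming, as you anticipated, that no fourth-moment gap $\mu>0$ is needed) and the NUBC via the asymmetric form of Theorem~\ref{mul inq}~(b). Your identification of the main technical checkpoint---that the Hanson--Wright and symmetrization steps carry over to $\pmb{a}_k\pmb{b}_k^*$---is exactly what the paper handles in Lemma~\ref{le:small ball for bd}.
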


Theorem~\ref{thm:bd} shows that the constrained LS estimator achieves the error bound
{\small$\mO\lk\lV\xi\rV_{L_q}\cdot \sqrt{\frac{n}{m}}\rk$}.
This rate is optimal up to a logarithmic factor, as implied by the minimax lower bound established in \cite{chen2023convex}. 
Compared to the estimation results in \cite[Theorem 3]{chen2023convex}, 
Theorem~\ref{thm:bd} extends the noise model from sub-Gaussian to heavy-tailed distributions and reduces the required number of samples from $m=\mO\lk n\log^6 m\rk$ to the optimal $m=\mO\lk n\rk$, while also improving the estimation error.

\section{Discussion}\label{discussion}

This paper investigates the stable performance of the NCVX-LS and CVX-LS estimators for phase retrieval in the presence of Poisson and heavy-tailed noise.
We have demonstrated, that both estimators achieve the minimax optimal rates in the high-energy regime for these two noise models. 
In the Poisson setting, the NCVX-LS estimator further achieves an error rate that decreases with the signal energy in the low-energy regime, remaining optimal with respect to the oversampling ratio. 
Similarly, in the heavy-tailed setting, the NCVX-LS estimator achieves a minimax optimal rate in the low-energy regime.
We also extend our analysis framework to some related problems, including sparse phase retrieval, low-rank PSD matrix recovery, and random blind deconvolution.

Moving forward, our findings suggest several directions for further investigation.
For the Poisson model \eqref{poisson}, the gap in the low-energy regime between our upper bound for both the NCVX-LS estimator and the minimax lower bound $\Omega\lk\sqrt{\lV\x\rV_2}\cdot\lk\frac{n}{m}\rk^{1/4}\rk$ could potentially be closed. 
Our analysis suggests that employing robust estimators capable of handling heavy-tailed noise with a finite 
 $L_2$-norm rather than a finite $L_4$-norm would allow this gap to be closed.
Moreover, developing efficient algorithms to compute the NCVX-LS estimator and achieve the optimal error rate in the low-energy regime also represents a promising research direction.
For the heavy-tailed model~\eqref{heavy}, an interesting question is whether optimal error rates can be achieved when the noise has only a finite $q$-th moment ($1 \le q \le 2$) or even no finite expectation. 
Addressing this case may require additional assumptions on the noise (e.g., symmetry or structural properties), as well as robust estimators or suitable data preprocessing.
Furthermore, beyond sub-Gaussian sampling, it would be of interest to extend the current analysis to more realistic measurement schemes, such as coded diffraction patterns (CDP) or short-time Fourier transform (STFT) sampling.
We leave these questions for future work.

\section*{Acknowledgments}
G.H. was supported by the Qiushi Feiying Program of Zhejiang University. 
This work was carried out while he was a visiting PhD student at UCLA. 
S.L. was supported by NSFC under grant number U21A20426. 
D.N. was partially supported by NSF DMS 2408912.

\appendix

\section{Auxiliary Proofs}\label{app dis}

\subsection{Proof of Proposition \ref{dis1}}\label{app dis1}

 We choose $\varphi_0:=\mbox{Phase}\lk\z_\star^{*} \x\rk$ and set $\widetilde{\x}:=e^{i\varphi_0}\x$, then  $\lg\z_\star^{*},\widetilde{\x}\rg\ge 0$ 
 and we have 
 \begin{eqnarray*}
 \begin{aligned}
 \textbf{dist}^2\lk\z_\star^{*},\x\rk&=\min_{\varphi\in\lz0,2\pi\rk}\lV e^{i\varphi}\z_\star-\x\rV_{2}^{2}\\
 &=\lV e^{i\varphi_0}\z_\star-\x\rV_{2}^{2}=\lV\z_\star\rV^{2}_{2}+\lV\widetilde{\x}\rV^{2}_{2}-2\lg\z_\star,\widetilde{\x}\rg.
 \end{aligned}
  \end{eqnarray*} 
We also obtain that
\begin{eqnarray*}
\begin{aligned}
\lV\z_\star\z_\star^{*}-\x\x^{*}\rV^{2}_{F}
&=\lV\z_\star\rV^{4}_{2}+\lV\x\rV^{4}_{2}-2\lv\lg\z_\star,\x\rg\rv^{2}
=\lV\z_\star\rV^{4}_{2}+\lV\widetilde{\x}\rV^{4}_{2}-2\lv\lg\z_\star,\widetilde{\x}\rg\rv^{2}\\
&=\lk\sqrt{\lV\z_\star\rV^{4}_{2}+\lV\widetilde{\x}\rV^{4}_{2}}-\sqrt{2}\lg\z_\star,\widetilde{\x}\rg\rk\cdot\lk\sqrt{\lV\z_\star\rV^{4}_{2}+\lV\widetilde{\x}\rV^{4}_{2}}+\sqrt{2}\lg\z_\star,\widetilde{\x}\rg\rk\\
&\ge\frac{1}{2}\lk\lV\z_\star\rV^{2}_{2}+\lV\widetilde{\x}\rV^{2}_{2}-2\lg\z_\star,\widetilde{\x}\rg\rk\cdot\lk\lV\z_\star\rV^{2}_{2}+\lV\widetilde{\x}\rV^{2}_{2}+2\lg\z_\star,\widetilde{\x}\rg\rk\\
&\ge\frac{1}{4}\textbf{dist}^2\lk\z_\star,\x\rk\cdot\lk\lV\z_\star\rV_{2}+\lV\widetilde{\x}\rV_{2}\rk^{2}.
\end{aligned}
\end{eqnarray*}
In the third and fourth lines, we have used the Cauchy-Schwarz inequality.
Since 
\begin{eqnarray*}
\lk\lV\z_\star\rV_{2}+\lV\widetilde{\x}\rV_{2}\rk^{2}\ge\max\left\{\textbf{dist}^2\lk\z_\star,\x\rk,\lV\x\rV_{2}^{2}\right\},
\end{eqnarray*}
we have finished the proof.

\subsection{Proof of Proposition~\ref{one}}\label{alg}

Let $\M\in\mathcal{E}_{\text{cvx}}$, by the definition of $\mathcal{E}_{\text{cvx}}$, we can find a rank-$1$ matrix $\x\x^* \in \mS^{n}_{+}$ such that 
\begin{equation}\label{PSD}
\x\x^* + \M \in \mathcal{S}^n_{+}.
\end{equation}
Suppose now by contradiction that $\M$ has $2$ (strictly) negative eigenvalues with corresponding eigenvectors $ \pmb{z}_1, \pmb{z}_{2} \in \mathbb{C}^n$.  
We can find a vector $\xu \in \text{span} \left\{ \pmb{z}_1,\pmb{z}_{2} \right\} \backslash \left\{ 0 \right\}$ such that $ \langle  \xu,\x \rangle =0 $. 
This implies that we have
\begin{equation*}
\xu \lk  \x\x^* + \M \rk \xu^* =  \xu^*\M\xu < 0,
\end{equation*} 
which is a contradiction to \eqref{PSD}.

\subsection{Proof of Proposition~\ref{pro low-rank}}\label{proof:app low-rank}

The proof of \textbf{Part}~$\mathrm{(a)}$ follows from the observation that the elements in $\mathcal{E}_{\text{ncvx}}$ have a rank at most 2.
For \textbf{Part}~$\mathrm{(b)}$, as every element $\M\in\mathcal{E}_{\text{cvx,1}}$ satisfies 
\begin{eqnarray*}
 \frac{1}{2}  \sum_{i=1}^{n-1} \lambda_i \lk\M\rk< -\lambda_n \lk\M\rk ,
\end{eqnarray*}
we have that
       \begin{eqnarray*}
\lV\M\rV_{*}=\sum_{i=1}^{n-1} \lambda_i \lk\M\rk -\lambda_n \lk\M\rk
                  \le -3 \lambda_n \lk\M\rk\le 3\lV\M\rV_{F}.
\end{eqnarray*}

\subsection{Proof of Proposition \ref{small ball function}}\label{proof:small ball function}

By the Paley–Zygmund inequality (see e.g., \cite{de2012decoupling}), we have that for any $\M\in\mS^n$,
\begin{equation*}
     \mathbb{P}\lk\lv\pp^*\M\pp\rv^2\ge\frac{\E\lv\pp^*\M\pp\rv^2}{2}\rk
     \ge\frac{\lk\E\lv\pp^*\M\pp\rv^2\rk^2}{\E\lv\pp^*\M\pp\rv^4}.
 \end{equation*}
By Lemma 9 in \cite{krahmer2020complex} and $\E\lk\pp^2\rk=0$, we can obtain for any $\M\in\mS^n$,
\begin{equation}\label{eq:E^2}
\begin{aligned}
  \E\lv\pp^*\M\pp\rv^2&=\lk\text{Tr}\lk\M\rk\rk^2+\lz\E\lk\lv\pp\rv^4\rk-1\rz\sum_{i=1}^n\M^2_{i,i}+\sum_{i\neq j}\lv\M_{i,j}\rv^2\\
  &\ge \lk\text{Tr}\lk\M\rk\rk^2+\min\left\{\mu,1\right\}\cdot\lV\M\rV_F^2.
  \end{aligned}
 \end{equation}
 The second line follows from $\E\lk\lv\pp\rv^4\rk\ge1+\mu$.
 Setting $q=4, m=1$ in Lemma~\ref{geometry}, we obtain
 \begin{equation*}
 \lV\pp^*\M\pp-\E\pp^*\M\pp\rV_{L_4}\lesssim K^2\lV\M\rV_F.
 \end{equation*}
 Therefore, the triangle inequality yields that
 \begin{equation}\label{eq:E^4}
 \begin{aligned}
\E\lv\pp^*\M\pp\rv^4&\lesssim\E\lv\pp^*\M\pp-\E\pp^*\M\pp\rv^4+\lk\E\pp^*\M\pp\rk^4\\
&\lesssim K^8\lV\M\rV^4_F+\lk\text{Tr}\lk\M\rk\rk^4,
\end{aligned}
 \end{equation}
 where we have used $\E\pp^*\M\pp=\text{Tr}\lk\M\rk$.
Hence, for $0<u\le\sqrt{\frac{\min\left\{\mu,1\right\}}{2}}$, we have
 \begin{equation*}
 \begin{aligned}
 \mathcal{Q}_{u}\lk\mM ;\pp\pp^{*}\rk&\ge
 \inf_{\M\in\mM}\mathbb{P}\lk\lv\pp^*\M\pp\rv^2\ge\frac{\E\lv\pp^*\M\pp\rv^2}{2}\rk \\
 &\gtrsim\frac{\min\left\{\mu^2,1\right\}\cdot\lV\M\rV_F^4+\lk\text{Tr}\lk\M\rk\rk^4}{K^8\lV\M\rV_F^4+\lk\text{Tr}\lk\M\rk\rk^4}\\
 &\ge\frac{\min\left\{\mu^2,1\right\}}{ K^8+1}.
 \end{aligned}
 \end{equation*}
 In the first inequality, we have used $\lV\M\rV_F=1$ and \eqref{eq:E^2}, and in the second inequality we have used \eqref{eq:E^2} and \eqref{eq:E^4}.
 
\subsection{Proof of Proposition \ref{psi_1 of poisson}}\label{psi_1}
We record some facts that will be used.

\begin{fact}\label{fact1}
For $x\in\lz0,\frac{1}{2}\rz$, we have $\frac{1}{1-x}\le e^{2x}$.
\end{fact}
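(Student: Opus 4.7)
The plan is to reduce the claim $\tfrac{1}{1-x}\le e^{2x}$ on $[0,\tfrac12]$ to a one–line calculus check. Rearranging, the inequality is equivalent to $e^{2x}(1-x)\ge 1$, so I would introduce the auxiliary function
\[
f(x) := e^{2x}(1-x) - 1, \qquad x\in[0,\tfrac12],
\]
and show $f(x)\ge 0$ throughout this interval.

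First, observe that $f(0)=0$, so it suffices to show $f$ is non-decreasing on $[0,\tfrac12]$. A direct differentiation gives
\[
f'(x) = 2e^{2x}(1-x) - e^{2x} = e^{2x}(1-2x),
\]
which is manifestly nonnegative on $[0,\tfrac12]$ since $e^{2x}>0$ and $1-2x\ge 0$ there. Hence $f$ is monotone nondecreasing on $[0,\tfrac12]$, and combined with $f(0)=0$ this yields $f(x)\ge 0$, i.e. $e^{2x}(1-x)\ge 1$, which is exactly the desired bound after dividing by the positive quantity $1-x$.

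There is essentially no obstacle here; the only mild care needed is to note that $1-x>0$ on $[0,\tfrac12]$ so the division preserving direction of the inequality is justified. As a sanity check at the endpoint, $e^{1}(1/2) = e/2 \approx 1.359 \ge 1$, confirming tightness is comfortable within the stated range. If a more self-contained series argument were preferred, one could alternatively note that for $x\in[0,\tfrac12]$,
\[
-\log(1-x) \;=\; \sum_{k=1}^\infty \frac{x^k}{k} \;\le\; \sum_{k=1}^\infty x^k \;=\; \frac{x}{1-x}\;\le\; 2x,
\]
and exponentiate; but the monotonicity argument above is shorter and avoids invoking a power series expansion.
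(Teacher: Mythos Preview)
Your proof is correct and follows essentially the same approach as the paper, which simply states that the fact ``can be verified by differentiation.'' Your auxiliary function $f(x)=e^{2x}(1-x)-1$ with $f(0)=0$ and $f'(x)=e^{2x}(1-2x)\ge 0$ on $[0,\tfrac12]$ is exactly such a verification.
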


\begin{fact}\label{fact2}
Let $f\lk x\rk=\frac{e^x-1-x}{x^2}$. 
Then $f\lk x\rk$ is monotonically increasing on $\R$.
\end{fact}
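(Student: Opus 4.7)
The plan is a direct calculus argument showing that $f'(x)\ge 0$ on $\mathbb{R}\setminus\{0\}$, with continuous extension at $x=0$. First I would extend $f$ continuously to all of $\mathbb{R}$ by using the Taylor expansion $e^x-1-x=\sum_{n\ge 2}x^n/n!$, which yields the absolutely convergent series
\[
f(x)=\sum_{n=0}^{\infty}\frac{x^n}{(n+2)!},
\]
so in particular $f(0)=1/2$ and $f$ is smooth on $\mathbb{R}$.

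Next I would compute $f'$ for $x\neq 0$. The quotient rule gives
\[
f'(x)=\frac{(e^x-1)x-2(e^x-1-x)}{x^3}=\frac{g(x)}{x^3},\qquad g(x):=(x-2)e^x+x+2.
\]
The key reduction is that $f'(x)\ge 0$ is equivalent to $g(x)$ and $x^3$ having the same sign, so it suffices to show $g(x)\le 0$ for $x\le 0$ and $g(x)\ge 0$ for $x\ge 0$, i.e.\ $g$ has the same sign as $x$.

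To prove this sign property, I would differentiate $g$ twice: $g'(x)=(x-1)e^x+1$ and $g''(x)=xe^x$. Thus $g''<0$ on $(-\infty,0)$ and $g''>0$ on $(0,\infty)$, so $g'$ attains its global minimum at $x=0$, where $g'(0)=-1+1=0$. Hence $g'\ge 0$ everywhere, meaning $g$ is non-decreasing on $\mathbb{R}$. Since $g(0)=-2+0+2=0$, it follows that $g(x)\le 0$ for $x\le 0$ and $g(x)\ge 0$ for $x\ge 0$, which is exactly the sign condition needed. Therefore $f'(x)\ge 0$ for all $x\neq 0$, and by continuity (e.g.\ $f'(0)=1/6$ from the series), $f$ is monotonically increasing on $\mathbb{R}$.

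I do not anticipate any real obstacle: the only subtlety is keeping track of the sign of $x^3$ on both sides of zero when inferring monotonicity from $g/x^3\ge 0$, and being explicit that the apparent singularity at $x=0$ is removable via the Taylor series.
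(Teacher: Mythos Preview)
Your proof is correct and follows exactly the approach the paper indicates: the paper simply states that Fact~\ref{fact2} ``can be verified by differentiation'' and omits the details, and your argument carries out precisely that differentiation, reducing $f'(x)\ge 0$ to a sign analysis of $g(x)=(x-2)e^x+x+2$ via $g''(x)=xe^x$. There is nothing to add.
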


\begin{fact}\label{fact3}
Let $Z\sim \text{Poisson}\lk\lambda\rk$.
The moment generating function of $Z$ is
$$M_{Z}\lk t\rk=e^{\lambda\lk e^t-1\rk}.$$
\end{fact}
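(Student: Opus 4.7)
The plan is to derive the moment generating function directly from the definition by expanding $\mathbb{E}[e^{tZ}]$ as a series using the Poisson probability mass function and recognizing the resulting sum as the Taylor expansion of the exponential. First I would write
\begin{equation*}
M_Z(t) = \mathbb{E}[e^{tZ}] = \sum_{k=0}^{\infty} e^{tk}\,\mathbb{P}(Z=k) = \sum_{k=0}^{\infty} e^{tk}\cdot \frac{\lambda^k e^{-\lambda}}{k!},
\end{equation*}
which is legitimate since $Z$ is a discrete nonnegative integer-valued random variable and (for any real $t$) all terms are nonnegative, so Tonelli's theorem justifies the interchange of expectation and summation.

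Next I would factor out the $\lambda$-independent constant $e^{-\lambda}$ and group $e^{tk}\lambda^k = (\lambda e^t)^k$, obtaining
\begin{equation*}
M_Z(t) = e^{-\lambda}\sum_{k=0}^{\infty} \frac{(\lambda e^t)^k}{k!}.
\end{equation*}
Since $\lambda e^t > 0$ is finite for every $t\in\mathbb{R}$, the series converges absolutely and equals $\exp(\lambda e^t)$ by the standard power-series representation of the exponential function. Combining the two factors yields $M_Z(t) = e^{-\lambda}\cdot e^{\lambda e^t} = e^{\lambda(e^t-1)}$, which is the claimed identity.

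There is essentially no obstacle here: the only subtlety is justifying the interchange of summation and expectation (handled by nonnegativity via Tonelli) and ensuring the series converges for all real $t$ (immediate from the entire-function nature of $\exp$). The entire argument is a two-line computation, and I would present it as such without further elaboration.
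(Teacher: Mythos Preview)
Your proof is correct and is exactly the standard computation the paper has in mind: the paper simply states that Fact~\ref{fact3} ``follows from the probability density function of the Poisson distribution'' and omits the details, so your argument fills in precisely what was left implicit.
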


\begin{fact}\label{fact4}
There exists a constant $C_0\ge 1$ such that
\begin{eqnarray*}
\lV  \pp^*\x\rV_{\psi_2}\le C_0 K\lV\x\rV_2.
\end{eqnarray*}
\end{fact}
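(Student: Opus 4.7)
The plan is to reduce this complex-valued sub-Gaussian bound to the classical real-valued inequality for sums of independent centered sub-Gaussian variables by separately handling the real and imaginary parts of $\pp^*\x$.

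First, I would write
\begin{equation*}
\pp^*\x = \sum_{j=1}^n \overline{\varphi_j}\,x_j,
\end{equation*}
and decompose each summand into its real and imaginary parts. Since $\lV\varphi_j\rV_{\psi_2}=K$, a standard fact gives $\lV \text{Re}(\varphi_j)\rV_{\psi_2}, \lV \text{Im}(\varphi_j)\rV_{\psi_2}\lesssim K$, and the mean-zero assumption $\E\varphi_j=0$ is inherited by these parts. Consequently, for every $j$, both $\text{Re}(\overline{\varphi_j}\,x_j)$ and $\text{Im}(\overline{\varphi_j}\,x_j)$ are real, mean-zero, sub-Gaussian random variables with $\psi_2$-norm bounded by a universal multiple of $K\lv x_j\rv$.

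Next, invoking the independence of $\{\varphi_j\}_{j=1}^n$ together with the classical sum inequality for independent centered real sub-Gaussian variables (e.g., Proposition~2.6.1 in \cite{vershynin2018high}), which states $\lV\sum_j Y_j\rV_{\psi_2}^2\le C\sum_j \lV Y_j\rV_{\psi_2}^2$, I would obtain
\begin{equation*}
\lV \text{Re}(\pp^*\x)\rV_{\psi_2}^2 + \lV \text{Im}(\pp^*\x)\rV_{\psi_2}^2 \;\lesssim\; K^2 \sum_{j=1}^n \lv x_j\rv^2 \;=\; K^2 \lV\x\rV_2^2.
\end{equation*}
Finally, since $\lv\pp^*\x\rv^2 \le 2\bigl(\lv\text{Re}(\pp^*\x)\rv^2 + \lv\text{Im}(\pp^*\x)\rv^2\bigr)$, a Cauchy-Schwarz step on the MGF defining the $\psi_2$-norm (using $e^{a+b}\le \tfrac{1}{2}(e^{2a}+e^{2b})$) transfers the two real bounds into a single complex bound $\lV\pp^*\x\rV_{\psi_2}\le C_0 K \lV\x\rV_2$ with an explicit universal $C_0\ge 1$.

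This is essentially routine, and I expect no serious obstacles. The only care needed is bookkeeping of absolute constants through the real/imaginary decomposition and through the MGF-combination step at the end; there is nothing delicate because Assumption~\ref{sample} and the excerpt's definition of $\lV\cdot\rV_{\psi_2}$ already line up with the standard conventions of \cite{vershynin2018high}.
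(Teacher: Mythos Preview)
Your proposal is correct and takes essentially the same approach as the paper: the paper simply cites Lemma~3.4.2 in \cite{vershynin2018high} (the sub-Gaussian vector lemma, whose proof rests precisely on Proposition~2.6.1), while you unpack that same argument with explicit handling of the complex-valued case via real/imaginary parts. The mathematical content is identical; your version is just more detailed about the complex bookkeeping that the paper leaves implicit.
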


Fact~\ref{fact1} and Fact~\ref{fact2} can be verified by differentiation; 
Fact~\ref{fact3} follows from the probability density function of the Poisson distribution;
Fact~\ref{fact4} follows directly from Lemma 3.4.2 in \cite{vershynin2018high}.
We omit the details here.

We denote $X= \lv\pp^*\x\rv$ and then $\xi=\text{Poisson}\lk X^{2}\rk-X^2$.
Clearly, we have $\E\lk \xi\rk=0$.
By Fact~\ref{fact4} and Proposition 2.5.2 in \cite{vershynin2018high}, for any $p\ge 1$ we have
 \begin{eqnarray}\label{E^p}
 \E\lv X\rv^p\le \lk C_0K\lV\x\rV_2\sqrt{p}\rk^p.
  \end{eqnarray}
Given that  $\xi\mid X=\lambda\sim\text{Poisson}\lk \lambda^{2}\rk-\lambda^2$, Fact~\ref{fact3} yields
\begin{eqnarray*}
\E \lk e^{\theta \xi}\mid X=\lambda\rk= e^{\lk e^\theta-1-\theta\rk \lambda^2}:= e^{ g\lk\theta\rk \lambda^2}.
\end{eqnarray*}
Therefore, applying the law of total expectation and using Taylor expansion, we obtain
 \begin{equation}\label{EY}
 \begin{aligned}
 \E \lk e^{\theta \xi}\rk&=\E\lk e^{ g\lk\theta\rk X^2}\rk=1+\sum_{p=1}^{\infty}\frac{g\lk\theta\rk^p\E\lk X^{2p}\rk}{p!}\\
  &\le 1+ \sum_{p=1}^{\infty}\frac{g\lk\theta\rk^pC_0^{2p}K^{2p}\lV\x\rV_2^{2p}\lk 2p\rk^p}{p!}\\
  &\le 1+ \sum_{p=1}^{\infty}\frac{g\lk\theta\rk^pC_0^{2p}K^{2p}\lV\x\rV_2^{2p}\lk 2p\rk^p}{\lk\frac{p}{e}\rk^p}\\
  &=1+\sum_{p=1}^{\infty}\lz 2e g\lk\theta\rk C_0^2 K^2 \lV\x\rV_2^{2} \rz^p\\
  &=\frac{1}{1-2e g\lk\theta\rk C_0^2 K^2 \lV\x\rV_2^{2}}\\
  &\le e^{4e g\lk\theta\rk C_0^2 K^2 \lV\x\rV_2^{2}}
 \end{aligned}
 \end{equation}
provided $2e g\lk\theta\rk C_0^2 K^2 \lV\x\rV_2^{2}\le\frac{1}{2}$.
Here, in the second line we have used~\eqref{E^p}, the third line employs the inequality $\lk\frac{p}{e}\rk^p\le p!$, and in the last line we invoke Fact~\ref{fact1}.
 
To bound the sub-exponential norm of $\xi$, we apply Proposition 2.7.1 from \cite{vershynin2018high}, which requires identifying a sufficiently small constant $T_0$ such that
\begin{equation*}
  \E \lk e^{\theta \xi}\rk\le e^{T_0^2\theta^2},\quad \forall \lv\theta\rv\le\frac{1}{T_0}.
\end{equation*}
 By~\eqref{EY}, this condition is satisfied if
  \begin{equation}\label{eq:g(theta)}
  4e g\lk\theta\rk C_0^2 K^2 \lV\x\rV_2^{2}\le T_0^2\theta^2,\quad \forall \lv\theta\rv\le\frac{1}{T_0}.
  \end{equation}
By Fact~\ref{fact2}, $\frac{g\lk\theta\rk}{\theta^2}$ is monotonically increases on $\lz-\frac{1}{T_0},\frac{1}{T_0}\rz$, thus \eqref{eq:g(theta)} holds if
\begin{equation*}
\frac{g\lk 1/T_0\rk}{\lk1/T_0\rk^2}\cdot\frac{4eC_0^2K^2\lV\x\rV_2^2}{T_0^2}=\sum_{p=0}^{\infty}\frac{1}{T_0^p\lk p+2\rk!}\cdot\frac{4eC_0^2K^2\lV\x\rV_2^2}{T_0^2}\le1.
\end{equation*}
We finish the proof by choosing $T_0=\max\left\{2,2\sqrt{e}C_0K\lV\x\rV_2\right\}$.

\subsection{Proof of Proposition \ref{L_q of poisson}}\label{L_q}

Recall that $X= \lv\pp^*\x\rv$. 
Conditioned on $X=\lambda$, then we obtain
 \begin{eqnarray}\label{EY^4}
 \begin{aligned}
 \E\lk\lv \xi\rv^4\mid X=\lambda\rk&=\E\lk\lv \text{Poisson}\lk \lambda^{2}\rk-\lambda^2\rv^4\rk\\
 &=\E\lk \text{Poisson}\lk \lambda^{2}\rk^4\rk-4\lambda^2\E\lk \text{Poisson}\lk \lambda^{2}\rk^3\rk\\
 &\quad +6\lambda^4\E\lk \text{Poisson}\lk \lambda^{2}\rk^2\rk
  -4\lambda^6\E\lk \text{Poisson}\lk \lambda^{2}\rk\rk+\lambda^8.
 \end{aligned}
  \end{eqnarray}
By direct calculation, we have
\begin{eqnarray*}
\mathbb{E}\left( \text{Poisson}\lk\lambda^{2}\rk \right) &=& \lambda^2, \\
\mathbb{E}\left( \text{Poisson}\lk\lambda^{2}\rk^2 \right) &=& \lambda^2 + \lambda^4, \\
\mathbb{E}\left( \text{Poisson}\lk\lambda^{2}\rk^3 \right) &=& \lambda^2 + 3\lambda^4 + \lambda^6, \\
\mathbb{E}\left( \text{Poisson}\lk\lambda^{2}\rk^4 \right) &=& \lambda^2 + 7\lambda^4 + 6\lambda^6 + \lambda^8.
\end{eqnarray*}
Substitute the above equations into \eqref{EY^4}, we have that
\begin{eqnarray*}
\E\lk\lv \xi\rv^4\mid X=\lambda\rk=\lambda^2+3\lambda^4.
\end{eqnarray*}
Now, by the law of total expectation and \eqref{E^p}, we obtain
 \begin{eqnarray*}
\E\lk\lv\xi\rv^4\rk=E\lk X^2\rk+3E\lk X^4\rk\le\lk \sqrt{2} C_0K\lV\x\rV_2\rk^2+3\lk2 C_0K\lV\x\rV_2\rk^4.
\end{eqnarray*}
Finally, we could further bound that
\begin{eqnarray*}
\lV\xi\rV_{L_4}\le\lk \sqrt{2} C_0K\lV\x\rV_2\rk^{1/2}+3 C_0K\lV\x\rV_2\lesssim\max\left\{\lk K\lV\x\rV_2\rk^{1/2},K\lV\x\rV_2\right\}.
\end{eqnarray*}

\section{Proof of Lemma~\ref{up for KL}}\label{pf_KL}

Our analysis primarily follows the approach in \cite[Lemma 7.1]{chen2017solving}, with some refinements. 
We first prove \textbf{Part}~$(\mathrm{a})$, while \textbf{Part}~$(\mathrm{b})$ and \textbf{Part}$~(\mathrm{c})$ follow by similar arguments.
We begin by constructing a set $\mathcal{T}_{1}$ that satisfying \eqref{distance} in \textbf{Part}~$(\mathrm{a})$, with exponentially many vectors near $\x$ that are approximately equally separated.
The construction of $\mathcal{T}_{1}$ follows a standard random packing argument. 
Specifically, let
\begin{eqnarray*}
	\z=\left[z_{1},\cdots,z_{n}\right]^{\top},\quad z_{l}=x_{l}+\frac{1}{\sqrt{2n}}g_{l},\quad1\le l\le n,
\end{eqnarray*}
where $g_{l} \overset{\text{ind.}}{\sim} \mathcal{N}\left(0,1\right)$.
The set $\mathcal{T}_{1}$ is then obtained by generating $T_{1}=\exp\left(\frac{n}{20}\right)$
independent copies $\z^{(i)}$ ($1\le i\le T_{1}$) of $\z$.
For all $\z^{(i)},\z^{(j)}\in\mathcal{T}_{1}$, concentration inequality (see \cite[Theorem 5.1.4]{vershynin2018high}), together with a union bound over all $\binom{T{1}}{2}$ pairs, imply that
\begin{eqnarray}\label{eq:dis}
	\begin{array}{ll}
	1/2-n^{-1/2} &\le\lV \z^{(i)}-\z^{(j)}\rV_2 \le 3/2+n^{-1/2},\quad\ \forall i\neq j\\
	1/{\sqrt{8}}- (2n)^{-1/2} &\le\lV \z^{(i)}-\x\rV_2 \leq 3/\sqrt{8}+ (2n)^{-1/2}, \quad1\le i\leq T_{1}
	\end{array}
\end{eqnarray}
with probability at least $1-2\exp\left(-\frac{n}{40}\right)$. 

We then show that many vectors in $\mathcal{T}_{1}$ satisfy \eqref{KL for poisson1} in \textbf{Part}$~(\mathrm{a})$.
By the rotation invariance of Gaussian vectors, we may assume without loss of generality that $\x=\left[a,0,\cdots,0\right]^{\top}$
for some $a>0$.
For any given $\z$ with $\pmb{r}:=\z-\x$,
by letting $\pp_{\perp}:=\left[\varphi_{2},\cdots,\varphi_{n}\right]^{\top}$, and $\pmb{r}_{\perp}:=\left[r_{2},\cdots,r_{n}\right]^{\top}$,
we derive
\begin{align}
	\frac{ |\pp^{\top}\pmb{r} |^{2}}{ |\pp^{\top}\x |^{2}}  
	\le  \frac{2|\varphi_{1}r_{1}|^{2}+2|\pp_{\perp}^{\top}\pmb{r}_{\perp}|^{2}}{\left|\varphi_{1}\right|^{2}\lV \x\rV_2 ^{2}}
	\le \frac{2\lV\pmb{r}\rV_2^{2}}{\lV \x\rV_2 ^{2}}+\frac{2|\pp_{\perp}^{\top}\pmb{r}_{\perp}|^{2}}{\left|\varphi_{1}\right|^{2}\lV \x\rV^{2}}.
	\label{eq:key-quantity}
\end{align}
Our analysis next focuses on deriving an upper bound for
$\frac{2|\pp_{\perp}^{\top}\pmb{r}_{\perp}|^{2}}{\left|\varphi_{1}\right|^{2}}$.
The motivation for the above decomposition is that $|\pp_{\perp}^{\top}\pmb{r}_{\perp}|^{2}$ and
$\left|\varphi_{1}\right|^{2}$ are independent, which makes the ratio more convenient to handle. 
Before we proceed with our analysis, we present two facts on the magnitudes of $\pp_{k}^{\top}\x$ ($1\leq k\leq m$).

\begin{fact}\label{fact5}
For any given $\x$ and any sufficiently large $m$, with probability at least $1-\frac{2}{\log m}$,
\begin{eqnarray*}\label{eq:LB}
\min_{1\le k\leq m}\lv\pp_{k}^{\top}\x\rv\geq\frac{1}{ m\log m}\lV\x\rV_2.
\end{eqnarray*}
\end{fact}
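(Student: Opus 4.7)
The plan is to exploit the small-ball (anti-concentration) property of a real Gaussian inner product combined with a simple union bound over the $m$ measurement vectors. Since we are in the reduction to $\x\in\R^n$ and $\pp_k\overset{\text{i.i.d.}}{\sim}\mathcal{N}(\pmb{0},\pmb{I}_n)$, each projection $\pp_k^{\top}\x/\lV\x\rV_2$ is distributed as a standard Gaussian. The density of $\mathcal{N}(0,1)$ is uniformly bounded above by $1/\sqrt{2\pi}$, so for every threshold $t>0$,
\begin{equation*}
\mathbb{P}\lk\lv\pp_k^{\top}\x\rv\le t\lV\x\rV_2\rk\le\frac{2t}{\sqrt{2\pi}}=\sqrt{\frac{2}{\pi}}\,t.
\end{equation*}

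With this pointwise small-ball estimate in hand, the next step is to choose the threshold $t=1/(m\log m)$ and apply a union bound across the $m$ independent measurements:
\begin{equation*}
\mathbb{P}\lk\min_{1\le k\le m}\lv\pp_k^{\top}\x\rv<\frac{\lV\x\rV_2}{m\log m}\rk\le m\cdot\sqrt{\frac{2}{\pi}}\cdot\frac{1}{m\log m}=\sqrt{\frac{2}{\pi}}\cdot\frac{1}{\log m}.
\end{equation*}
Since $\sqrt{2/\pi}<1<2$, the right-hand side is upper bounded by $2/\log m$, which matches the probability guarantee claimed in Fact~\ref{fact5}. Taking complements yields the desired lower bound on $\min_k\lv\pp_k^{\top}\x\rv$.

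There is no genuine obstacle here; the argument is essentially one-line anti-concentration plus a union bound, and the appearance of the constant $2$ in $2/\log m$ is simply absorbing the slightly smaller constant $\sqrt{2/\pi}$. The only point worth flagging is that the bound is completely dimension-free, which is why it is invoked uniformly for any fixed $\x$: because $\pp_k^{\top}\x$ depends on $\pp_k$ only through a one-dimensional standard Gaussian projection, the dimension $n$ never enters the probability estimate.
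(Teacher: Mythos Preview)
Your proof is correct and essentially matches the paper's: both use the Gaussian density bound $\mathbb{P}(|Z|\le t)\le \sqrt{2/\pi}\,t$ for a single projection and then aggregate over the $m$ measurements. The only cosmetic difference is that the paper writes the success probability as the exact product $\bigl(1-\tfrac{2}{\sqrt{2\pi}}\tfrac{1}{m\log m}\bigr)^m$ (using independence) and lower-bounds it by $e^{-2/\log m}\ge 1-\tfrac{2}{\log m}$, whereas you go straight through a union bound; both routes are one-liners and yield the same conclusion.
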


\begin{proof}
We have that
\begin{align*}
	 \mathbb{P}\left\{ \min_{1\le k\leq m}\lv\pp_{k}^{\top}\x\rv\geq\frac{1}{ m\log m}\lV\x\rV_2 \right\}  
	 &= \left(\mathbb{P}\left\{ \lv\pp_{k}^{\top}\x\rv \geq \frac{1}{m\log m} \lV \x \rV_2
	\right\} \right)^{m} \\
	&\ge\left(1-\frac{2}{\sqrt{2\pi}}\frac{1}{m\log m}\right)^{m}\\
    &\ge e^{-\frac{2}{\log m}}\ge 1-\frac{2}{\log m}.
\end{align*}
\end{proof}

\begin{fact}\label{fact6}
For any given $\x$, with probability at least $1-\exp\lk- \Omega\Big( \frac{n^2} {m \log^2 m }  \Big) \rk$,
\begin{eqnarray*}\label{eq:UB}
\sum_{k=1}^{m}\mathbbm{1}_{\left\{ \lv\pp_{k}^{\top}\x\rv\le\frac{n\lV \x\rV_2 }{40m\log m}\right\}  }> \frac{n}{25\log m} := t_0.
\end{eqnarray*}
\end{fact}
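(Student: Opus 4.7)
The event reduces to concentration of a Bernoulli sum, so the plan is a standard two-step argument. Define
$$B_k \;:=\; \mathbbm{1}\{\,|\pp_k^\top\x| \le \tau\,\}, \qquad \tau \;:=\; \frac{n \lV\x\rV_2}{40\, m\log m},$$
so that the quantity in the Fact is $S_m := \sum_{k=1}^m B_k$. Because $\pp_k \sim \mathcal{N}(\pmb{0},\pmb{I}_n)$ and $\x$ is fixed, $\pp_k^\top\x / \lV\x\rV_2$ is a standard real Gaussian, and hence the $B_k$ are i.i.d.\ Bernoulli$(p)$ with $p := \mathbb{P}(|Z|\le u)$, $u := n/(40\, m\log m)$, and $Z\sim\mathcal{N}(0,1)$.

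The first step is a sharp estimate of $p$ via the Gaussian density. Under the hypothesis $m \ge L n$ with $L$ sufficiently large, $u = o(1)$, so minimising $\phi(x) := e^{-x^2/2}/\sqrt{2\pi}$ on $[-u,u]$ gives the elementary bound $p \ge 2u\phi(u) = (2u/\sqrt{2\pi})\, e^{-u^2/2}$. This yields $p = \Theta(n/(m\log m))$ and hence $mp = \Theta(n/\log m)$, of the same order as $t_0 = n/(25\log m)$; a sharper Taylor expansion $\mathbb{P}(|Z|\le u) = u\sqrt{2/\pi}\,(1 - u^2/6 + O(u^4))$ pins down the leading constant.

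The second step transfers this estimate to $S_m$ via Hoeffding's inequality for the i.i.d.\ bounded variables $B_k \in [0,1]$:
$$\mathbb{P}\lk\, |S_m - mp| \ge \lambda \,\rk \;\le\; 2\exp\lk -\,2\lambda^2/m \rk.$$
Choosing $\lambda = \Theta(n/\log m)$ comparable to the gap between $mp$ and $t_0$, the right-hand side equals $\exp(-\Omega(n^2/(m\log^2 m)))$, matching the stated rate since $\lambda^2/m = \Theta(n^2/(m\log^2 m))$. On the complementary event $\{|S_m - mp| < \lambda\}$, the sum $S_m$ lies within $\lambda$ of $mp$; tuning the numerical constants in $\tau$ and $\lambda$ so that $mp - \lambda > t_0$ then delivers $S_m > t_0$ on this high-probability event.

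The main obstacle is the numerical bookkeeping of the absolute constants: both $mp$ and $t_0$ are of order $n/\log m$ with small leading coefficients, so the density lower bound from Step~1 must be used tightly and $\lambda$ chosen to close a narrow numerical gap while preserving the $\exp(-\Omega(n^2/(m\log^2 m)))$ tail. An alternative route that offers additional slack is Bernstein's inequality $\mathbb{P}(|S_m - mp| \ge \lambda) \le 2\exp(-\lambda^2/(2mp + 2\lambda/3))$, which exploits $\mathrm{Var}(B_k) = p(1-p)\ll 1$ and yields the same $\exp(-\Omega(n^2/(m\log^2 m)))$ rate for $\lambda \asymp mp$.
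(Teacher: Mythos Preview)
Your proposal has a genuine gap in the direction of the inequality. With the specific constants given, $u = n/(40\,m\log m)$ yields $p = \mathbb{P}(|Z|\le u) \approx u\sqrt{2/\pi} \approx 0.02\,n/(m\log m)$, so $mp \approx 0.02\,n/\log m$. But $t_0 = n/(25\log m) = 0.04\,n/\log m$, whence $mp < t_0$ strictly. There is no way to choose $\lambda>0$ with $mp-\lambda>t_0$, and you cannot ``tune the numerical constants in $\tau$'': the constants $40$ and $25$ are fixed by the statement, not free parameters.

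In fact the displayed inequality in the Fact is the \emph{low}-probability event, not the high-probability one (a typo in the statement). The paper's own proof \emph{upper}-bounds $p$ by $\tfrac{2}{\sqrt{2\pi}}\cdot\tfrac{n}{40m\log m}\le \tfrac{n}{25m\log m}$ and then applies Hoeffding to conclude
\[
\mathbb{P}\Big(S_m > \tfrac{n}{25\log m}\Big)\;\le\;\mathbb{P}\Big(\tfrac{1}{m}\big(S_m-mp\big)>\tfrac{n}{50m\log m}\Big)\;\le\;\exp\Big(-\Omega\big(\tfrac{n^2}{m\log^2 m}\big)\Big),
\]
i.e.\ the high-probability event is $S_m\le t_0$. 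This is also what the surrounding argument needs: after sorting $|\pp_k^\top\x|$ in increasing order, the second group $k>t_0$ must satisfy $|\pp_k^\top\x|>n\lV\x\rV_2/(40m\log m)$, which is precisely the statement that at most $t_0$ indices fall below the threshold.

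Your concentration machinery (Hoeffding on i.i.d.\ Bernoulli indicators) is exactly the right tool and matches the paper's approach; the only fix is to bound $p$ from above rather than below, and control the upper tail $\mathbb{P}(S_m>t_0)$ instead of the lower tail.
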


\begin{proof}
Since 
\begin{eqnarray*}
\mathbb{E}\left[\mathbbm{1}_{\left\{ \lv\pp_{k}^{\top}\x\rv\le\frac{n\lV \x\rV_2 }{40m\log m}\right\} }\right]
\le\frac{2}{\sqrt{2\pi}}\frac{n}{40m\log m}
\le\frac{n}{25m\log m},
\end{eqnarray*}
by Hoeffding inequality \cite[Theorem 2.6.2]{vershynin2018high}, we have 
\begin{eqnarray*}
 	&& \mathbb{P}\bigg\{ \sum_{k=1}^{m}\mathbbm{1}_{\left\{ \lv\pp_{k}^{\top}\x\rv\le\frac{n\lV \x\rV_2 }{10m\log m}\right\}  }> \frac{n}{25\log m} \bigg\} \\
 	&& \quad \le \mathbb{P}\bigg\{ \frac{1}{m}\sum_{k=1}^{m}\left(\mathbbm{1}_{\left\{ \lv\pp_{k}^{\top}\x\rv\le\frac{n\lV \x\rV_2 }{40m\log m}\right\} }
		- \mathbb{E}\left[\mathbbm{1}_{\left\{ \lv\pp_{k}^{\top}\x\rv\le\frac{n\lV \x\rV_2 }{40m\log m}\right\}}
        \right]\right)>\frac{n}{50m\log m}\bigg\} \\
	&& \quad \le \exp\lk- \Omega\Big( \frac{n^2} {m \log^2 m }  \Big) \rk.
\end{eqnarray*}
\end{proof}

To simplify presentation, we reorder $\{\pp_{k}\}_{k=1}^m$ such that
\begin{eqnarray*}
	(m\log m)^{-1}\lV \x\rV_2\le\left|\pp_{1}^{\top}\x\right|\le\left|\pp_{2}^{\top}\x\right|\leq\cdots\le\left|\pp_{m}^{\top}\x\right|.
\end{eqnarray*}
In the sequel we construct hypotheses conditioned on the events in Fact~\ref{fact5} and Fact~\ref{fact6}.
To proceed, let $\pmb{r}_{\perp}^{(i)}$ denote the vector obtained by removing the first entry of $\z^{(i)}-\boldsymbol{x}$, 
and introduce the indicator variables
\begin{equation}
	\xi_{k}^{i}:=\begin{cases}
	\mathbbm{1}_{\left\{ \left|\pp_{k,\perp}^{\top}\pmb{r}_{\perp}^{(i)}\right|\leq\frac{1}{m}\sqrt{\frac{n-1}{2n}}\right\} },\quad & 1\le k\leq t_0,\\
	\mathbbm{1}_{\big\{ \left|\pp_{k,\perp}^{\top}\pmb{r}_{\perp}^{(i)}\right|\leq\sqrt{\frac{2\left(n-1\right)\log m}{n}}\big\} }, & k>t_0,
\end{cases}
	\label{eq:ind_xi}
\end{equation}
where $t_0=\frac{n}{25\log m}$ as before. 
The idea behind dividing $\left\{\pp_{k,\perp}^{\top}\pmb{r}_{\perp}^{(i)}\right\}_{i=1}^m$ into two groups is that, by Fact~\ref{fact5}, it becomes more difficult to upper bound $\frac{|\pp_{k,\perp}^{\top}\pmb{r}^{(i)}_{\perp}|^{2}}{\left|\varphi_{k,1}\right|^{2}}$ when $\lv\varphi_{k,1}\rv$ is small.
Therefore, in this case, we should impose a stricter control on $|\pp_{k,\perp}^{\top}\pmb{r}^{(i)}_{\perp}|$.  

For any $\z^{(i)}\in\mathcal{T}_1$, the indicator variables in \eqref{eq:ind_xi} obeying $\prod\limits_{k=1}^m\xi_k^i=1$ ensure \textbf{Part}~$(\mathrm{b})$ when $n$ is sufficiently large.
To see this, note that for the first group of indices,  by $\xi_k^i=1$ and \eqref{eq:dis} one has
\begin{equation*}
	\left|\pp_{k,\perp}^{\top}\pmb{r}_{\perp}^{(i)}\right|\le\frac{1}{m}\sqrt{\frac{n-1}{2n}} \le\frac{3}{m}\lV\pmb{r}^{(i)}\rV,
	\quad1\le k\le t_0,
\end{equation*}
This taken collectively with \eqref{eq:key-quantity} and Fact \ref{fact5} yields
\begin{eqnarray*}
	\frac{\left|\pp_{k}^{\top}\pmb{r}^{(l)}\right|^{2}}{\left|\pp_{k}^{\top}\x\right|^{2}} 
	 \le  \frac{2 \|\pmb{r}^{(i)}\| ^{2}}{\lV \x\rV^{2}} + \frac{\frac{9}{m^{2}}\lV\pmb{r}^{(i)}\rV ^{2}}{\frac{1}{m^2\log^2m}\lV \x \rV_2^2}
	\le \frac{ (2+ 9 \log^2m)\lV \pmb{r}^{(i)}\rV ^{2} }{ \lV \x \rV_2 ^2}  ,\quad1\le k\leq t_0.
\end{eqnarray*}
For the second group of indices, since $\xi_k^i=1$, it follows from \eqref{eq:dis} that
\begin{equation}\label{second}
	\left|\pp_{k,\perp}^{\top}\pmb{r}_{\perp}^{(i)}\right|\le\sqrt{\frac{2\left(n-1\right)\log m}{n}}
	\leq 4\sqrt{\log m}\lV \pmb{r}^{(i)}\rV_2 ,\quad k=t_0+1,\cdots,m,
\end{equation}
Substituting the above inequality together with Fact~\ref{fact6} into \eqref{eq:key-quantity} yields
\begin{eqnarray*}
	\frac{\left|\pp_{k}^{\top}\pmb{r}^{(i)}\right|^{2}}{\left|\pp_{k}^{\top}\x\right|^{2}} 
    & \le & \frac{2 \lV \pmb{r}^{(i)} \rV_2^{2}}{\lV\x\rV_2^{2}} 
	+ \frac{16\lV \pmb{r}^{(i)}\rV ^{2}\log m}{\lV\x\rV_2^{2}n^2/1600m^2\log^{2}m}\\
	&\le&\frac{\lk2+25600\frac{m^2\log^3m}{n^2}\rk\lV\pmb{r}^{(i)}\rV_2^{2}}{\lV \x\rV_2^{2}} ,\quad k\ge t_0+1.
\end{eqnarray*}
Thus, \eqref{KL for poisson1} holds for all $1\le k\le m$.  
It remains to ensure the existence of exponentially many vectors satisfying $\prod\limits_{k=1}^m \xi_k^i = 1$.

The first group of indicators is quite restrictive: for each $k$, only a fraction $O(1/m)$ of the equations satisfy $\xi_k^i=1$. 
Fortunately, since $T_1$ is exponentially large, even $T_1/m^{t_0}$ remains exponentially large under our choice of $t_0=\frac{n}{25\log m}$.
By the calculations in \cite[pp.~871–872]{chen2017solving}, with probability exceeding $1-3\exp\left(-\Omega\left(t_0\right)\right)$,
the first group satisfies
\begin{eqnarray*}
	\sum_{i=1}^{T_{1}}\prod_{k=1}^{t_0}\xi_{k}^{i} 
    & \ge & \frac{1}{2}\frac{T_{1}}{\lk2\pi\rk^{t_0/2}\lk1+4\sqrt{t_0/n}\rk^{t_0/2}}\left(\frac{1}{\sqrt{2\pi}m}\right)^{t_0}\\
    &\ge&\frac{1}{2}T_{1}\frac{1}{\lk e^2m\rk^{t_0}}\\
    &\ge&\frac{1}{2}\exp\lz\left(\frac{1}{20}-\frac{t_0\left(2+\log m\right)}{n}\right)n\rz\\
	&\ge& \frac{1}{2}\exp\left(\frac{1}{100}n\right).
\end{eqnarray*}
In light of this, we define $\mathcal{T}_{2}$ as the collection of all $\z^{(i)}$ satisfying $\prod\limits_{i=k}^{t_0}\xi_{k}^{i}=1$.
Its size is at least $T_{2}\ge\frac{1}{2}\exp\left(\frac{1}{100}n\right)$
based on the preceding argument. 
For notational simplicity, we assume the elements of $\mathcal{T}_{2}$ are indexed as $\z^{(j)}$ for $1\le j\le T_{2}$.

We next turn to the second group, examining how many vectors $\z^{(j)}$ in $\mathcal{T}_{2}$ further satisfy $\prod\limits_{k=t_0+1}^{m}\xi_{k}^{j}=1$. 
The construction of $\mathcal{T}_2$ depends only on $\left\{\pp_k\right\}_{1\le k\le t_0}$ and is independent of the remaining vectors $\left\{\pp_k\right\}_{k>t_0}$.
The following argument is therefore carried out conditional on $\mathcal{T}_2$ and $\{\pp_k\}_{1 \le k \le t_0}$. 
By Bernstein inequality \cite[Theorem 2.8.1]{vershynin2018high}, we obtain 
\begin{eqnarray*}
	\mathbb{P}\left\{ \left|\pp_{k,\perp}^{\top}\pmb{r}_{\perp}^{(j)}\right|>\sqrt{\frac{2\left(n-1\right)\log m}{n}}\right\} \le\frac{2}{m^2}.
\end{eqnarray*}
for sufficiently large $n$.
Then by the union bound, we obtain 
\begin{align*}
 	\mathbb{E}&\left[\sum\limits_{j=1}^{T_{2}}\left(1-\prod\limits_{k=t_0+1}^{m}\xi_{k}^{j}\right)\right]\\
	&\quad= \sum\limits_{j=1}^{T_{2}}\mathbb{P}\bigg\{ \exists k \text{ }(t_0<k\le m):
    \text{ }\left|\pp_{k,\perp}^{\top}\pmb{r}_{\perp}^{(j)}\right|>\sqrt{\frac{2\left(n-1\right)\log m}{n}} \bigg\} \\
 	& \quad\le \text{}\sum_{j=1}^{T_{2}}\sum_{k=t_0+1}^{m}\mathbb{P}\left\{ \left|\pp_{k,\perp}^{\top}\pmb{r}_{\perp}^{(j)}\right|>\sqrt{\frac{2\left(n-1\right)\log m}{n}}\right\}\\
    &\quad\le\text{ }T_2m\frac{2}{m^2}=\frac{2T_{2}}{m}.
\end{align*}
This combined with Markov's inequality gives
\begin{eqnarray*}
	\sum\limits_{j=1}^{T_{2}}\left(1-\prod\limits_{k=t_0+1}^{m}\xi_{k}^{j}\right)\leq\frac{\log m}{m}\cdot T_{2}
\end{eqnarray*}
with probability $1- \frac{1}{\log m}$. 
The above inequalities implies that for sufficiently large $m$, there exist at least
\begin{eqnarray*}
	\left(1-\frac{\log m}{m}\right)T_{2}\geq\frac{1}{2}\left(1-\frac{\log m}{m}\right)\exp\left(\frac{1}{100}n\right)
	\ge \exp\left(\frac{n}{200}\right)
\end{eqnarray*}
vectors in $\mathcal{T}_{2}$ satisfying $\prod\limits_{j=t_0+1}^m\xi_i^l=1$. 
We finally choose $\mathcal{T}$ to be the set consisting of all these vectors.

The proof of \textbf{Part}~$(\mathrm{b})$ parallels that of \textbf{Part}~$(\mathrm{a})$, with a few differences.
First, Fact~\eqref{fact6} must be replaced by the following Fact~\eqref{fact7}, since a different choice of 
$t_0$ is required in our proof.
\begin{fact}\label{fact7}
For any given $\x$, with probability at least $1-\exp\lk- \Omega\Big( \frac{m} {\log^4 m }  \Big) \rk$,
\begin{eqnarray*}
\sum_{k=1}^{m}\mathbbm{1}_{\left\{ \lv\pp_{k}^{\top}\x\rv\le\frac{\lV \x\rV_2 }{40\log^2 m}\right\}  }> \frac{m}{25\log^2 m} := t_0.
\end{eqnarray*}
\end{fact}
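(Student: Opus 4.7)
The plan is to follow the same Bernstein-concentration route used in the proof of Fact~\ref{fact6}, modified only by the new threshold $\lV\x\rV_2/(40\log^2 m)$ and the corresponding new value of $t_0 = m/(25\log^2 m)$. Writing $S := \sum_{k=1}^{m}\mathbbm{1}_{\lk\lv\pp_k^{\top}\x\rv \le \lV\x\rV_2/(40\log^2 m)\rk}$, the argument will proceed in two short steps.

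First, I would use a pointwise bound on the standard Gaussian density to estimate the per-sample probability. Since rotational invariance gives $\pp_k^{\top}\x/\lV\x\rV_2 \sim \mathcal{N}(0,1)$,
\[
p := \mathbb{P}\!\lk\lv\pp_k^{\top}\x\rv \le \frac{\lV\x\rV_2}{40\log^2 m}\rk \le \frac{2}{\sqrt{2\pi}}\cdot\frac{1}{40\log^2 m} \le \frac{1}{50\log^2 m},
\]
so that $m\,\mathbb{E}[\mathbbm{1}_k] = mp \le m/(50\log^2 m) = t_0/2$. The key structural point is that the expected number of ``exceptional'' indices sits a factor $1/2$ below the target $t_0$.

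Second, I would invoke Bernstein's inequality for the sum of i.i.d.\ bounded ($[0,1]$-valued) random variables whose per-term variance is at most $p$. Using deviation $(t_0 - mp)/m \ge 1/(50\log^2 m)$, variance proxy $p$, and boundedness constant $1$,
\[
\mathbb{P}\!\lk S > t_0 \rk \;\le\; \exp\!\lk-\frac{(t_0/2)^2}{2mp + (2/3)(t_0/2)}\rk \;\le\; \exp\!\lk-\Omega\!\lk\frac{m}{\log^2 m}\rk\rk,
\]
which is strictly stronger than the exponent $m/\log^4 m$ asserted in the fact (since $\log^4 m \ge \log^2 m$). Passing to the complementary event finishes the proof. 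This is a near-verbatim replay of the Hoeffding/Bernstein bound performed in Fact~\ref{fact6}, with the only modifications being the numerical constants driven by replacing the threshold $n\lV\x\rV_2/(40m\log m)$ by $\lV\x\rV_2/(40\log^2 m)$.

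The role of this fact in the larger argument is the same as Fact~\ref{fact6} played for Part~$(\mathrm{a})$: after sorting $\lv\pp_k^{\top}\x\rv$ in increasing order, it controls the ``tail'' indices $k > t_0$ by giving $\lv\varphi_{k,1}\rv \ge \lV\x\rV_2/(40\log^2 m)$, which, when fed into \eqref{eq:key-quantity} together with the bound \eqref{second}, produces the $2 + 16\log^5 m$ factor appearing in \eqref{KL for poisson2}. There is no genuine analytic obstacle—Step~1 is a one-line Gaussian small-ball estimate and Step~2 is a textbook Bernstein application; the only care needed is to verify the algebraic simplification of the Bernstein exponent, which is immediate because $mp$ and $t_0$ are both of order $m/\log^2 m$.
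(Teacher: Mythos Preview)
Your proposal is correct and follows essentially the same approach as the paper, which merely says the proof is ``similar to that of Fact~\ref{fact6}'' (bound the per-sample probability by $\tfrac{2}{\sqrt{2\pi}}\cdot\tfrac{1}{40\log^2 m}$, then apply a concentration inequality). The only difference is that the paper, mirroring Fact~\ref{fact6}, uses Hoeffding's inequality---which produces exactly the stated exponent $\Omega(m/\log^4 m)$---whereas your Bernstein argument exploits the small variance $p\asymp 1/\log^2 m$ and yields the sharper exponent $\Omega(m/\log^2 m)$; either suffices.
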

\noindent The proof of Fact~\eqref{fact7} is similar to that of Fact~\eqref{fact6}.
Second, because of our choice of $t_0$, to use the analysis of the first group in \textbf{Part}~$(\mathrm{a})$, we must impose the restriction
\begin{eqnarray*}
t_0\log m/n=\frac{m}{40n\log m}\le\widetilde{L}
\end{eqnarray*}
for some $\widetilde{L}>0.$
The remaining analysis is identical to that in \textbf{Part}~$(\mathrm{a})$.

The proof of \textbf{Part}~$(\mathrm{c})$ parallels the analysis of the second group in \textbf{Part}~$(\mathrm{a})$, and does not rely on Fact~\eqref{fact5} or Fact~\eqref{fact6}. 
We therefore omit the details.

\section{Proofs for Sparse Phase Retrieval}

Following the framework in Section~\ref{Architecture} for analyzing the NCVX-LS estimator \eqref{model1}, we define the admissible set as
\begin{eqnarray*}
		\mathcal{E}^{s}_{\text{ncvx}}:=\left\{ \z\z^*-\x\x^*:\z,\x\in\Sigma_{s}^{n}\right\}.
	\end{eqnarray*}
It remains to verify that, with high probability, both the \textit{SLBC} and \textit{NUBC} with respect to $\lV\,\cdot\,\rV_{F}$ hold uniformly over this set, providing lower and upper bounds for parameters $\alpha$ and $\beta$, respectively.

\subsection{Upper Bounds for \textit{NUBC}}\label{NUBC for spr}

We provide upper bounds for the \textit{NUBC} with respect to $\lV\,\cdot\,\rV_F$, as stated in the following lemma.

\begin{lemma}\label{mul inq for spr}
Suppose that $\{\pp_{k}\}_{k=1}^{m}$ and $\{\xi_{k}\}_{k=1}^{m}$ satisfy conditions in Theorem~\ref{mul inq}.
				\begin{itemize}
		\item[$(\mathrm{a})$] If $\xi$ is sub-exponential, then there exist positive constants $c_1,C_1,L$ dependent only on $K$ such that if $m\ge L s\log\lk en/s\rk$, with probability at least $1-2\exp\lk-c_1s\log\lk en/s\rk\rk$, for all $\M\in\mathcal{E}^{s}_{\text{ncvx}}$,
     \begin{eqnarray*}
\lv\left\langle\sum_{k=1}^{m}\lk\xi_k\pp_k\pp_{k}^{*}-\E\xi\pp\pp^{*}\rk,\M\right\rangle\rv
		\le C_1 \lV\xi\rV_{\psi_1}\sqrt{ms\log\lk en/s\rk}\lV\M\rV_F;
				\end{eqnarray*}

		\item[$(\mathrm{b})$] If $\xi\in L_q$ for some $q>2$, 
		then there exist positive constants $c_2,c_3,C_2,\widetilde{L}$ dependent only on $K$ and $q$ such that if $m\ge \widetilde{L} s\log\lk en/s\rk$,
        with probability at least $1-c_2m^{-\lk q/2-1\rk}\log^{q} m-2\exp\lk-c_3s\log\lk en/s\rk\rk$, for all $\M\in\mathcal{E}^{s}_{\text{ncvx}}$,
     \begin{eqnarray*}
	\lv\left\langle\sum_{k=1}^{m}\lk\xi_k\pp_k\pp_{k}^{*}-\E\xi\pp\pp^{*}\rk,\M\right\rangle\rv
	\le C_2 \lV\xi\rV_{L_q}\sqrt{ms\log\lk en/s\rk}\lV\M\rV_F.
				\end{eqnarray*}
		\end{itemize}
\end{lemma}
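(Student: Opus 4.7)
The plan is to adapt the proof of Theorem~\ref{mul inq} from Section~\ref{subsecion:proof for mul ineq} and the duality argument of Corollaries~\ref{mul inq for subexponential}--\ref{mul inq for heavy}, but to replace the full rank-$1$ PSD unit sphere by its $2s$-sparse counterpart. The first step is a duality reduction. For any $\M=\z\z^*-\x\x^*\in\mathcal{E}^{s}_{\text{ncvx}}$ with $\z,\x\in\Sigma_{s}^{n}$, set $T:=\mathrm{supp}(\z)\cup\mathrm{supp}(\x)$ so that $|T|\le 2s$ and $\M$ is supported on $T\times T$; by Proposition~\ref{pro low-rank}(a) one has $\lV\M\rV_{*}\le\sqrt{2}\,\lV\M\rV_{F}$. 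Writing $E:=\sum_{k=1}^{m}(\xi_{k}\pp_{k}\pp_{k}^{*}-\E\xi\pp\pp^{*})$, the dual-norm inequality yields
\begin{equation*}
\lv\lg E,\M\rg\rv=\lv\lg E|_{T\times T},\M\rg\rv\le \lV E|_{T\times T}\rV_{op}\,\lV\M\rV_{*}\le \sqrt{2}\,\lV E\rV_{\mathrm{sp},2s}\,\lV\M\rV_{F},
\end{equation*}
where, by Hermiticity of $E$,
$\lV E\rV_{\mathrm{sp},2s}:=\sup_{|T|\le 2s}\lV E|_{T\times T}\rV_{op}=\sup_{\M'\in\mM_{2s}}\lv\lg E,\M'\rg\rv$ with $\mM_{2s}:=\{\pmb{u}\pmb{u}^{*}:\pmb{u}\in\mathbb{S}^{n-1}\cap\Sigma_{2s}^{n}\}$. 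Hence it suffices to prove $\lV E\rV_{\mathrm{sp},2s}\lesssim_{K}\lV\xi\rV_{\psi_{1}}\sqrt{m\,s\log(en/s)}$ (and the $L_{q}$-analog in case (b)) with the stated probabilities.

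The second step is to apply Mendelson's multiplier-process bound (Lemma~\ref{multiplier}) to $\mF=\{\lg\cdot,\M'\rg:\M'\in\mM_{2s}\}$, exactly as in Section~\ref{subsecion:proof for mul ineq}: Lemma~\ref{geometry} controls the $\lV\,\cdot\,\rV_{(p)}$-increments of $\mF$ by a weighted sum of $\lV\,\cdot\,\rV_{F}$- and $\lV\,\cdot\,\rV_{op}$-increments, reducing the problem to computing $\gamma_{2}(\mM_{2s},\lV\,\cdot\,\rV_{F})$ and $\gamma_{1}(\mM_{2s},\lV\,\cdot\,\rV_{op})$. Union-bounding over $\binom{n}{2s}\le (en/(2s))^{2s}$ supports, and using the $\epsilon$-net bound $(9/\epsilon)^{4s+1}$ from \cite[Lemma 3.1]{candes2011tight} on each sparse sphere, gives
\begin{equation*}
\mathcal{N}\lk\mM_{2s},\lV\,\cdot\,\rV_{F},\epsilon\rk\le \lk\tfrac{en}{2s}\rk^{2s}\lk\tfrac{9}{\epsilon}\rk^{4s+1},
\end{equation*}
and Dudley's integral then yields $\gamma_{2}(\mM_{2s},\lV\,\cdot\,\rV_{F})\lesssim\sqrt{s\log(en/s)}$ and $\gamma_{1}(\mM_{2s},\lV\,\cdot\,\rV_{op})\lesssim s\log(en/s)$, while $\sup_{\mM_{2s}}\lV\pi_{s_{0}}(\M')\rV_{F}=\sup_{\mM_{2s}}\lV\pi_{s_{0}}(\M')\rV_{op}=1$. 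Choosing $s_{0}$ so that $K^{2}2^{s_{0}}\asymp s\log(en/s)$ and taking $w,u\asymp 1$ in Lemma~\ref{multiplier} renders both summands in the bound \eqref{gamma4} on $\widetilde{\Lambda}_{s_{0},c_{3}u}(\mF)$ of order $\sqrt{s\log(en/s)}$, provided $m\gtrsim_{K} s\log(en/s)$. Substituting into Lemma~\ref{multiplier}(a) produces the sub-exponential claim with failure probability $2\exp(-c\,s\log(en/s))$, while Lemma~\ref{multiplier}(b) produces the heavy-tailed claim with failure probability $c_{2}m^{-(q/2-1)}\log^{q}m+2\exp(-c_{3}s\log(en/s))$, matching the lemma.

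The main obstacle is the simultaneous calibration of the two exponents $c_{1}mw^{2}$ and $c_{2}u^{2}2^{s_{0}}$ appearing in Lemma~\ref{multiplier}: the target concentration rate $\exp(-c\,s\log(en/s))$ forces $2^{s_{0}}\asymp s\log(en/s)$, which in turn forces the sampling condition $m\ge Ls\log(en/s)$ so that the first exponent dominates. Once the sparse $\gamma$-functional estimates above are in place, the rest of the argument is line-by-line identical to the template of Section~\ref{subsecion:proof for mul ineq}, with $n$ systematically replaced by $s\log(en/s)$.
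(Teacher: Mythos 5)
Your proof is correct and reaches the same bound, and the core mechanism—invoking Lemma~\ref{multiplier} and controlling the $\gamma_2$- and $\gamma_1$-functionals via covering numbers for sparse low-rank matrices—is exactly the paper's strategy. The only cosmetic difference is that you insert a preliminary dual-norm/sparse-spectral-norm reduction to the class $\mM_{2s}$ (in the spirit of Corollaries~\ref{mul inq for subexponential}--\ref{mul inq for heavy}), whereas the paper applies Lemma~\ref{multiplier} directly to the function class indexed by $\mathcal{E}^s_{\text{ncvx}}\cap\mathbb{S}_F$; both routes yield the same $\gamma$-functional rates of order $\sqrt{s\log(en/s)}$ and $s\log(en/s)$ and the same probability bound.
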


\begin{proof}
Similar to the proof of Theorem~\ref{mul inq}, we use the multiplier processes in Lemma~\ref{multiplier}.
The only distinctions lies in the parameter $\widetilde{\Lambda}_{s_0,u}\lk\mF\rk$, where 
\begin{eqnarray*}
\mF:=\left\{\left\langle\frac{1}{\sqrt{m}}\sum_{k=1}^{m}\lk\pp_k\pp_k^*-\E\pp\pp^*\rk,\M\right\rangle:\M\in \mathcal{E}^{s}_{\text{ncvx}}\cap\mathbb{S}_{F}\right\}.
\end{eqnarray*}
To upper bound $\widetilde{\Lambda}_{s_0,u}\lk\mF\rk$, by Lemma~\ref{geometry} and following the proof of Theorem~\ref{mul inq}, it suffices to evaluate the $\gamma_{2}$-functional and $\gamma_{1}$-functional with respect to the set $\mathcal{E}^{s}_{\text{ncvx}}\cap\mathbb{S}_{F}$.

Since all elements of $\mathcal{E}^{s}_{\text{ncvx}}$ have rank at most 2, Lemma 3.1 in \cite{candes2011tight} implies the following bound on the covering number of $\mathcal{E}^{s}_{\text{ncvx}}\cap\mathbb{S}_{F}$:
 \begin{eqnarray*}
 \begin{aligned}
 \mN\lk \mathcal{E}^{s}_{\text{ncvx}}\cap\mathbb{S}_{F},\lV\,\cdot\,\rV_{F},\epsilon\rk
 \le\sum_{k=1}^{s} \binom{n}{k}\cdot \lk\frac{9}{\epsilon}\rk^{2\lk2s+1\rk}\le\lk\frac{en}{s}\rk^{s}\cdot\lk\frac{9}{\epsilon}\rk^{6s}.
 \end{aligned}
 \end{eqnarray*} 
Therefore, by Dudley integral (\cite[Theorem 11.17]{ledoux2013probability}), we obtain 
 \begin{eqnarray*}
\begin{aligned}
		\gamma_{2}\lk\mathcal{E}^{s}_{\text{ncvx}}\cap\mathbb{S}_{F},\lV\,\cdot\,\rV_{F}\rk
		&\le C \sqrt{6s}\lk\sqrt{\log \lk \frac{en}{s}\rk}+\int_{0}^{1} \sqrt{\log \lk \frac{9}{\epsilon}\rk}d\epsilon\rk\\
        &\le\widetilde{C}\sqrt{s\log\lk \frac{en}{s}\rk}.
	\end{aligned}
\end{eqnarray*} 
Similarly, we further bound $\gamma_{1}\lk\mathcal{E}^{s}_{\text{ncvx}}\cap\mathbb{S}_{F},\lV\,\cdot\,\rV_{op}\rk\lesssim s\log\lk en/s\rk$.
By ensuring that $m \gtrsim_{K} s\log\lk en/s\rk$, the proof is complete.
\end{proof}

\subsection{Lower Bounds for \textit{SLBC}}

We provide lower bounds for the \textit{SLBC} with respect to $\lV\,\cdot\,\rV_F$, as stated in the following lemma.

	\begin{lemma}\label{small ball for spr}
		Suppose that the sampling vectors  $\left\{\pp_{k}\right\}_{k=1}^{m}$  satisfy Assumption \ref{sample}.
		Then there exist positive constants $L,c,C$, depending only on $K$ and $\mu$ such that if $m\ge L s\log\lk en/s\rk$, 
         with probability at least $1- e^{-cm}$, for all $\M \in \mathcal{E}^{s}_{\text{ncvx}}$: 
		\begin{eqnarray*}
		 \sum_{k=1}^{m}\lv\lg\pp_k\pp_k^*,\M\rg\rv^2\ge Cm \lV\M\rV^{2}_{F}.
		\end{eqnarray*}
	\end{lemma}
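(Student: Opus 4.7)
The plan is to mirror the argument used for Lemma~\ref{small ball1} and apply the small ball method (Proposition~\ref{small ball}) to the set $\mathcal{E}^{s}_{\text{ncvx}} \cap \mathbb{S}_{F}$. The key task is to lower bound the small ball function $\mathcal{Q}_{2u}(\mathcal{E}^{s}_{\text{ncvx}} \cap \mathbb{S}_{F}; \pp\pp^{*})$ and to upper bound the Rademacher empirical process $\mathcal{W}_{m}(\mathcal{E}^{s}_{\text{ncvx}} \cap \mathbb{S}_{F}; \pp\pp^{*})$, after which the choice of parameters $u$ and $t$ closely follows Section~\ref{small_ball}. Note that $\mathcal{E}^{s}_{\text{ncvx}}$ is symmetric (since $\z\z^{*}-\x\x^{*}\in \mathcal{E}^{s}_{\text{ncvx}}$ implies $\x\x^{*}-\z\z^{*}\in \mathcal{E}^{s}_{\text{ncvx}}$) and every element is of rank at most two, so the structural ingredients needed in the analysis are inherited directly.

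For the small ball function, since $\mathcal{E}^{s}_{\text{ncvx}} \cap \mathbb{S}_{F} \subset \mathbb{S}_{F}$, Proposition~\ref{small ball function} applies verbatim and gives, for $0<u\le \tfrac{1}{2}\sqrt{\min\{\mu,1\}/2}$,
\begin{equation*}
\mathcal{Q}_{2u}\bigl(\mathcal{E}^{s}_{\text{ncvx}} \cap \mathbb{S}_{F};\, \pp\pp^{*}\bigr) \;\ge\; C_{0}\,\frac{\min\{\mu^{2},1\}}{K^{8}+1}.
\end{equation*}
This is the same signal-independent lower bound used in Lemma~\ref{small ball1} and requires no new work.

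For the Rademacher process, I would invoke Lemma~\ref{chaos1} directly with $\mathcal{M}=\mathcal{E}^{s}_{\text{ncvx}} \cap \mathbb{S}_{F}$. Two ingredients are needed: covering-number bounds (to control $\gamma_{2}(\mathcal{M},\|\cdot\|_{F})$ and $\gamma_{1}(\mathcal{M},\|\cdot\|_{op})$), and a uniform bound on $\sup_{\M\in \mathcal{M}}\text{Tr}(\M)$. For the first, the argument used to establish Lemma~\ref{mul inq for spr} already supplies the required covering estimate: every $\M=\z\z^{*}-\x\x^{*}\in\mathcal{E}^{s}_{\text{ncvx}}$ is supported on a $2s\times 2s$ principal submatrix and has rank at most two, so combining the enumeration over supports ($\binom{n}{2s}\le (en/2s)^{2s}$) with Lemma~3.1 in \cite{candes2011tight} yields
\begin{equation*}
\log \mathcal{N}\!\bigl(\mathcal{E}^{s}_{\text{ncvx}}\cap \mathbb{S}_{F},\,\|\cdot\|_{F},\,\epsilon\bigr) \;\lesssim\; s\log(en/s) + s\log(1/\epsilon).
\end{equation*}
Dudley's integral then gives $\gamma_{2}(\mathcal{M},\|\cdot\|_{F})\lesssim \sqrt{s\log(en/s)}$ and $\gamma_{1}(\mathcal{M},\|\cdot\|_{op})\lesssim s\log(en/s)$. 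For the trace term, any $\M=\z\z^{*}-\x\x^{*}$ of rank at most two satisfies $|\text{Tr}(\M)|\le \sqrt{2}\,\|\M\|_{F}=\sqrt{2}$, matching the absolute-constant bound that appeared in the non-sparse setting. Putting these into Lemma~\ref{chaos1} produces
\begin{equation*}
\mathcal{W}_{m}\bigl(\mathcal{E}^{s}_{\text{ncvx}}\cap \mathbb{S}_{F};\, \pp\pp^{*}\bigr) \;\lesssim_{K}\; \sqrt{s\log(en/s)} \;+\; \frac{s\log(en/s)}{\sqrt{m}} \;+\; 1.
\end{equation*}

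To finish, set $u=\tfrac{1}{2}\sqrt{\min\{\mu,1\}/2}$ and $t\asymp_{K,\mu}\sqrt{m}$ in Proposition~\ref{small ball}; provided $m\ge L\, s\log(en/s)$ for a constant $L$ depending only on $K$ and $\mu$, both the Rademacher term and the deviation term are dominated by $u\sqrt{m}\,\mathcal{Q}_{2u}$, yielding
\begin{equation*}
\sum_{k=1}^{m}\lv\lg \pp_{k}\pp_{k}^{*},\M\rg\rv^{2}\;\ge\; C\,m\,\|\M\|_{F}^{2}\quad\text{for all }\M\in\mathcal{E}^{s}_{\text{ncvx}},
\end{equation*}
on an event of probability at least $1-e^{-cm}$. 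The only step that requires genuine care—rather than a routine transcription from Section~\ref{small_ball}—is the covering-number calculation for the sparse low-rank set; once that bound is in place, the rest assembles mechanically.
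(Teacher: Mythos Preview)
Your proposal is correct and follows essentially the same approach as the paper: apply the small ball method (Proposition~\ref{small ball}), lower bound $\mathcal{Q}_{2u}$ via Proposition~\ref{small ball function}, and upper bound $\mathcal{W}_m$ via Lemma~\ref{chaos1} using the covering-number estimates from the proof of Lemma~\ref{mul inq for spr}. Your treatment is slightly more explicit—you verify symmetry of $\mathcal{E}^{s}_{\text{ncvx}}$ and bound the trace term separately—whereas the paper absorbs the $+1$ into the leading $\sqrt{s\log(en/s)}$ term, but the argument is otherwise identical.
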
	

\begin{proof}
The proof follows the same strategy as in Lemma~\ref{small ball1}, employing the small ball method.
Using the upper bounds on the $\gamma_{2}\lk\mathcal{E}^{s}_{\text{ncvx}}\cap\mathbb{S}_{F},\lV\,\cdot\,\rV_{F}\rk$ and $\gamma_{1}\lk\mathcal{E}^{s}_{\text{ncvx}}\cap\mathbb{S}_{F},\lV\,\cdot\,\rV_{F}\rk$ established in the proof of Lemma~\ref{mul inq for spr}, together with Lemma~\ref{chaos1}, we obtain
\begin{align*}
\mathcal{W}_{m}\lk \mathcal{E}^{s}_{\text{ncvx}}\cap\mathbb{S}_{F};\pp\pp^* \rk \notag
&\le C K^2\sqrt{m}\lk \sqrt{\frac{s\log\lk en/s\rk}{m}} + \frac{s\log\lk en/s\rk}{m} \rk.
\end{align*}
We choose $u=\frac{1}{2}\sqrt{\frac{\min\left\{\mu,1\right\}}{2}}$, 
by proposition~\ref{small ball function} we have
\begin{eqnarray*}
\mathcal{Q}_{2u}\lk \mathcal{E}^{s}_{\text{ncvx}}\cap\mathbb{S}_{F};\pp\pp^*\rk	\gtrsim \frac{\min\left\{ \mu^2,\, 1 \right\}}{K^8 + 1}.
\end{eqnarray*} 
This completes the proof by Proposition~\ref{small ball}, provided that $m \gtrsim_{K,\mu} s\log \lk en/s\rk$.
 \end{proof}

\subsection{Proofs of Theorem~\ref{sparse pr_poisson} and Theorem~\ref{sparse pr_heavy}}

We follow the argument presented in Section~\ref{proof of main}.
We first prove \textbf{Part}~$(\mathrm{a})$ of Theorem~\ref{sparse pr_poisson}.
By \textbf{Part}~$(\mathrm{a})$ of Lemma~\ref{mul inq for spr} and  Proposition~\ref{psi_1 of poisson}, we have 
\begin{eqnarray*}
\beta\lesssim_K \max\left\{1,K\lV\x\rV_2\right\}\cdot\sqrt{ms\log\lk en/s\rk}.
\end{eqnarray*}
Moreover, Lemma~\ref{small ball for spr} yields $\alpha \gtrsim_{K,\mu} m$.
Hence, \textbf{Part}~$(\mathrm{a})$ of Theorem~\ref{sparse pr_poisson} is established by \eqref{error1} in Section~\ref{Architecture}.
Similarly, by \textbf{Part}~$(\mathrm{b})$ of Lemma~\ref{mul inq for spr} along with Proposition~\ref{L_q of poisson} and  the condition $\x\in\Gamma_s$, we obtain
\begin{eqnarray*}
\beta\lesssim_K \sqrt{K\lV\x\rV_2}\cdot\sqrt{ms\log\lk en/s\rk}.
\end{eqnarray*}
Combining with the lower bound  $\alpha \gtrsim_{K,\mu} m$, we can establish \textbf{Part}~$(\mathrm{b})$ of Theorem~\ref{sparse pr_poisson}.

To prove Theorem~\ref{sparse pr_heavy}, we invoke \textbf{Part}~$(\mathrm{b})$ of Lemma~\ref{mul inq for spr}, which yields
\begin{eqnarray*}
\beta\lesssim_{K,q} \lV\xi\rV_{L_q}\cdot\sqrt{ms\log\lk en/s\rk}.
\end{eqnarray*}
Combined with $\alpha \gtrsim_{K,\mu} m$, the proof is complete.

\section{Proofs for Low-Rank PSD Matrix Recovery}

We follow the framework outlined in Section~\ref{Architecture} for analyzing the CVX-LS estimator \eqref{model2}. 
In the setting of recovering low-rank PSD matrix, we define the admissible set as
\begin{eqnarray*}\label{E_psd}
		\mathcal{E}^{r}_{\text{cvx}}:=\left\{\Z-\X:\Z,\X\in\mS_{+}^{n}\  \text{and}\ \X\ \text{is rank-}r \right\}.
	\end{eqnarray*}
We begin with the following proposition, which asserts that any matrix in $\mathcal{E}^{r}_{\text{cvx}}$ has at most $r$ negative eigenvalues.	
\begin{proposition}\label{r}
Suppose that $\pmb{M}\in \mathcal{E}^{r}_{\text{cvx}}$. 
Then $\pmb{M}$ has at most $r$ strictly negative eigenvalue.
\end{proposition}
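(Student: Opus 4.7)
The plan is to adapt the contradiction argument used for Proposition~\ref{one} to the general rank-$r$ case, replacing the ad hoc two-dimensional span by a dimension-counting argument. Concretely, suppose for contradiction that $\M \in \mathcal{E}^{r}_{\text{cvx}}$ has at least $r+1$ strictly negative eigenvalues, with corresponding eigenvectors $\xu_1,\ldots,\xu_{r+1}$ (chosen orthonormal), and let $V := \mathrm{span}\{\xu_1,\ldots,\xu_{r+1}\} \subset \C^n$, so $\dim V = r+1$.

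Next I would use the fact that $\M = \Z - \X$ with $\Z \in \mS^n_+$ and $\X \in \mS^n_+$ of rank exactly $r$. Since $\dim \ker(\X) = n - r$, the dimension formula gives
\[
\dim\bigl(V \cap \ker(\X)\bigr) \;\ge\; (r+1) + (n-r) - n \;=\; 1,
\]
so there exists a nonzero vector $\xu \in V \cap \ker(\X)$. In particular $\xu^{*}\X\xu = 0$, and hence
\[
\xu^{*}\Z\xu \;=\; \xu^{*}\M\xu + \xu^{*}\X\xu \;=\; \xu^{*}\M\xu.
\]
Writing $\xu = \sum_{i=1}^{r+1} c_i \xu_i$ with not all $c_i$ zero, and letting $\lambda_i < 0$ denote the corresponding (strictly negative) eigenvalues of $\M$, we have
\[
\xu^{*}\M\xu \;=\; \sum_{i=1}^{r+1} \lambda_i |c_i|^2 \;<\; 0,
\]
so $\xu^{*}\Z\xu < 0$, contradicting $\Z \in \mS^n_+$. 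This contradiction forces $\M$ to have at most $r$ strictly negative eigenvalues.

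There is essentially no main obstacle: the argument is a clean generalization of the $r=1$ case, and the only subtle point is ensuring that $\xu^{*}\M\xu$ is strictly (not merely weakly) negative for any nonzero $\xu \in V$, which follows because $V$ is spanned by eigenvectors with strictly negative eigenvalues. The dimension-counting step replaces the explicit construction of a vector in $\mathrm{span}\{\z_1,\z_2\}$ orthogonal to $\x$ used in the proof of Proposition~\ref{one}, and this substitution is what unlocks the rank-$r$ generalization.
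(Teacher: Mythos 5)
Your proof is correct and takes essentially the same approach as the paper: both argue by contradiction from $r+1$ strictly negative eigenvalues, locate a nonzero vector in the span of the corresponding eigenvectors that lies in $\ker(\X)$, and derive $\xu^*\Z\xu = \xu^*\M\xu < 0$, contradicting $\Z \in \mS_+^n$. Your version simply makes explicit the dimension count $\dim(V \cap \ker \X) \ge 1$ that the paper asserts implicitly when it says "one could choose a nonzero vector $\xu$ in their span orthogonal to $\X$."
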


\begin{proof}

By the definition of $\mathcal{E}^{r}_{\text{cvx}}$, for any $\pmb{M}\in \mathcal{E}^{r}_{\text{cvx}}$, we can find a rank-$r$ matrix $\X \in \mS^{n}_{+}$ such that $\X + \M \in \mathcal{S}^n_{+}.$
If $\M$ has $r+1$ (strictly) negative eigenvalues with corresponding eigenvectors $ \pmb{z}_1,\cdots, \pmb{z}_{r+1} \in \mathbb{C}^n$, one could choose a nonzero vector $\xu$ in their span orthogonal to $\X$, i.e.,  $ \langle  \xu\xu^*,\X \rangle =0 $, yielding $\xu \lk  \X + \M \rk \xu^* =  \xu^*\M\xu < 0$, contradicting the PSD condition.

\end{proof}

Unlike the two-part partition used for $\mathcal{E}_{\text{cvx}}$ in Section~\ref{Architecture}, a more refined partitioning strategy is required to handle $\mathcal{E}^{r}_{\text{cvx}}$.
We restate that for a matrix $\M\in \mathcal{S}^n$, we denote its eigenvalues by $\left\{ \lambda_i\lk \M\rk \right\}_{i=1}^n$, arranged in decreasing order.
By Proposition \ref{r}, the eigenvalues of $\M$ satisfies that  $\lambda_i\lk \M\rk \ge 0$ for all $i \in \left[n - r\right]$.
We first divide $\mathcal{E}^{r}_{\text{cvx}}$ into $r+1$ disjoint parts: 
\begin{equation*}
\mathcal{E}^{r;k}_{\text{cvx}} := \left\{
\M \in \mathcal{E}^r_{\text{cvx}} :
\begin{array}{l}
\text{for } i \in [n-k],\quad \lambda_i(\M) > 0 \\[3pt]
\text{for } i \in [n]\setminus[n-k],\quad \lambda_i(\M) \le 0
\end{array}
\right\},\quad k=0,1,\cdots,r.
\end{equation*}
We can see that $\mathcal{E}^{r;0}_{\text{cvx}}$ is the positive definite cone in $\mathcal{S}^n$.
For each $\mathcal{E}^{r;k}_{\text{cvx}}$, we divide it into two parts:
an approximately low-rank subset
\begin{eqnarray*}
		\mathcal{E}^{r;k}_{\text{cvx,1}}:= \left\{\M\in  \mathcal{E}^{r;k}_{\text{cvx}}:   -\sum_{i=n-k+1}^{n} \lambda_{i} \lk\M\rk > \frac{1}{2}  \sum_{i=1}^{n-k} \lambda_i \lk\M\rk   \right\},
			\end{eqnarray*}
            and an almost PSD subset
\begin{eqnarray*}
		\mathcal{E}^{r;k}_{\text{cvx,2}}:= \left\{\M\in  \mathcal{E}^{r;k}_{\text{cvx}}: -\sum_{i=n-k+1}^{n} \lambda_{i} \lk\M\rk \le \frac{1}{2}  \sum_{i=1}^{n-k} \lambda_i \lk\M\rk  \right\}.
	\end{eqnarray*}
Now, we let
\begin{eqnarray*}
		\mathcal{E}^{r}_{\text{cvx,1}}:= \bigcup_{k=0}^{r}	\mathcal{E}^{r;k}_{\text{cvx,1}}		
		\quad \text{and}\quad \mathcal{E}^{r}_{\text{cvx,2}}:= \bigcup_{k=0}^{r}	\mathcal{E}^{r;k}_{\text{cvx,2}}.	
		\end{eqnarray*}

The following proposition states that the elements in $\mathcal{E}^{r}_{\text{cvx,1}}$ are approximately low-rank.

\begin{proposition}\label{pro low-rank_0}
 For all $\M\in\mathcal{E}^{r}_{\text{cvx,1}}$, we have $ \lV\M\rV_{*}\le 3\sqrt{r}\lV\M\rV_{F}$.
\end{proposition}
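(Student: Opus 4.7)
The plan is to reduce the claim to the partition $\mathcal{E}^r_{\text{cvx,1}}=\bigcup_{k=0}^{r}\mathcal{E}^{r;k}_{\text{cvx,1}}$ and verify the nuclear-to-Frobenius bound uniformly across each piece. The argument closely mirrors Part~$\mathrm{(b)}$ of Proposition~\ref{pro low-rank}, with the only additional twist being that each $\M$ may now carry up to $r$ negative eigenvalues rather than a single one.

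First I would dispose of the trivial case $k=0$. When $\M \in \mathcal{E}^{r;0}_{\text{cvx,1}}$, all eigenvalues of $\M$ are nonnegative, so the defining inequality $-\sum_{i=n+1}^{n}\lambda_i(\M) > \tfrac{1}{2}\sum_{i=1}^{n}\lambda_i(\M)$ reduces to $0 > \tfrac{1}{2}\,\text{Tr}(\M)$, which is impossible for a PSD matrix (except at $\M=0$, where the claim is vacuous). Hence $\mathcal{E}^{r;0}_{\text{cvx,1}}$ contributes nothing.

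Next, I would fix $k \in \{1,\dots,r\}$ and introduce the shorthand $a:=\sum_{i=1}^{n-k}\lambda_i(\M)\ge 0$ for the sum of the nonnegative eigenvalues, and $b:=-\sum_{i=n-k+1}^{n}\lambda_i(\M)\ge 0$ for the sum of magnitudes of the negative ones. By the ordering of eigenvalues and Proposition~\ref{r}, these are exactly the positive and negative parts, so $\lV\M\rV_{*}=a+b$. The defining inequality of $\mathcal{E}^{r;k}_{\text{cvx,1}}$ is precisely $b>a/2$, which gives
\begin{equation*}
\lV\M\rV_{*}\;=\;a+b\;<\;2b+b\;=\;3b.
\end{equation*}

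The concluding step is to control $b$ by $\sqrt{r}\,\lV\M\rV_F$ via Cauchy--Schwarz on the at most $r$ negative eigenvalues:
\begin{equation*}
b\;=\;\sum_{i=n-k+1}^{n}|\lambda_i(\M)|\;\le\;\sqrt{k}\cdot\Bigl(\sum_{i=n-k+1}^{n}\lambda_i^{2}(\M)\Bigr)^{1/2}\;\le\;\sqrt{r}\,\lV\M\rV_F,
\end{equation*}
where the last step uses $k\le r$ (guaranteed by Proposition~\ref{r}) and monotonicity of the Frobenius norm over a subset of eigenvalues. Combining the two displays yields $\lV\M\rV_{*}\le 3\sqrt{r}\,\lV\M\rV_F$, as desired. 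There is no genuine obstacle here; the only point requiring care is making sure the partition indexing $k$ never exceeds $r$, which is exactly what Proposition~\ref{r} gives.
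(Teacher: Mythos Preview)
Your proof is correct and follows essentially the same approach as the paper's: fix $k$, use the defining inequality $b>a/2$ to get $\lV\M\rV_*=a+b<3b$, then bound $b\le\sqrt{k}\,\lV\M\rV_F\le\sqrt{r}\,\lV\M\rV_F$ via Cauchy--Schwarz on the at most $k\le r$ nonpositive eigenvalues. Your explicit disposal of the $k=0$ case is a small addition the paper leaves implicit, but otherwise the arguments are identical.
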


\begin{proof}
For every $k=0,1,\cdots,r$, the element $\M\in\mathcal{E}^{r;k}_{\text{cvx,1}}$ satisfies that
\begin{eqnarray*}
 \frac{1}{2}  \sum_{i=1}^{n-k} \lambda_i \lk\M\rk< -\sum_{i=n-k+1}^{n} \lambda_{i} \lk\M\rk.
\end{eqnarray*}
Thus we have that
       \begin{eqnarray*}
       \begin{aligned}
\lV\M\rV_{*}&=\sum_{i=1}^{n-k} \lambda_i \lk\M\rk -\sum_{i=n-k+1}^{n} \lambda_{i} \lk\M\rk\\
&\le -3\sum_{i=n-k+1}^{n} \lambda_{i} \lk\M\rk
\le 3\sqrt{k}\lV\M\rV_{F}\le3\sqrt{r}\lV\M\rV_{F}.      
                  \end{aligned}
\end{eqnarray*}
\end{proof}

\subsection{Upper Bounds for \textit{NUBC}}

We provide uppers bounds for the \textit{NUBC}, as stated in the following lemma.

\begin{lemma}\label{mul inq for psd}
Suppose that $\left\{\pp_{k}\right\}_{k=1}^{m}$ and $\left\{\xi_{k}\right\}_{k=1}^{m}$ satisfy the conditions in Theorem~\ref{mul inq}.
\begin{itemize}
\item If $\xi$ is sub-exponential, then there exist positive constants $c,C_1,C_2,L$ dependent only on $K$ such that, when provided $m\ge L n$, with probability at least $1-2\exp\lk-cn\rk$, the following holds:
				\begin{itemize}
		\item[$\mathrm{(a)}$] 
		For  all $\M\in \mathcal{E}^r_{\text{cvx,1}} $, one has
 \begin{eqnarray*}
 \lv\left\langle\sum_{k=1}^{m}\lk\xi_k\pp_k\pp_{k}^{*}-\E\xi\pp\pp^{*}\rk,\M\right\rangle\rv
	\le C_1 \lV\xi\rV_{\psi_1}\sqrt{mrn} \lV\M\rV_{F};
    \end{eqnarray*}
	
		\item[$\mathrm{(b)}$] 
		  For all $\M\in \mathcal{E}^r_{\text{cvx,2}}$, one has
        \begin{eqnarray*}
	\lv\left\langle\sum_{k=1}^{m}\lk\xi_k\pp_k\pp_{k}^{*}-\E\xi\pp\pp^{*}\rk,\M\right\rangle\rv
	\le C_2 \lV\xi\rV_{\psi_1}\sqrt{mn}\lV\M\rV_{*}.
    \end{eqnarray*}

		\end{itemize}
        
       \item If $\xi\in L_q$ for some $q>2$, then there exist positive constants $c_1,c_2,C_3,C_4,\widetilde{L}$ dependent only on $K$ and $q$ such that, when provided $m\ge \widetilde{L} n$, with probability at least $1-c_1m^{-\lk q/2-1\rk}\log^{q} m-2\exp\lk-c_2n\rk$,
the following holds:
				\begin{itemize}
		\item[$\mathrm{(c)}$] 
		For all $\M\in \mathcal{E}^r_{\text{cvx,1}} $, one has
  \begin{eqnarray*}
  \lv\left\langle\sum_{k=1}^{m}\lk\xi_k\pp_k\pp_{k}^{*}-\E\xi\pp\pp^{*}\rk,\M\right\rangle\rv
	\le C_3 \lV\xi\rV_{L_q}\sqrt{mrn} \lV\M\rV_{F};
    \end{eqnarray*}
	
		\item[$\mathrm{(d)}$] 
		  For all $\M\in \mathcal{E}^r_{\text{cvx,2}} $, one has
        \begin{eqnarray*}
	\lv\left\langle\sum_{k=1}^{m}\lk\xi_k\pp_k\pp_{k}^{*}-\E\xi\pp\pp^{*}\rk,\M\right\rangle\rv
	\le C_4 \lV\xi\rV_{L_q}\sqrt{mn}\lV\M\rV_{*}.
    \end{eqnarray*}
    
        \end{itemize}
        \end{itemize}
\end{lemma}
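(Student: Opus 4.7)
The plan is to mirror the argument used for Corollary~\ref{mul inq for subexponential} and Corollary~\ref{mul inq for heavy}, replacing the rank-2 structure of $\mathcal{E}_{\text{ncvx}}$ (or $\mathcal{E}_{\text{cvx,1}}$) by the approximate rank-$r$ structure of $\mathcal{E}^{r}_{\text{cvx,1}}$ provided by Proposition~\ref{pro low-rank_0}, and by keeping $\mathcal{E}^{r}_{\text{cvx,2}}$ controlled directly in nuclear norm. In both settings the key inputs are (i) the dual-norm inequality $|\langle A,\M\rangle|\le \lV A\rV_{op}\lV\M\rV_{*}$, and (ii) the multiplier inequalities of Theorem~\ref{mul inq}, which provide, with the respective probabilities, the uniform bound
\begin{equation*}
\lV \sum_{k=1}^{m}\lk\xi_k\pp_k\pp_k^{*}-\E\xi\pp\pp^{*}\rk\rV_{op}\;\lesssim_{K,q}\;\lV\xi\rV_{\natural}\sqrt{mn},
\end{equation*}
where $\lV\xi\rV_{\natural}$ denotes either $\lV\xi\rV_{\psi_1}$ (sub-exponential case) or $\lV\xi\rV_{L_q}$ (heavy-tailed case).

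For parts $\mathrm{(a)}$ and $\mathrm{(c)}$, which concern $\mathcal{E}^{r}_{\text{cvx,1}}$, I would first write, for any $\M\in \mathcal{E}^{r}_{\text{cvx,1}}$,
\begin{equation*}
\lv\left\langle \sum_{k=1}^{m}\lk\xi_k\pp_k\pp_k^{*}-\E\xi\pp\pp^{*}\rk,\M\right\rangle\rv
\;\le\; \lV \sum_{k=1}^{m}\lk\xi_k\pp_k\pp_k^{*}-\E\xi\pp\pp^{*}\rk\rV_{op}\cdot \lV\M\rV_{*},
\end{equation*}
apply Proposition~\ref{pro low-rank_0} to obtain $\lV\M\rV_{*}\le 3\sqrt{r}\lV\M\rV_{F}$, and then plug in the multiplier bound from Theorem~\ref{mul inq}~$\mathrm{(a)}$ (respectively $\mathrm{(b)}$). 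This yields the claimed $\sqrt{mrn}\,\lV\M\rV_{F}$ rate, with the sub-exponential probability guarantee $1-2\exp(-cn)$ in part $\mathrm{(a)}$ and the heavier-tailed guarantee $1-c_1 m^{-(q/2-1)}\log^{q}m-2\exp(-c_2 n)$ in part $\mathrm{(c)}$, valid whenever $m\gtrsim_{K,q} n$.

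For parts $\mathrm{(b)}$ and $\mathrm{(d)}$, concerning $\mathcal{E}^{r}_{\text{cvx,2}}$, I would simply apply the same dual-norm inequality followed by Theorem~\ref{mul inq} directly to conclude
\begin{equation*}
\lv\left\langle \sum_{k=1}^{m}\lk\xi_k\pp_k\pp_k^{*}-\E\xi\pp\pp^{*}\rk,\M\right\rangle\rv
\;\lesssim_{K,q}\; \lV\xi\rV_{\natural}\sqrt{mn}\,\lV\M\rV_{*},
\end{equation*}
without any low-rank conversion, since the natural norm to carry through the almost-PSD analysis in Section~\ref{ell_1} is already the nuclear norm. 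The probabilities and sample-complexity conditions are inherited unchanged from Theorem~\ref{mul inq}. Since the multiplier bound is uniform in $\M$, the union over the finitely many components $\mathcal{E}^{r;k}_{\text{cvx},\cdot}$ ($k=0,1,\ldots,r$) making up $\mathcal{E}^{r}_{\text{cvx,1}}$ and $\mathcal{E}^{r}_{\text{cvx,2}}$ costs nothing.

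There is no essential obstacle beyond the one already handled by Proposition~\ref{pro low-rank_0}: the only place where the rank enters is the $\sqrt{r}$ factor in the nuclear-to-Frobenius conversion on $\mathcal{E}^{r}_{\text{cvx,1}}$, which is sharp for the approximate rank-$r$ structure and produces precisely the $\sqrt{mrn}$ scaling seen in~\eqref{dist2_lr} and~\eqref{dist3_lr}. The four parts are then obtained by combining the two versions of Theorem~\ref{mul inq} with the two admissible subsets in the obvious way, so the proof is essentially a routine adaptation of the arguments given for Corollary~\ref{mul inq for subexponential} and Corollary~\ref{mul inq for heavy}.
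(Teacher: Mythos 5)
Your proposal is correct and follows essentially the same route as the paper: for parts $\mathrm{(a)}$ and $\mathrm{(c)}$ the paper also applies the dual-norm bound $\lv\langle A,\M\rangle\rv \le \lV A\rV_{op}\lV\M\rV_{*}$, converts $\lV\M\rV_{*}\le 3\sqrt{r}\lV\M\rV_{F}$ via Proposition~\ref{pro low-rank_0}, and then invokes the operator-norm multiplier bound of Theorem~\ref{mul inq}; for parts $\mathrm{(b)}$ and $\mathrm{(d)}$ it stops after the dual-norm step and applies Theorem~\ref{mul inq} directly. Your added remark that the union over the $r+1$ components of $\mathcal{E}^{r}_{\text{cvx},\cdot}$ costs nothing (since the operator-norm bound is uniform in $\M$) is a fair observation that the paper leaves implicit.
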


\begin{proof}
The proof of \textbf{Part}~$\mathrm{(a)}$ follows from Theorem~\ref{mul inq} and Proposition~\ref{pro low-rank_0}, since we have that
\begin{eqnarray*}
\begin{aligned}
 \lv\left\langle\sum_{k=1}^{m}\lk\xi_k\pp_k\pp_{k}^{*}-\E\xi\pp\pp^{*}\rk,\M\right\rangle\rv
	&\le \lV\sum_{k=1}^{m}\lk\xi_k\pp_k\pp_{k}^{*}-\E\xi\pp\pp^{*}\rk\rV_{op}\cdot\lV\M\rV_F\\
    &\le C_1 \lV\xi\rV_{\psi_1}\sqrt{mrn} \lV\M\rV_{F}.
    \end{aligned}
    \end{eqnarray*}
The proof of \textbf{Part}~$\mathrm{(c)}$ is similar.
The proofs of \textbf{Part}~$\mathrm{(b)}$ and \textbf{Part}~$\mathrm{(d)}$ follow directly from Theorem~\ref{mul inq}.
\end{proof}

\subsection{Lower Bounds for \textit{SLBC}}

We establish lower bounds for the \textit{SLBC} to bound the parameters~$\alpha$ and~$\widetilde{\alpha}$ from below.
We first derive the \textit{SLBC} with respect to $\lV\,\cdot\,\rV_{F}$ over the admissible set $\mathcal{E}^{r}_{\text{cvx,1}}$.
The result is stated in the following lemma.

	\begin{lemma}\label{small ball for lr}
		Suppose that the $\left\{\pp_{k}\right\}_{k=1}^{m}$  satisfy Assumption \ref{sample}.
		Then there exist positive constants $L,c,C$ dependent only on $K$ and $\mu$ such that if $m\ge Lrn$, with probability at least $1- \mathcal{O}\lk e^{-cm}\rk$, the following holds  for all $ \M \in \mathcal{E}^r_{\text{cvx}}$,
		\begin{eqnarray*}
		 \sum_{k=1}^{m}\lv\lg\pp_k\pp_k^*,\M\rg\rv^2\ge Cm \lV\M\rV^{2}_{F}.
		\end{eqnarray*}
	\end{lemma}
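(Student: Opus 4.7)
The plan is to decompose $\mathcal{E}^{r}_{\text{cvx}} = \mathcal{E}^{r}_{\text{cvx,1}}\cup\mathcal{E}^{r}_{\text{cvx,2}}$ along the dichotomy already set up in Section~\ref{app}, handle each piece by a different technique in the spirit of Lemma~\ref{small ball1} and Lemma~\ref{approximate}, and combine the two by taking the smaller of the resulting constants.

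For the almost-PSD part $\mathcal{E}^{r}_{\text{cvx,2}}$, I would adapt the argument of Lemma~\ref{approximate} verbatim, replacing ``at most one negative eigenvalue'' by ``at most $r$ negative eigenvalues''. Conditioning on the two-sided isometry event $(1-\delta)\lV\z\rV_{2}^{2}\le\frac{1}{m}\sum_{k}\lv\lg\pp_{k},\z\rg\rv^{2}\le(1+\delta)\lV\z\rV_{2}^{2}$, which holds uniformly for $\z\in\C^{n}$ with probability $1-2e^{-\tilde{c}m}$ once $m\gtrsim_{K}n$, for $\M\in\mathcal{E}^{r;k}_{\text{cvx,2}}$ with spectral decomposition $\M=\sum_{i}\lambda_{i}(\M)\xu_{i}\xu_{i}^{*}$ the defining inequality $-\sum_{i=n-k+1}^{n}\lambda_{i}(\M)\le\frac{1}{2}\sum_{i=1}^{n-k}\lambda_{i}(\M)$ together with $\delta=1/6$ gives
\[
\sum_{k=1}^{m}\lv\lg\pp_{k}\pp_{k}^{*},\M\rg\rv\ge(1-\delta)m\sum_{i=1}^{n-k}\lambda_{i}(\M)+(1+\delta)m\sum_{i=n-k+1}^{n}\lambda_{i}(\M)\ge\tfrac{m}{6}\lV\M\rV_{*}.
\]
Cauchy--Schwarz then yields $\sum_{k}\lv\lg\pp_{k}\pp_{k}^{*},\M\rg\rv^{2}\ge\frac{m}{36}\lV\M\rV_{*}^{2}\ge\frac{m}{36}\lV\M\rV_{F}^{2}$. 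This step only requires $m\gtrsim_{K}n$, so it is not the bottleneck.

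For the approximately low-rank part $\mathcal{E}^{r}_{\text{cvx,1}}$, I would run the small ball method exactly as in Lemma~\ref{small ball1}. Proposition~\ref{pro low-rank_0} embeds $\mathcal{E}^{r}_{\text{cvx,1}}\cap\mathbb{S}_{F}$ into $\mathcal{M}_{r}:=\{\M\in\mathcal{S}^{n}:\lV\M\rV_{F}=1,\,\lV\M\rV_{*}\le 3\sqrt{r}\}$. Setting $u=\tfrac{1}{2}\sqrt{\min\{\mu,1\}/2}$, Proposition~\ref{small ball function} gives the uniform lower bound $\mathcal{Q}_{2u}(\mathcal{M}_{r};\pp\pp^{*})\gtrsim\frac{\min\{\mu^{2},1\}}{K^{8}+1}$. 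Lemma~\ref{chaos1} then yields
\[
\mathcal{W}_{m}(\mathcal{M}_{r};\pp\pp^{*})\lesssim_{K} \gamma_{2}(\mathcal{M}_{r},\lV\cdot\rV_{F})+\frac{\gamma_{1}(\mathcal{M}_{r},\lV\cdot\rV_{op})}{\sqrt{m}}+\sup_{\M\in\mathcal{M}_{r}}\mathrm{Tr}(\M),
\]
with $\sup_{\mathcal{M}_{r}}\mathrm{Tr}(\M)\le 3\sqrt{r}$. Plugging these into Proposition~\ref{small ball} with $t\asymp\sqrt{m}$ produces $\sum_{k}\lv\lg\pp_{k}\pp_{k}^{*},\M\rg\rv^{2}\gtrsim_{K,\mu} m\lV\M\rV_{F}^{2}$ uniformly on $\mathcal{E}^{r}_{\text{cvx,1}}$ with probability $1-\mathcal{O}(e^{-cm})$, provided $m\ge Lrn$ with $L$ depending only on $K$ and $\mu$.

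The main technical obstacle is controlling the $\gamma$-functionals of $\mathcal{M}_{r}$. The covering-number estimate from Lemma~3.1 of \cite{candes2011tight}, which drives the analogous bounds in Section~\ref{small_ball} and Section~\ref{NUBC for spr}, applies only to exact rank-$r$ manifolds and does not transfer directly. To pass to the approximately low-rank set $\mathcal{M}_{r}$, I would invoke a standard Maurey/peeling argument: any $\M\in\mathcal{M}_{r}$ admits a convex representation in terms of rank-one Hermitian matrices of Frobenius norm at most $3\sqrt{r}$, so $\epsilon$-nets of $\mathcal{M}_{r}$ can be built from $\epsilon$-nets of rank-$O(r)$ matrices at a constant-factor cost, giving $\gamma_{2}(\mathcal{M}_{r},\lV\cdot\rV_{F})\lesssim\sqrt{rn}$ and $\gamma_{1}(\mathcal{M}_{r},\lV\cdot\rV_{op})\lesssim rn$, matching the scale already used in Lemma~\ref{mul inq for psd}. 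Once these bounds are in place, taking the minimum of the two constants obtained in the two steps (with the sample requirement $m\ge Lrn$ from $\mathcal{E}^{r}_{\text{cvx,1}}$ dominating) yields the claim.
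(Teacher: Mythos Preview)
Your decomposition into $\mathcal{E}^{r}_{\text{cvx,1}}\cup\mathcal{E}^{r}_{\text{cvx,2}}$ and your treatment of the almost-PSD part are both correct; in fact the paper handles $\mathcal{E}^{r}_{\text{cvx,2}}$ in a separate lemma (Lemma~\ref{approximate for lr}) by exactly the argument you wrote, and the present lemma---despite the typo in its statement---is really only about $\mathcal{E}^{r}_{\text{cvx,1}}$.

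For $\mathcal{E}^{r}_{\text{cvx,1}}$, the paper takes a much shorter route that completely sidesteps the obstacle you flagged. Rather than bounding $\gamma_{2}(\mathcal{M}_{r},\lV\cdot\rV_{F})$ and $\gamma_{1}(\mathcal{M}_{r},\lV\cdot\rV_{op})$ directly, it uses trace duality and Proposition~\ref{pro low-rank_0} to write
\[
\mathcal{W}_{m}\bigl(\mathcal{E}^{r}_{\text{cvx,1}}\cap\mathbb{S}_{F};\pp\pp^{*}\bigr)
\le \E\Bigl\lV\tfrac{1}{\sqrt{m}}\sum_{k}\varepsilon_{k}\pp_{k}\pp_{k}^{*}\Bigr\rV_{op}\cdot\sup\lV\M\rV_{*}
\le 3\sqrt{r}\cdot\mathcal{W}_{m}(\mM;\pp\pp^{*}),
\]
where $\mM=\{\z\z^{*}:\z\in\mathbb{S}^{n-1}\}$, and then recycles the bound \eqref{up for W} already obtained in Section~\ref{small_ball}. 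This is a one-line reduction to the rank-one case; no entropy estimates for approximately low-rank sets are needed.

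Your Maurey route, by contrast, does not work as stated. Maurey's empirical method (or best rank-$k$ truncation) approximates $\M\in\mathcal{M}_{r}$ to Frobenius accuracy $\epsilon$ by a matrix of rank $O(r/\epsilon^{2})$, not $O(r)$; plugging this into Dudley gives $\log N(\mathcal{M}_{r},\lV\cdot\rV_{F},\epsilon)\lesssim (rn/\epsilon^{2})\log(1/\epsilon)$, and the resulting integrals for $\gamma_{2}$ and $\gamma_{1}$ diverge at $0$. The bound $\gamma_{2}(\mathcal{M}_{r},\lV\cdot\rV_{F})\lesssim\sqrt{rn}$ is in fact true (it follows from the Gaussian-width calculation $\E\sup_{\M\in\mathcal{M}_{r}}\langle G,\M\rangle\le 3\sqrt{r}\,\E\lV G\rV_{op}$ together with majorizing measures---which is essentially the duality trick again), but the claim $\gamma_{1}(\mathcal{M}_{r},\lV\cdot\rV_{op})\lesssim rn$ is not obtainable by the covering argument you describe. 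The duality reduction the paper uses both avoids this difficulty and delivers a cleaner proof.
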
		

\begin{proof}

The proof is similar to that of Lemma~\ref{small ball1}, except that here it remains to establish
 \begin{eqnarray*}
 \mathcal{W}_{m}\lk \mathcal{E}_{\text{cvx,1}}\cap \mathbb{S}_{F};\pp\pp^*\rk\lesssim_K\sqrt{rm}\lk \sqrt{\frac{n}{m}}+\frac{n}{m}\rk.
\end{eqnarray*}
In fact, we have that
\begin{align*}
\mathcal{W}_{m}\lk \mathcal{E}_{\text{cvx,1}} \cap \mathbb{S}_{F}; \pp\pp^* \rk
&\le \mathbb{E} \lV \frac{1}{\sqrt{m}} \sum_{k=1}^{m} \varepsilon_{k} \pp_k \pp_k^* \rV_{op} \cdot \lV \M \rV_{*} \notag \\
&\le 3\sqrt{r} \cdot \mathcal{W}_{m}\lk\mM; \pp\pp^* \rk \notag \\
&\lesssim K^{2} \sqrt{rm} \lk\sqrt{\frac{n}{m}} + \frac{n}{m} \rk, \label{eq:W_bound_cvx1}
\end{align*}
where $\mM=\left\{\z\z^*:\z\in\mathbb{S}^{n-1}\right\}$.
Here, in the second inequality we have used Proposition \ref{pro low-rank_0}, and in the third inequality we have used \eqref{up for W} in Section~\ref{small_ball}.
\end{proof}

We then derive the \textit{SLBC} with respect to $\lV\,\cdot\,\rV_{op}$ over the admissible set $\mathcal{E}^{r}_{\text{cvx,2}}$. 

	\begin{lemma}\label{approximate for lr}
		Suppose that $\{\pp_{k}\}_{k=1}^{m}$ are independent copies of a random vectors $\pp$ whose entries $\left\{\varphi_j\right\}_{j=1}^{n}$ are i.i.d., mean 0, variance 1, and $K$-sub-Gaussian.
		Then there exist positive constants $L,c$ dependent only on $K$ such that if  $m\ge L n$, with probability at $1- 2e^{-cm}$, the following holds for all $ \M \in  \mathcal{E}^r_{\text{cvx,2}}$,
        \begin{eqnarray*}
		 \sum_{k=1}^{m}\lv\lg\pp_k\pp_k^*,\M\rg\rv^2\ge \frac{1}{36} m \lV\M\rV^{2}_{*}.
		\end{eqnarray*}
	\end{lemma}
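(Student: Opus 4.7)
The plan is to mirror the proof of Lemma~\ref{approximate}, leveraging the fact that the defining inequality of $\mathcal{E}^r_{\text{cvx,2}}$ is structurally identical to that of $\mathcal{E}_{\text{cvx,2}}$: the negative part of the spectrum is dominated by half the positive part. First I would invoke \cite[Theorem 4.6.1]{vershynin2018high} to obtain, with probability at least $1-2e^{-\tilde c m}$ under $m\gtrsim_K n$, the lower isometry
\begin{equation*}
\tfrac{5}{6}\lV\z\rV_2^2 \le \frac{1}{m}\sum_{k=1}^m \lv\lg\pp_k,\z\rg\rv^2 \le \tfrac{7}{6}\lV\z\rV_2^2, \qquad \forall\,\z\in\C^n,
\end{equation*}
i.e.~the same choice $\delta=1/6$ used for Lemma~\ref{approximate}.

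Next, fix $\M\in\mathcal{E}^r_{\text{cvx,2}}$ with eigendecomposition $\M=\sum_{i=1}^n\lambda_i(\M)\xu_i\xu_i^*$. By Proposition~\ref{r}, $\M$ has some $k\le r$ strictly negative eigenvalues, so
\begin{equation*}
\sum_{k=1}^m \lv\lg\pp_k\pp_k^*,\M\rg\rv \ge \sum_{k=1}^m \lg\pp_k\pp_k^*,\M\rg = \sum_{i=1}^n \lambda_i(\M)\sum_{k=1}^m \lv\lg\pp_k,\xu_i\rg\rv^2.
\end{equation*}
Applying the lower isometry termwise, and noting that multiplying a negative $\lambda_i(\M)$ by the upper envelope yields a valid lower bound while multiplying a positive $\lambda_i(\M)$ by the lower envelope does too, I would obtain
\begin{equation*}
\sum_{k=1}^m \lg\pp_k\pp_k^*,\M\rg \ge \tfrac{5}{6}m\sum_{i=1}^{n-k}\lambda_i(\M) + \tfrac{7}{6}m\sum_{i=n-k+1}^{n}\lambda_i(\M).
\end{equation*}
Using the defining inequality $-\sum_{i=n-k+1}^n \lambda_i(\M)\le \tfrac12\sum_{i=1}^{n-k}\lambda_i(\M)$, this simplifies to $\tfrac{1}{4}m\sum_{i=1}^{n-k}\lambda_i(\M)$, and since $\lV\M\rV_*\le \tfrac{3}{2}\sum_{i=1}^{n-k}\lambda_i(\M)$ by the same inequality, we conclude $\sum_{k=1}^m \lv\lg\pp_k\pp_k^*,\M\rg\rv \ge \tfrac{1}{6}m\lV\M\rV_*$.

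Finally, Cauchy--Schwarz converts this $\ell_1$-type bound into the desired $\ell_2$-type bound,
\begin{equation*}
\sum_{k=1}^m \lv\lg\pp_k\pp_k^*,\M\rg\rv^2 \ge \frac{1}{m}\lk\sum_{k=1}^m \lv\lg\pp_k\pp_k^*,\M\rg\rv\rk^2 \ge \frac{1}{36}m\lV\M\rV_*^2,
\end{equation*}
uniformly over $\M\in\mathcal{E}^r_{\text{cvx,2}}$, on the single high-probability event on which the lower isometry holds. There is no real obstacle here: the only conceptual point is recognizing that the rank-$r$ partition $\mathcal{E}^{r;k}_{\text{cvx,2}}$ was designed precisely so that the same arithmetic balancing (factor $\tfrac{5}{6}$ against $\tfrac{7}{6}$ with a $\tfrac{1}{2}$ domination) carries through without modification; the number of negative eigenvalues $k\le r$ never enters the constants, which is why the same $\tfrac{1}{36}$ appears as in the rank-1 case and no dependence on $r$ is incurred. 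The dependence on $r$ in Lemma~\ref{small ball for lr} comes solely from the Rademacher complexity step for $\mathcal{E}^r_{\text{cvx,1}}$, not from this PSD-dominated regime.
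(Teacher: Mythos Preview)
Your proposal is correct and follows essentially the same approach as the paper: invoke the isometry \eqref{l_2} with $\delta=1/6$, expand $\sum_k\lg\pp_k\pp_k^*,\M\rg$ along the eigendecomposition, apply the $5/6$ and $7/6$ envelopes to the positive and negative eigenvalue blocks respectively, use the $\mathcal{E}^{r;k}_{\text{cvx,2}}$ domination inequality to reach $\tfrac{1}{6}m\lV\M\rV_*$, and finish with Cauchy--Schwarz. The paper treats the case $k=0$ (all eigenvalues nonnegative) separately, but your argument absorbs it automatically since the negative sum is then empty; otherwise the two proofs are identical, including your observation that no dependence on $r$ enters the constants.
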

	
	\begin{proof}
    The proof is similar to that of Lemma~\ref{approximate}.
Set $\M\in \mathcal{E}^r_{\text{cvx,2}}$, by Proposition \ref{r}, we know that $\M$ has at most $r$ negative eigenvalue. 
If $\M\in \mathcal{E}^{r;0}_{\text{cvx,2}}\subset\mathcal{E}^r_{\text{cvx,2}}$, then setting $\delta = \frac{1}{6}$ in \eqref{l_2} yields
$\sum\limits_{k=1}^{m}\lv\lg\pp_k\pp_k^*,\M\rg\rv \ge \frac{5}{6} m\lV\M\rV_{*}.$
If $\M\in \mathcal{E}^{r;k}_{\text{cvx,2}}$ where $k\in[r]$, 
since we have $-\sum\limits_{i=n-k+1}^{n} \lambda_{i} \lk\M\rk \le \frac{1}{2}  \sum\limits_{i=1}^{n-k} \lambda_i \lk\M\rk$, we obtain that
\begin{equation*}\label{eq22}
\begin{aligned}
\sum_{k=1}^{m}\lv\lg\pp_k\pp_k^*,\M\rg\rv
&\ge  \sum_{k=1}^{m}\lg\pp_k\pp_k^*,\M\rg
=\sum_{i=1}^{n} \lambda_i \lk\M\rk \lk \sum_{k=1}^{m}\lv\lg\pp_k,\xu_i\rg\rv^2\rk\\
 &\ge  \frac{5}{6} m \sum_{i=1}^{n-k} \lambda_i \lk\M\rk  +\frac{7}{6}  m\sum_{i=n-k+1}^{n}\lambda_k \lk\M\rk\\
 &\ge  \frac{1}{4} m \sum_{i=1}^{n-k} \lambda_i \lk\M\rk\ge\frac{1}{6} m \lV\M\rV_{*}.
\end{aligned}
\end{equation*}
In the last inequality, we have used 
\begin{equation*}
\lV\M\rV_{*}= \sum_{i=1}^{n-k} \lambda_i \lk\M\rk -\sum_{i=n-k+1}^{n}\lambda_n \lk\M\rk \le  \frac{3}{2}\sum_{i=1}^{n-k} \lambda_i \lk\M\rk.
\end{equation*}
The proof then follows from the Cauchy–Schwarz inequality.
\end{proof}

\subsection{Proof of Theorem~\ref{lr_poisson}}	

The proof relies on the following proposition to characterize the properties of Poisson noise.
\begin{proposition}\label{eq:psi_1 of poisson_lr}
Let random variable 
\begin{eqnarray*}
\xi=\text{Poisson}\lk \lg\pp\pp^*,\X\rg\rk-\lg\pp\pp^*,\X\rg,
\end{eqnarray*}
where $X\in\mathcal{S}^{+}_n$ and the entries $\left\{\varphi_j\right\}_{j=1}^{n}$ of random vector $\pp$ are independent, mean-zero and $K$-sub-Gaussian.
Then we have
\begin{itemize}
		\item[$\mathrm{(a)}$] 
$ \lV\xi\rV_{\psi_1}\lesssim \max\left\{1, K\sqrt{\lV\X\rV_*} \right\};$
 		\item[$\mathrm{(b)}$] 
$\lV\xi\rV_{L_{4}}\lesssim\max\left\{ \sqrt{K}\lV\X\rV_*^{1/4},K\sqrt{\lV\X\rV_*}\right\}.$
 \end{itemize}
\end{proposition}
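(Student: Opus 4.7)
The plan is to port the scalar arguments in Appendix~\ref{psi_1} and Appendix~\ref{L_q} to the PSD-matrix setting, the only real novelty being the control of the moments of the (random) Poisson rate $\Lambda := \pp^*\X\pp$, which in the rank-one case $\X = \x\x^*$ degenerates to $|\pp^*\x|^2$ and is handled there via Fact~\ref{fact4}. Since $\X \in \mS_+^n$, I diagonalize $\X = \sum_{i=1}^{n} \lambda_i u_i u_i^*$ with $\lambda_i \ge 0$, so that $\Lambda = \sum_i \lambda_i |u_i^*\pp|^2$ and $\sum_i \lambda_i = \|\X\|_*$. The orthonormality of $\{u_i\}$ and the sub-Gaussian hypothesis on the entries of $\pp$ give $\lV u_i^*\pp\rV_{\psi_2} \lesssim K$, hence $\E |u_i^*\pp|^{2p} \le (C K \sqrt{2p})^{2p}$ by Proposition~2.5.2 of~\cite{vershynin2018high}. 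Setting $w_i := \lambda_i/\lV\X\rV_*$ and using Jensen's inequality for the convex function $t \mapsto t^p$ on the convex combination $\sum_i w_i |u_i^*\pp|^2$ then yields the key moment bound
\begin{equation*}
\E\,\Lambda^p \;\le\; \lV\X\rV_*^{\,p}\,\sum_i w_i\,\E |u_i^*\pp|^{2p} \;\le\; \lV\X\rV_*^{\,p}\,(2C K^2 p)^p,\qquad p\ge 1.
\end{equation*}
This is the matrix analogue of the bound $\E|X|^{2p}\le (C_0 K \lV\x\rV_2)^{2p}(2p)^p$ used in Appendix~\ref{psi_1}, with $\lV\x\rV_2^2$ replaced by $\lV\X\rV_*$.

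For Part~$(\mathrm{a})$, I would condition on $\Lambda$ and invoke Fact~\ref{fact3}, which gives $\E[e^{\theta \xi}\mid \Lambda = \lambda] = e^{g(\theta)\lambda}$ with $g(\theta) := e^\theta-1-\theta$. Taking total expectation, expanding in Taylor series, substituting the above moment bound, and using $(p/e)^p \le p!$ collapses the series to a geometric one:
\begin{equation*}
\E e^{\theta \xi} \;\le\; 1 + \sum_{p=1}^{\infty} \bigl(2eC K^2 g(\theta)\lV\X\rV_*\bigr)^p \;\le\; e^{4eC K^2 g(\theta)\lV\X\rV_*},
\end{equation*}
provided $2eC K^2 g(\theta) \lV\X\rV_* \le \tfrac{1}{2}$, where the last step uses Fact~\ref{fact1}. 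The sub-exponential norm bound then follows exactly as in Appendix~\ref{psi_1}: by Proposition~2.7.1 of~\cite{vershynin2018high}, it suffices to find $T_0$ with $4eCK^2 g(\theta)\lV\X\rV_* \le T_0^2\theta^2$ for all $|\theta|\le 1/T_0$, and Fact~\ref{fact2} (monotonicity of $g(\theta)/\theta^2$) reduces this to a single inequality at $\theta = 1/T_0$, which is met by $T_0 = \max\{2, 2\sqrt{e}\,C K \sqrt{\lV\X\rV_*}\}$, yielding $\lV\xi\rV_{\psi_1} \lesssim \max\{1, K\sqrt{\lV\X\rV_*}\}$.

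For Part~$(\mathrm{b})$, the same conditional computation done in Appendix~\ref{L_q} still applies pointwise in $\Lambda$ and gives $\E[\xi^4\mid \Lambda=\lambda] = \lambda + 3\lambda^2$. Taking total expectation produces $\E\xi^4 = \E\Lambda + 3\E\Lambda^2$; the first term equals $\lV\X\rV_*$ since $\E(\pp^*\X\pp) = \trace(\X) = \lV\X\rV_*$, while the second is controlled by specializing the moment bound to $p=2$: $\E\Lambda^2 \lesssim K^4 \lV\X\rV_*^2$. Therefore $\E\xi^4 \lesssim \lV\X\rV_* + K^4\lV\X\rV_*^2$, and taking fourth roots gives $\lV\xi\rV_{L_4} \lesssim \max\{K^{1/2}\lV\X\rV_*^{1/4}, K\sqrt{\lV\X\rV_*}\}$, where I use $K \gtrsim 1$ to absorb the bare $\lV\X\rV_*^{1/4}$ term into $\sqrt{K}\lV\X\rV_*^{1/4}$.

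The principal technical point is ensuring that the Jensen-with-weights step is the right way to reduce the quadratic form $\pp^*\X\pp$ to per-eigenvector moments: a naive Hanson--Wright bound would give the cleaner $K^2(\sqrt{p}\lV\X\rV_F + p\lV\X\rV_{op})$ but would not recover the nuclear-norm dependence that matches the rank-one case and that Theorem~\ref{lr_poisson} needs. Exploiting the PSD structure to replace $\|\X\|_{op}$ by $\|\X\|_*$ at the cost of the factor $p$ is precisely what makes the geometric series in the MGF computation integrate to the correct $\sqrt{\lV\X\rV_*}$ scaling; the rest of the argument is a verbatim transcription of Appendices~\ref{psi_1}--\ref{L_q}.
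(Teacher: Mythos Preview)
Your proof is correct and follows essentially the same approach as the paper: both establish that $\sqrt{\Lambda}$ with $\Lambda := \pp^*\X\pp$ obeys the same sub-Gaussian control as $\lv\pp^*\x\rv$ did in the rank-one case (with $\lV\x\rV_2$ replaced by $\sqrt{\lV\X\rV_*}$), and then rerun Appendices~\ref{psi_1}--\ref{L_q} verbatim. The only cosmetic difference is that the paper packages the reduction as an Orlicz-norm bound $\lV\sqrt{\Lambda}\rV_{\psi_2}^2 = \lV\Lambda\rV_{\psi_1} \le \sum_i \lambda_i(\X)\,\lV\pp^*\xu_i\rV_{\psi_2}^2 \lesssim K^2\lV\X\rV_*$ via the triangle inequality for $\lV\,\cdot\,\rV_{\psi_1}$, whereas your Jensen-with-weights step yields the equivalent moment bound $\E\Lambda^p \le (CK^2\lV\X\rV_*\,p)^p$ directly.
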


\begin{proof}

We claim that there exists a constant $C_0\ge 1$ such that
\begin{eqnarray}\label{eq:psi_2 for PSD}
\lV  \sqrt{\lg\pp\pp^*,\X\rg}\rV_{\psi_2}\le C_0 K\sqrt{\lV\X\rV_*}.
\end{eqnarray}
    Since $\lV  \xi\rV^2_{\psi_2}=\lV \xi^2\rV_{\psi_1}$, we can obtain that
    \begin{align*}
\lV  \sqrt{\lg\pp\pp^*, \X \rg} \rV^2_{\psi_2}
&= \lV\lg \pp\pp^*, \X \rg \rV_{\psi_1} \nonumber 
\le \sum_{k=1}^m \lambda_k \lk \X\rk \lV \lg \pp\pp^*, \xu_k \xu_k^* \rg\rV_{\psi_1} \nonumber \\
&= \sum_{k=1}^m \lambda_k\lk \X\rk \lV\pp^* \xu_k \rV^2_{\psi_2} \nonumber 
\le C K^2 \sum_{k=1}^m \lambda_k\lk \X\rk=C K^2\lV\X\rV_{*}.
\end{align*}
The first inequality follows from the orthogonal decomposition of the PSD matrix 
$\X$.
The second inequality follows from Fact~\ref{fact4}.

The remaining proofs follow directly from Proposition~\ref{psi_1 of poisson} and Proposition~\ref{L_q of poisson}, provided that Fact~\ref{fact4} used in their proofs is adapted to the setting of \eqref{eq:psi_2 for PSD}.

\end{proof}

We now prove \textbf{Part}~$(\mathrm{a})$ of Theorem~\ref{lr_poisson}.
By Lemma~\ref{small ball for lr} we have $\alpha\gtrsim_{K,\mu}m$, and by Lemma~\ref{approximate for lr} it holds that
$\widetilde{\alpha}\ge \frac{1}{36}m$.
Moreover, by combining \textbf{Part}~$\mathrm{(a)}$ and \textbf{Part}~$\mathrm{(b)}$ of Lemma~\ref{mul inq for psd} with \textbf{Part}~$\mathrm{(a)}$ of Proposition~\ref{eq:psi_1 of poisson_lr}, we obtain
    \begin{equation*}
\beta\lesssim_K\max\left\{1, K\sqrt{\lV\X\rV_*} \right\}\cdot\sqrt{mrn}\quad \text{and}\quad\widetilde{\beta}\lesssim_K \max\left\{1, K\sqrt{\lV\X\rV_*} \right\}\cdot\sqrt{mn}.
\end{equation*}
Therefore, the estimation error can be bounded as
\begin{equation*}
\lV\Z_{\star}-\X\rV_F\le2\max\left\{\frac{\beta}{\alpha},\frac{\widetilde{\beta}}{\widetilde{\alpha}}\right\}\lesssim_{K,\mu}\max\left\{1, K\sqrt{\lV\X\rV_*} \right\}\cdot\sqrt{\frac{rn}{m}}.
\end{equation*}

Similarly, for \textbf{Part}~$\mathrm{(b)}$ of Theorem~\ref{lr_poisson}, by combining \textbf{Part}~$\mathrm{(c)}$ and \textbf{Part}~$\mathrm{(d)}$ of Lemma~\ref{mul inq for psd} with \textbf{Part}~$\mathrm{(b)}$ of Proposition~\ref{eq:psi_1 of poisson_lr}, we have
\begin{equation*}
\beta\lesssim_K \sqrt{K}\lV\X\rV_*^{1/4}\cdot\sqrt{mrn}\quad \text{and}\quad\widetilde{\beta}\lesssim_K \sqrt{K}\lV\X\rV_*^{1/4}\cdot\sqrt{mn}.
\end{equation*}
Therefore, the error bound becomes
\begin{equation*}
\lV\Z_{\star}-\X\rV_F\lesssim_{K,\mu}\sqrt{K}\lV\X\rV_*^{1/4}\cdot\sqrt{\frac{rn}{m}}.
\end{equation*}

\subsection{Proof of Theorem~\ref{lr_heavy}}	
	The proof is similar to the proof of Theorem~\ref{lr_poisson}.
    We also have that $\alpha\gtrsim_{K,\mu}m$ and $\widetilde{\alpha}\ge \frac{1}{36}m$.
    By \textbf{Part}~$(\mathrm{c})$ and \textbf{Part}~$(\mathrm{d})$ of Lemma~\ref{mul inq for psd}, it holds that
    \begin{equation*}
\beta\lesssim_{K,q}\lV\xi\rV_{L_q}\cdot\sqrt{mrn}\quad \text{and}\quad\widetilde{\beta}\lesssim_{K,q}\lV\xi\rV_{L_q}\cdot\sqrt{mn}.
\end{equation*}
Therefore, we obtain
\begin{equation*}
\lV\Z_{\star}-\X\rV_F\lesssim_{K,\mu,q}\lV\xi\rV_{L_q}\cdot\sqrt{\frac{rn}{m}}.
\end{equation*}

\section{Proofs for Random Blind Deconvolution}

To use the framework outline in Section \ref{Architecture}, we first define the admissible set for this setting.
The descent cone of the nuclear norm at a point $\X\in\C^{n\times n}$ is the set of all possible directions $\M\in\C^{n\times n}$ along which the nuclear norm does not increase; see e.g., \cite{chandrasekaran2012convex}. 
Specifically, for a rank-one matrix $\x\h^*$, the descent cone is given by
     \begin{eqnarray*}	     
     \mD\lk \x\h^*\rk:=\left\{\M\in\C^{n\times n}:\lV\x\h^*_0+t\M\rV_{*}\le\lV\x\h^*\rV_{*}\,\text{for some}\ t>0\right\}.
     	         \end{eqnarray*}
To ensure that our results hold uniformly for all $\x,\h\in\C^n$, we define the admissible set as the union of descent cones over all nonzero pairs:
 \begin{eqnarray*}	     
    \widetilde{\mathcal{E}}:=\bigcup_{\x,\h}\mD\lk \x\h^*\rk,
	         \end{eqnarray*}
	         where the union runs over all $\x,\h\in\C^n \backslash \left\{0\right\}$.
             In what follows, we take $\widetilde{\mathcal{E}}$ as the admissible set for our analysis.
             
       The following proposition characterizes the geometric properties of the admissible set $\widetilde{\mathcal{E}}$, which will be used in the subsequent analysis.
       Its proof can be obtained either directly from Lemma 10 in \cite{kueng2017low} or from Proposition 1 in \cite{huang2025low}; we omit the details here.
\begin{proposition}[\cite{kueng2017low,huang2025low}]\label{des}
For all $\M\in\widetilde{\mathcal{E}}$, one has
\begin{eqnarray*}
\lV\M\rV_{*}\le2\sqrt{2} \lV\M\rV_{F}.
	    \end{eqnarray*}  
\end{proposition}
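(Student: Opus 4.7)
The plan is to exploit the well-known consequence of the nuclear norm subdifferential: matrices lying in the descent cone of $\lV\cdot\rV_*$ at a rank-$r$ point must have their ``off-support'' component controlled by the ``on-support'' component. Specializing to $r=1$ will furnish the constant $2\sqrt{2}$.

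Fix $\M\in\widetilde{\mathcal{E}}$, so $\M\in\mD(\x\h^*)$ for some nonzero $\x,\h\in\C^n$. Let $\xu=\x/\lV\x\rV_2$ and $\xv=\h/\lV\h\rV_2$, so the compact SVD of $\x\h^*$ is $\lV\x\rV_2\lV\h\rV_2\,\xu\xv^*$. Define the tangent space
\[
T:=\{\xu\pmb{b}^*+\pmb{a}\xv^*:\pmb{a},\pmb{b}\in\C^n\},
\]
and let $P_T$ be the orthogonal projection (in the Frobenius inner product) onto $T$, given explicitly by $P_T(\M)=\xu\xu^*\M+\M\xv\xv^*-\xu\xu^*\M\xv\xv^*$. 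Write $\M_T:=P_T(\M)$ and $\M_{T^\perp}:=\M-\M_T$; note $\M_{T^\perp}=(\pmb{I}-\xu\xu^*)\M(\pmb{I}-\xv\xv^*)$, so in particular $\xu^*\M_{T^\perp}=\pmb{0}$ and $\M_{T^\perp}\xv=\pmb{0}$.

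First I will establish the ``null-space-type'' inequality $\lV\M_{T^\perp}\rV_*\le\lV\M_T\rV_*$. By the subdifferential of the nuclear norm at $\x\h^*$, for every matrix $\pmb{W}$ with $\lV\pmb{W}\rV_{op}\le 1$ and $P_T(\pmb{W})=\xu\xv^*$, the matrix $\xu\xv^*+(\pmb{I}-\xu\xu^*)\pmb{W}(\pmb{I}-\xv\xv^*)$ is a subgradient. Choosing $\pmb{W}$ to additionally satisfy $\langle \pmb{W},\M_{T^\perp}\rangle=\lV\M_{T^\perp}\rV_*$ (which is possible by duality of $\lV\cdot\rV_{op}$ and $\lV\cdot\rV_*$), the convexity inequality $\lV\x\h^*+t\M\rV_*\ge\lV\x\h^*\rV_*+t\langle \xu\xv^*+\pmb{W}_{T^\perp},\M\rangle$ together with the descent-cone property $\lV\x\h^*+t\M\rV_*\le\lV\x\h^*\rV_*$ yields
\[
\mathrm{Re}\langle \xu\xv^*,\M_T\rangle+\lV\M_{T^\perp}\rV_*\le 0,
\]
and hence $\lV\M_{T^\perp}\rV_*\le|\langle\xu\xv^*,\M_T\rangle|\le\lV\M_T\rV_*$.

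Next I will use the rank structure of $T$. Every $\pmb{N}\in T$ has rank at most $2$, because $\pmb{N}=\xu\pmb{b}^*+\pmb{a}\xv^*$ is the sum of two rank-one matrices, so the Cauchy–Schwarz bound on singular values gives $\lV\pmb{N}\rV_*\le\sqrt{2}\,\lV\pmb{N}\rV_F$. In particular $\lV\M_T\rV_*\le\sqrt{2}\,\lV\M_T\rV_F$. Finally, combining the triangle inequality with the two estimates above and using the Pythagorean identity $\lV\M\rV_F^2=\lV\M_T\rV_F^2+\lV\M_{T^\perp}\rV_F^2$,
\[
\lV\M\rV_*\le\lV\M_T\rV_*+\lV\M_{T^\perp}\rV_*\le 2\lV\M_T\rV_*\le 2\sqrt{2}\,\lV\M_T\rV_F\le 2\sqrt{2}\,\lV\M\rV_F,
\]
which is exactly the claim.

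The main obstacle I anticipate is a clean justification of the subgradient selection used to produce the inequality $\lV\M_{T^\perp}\rV_*\le\lV\M_T\rV_*$; this is standard in the low-rank recovery literature (it underlies the rank-NSP / restricted-strong-convexity arguments) and only requires care in the complex setting to handle the real parts correctly. Once that step is in place, the rest is a two-line bound using the rank-$\le 2$ property of $T$ and orthogonality of $P_T$ in Frobenius inner product, so no further technical difficulty is expected.
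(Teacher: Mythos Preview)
Your proof is correct and follows exactly the standard tangent-space/subdifferential argument that the cited references (Lemma 10 in \cite{kueng2017low} and Proposition 1 in \cite{huang2025low}) employ: decompose $\M=\M_T+\M_{T^\perp}$, use the subgradient $\xu\xv^*+W$ to obtain $\lV\M_{T^\perp}\rV_*\le\lV\M_T\rV_*$, and then exploit $\mathrm{rank}(\M_T)\le 2$. The paper itself omits the proof and simply points to those references, so your writeup is precisely what would be expected to fill the gap.
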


\subsection{Proof of Theorem \ref{thm:bd}}

We first provide  upper bounds for the \textit{NUBC} with respect to $\lV\,\cdot\,\rV_F$.

\begin{lemma}\label{le:mul inq for bd}
Suppose that $\{\pmb{a}_{k}\}_{k=1}^{m}$ and $\{\pmb{b}_{k}\}_{k=1}^{m}$ satisfy conditions in Theorem \ref{thm:bd},
and the noise term $\left\{\xi_k\right\}_{k=1}^{m}$ satisfies the conditions in Assumption \ref{noise0} $\mathrm{(b)}$ with $q>2$.
Then there exist positive constants $c_1,c_2,C,L$ dependent only on $K$ and $q$ such that if $m\ge L n$,
with probability at least $1-c_1m^{-\lk q/2-1\rk}\log^{q} m-2\exp\lk-c_2n\rk$, for all $\M\in\widetilde{\mathcal{E}}$, 
     \begin{eqnarray*}
	\lv\left\langle\sum_{k=1}^{m}\xi_k\pmb{a}_k\pmb{b}_{k}^{*},\M\right\rangle\rv
	\le C \lV\xi\rV_{L_q}\sqrt{mn}\lV\M\rV_F.
				\end{eqnarray*}
\end{lemma}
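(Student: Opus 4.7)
The plan is to reduce the bilinear bound to a single operator-norm estimate for the empirical multiplier sum, and then to exploit the low-effective-rank geometry of $\widetilde{\mathcal{E}}$ captured by Proposition~\ref{des}. Concretely, I would first invoke the asymmetric version of \textbf{Part}~$\mathrm{(b)}$ of Theorem~\ref{mul inq}, which is valid by Remark~\ref{remark1}(1): provided $m \ge Ln$, with probability at least $1 - c_1 m^{-\lk q/2-1\rk}\log^q m - 2\exp\lk-c_2 n\rk$,
\begin{equation*}
\lV \frac{1}{\sqrt{m}} \sum_{k=1}^{m} \lk \xi_k \pmb{a}_k \pmb{b}_k^{*} - \E\, \xi \pmb{a}\pmb{b}^{*} \rk \rV_{op} \le C \lV \xi \rV_{L_q} \sqrt{n}.
\end{equation*}
Since $\E\lk \xi \mid \pmb{a},\pmb{b}\rk = 0$ under Assumption~\ref{noise0}(b), the centering term $\E\, \xi \pmb{a}\pmb{b}^{*}$ vanishes, so the displayed bound controls $\lV \sum_k \xi_k \pmb{a}_k \pmb{b}_k^{*} \rV_{op}$ directly.

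Second, I would apply the nuclear/operator duality together with Proposition~\ref{des}: for every $\M \in \widetilde{\mathcal{E}}$,
\begin{equation*}
\lv \left\langle \sum_{k=1}^{m} \xi_k \pmb{a}_k \pmb{b}_k^{*},\, \M \right\rangle \rv
\le \lV \sum_{k=1}^{m} \xi_k \pmb{a}_k \pmb{b}_k^{*} \rV_{op} \lV \M \rV_{*}
\le 2\sqrt{2}\, \lV \sum_{k=1}^{m} \xi_k \pmb{a}_k \pmb{b}_k^{*} \rV_{op} \lV \M \rV_{F}.
\end{equation*}
Combining the two displays yields the claimed bound with $C = 2\sqrt{2}\,C'$ for the constant $C'$ in the operator-norm inequality, on the same high-probability event.

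The main obstacle lies in verifying that the proof architecture of Theorem~\ref{mul inq} in Section~\ref{subsecion:proof for mul ineq} indeed carries over to the asymmetric sampling $\pmb{a}_k\pmb{b}_k^{*}$. The original chaining argument rests on Lemma~\ref{geometry}, whose proof uses the Hanson--Wright inequality applied to the quadratic form $\pp_k^{*}\M\pp_k$. In the asymmetric case this must be replaced by an $L_q$-bound for the bilinear chaos $\pmb{a}_k^{*}\M\pmb{b}_k$, which can be obtained from a standard decoupling step followed by Hanson--Wright (or a direct sub-Gaussian bilinear concentration), producing the same tail behavior $\sqrt{qm}\lV \M\rV_F + q \lV \M\rV_{op}$ up to absolute constants. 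Once this asymmetric analogue of Lemma~\ref{geometry} is in place, the evaluation of the $\widetilde{\Lambda}_{s_0,u}$-functional over $\mM = \{\z\z^{*} : \z \in \mathbb{S}^{n-1}\}$ (or, equivalently, over a net on the unit Frobenius sphere of rank-one matrices $\xu\xv^{*}$) proceeds verbatim: the covering-number and Dudley-integral computations still yield $\gamma_2 \lesssim \sqrt{n}$ and $\gamma_1 \lesssim n$, and Lemma~\ref{multiplier}(b) then delivers the required operator-norm bound, closing the argument.
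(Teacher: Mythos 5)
Your proof is correct and follows essentially the same route as the paper: invoke the asymmetric version of \textbf{Part}~$\mathrm{(b)}$ of Theorem~\ref{mul inq} (justified by Remark~\ref{remark1}) to control $\lV\sum_k \xi_k\pmb{a}_k\pmb{b}_k^*\rV_{op}$, then combine nuclear/operator duality with Proposition~\ref{des} to pass from $\lV\M\rV_*$ to $\lV\M\rV_F$. Your write-up is in fact slightly more careful than the paper's in spelling out the $\lV\M\rV_*\le 2\sqrt{2}\lV\M\rV_F$ step, the vanishing of the centering term, and what changes in the proof of Lemma~\ref{geometry} for the bilinear chaos $\pmb{a}_k^*\M\pmb{b}_k$ versus the quadratic form $\pp_k^*\M\pp_k$.
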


\begin{proof}
By \textbf{Part}~$(\mathrm{b})$ of Theorem~\ref{mul inq} (see Remark~\ref{remark1}) combined with Proposition~\ref{des}, we can obtain that
\begin{eqnarray*}
\begin{aligned}
	\lv\left\langle\sum_{k=1}^{m}\xi_k\pmb{a}_k\pmb{b}_{k}^{*},\M\right\rangle\rv
	&\le\lV\sum_{k=1}^{m}\xi_k\pmb{a}_k\pmb{b}_{k}^{*}\rV_{op}\cdot\lV\M\rV_{F}\\
    &\le C \lV\xi\rV_{L_q}\sqrt{mn}\lV\M\rV_F.
    \end{aligned}
				\end{eqnarray*}

\end{proof}
We then provide lower bounds for the \textit{SLBC} with respect to $\lV\,\cdot\,\rV_F$.

	\begin{lemma}\label{le:small ball for bd}
		Suppose that $\{\pmb{a}_{k}\}_{k=1}^{m}$ and $\{\pmb{b}_{k}\}_{k=1}^{m}$ satisfy conditions in Theorem \ref{thm:bd}.
        Then there exist positive constants $L,c,C$ dependent only on $K$ such that if $m\ge L n$, 
        with probability at least $1- \mathcal{O}\lk e^{-cm}\rk$, for all $\M\in\widetilde{\mathcal{E}}$,
		\begin{eqnarray*}
		 \sum_{k=1}^{m}\lv\left\langle\pmb{a}_k\pmb{b}_{k}^{*},\M\right\rangle\rv^2\ge Cm \lV\M\rV^{2}_{F}.
		\end{eqnarray*}
	\end{lemma}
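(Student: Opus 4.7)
The plan is to follow the same small-ball template used in Lemma~\ref{small ball1}, but adapted to the asymmetric rank-one sampling $\pmb{a}_k\pmb{b}_k^*$. Concretely, I would apply Proposition~\ref{small ball} to the index set $\widetilde{\mathcal{E}}\cap\mathbb{S}_F$ and the sampling operator $\M\mapsto\langle\pmb{a}\pmb{b}^*,\M\rangle=\pmb{b}^*\M\pmb{a}$, which reduces the task to bounding (i) the small-ball function $\mathcal{Q}_{2u}(\widetilde{\mathcal{E}}\cap\mathbb{S}_F;\pmb{a}\pmb{b}^*)$ from below and (ii) the Rademacher width $\mathcal{W}_m(\widetilde{\mathcal{E}}\cap\mathbb{S}_F;\pmb{a}\pmb{b}^*)$ from above.

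For the small-ball function I would use a Paley--Zygmund argument, exploiting the fact that $\pmb{a}$ and $\pmb{b}$ are independent and each isotropic. First, $\E|\pmb{b}^*\M\pmb{a}|^2=\E_{\pmb{a}}\lV\M\pmb{a}\rV_2^2=\lV\M\rV_F^2=1$ by conditioning on $\pmb{a}$ and using $\E\pmb{b}\pmb{b}^*=\pmb{I}_n$. Second, conditional on $\pmb{a}$ the variable $\pmb{b}^*(\M\pmb{a})$ is $K$-sub-Gaussian of norm $\lesssim K\lV\M\pmb{a}\rV_2$, so $\E|\pmb{b}^*\M\pmb{a}|^4\lesssim K^4\E(\pmb{a}^*\M^*\M\pmb{a})^2$; combining this with Lemma~\ref{geometry} applied to the PSD matrix $\M^*\M$ (noting $\lV\M^*\M\rV_F\le\lV\M\rV_F^2$) yields $\E|\pmb{b}^*\M\pmb{a}|^4\lesssim K^8$. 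Paley--Zygmund then gives $\mathcal{Q}_{2u}(\widetilde{\mathcal{E}}\cap\mathbb{S}_F;\pmb{a}\pmb{b}^*)\gtrsim K^{-8}$ for a suitable absolute choice of $u$ (e.g.\ $u=1/\sqrt{8}$). Compared with Proposition~\ref{small ball function}, the asymmetric product structure removes the dependence on the fourth--moment excess $\mu$, paralleling the way Lemma~\ref{approximate} sidesteps this hypothesis.

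For the Rademacher width I would invoke Proposition~\ref{des} together with the dual-norm bound to obtain
\begin{equation*}
\mathcal{W}_m(\widetilde{\mathcal{E}}\cap\mathbb{S}_F;\pmb{a}\pmb{b}^*)\le 2\sqrt{2}\,\E\lV\frac{1}{\sqrt{m}}\sum_{k=1}^m\varepsilon_k\pmb{a}_k\pmb{b}_k^*\rV_{op},
\end{equation*}
and then bound the right-hand side by the asymmetric extension of Theorem~\ref{mul inq}\,(a) (see Remark~\ref{remark1}) with multiplier $\xi=\varepsilon$, which is sub-exponential with $\lV\varepsilon\rV_{\psi_1}\le 1$; since $\E\varepsilon\pmb{a}\pmb{b}^*=0$, this yields $\mathcal{W}_m\lesssim_K\sqrt{n}$. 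Finally, substituting these two estimates into Proposition~\ref{small ball} with $t\asymp_K\sqrt{m}$ and imposing $m\ge Ln$ for a sufficiently large constant $L=L(K)$ produces the lower bound $\sum_k|\langle\pmb{a}_k\pmb{b}_k^*,\M\rangle|^2\gtrsim_K m\lV\M\rV_F^2$ uniformly on $\widetilde{\mathcal{E}}\cap\mathbb{S}_F$ with probability at least $1-\mathcal{O}(e^{-cm})$, and the full statement then follows by homogeneity.

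The main obstacle I expect is the small-ball lower bound: one must argue that $\E|\pmb{b}^*\M\pmb{a}|^4$ is controlled by the \emph{Frobenius} norm of $\M$ (rather than, say, the operator norm) uniformly over $\widetilde{\mathcal{E}}\cap\mathbb{S}_F$, since the descent cone contains matrices of potentially high rank. The clean way to do this is to exploit the PSD structure of $\M^*\M$ in the Hanson--Wright-type inequality of Lemma~\ref{geometry} and the bound $\lV\M^*\M\rV_F\le\lV\M\rV_{op}\lV\M\rV_F\le\lV\M\rV_F^2$, which avoids any rank restriction; everything else is a straightforward adaptation of the arguments already developed in Section~\ref{small0}.
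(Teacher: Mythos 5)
Your proposal follows essentially the same route as the paper's proof: Paley--Zygmund for the small-ball lower bound (with the same $K^{-8}$ conclusion, and the same observation that the asymmetric structure removes the $\mu$-dependence), Proposition~\ref{des} plus an operator-norm bound for the Rademacher width, and Proposition~\ref{small ball} to finish. The only cosmetic difference is how the fourth moment is bounded — you condition on $\pmb{a}$ and invoke Hanson--Wright for $\M^*\M$, whereas the paper applies the asymmetric version of Lemma~\ref{geometry} to $\pmb{a}^*\M\pmb{b}$ directly (using $\E\,\pmb{a}^*\M\pmb{b}=0$) — and which result is cited for $\E\lVert m^{-1/2}\sum_k\varepsilon_k\pmb{a}_k\pmb{b}_k^*\rVert_{op}$; both give the same bounds, so the arguments coincide in substance.
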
	

\begin{proof}

In a manner analogous to Proposition~\ref{small ball function}, for $0<u\le\frac{\sqrt{2}}{4}$ we proof that 
\begin{eqnarray}\label{eq:small ball for ab}
\mathcal{Q}_{2u}\lk \widetilde{\mathcal{E}} \cap \mathbb{S}_F;\pmb{a}\pmb{b}^*\rk	\gtrsim  \frac{1}{K^8 }.
\end{eqnarray} 
Specially, by the Paley–Zygmund inequality (see e.g., \cite{de2012decoupling}), for any $\M\in\mS^n$,
\begin{equation*}
     \mathbb{P}\lk\lv\pmb{a}^*\M\pmb{b}\rv^2\ge\frac{\E\lv\pmb{a}^*\M\pmb{b}\rv^2}{2}\rk
     \ge\frac{\lk\E\lv\pmb{a}^*\M\pmb{b}\rv^2\rk^2}{\E\lv\pmb{a}^*\M\pmb{b}\rv^4}.
 \end{equation*}
 By direct calculation, we have
 \begin{equation*}\label{eq:E^2ab}
\begin{aligned}
  \E\lv\pmb{a}^*\M\pmb{b}\rv^2
&=\E\lk\sum_{i,j}\M_{i,j}\overline{a}_ib_j\rk\lk\sum_{\tilde{i},\tilde{j}}\overline{\M}_{\tilde{i},\tilde{j}}
\overline{b}_{\tilde{j}}\rk\\
&=\sum_{i,j,\tilde{i},\tilde{j}}\E\, \M_{i,j}\overline{\M}_{\tilde{i},\tilde{j}}\overline{a}_{i}a_{\tilde{i}}b_j\overline{b}_{\tilde{j}}
 \\
 &=\sum_{i=,\tilde{i},j=\tilde{j}}\M_{i,j}\overline{\M}_{\tilde{i},\tilde{j}}=\lV\M\rV^2_F.
 \end{aligned}
 \end{equation*}
By Lemma~\ref{geometry} (it still holds in this asymmetric setting), we obtain
 \begin{equation*}\label{eq:E^4ab}
 \begin{aligned}
\E\lv\pmb{a}^*\M\pmb{b}\rv^4&\lesssim\E\lv\pmb{a}^*\M\pmb{b}-\E\pmb{a}^*\M\pmb{b}\rv^4+\lk\E\,\pmb{a}^*\M\pmb{b}\rk^4\\
&\lesssim K^8\lV\M\rV^4_F,
\end{aligned}
 \end{equation*} 
where $\E\,\pmb{a}^*\M\pmb{b}=0$.
Hence, by the definition of the small ball function in Proposition~\ref{small ball}, we establish \eqref{eq:small ball for ab}.

Moreover, we can also upper bound the Rademacher empirical process as
\begin{eqnarray*}
\begin{aligned}
\mathcal{W}_{m}\lk \widetilde{\mathcal{E}} \cap \mathbb{S}_F;\pmb{a}\pmb{b}^* \rk
&\le\mathbb{E} \lV \frac{1}{\sqrt{m}} \sum_{k=1}^{m} \varepsilon_{k} \pmb{a}_k\pmb{b}_k^* \rV_{op} \cdot \lV \M \rV_{*} \notag \\
&\lesssim K^2\sqrt{m}\lk\sqrt{\frac{n}{m}} + \frac{n}{m} \rk.
\end{aligned}
\end{eqnarray*}
Here, in the second inequality we have used Proposition \ref{des} and
\begin{eqnarray*}
\mathbb{E} \lV \frac{1}{\sqrt{m}} \sum_{k=1}^{m} \varepsilon_{k} \pmb{a}_k\pmb{b}_k^* \rV_{op}\lesssim K^2\lk\sqrt{n}+\frac{n}{\sqrt{m}}\rk.
\end{eqnarray*}
The proof then follows by choosing $u=\frac{\sqrt{2}}{4}$ and $t=\frac{c\sqrt{m}}{K^8}$ in Proposition~\ref{small ball}, and assuming 
$m\ge L n$ for some constant $L>0$ depending only on $K$.
 \end{proof}

Now, we turn to the proof of Theorem~\ref{thm:bd}.
By Lemma~\ref{le:small ball for bd} and Lemma~\ref{le:mul inq for bd}, we have that
\begin{eqnarray*}
\alpha\gtrsim_{K}m\quad \text{and}\quad \beta\lesssim_{K,q} \lV\xi\rV_{L_q}\cdot\sqrt{mn}.
\end{eqnarray*}
Thus, we finally obtain
\begin{eqnarray*}
\lV\Z_{\star}-\x\h^*\rV_F\le\frac{2\beta}{\alpha}\lesssim_{K,q} \lV\xi\rV_{L_q}\cdot\sqrt{\frac{n}{m}}.
\end{eqnarray*}

\normalem
\bibliographystyle{plain}
\bibliography{ref}

\end{document}